\documentclass[a4paper, 10.5 pt, reqno]{amsart}

\usepackage[english]{babel}
\usepackage{fullpage}
\usepackage{hyperref}
 \hypersetup{
     colorlinks=true,
     linkcolor=purple,
     filecolor=cyan,
     citecolor = cyan,      
     urlcolor=cyan,
     }
\usepackage[poorman]{cleveref}

\usepackage{amscd}
\usepackage[arrow, matrix, curve]{xy}
\usepackage{color}
\usepackage{blindtext}
\usepackage{tikz}
\usepackage{tikz-cd}
\usepackage{comment}

\usepackage{relsize} 
\usepackage[bbgreekl]{mathbbol} 
\usepackage{amsfonts} 
\usepackage[T1]{fontenc}
\DeclareSymbolFontAlphabet{\mathbb}{AMSb} 
\DeclareSymbolFontAlphabet{\mathbbl}{bbold}

\usepackage{mathrsfs}
\usepackage{aliascnt}

\newtheorem{Theorem}{Theorem}[section]

\newaliascnt{Lemma}{Theorem}
\newtheorem{Lemma}[Lemma]{Lemma}
\aliascntresetthe{Lemma}

\newaliascnt{Corollary}{Theorem}
\newtheorem{Corollary}[Corollary]{Corollary}
\aliascntresetthe{Corollary}

\newaliascnt{proposition}{Theorem}
\newtheorem{proposition}[proposition]{Proposition}
\aliascntresetthe{proposition}

\theoremstyle{definition}

\newaliascnt{Definition}{Theorem}
\newtheorem{Definition}[Definition]{Definition}
\aliascntresetthe{Definition}

\newaliascnt{Remark}{Theorem}
\newtheorem{Remark}[Remark]{Remark}
\aliascntresetthe{Remark}

\newaliascnt{Situation}{Theorem}
\newtheorem{Situation}[Situation]{Situation}
\aliascntresetthe{Situation}

\newaliascnt{Construction}{Theorem}
\newtheorem{Construction}[Construction]{Construction}
\aliascntresetthe{Construction}

\Crefname{Theorem}{Theorem}{Theorems}
\Crefname{Lemma}{Lemma}{Lemmas}
\Crefname{Corollary}{Corollary}{Corollaries}
\Crefname{Definition}{Definition}{Definitions}
\Crefname{Remark}{Remark}{Remarks}
\Crefname{Example}{Example}{Examples}
\Crefname{proposition}{Proposition}{Propositions}
\Crefname{construction}{Construction}{Constructions}
\Crefname{hope}{Hope}{Hopes}
\Crefname{Fact}{Fact}{Facts}
\Crefname{Conjecture}{Conjecture}{Conjectures}
\Crefname{Construction}{Construction}{Constructions}
\Crefname{Exercise}{Exercise}{Exercises}
\Crefname{Situation}{Situation}{Situations}
\Crefname{Warning}{Warning}{Warnings}
\Crefname{question}{Question}{Questions}

\usepackage{amssymb}

\DeclareMathOperator{\Hom}{Hom}

\DeclareMathOperator{\Gal}{Gal}

\DeclareMathOperator{\Spec}{Spec}
\DeclareMathOperator{\Spa}{Spa}

\DeclareMathOperator{\GL}{GL}

\DeclareMathOperator{\ad}{ad}

\DeclareMathOperator{\Z}{\mathbb{Z}}

\DeclareMathOperator{\perf}{perf}

\DeclareMathOperator{\Res}{Res}

\DeclareMathOperator{\Mor}{Mor}

\DeclareMathOperator{\op}{op}
\DeclareMathOperator{\Gr}{Gr}

\DeclareMathOperator{\Perf}{Perf}

\DeclareMathOperator{\Frob}{Frob}

\DeclareMathOperator{\coker}{coker}

\DeclareMathOperator{\tors}{tors}

\DeclareMathOperator{\sph}{sph}

\DeclareMathOperator{\sep}{sep}

\DeclareMathOperator{\ur}{ur}

\DeclareMathOperator{\Cent}{Cent}

\newcommand{\mbC}{\mathbb{C}}

\newcommand{\mbF}{\mathbb{F}}
\newcommand{\mbG}{\mathbb{G}}

\newcommand{\mbN}{\mathbb{N}}

\newcommand{\mbR}{\mathbb{R}}

\newcommand{\mbZ}{\mathbb{Z}}

\newcommand{\mcA}{\mathcal{A}}
\newcommand{\mcB}{\mathcal{B}}
\newcommand{\mcC}{\mathcal{C}}
\newcommand{\mcD}{\mathcal{D}}
\newcommand{\mcE}{\mathcal{E}}
\newcommand{\mcF}{\mathcal{F}}
\newcommand{\mcG}{\mathcal{G}}
\newcommand{\mcH}{\mathcal{H}}
\newcommand{\mcI}{\mathcal{I}}

\newcommand{\mcL}{\mathcal{L}}

\newcommand{\mcO}{\mathcal{O}}
\newcommand{\mcP}{\mathcal{P}}

\newcommand{\mcS}{\mathcal{S}}
\newcommand{\mcT}{\mathcal{T}}

\newcommand{\mfS}{\mathfrak{S}}

\newcommand{\id}{\mathrm{id}}
\def\Frob{\mathop{\mathrm{Frob}}\nolimits}

\newcommand{\lr}{\longrightarrow}

\newcommand{\simlr}{\overset{\sim}{\lr}}

\numberwithin{equation}{section}

\title{Close fields, affine Springer fibers and fundamental lemmas}
\author{Sebastian Bartling}
\address{Universität Duisburg-Essen, Fakultät für Mathematik, Thea-Leymann Straße, 45127 Essen, Germany}
\email{sebastian-bartling@hotmail.de}

\author{Kazuhiro Ito}
\address{Mathematical Institute, Tohoku University, 6-3, Aoba, Aramaki, Aoba-Ku, Sendai 980-8578, Japan}
\email{kazuhiro.ito.c3@tohoku.ac.jp}

\subjclass[2020]{Primary 22E50; Secondary 20G25}

\date{\today}
\begin{document}

\begin{abstract}
    We prove a geometric local constancy theorem for affine Springer fibers in families of close local fields. Consequently, stable orbital integrals are locally constant in these families, and both the base change fundamental lemma and the standard endoscopic fundamental lemma transfer from characteristic zero to arbitrary positive characteristic.
\end{abstract}
\maketitle

\section{Introduction}

In this paper, we introduce a simple geometric method based on the idea of close local fields for deducing certain identities between stable orbital integrals (or twisted $\kappa$-orbital integrals) in positive characteristic once they are known in characteristic zero. 
Our strategy relies on constructing affine Springer fibers over a family of close local fields.
This construction is inspired by the work of Li-Huerta \cite{LH}; however, 
our method is algebro-geometric in nature, in contrast to his analytic methods.
As an illustration, we provide an alternative proof of the group version of the standard endoscopic fundamental lemma for local function fields.
This group version has already been proved by
Casselman--Cely--Hales \cite{CasselmanCelyHales}
if the characteristic $p$ is large
and by Wang \cite{GriffinWangBook}
if $p$ is larger than twice the Coxeter number of the unramified reductive group $G$ under consideration.
However, to the best of our knowledge, the case of small characteristic $p$ has remained open.
Our method allows us to reduce the statement to the characteristic zero case, thereby establishing the fundamental lemma in arbitrary positive characteristic.
Furthermore, we prove the base change fundamental lemma for central elements in the parahoric Hecke algebras for general unramified reductive groups over local function fields, extending results of Ngô \cite{NgoBCFLGLn}, Lau \cite{EikePhD}, Henniart--Lemaire \cite{HenniartLemaireGLnBook}, Ray-Dulany \cite{Ray-Dulany} and Feng \cite{FengBCFLGLnParahoric}.
We remark that this method also applies to the arithmetic fundamental lemma of W.\ Zhang; this is the subject of the forthcoming paper \cite{AndreasMeAFL} of the first author with A.\ Mihatsch.

 \subsection{Main results and explanation of the method}
 The idea of close local fields goes back to the work of Krasner \cite{KrasnerCloseFields}, Deligne \cite{Deligne84} and Kazhdan \cite{KazhdanCloseField}.
 Roughly speaking, as the ramification of characteristic zero local fields grows to infinity, their theory approximates the theory of local function fields.
 Let $F_\infty=k((\pi_\infty))$ be a local function field with finite residue field $k$.
 We say that $\lbrace (F_{i},\pi_{i})\rbrace_{i\in \mathbb{N}\cup \lbrace \infty \rbrace}$ is a \textit{family of close local fields approximating} $F_{\infty}$ if $F_i$ is a non-archimedean local field of characteristic zero 
 with the same residue field $k$
 for $i\in \mathbb{N}$ and its absolute ramification index $e_{i}$ is greater than or equal to $i$.
 We have fixed uniformizers $\pi_{i}\in F_{i}$; many constructions given below will depend on the choice of $\pi_{i}$.

Let $G_\infty$ be an unramified reductive group over $F_\infty$.
Since $G_\infty$ is unramified, it arises as the base change of a reductive group scheme $\mcG$ over $W(k)$.
Let $G_i := \mcG \times_{\Spec W(k)} \Spec F_i$.
 The method of deducing identities between stable orbital integrals in the local function field case starts with a strongly regular semisimple element $\gamma_{\infty}\in G_{\infty}(F_{\infty})$ and spreads it out to a ``continuous'' family of strongly regular semisimple elements $\lbrace \gamma_{i}\in G_{i}(F_{i})\rbrace_{i\in \mathbb{N}_{\geq B}\cup \lbrace \infty \rbrace}$.
 By the Satake isomorphism, one has isomorphisms of spherical Hecke algebras $\mcH_{\sph}(G_{\infty}(F_{\infty}))\simeq \mcH_{\sph}(G_{i}(F_{i})),$ $f_{\infty}\mapsto f_i$.
 Our goal is to show that for stable orbital integrals we have
 \begin{equation}\label{eq: local constancy of stable orbital integrals intro}
 SO_{\gamma_{i}}(f_{i})=SO_{\gamma_{\infty}}(f_{\infty})
 \end{equation}
 for all $i\geq B,$ for some bound $B.$

Our approach to this problem relies on the well-known interpretation, due to Kazhdan--Lusztig \cite{KazhdanLusztig}, of stable orbital integrals in terms of counting points on (quotients of) affine Springer fibers. The construction of affine Springer fibers was recently extended to mixed characteristic by Chi \cite{ChiWittAffineSpringer}.
 Here we want to construct profinite families of affine Grassmannians and affine Springer fibers indexed by $i\in \mathbb{N}\cup \lbrace \infty \rbrace,$ the one-point compactification of the natural numbers.
The key idea is that once (quotients of) affine Springer fibers form a \textit{finitely presented} family over $\mathbb{N}\cup \lbrace \infty \rbrace$, point-counting becomes locally constant around $\infty$.
This geometric finiteness replaces harmonic analysis arguments.
 
To implement this idea, we make essential use of a construction that appeared for the first time in the work of Li-Huerta \cite{LH}.
Let $\psi_{i}\colon \mcO_{\infty}/\pi_{\infty}^{e_{i}}\simeq \mcO_{i}/\pi_{i}^{e_{i}}$
 be the $k$-algebra isomorphism
 sending $\pi_{\infty}$ to $\pi_{i}$, where $\mcO_i$ $(i \in \mbN \cup \{ \infty \})$ is the ring of integers of $F_i$.

 \begin{Construction}[Li-Huerta]
Let $\pi:=(\pi_{1}, \pi_{2},\dotsc,\pi_{\infty})\in \prod_{i\in \mathbb{N}}\mcO_{i} \times \mcO_{\infty}.$
Let
\[
\mcO \subset \prod_{i\in \mathbb{N}}\mcO_{i} \times \mcO_{\infty}
\]
be the $\pi$-adically complete subring
containing $\pi$
such that
$\mcO/\pi^{n}$ coincides with the subring of
$\prod_{i\in\mathbb{N}}\mcO_{i}/\pi_{i}^{n}\times \mcO_{\infty}/\pi_{\infty}^{n}$
consisting of elements
$x=(x_{1},x_{2}, \dotsc ,x_{\infty})$
with the property that there exists an integer $B \geq n$
such that for any $i \geq B$, we have $\psi_i(x_\infty)=x_i$.
We then define $F:=\mcO[1/\pi]$.
 \end{Construction}

One can spread out $G_\infty$ to a reductive group scheme over $\Spec F$, just by putting $G := \mcG \times_{\Spec W(k)} \Spec F$.
In fact, one can spread out an arbitrary parahoric group scheme $\mcP_{\infty}$ for $G_{\infty}$ to a smooth affine group scheme $\mcP$ over $\Spec \mcO,$ whose fibers are parahoric group schemes $\mcP_{i}$ of $G_{i}$; see Section \ref{Subsection:Unramified reductive groups in families}.
By considering such parahoric groups, we can establish the analogous local constancy \eqref{eq: local constancy of stable orbital integrals intro} more generally for parahoric Hecke algebras.

For a perfect $\mcO/\pi$-algebra $R,$ we define 
 $W_{\mcO}(R):=W(R)\widehat{\otimes}_{W(\mcO/\pi)}\mcO$
 (where we consider the $\pi$-adic completion)
 which is an $\mcO$-version of the ring of Witt vectors.
 We then define, as usual, the positive loop group (resp.\ loop group) functor on the category of perfect $\mcO/\pi$-algebras by $L^{+}_{\mcO}\mcP(R)=\mcP(W_{\mcO}(R))$ (resp.\ $L_{\mcO}G(R)=G(W_{\mcO}(R)[1/\pi])$).
The \textit{$\mcO$-Witt vector affine Grassmannian}
of $\mcP$ is the quotient sheaf $$\Gr_{\mcP}:=L_{\mcO}G/L^{+}_{\mcO}\mcP$$
 with respect to the \'etale topology.
 This lives over $\Spec \mcO/\pi$, whose topological space can be identified with
the profinite set
 $\mathbb{N}\cup \lbrace \infty \rbrace$.
 The fiber over $i \in \mbN$ is the Witt vector affine Grassmannian of $\mcP_{i}$, while the fiber over $\infty$ is the perfection of the usual affine Grassmannian of $\mcP_\infty$ defined in terms of power series.
 Our first main result is the following representability theorem.
 
 \begin{Theorem}[Corollary \ref{Corollary:affine Grassmannian of parahoric is representable}]
    The sheaf $\Gr_{\mcP}$ is representable by an increasing union of perfections of finitely presented projective schemes over $\Spec \mcO/\pi.$
 \end{Theorem}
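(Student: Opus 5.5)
We would follow the strategy of Bhatt--Scholze for the Witt vector affine Grassmannian, carried out over the base $\Spec \mcO/\pi$. We treat $\mcG$ first and then pass to parahorics.

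\textbf{Step 1: $\GL_n$.} We write $\Gr_{\GL_n}$ as an increasing union of closed subfunctors $\Gr_{\GL_n}^{[a,b]}$. For a perfect $\mcO/\pi$-algebra $R$, the $R$-points of $\Gr_{\GL_n}^{[a,b]}$ are the finite projective $W_{\mcO}(R)$-lattices $\Xi$ with
\[
\pi^{b}W_{\mcO}(R)^n \subset \Xi \subset \pi^{a}W_{\mcO}(R)^n .
\]
Such a lattice is determined by the quotient $\pi^{a}W_{\mcO}(R)^n/\Xi$. This quotient is a finite projective module over $W_{\mcO}(R)/\pi^{b-a}$, and it carries a nilpotent $\pi$-action.

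Since $\mcO/\pi^{m}$ is a finite free $\mcO/\pi$-module, one checks that $W_{\mcO}(R)/\pi^{m}$ is a finite locally free algebra over $W_m(R)$-type rings of bounded length. So, as in Bhatt--Scholze, the data above are parametrized by a closed subscheme of a perfected Grassmannian of quotients of a finite locally free module of rank $n(b-a)$. This realizes $\Gr_{\GL_n}^{[a,b]}$ as the perfection of a finitely presented projective scheme over $\Spec \mcO/\pi$.

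Representability as a perfect algebraic space, together with properness, follows from the Bhatt--Scholze descent arguments for $v$-sheaves. These arguments apply because the $h$-descent and $v$-descent statements for $W_{\mcO}(-)/\pi^m$ reduce to those for $W_m$.

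\textbf{Step 2: reductive $\mcG$.} We choose a closed embedding $\mcG \hookrightarrow \GL_n$ over $W(k)$ such that $\GL_n/\mcG$ is affine. Such an embedding exists because $\mcG$ is reductive. The standard argument then shows that $\Gr_{\mcG}\to \Gr_{\GL_n}$ is a closed immersion, which gives ind-projectivity. Finite presentation of each piece follows because it is cut out by finitely many equations coming from the affine quotient.

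\textbf{Step 3: parahoric $\mcP$.} We compare with the hyperspecial case via the fibration $\Gr_{\mcP}\to \Gr_{\mcG}$. Here we arrange $\mcP\subset \mcG_{\mcO}$, after conjugating so that $\mcP$ is contained in a hyperspecial group. The fibers of this map are $L^+\mcG/L^+\mcP$, which is the perfection of a partial flag variety $\mcG_{\mcO/\pi}/\overline{\mcP}$. This uses Section \ref{Subsection:Unramified reductive groups in families}: the parahoric $\mcP$ is the dilatation of $\mcG_{\mcO}$ along a parabolic of the special fiber. It follows that $\Gr_{\mcP}\to\Gr_{\mcG}$ is an étale-locally trivial fibration with projective finitely presented fibers. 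Hence each preimage of a projective piece is projective and finitely presented.

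\textbf{Main obstacle.} The expected main obstacle is finite presentation over the non-noetherian base $\mcO/\pi \cong C(\mbN\cup\{\infty\},k)$. The key point is to make the Bhatt--Scholze argument uniform in $i$ near $\infty$, where the rings $\mcO_i/\pi_i^{m}$ for $i\geq B\geq m$ are all identified with $\mcO_\infty/\pi_\infty^m$ via $\psi_i$. The plan is to use this identification to show that each bounded piece $\Gr^{[a,b]}$ is constant over a clopen neighborhood of $\infty$. Finitely many remaining points are handled separately, and a finitely presented object over a profinite set is exactly one that is locally constant in this sense.
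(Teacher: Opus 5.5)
\textbf{Step 1.} Your Step 1 does not work as written. First, the quotient $\pi^aW_\mcO(R)^n/\Xi$ is not projective over $W_\mcO(R)/\pi^{b-a}$. For $n=1$, $\Xi=\pi W_\mcO(R)$ and $b-a=2$ it is $R$, which is killed by $\pi$ and so is not flat over $W_\mcO(R)/\pi^2$. Even in equal characteristic one only has $R$-projectivity. Second, $\mcO/\pi^m$ is not an $\mcO/\pi$-algebra. At the finitely many $i$ with $e_i<m$, the fiber of $W_\mcO(R)/\pi^m$ is $W(R)\otimes_{W(k)}\mcO_i/\pi_i^m$, of characteristic $p^{\lceil m/e_i\rceil}$. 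So lattices there are not $R$-submodules of an $R$-module, and the bounded pieces do not embed in a classical Grassmannian. This is precisely why Zhu and Bhatt--Scholze needed new methods.

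Your fallback, that $v$-descent gives a proper perfect algebraic space, yields neither a scheme nor projectivity. In Bhatt--Scholze, projectivity comes from the Demazure resolution and semiampleness of the determinant line bundle. The paper runs exactly that argument over $\mcO/\pi$:
\begin{itemize}
\item $\widetilde{\Gr}_{\GL_n,N}$ is the perfection of a smooth projective scheme;
\item $(\widetilde{\Gr}_{\GL_n,N},\mcL)$ is spread out from $\mcO_{[1,B],1}$, so semiampleness only has to be checked on fibers;
\item one takes the Stein factorization $X$;
\item $\widetilde{\Gr}\times_X\widetilde{\Gr}=\widetilde{\Gr}\times_{\overline{\Gr}}\widetilde{\Gr}$ because both are pfp and agree fiberwise.
\end{itemize}
The local-constancy idea in your last paragraph could give a genuinely different proof, but you have not supplied its key point. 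The lattice condition on $\Xi$ (projectivity of $\Xi/\pi\Xi$) only involves $W_\mcO(R)/\pi^{b-a+1}$. Over $\{i\geq B\}\cup\{\infty\}$ with $B>b-a$, this ring is functorially $R[t]/t^{b-a+1}$. Hence the bounded piece there is the constant equal-characteristic one, and at the finitely many remaining $i$ you would cite Bhatt--Scholze.

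\textbf{Step 3.} Step 3 fails for general $x$. A parahoric is contained in a hyperspecial one, even after $G^{\mathrm{ad}}(F)$-conjugation, only if the closure of its facet contains a hyperspecial vertex. For the non-special maximal parahorics of, e.g., $\mathrm{Sp}_4$ or $G_2$ it does not. Then $\mcP_x$ is neither contained in $\mcG_\mcO$ nor a dilatation of it. The paper never describes $\mcP_x$ that way; it glues $\mcT_{\mfS}$ and the $t^{f(a)}\mathbb{G}_a$ over $\mfS$. Your fibration argument therefore only covers facets adjacent to a hyperspecial vertex, such as the Iwahori.

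The paper instead uses a closed embedding $\mcP_x\hookrightarrow\GL_n$ with $\GL_n/\mcP_x$ smooth quasi-affine, obtained from Pappas--Zhu applied to $\mcP'_x$ over $\mfS$. This makes $\Gr_{\mcP}\to\Gr_{\GL_n}$ a pfp locally closed immersion, so $\Gr_{\mcP}$ is an increasing union of perfections of finitely presented quasi-projective schemes. These are projective because each $\Gr_{\mcP_i}$ is ind-proper (Zhu), and after spreading out from $\mcO_{[1,B],1}$, closedness can be checked on finitely many fibers.
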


Next, we state our results concerning affine Springer fibers. 
Let $\gamma_{\infty}\in G_\infty(F_{\infty})$ be a strongly regular semisimple element. 
After excluding finitely many $F_i$ ($i \in \mathbb{N}$) from $\{ (F_i, \pi_i) \}_{i \in \mbN \cup \{ \infty \}}$,
we can lift $\gamma_{\infty}$ to an element $\gamma \in G(F)$ 
whose fibers $\gamma_i \in G_i(F_i)$ are strongly regular semisimple elements. 
Consequently, the centralizer $G_{\gamma} \subset G$ is a maximal $F$-torus. 
In this setting, we can construct the affine Springer fibers $X_{\mcP, \gamma}^{w}\subset \Gr_{\mcP}$ for elements $w$ in the Iwahori--Weyl group (Definition \ref{def: Affine Springer fiber O}). 
These are perfect schemes, locally perfectly of finite presentation over $\Spec \mcO/\pi,$ which admit an action of $L_{\mcO} G_{\gamma}$.
The critical finiteness property of these objects is established in the following result.

\begin{Theorem}[Theorem \ref{Theorem: ASF is quasi-compact separated pfp algebraic space}]
    There exists an \'etale group scheme
    $\underline{\Lambda}_{\gamma} \subset L_{\mcO} G_{\gamma}$ such that the quotient
    $
    \underline{\Lambda}_{\gamma}\backslash X_{\mcP, \gamma}^{w}
    $
    is a (quasi-compact) perfect algebraic space perfectly of finite presentation over $\Spec \mcO/\pi.$
\end{Theorem}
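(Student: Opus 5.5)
We will follow the classical strategy of Kazhdan--Lusztig, in the form Bezrukavnikov and Chi used for the Witt vector case, and carry it out uniformly over the profinite base $\Spec \mcO/\pi$.

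\textbf{Step 1: the lattice.} The centralizer $G_\gamma$ is a maximal $F$-torus. We first show that, after discarding finitely many $i$, it splits over a finite étale extension $F'/F$ whose fibers $F'_i/F_i$ have constant Galois group and ramification type. Concretely, we lift the splitting field of $G_{\gamma_\infty}$ through the isomorphisms $\psi_i$, in the spirit of Deligne's equivalence of categories for close fields. The étale group scheme $\underline{\Lambda}_\gamma$ is then built as follows. Take the cocharacters $X_*(G_\gamma)$ that are invariant under the inertia part of the Galois action, and let $\underline{\Lambda}_\gamma$ be a finite-index free subgroup of the Frobenius-coinvariant quotient. The subgroup must be split, meaning it comes from elements $\lambda(\pi)$ for $\lambda$ defined over the maximal split subtorus (after a further finite unramified cover). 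This realizes $\underline{\Lambda}_\gamma$ as a locally constant discrete subgroup of $L_\mcO G_\gamma$ over $\Spec \mcO/\pi$. It acts on $X^w_{\mcP,\gamma}$ freely, since it acts freely on $\Gr_{\mcP}$ through translations in the apartment.

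\textbf{Step 2: a uniform fundamental domain.} The key finiteness input is a uniform bound. We need a finite union $Z$ of Schubert-type closed subschemes of $\Gr_\mcP$, each perfectly finitely presented by Corollary~\ref{Corollary:affine Grassmannian of parahoric is representable}, such that $\underline{\Lambda}_\gamma \cdot (Z\cap X^w_{\mcP,\gamma}) = X^w_{\mcP,\gamma}$.

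1. Over each geometric point we argue as Kazhdan--Lusztig do. We use the root valuation of $\gamma$, namely $\mathrm{val}(\alpha(\gamma)-1)$ for the roots $\alpha$, to bound the distance from a point of the fiber to the apartment of $G_\gamma$.
2. We then use the $\underline{\Lambda}_\gamma$-action to move the projection to the apartment into a fixed compact region.
3. The numbers entering this bound (root valuations and the discriminant of $\gamma$) are the same for all $i\ge B$, because $\gamma_i$ and $\gamma_\infty$ agree modulo a high power of $\pi$. Hence one $Z$ works over all of $\Spec\mcO/\pi$.

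\textbf{Step 3: the quotient.} The map $Z\cap X^w_{\mcP,\gamma}\to \underline{\Lambda}_\gamma\backslash X^w_{\mcP,\gamma}$ is surjective. The equivalence relation it induces is $\{(x,\lambda)\colon \lambda x\in Z\}$, and only finitely many $\lambda$ occur by the same boundedness. So the relation is a finite disjoint union of closed subschemes of $Z\times Z$, and it is étale over $Z$ because $\underline{\Lambda}_\gamma$ is étale and acts freely. The quotient of a qcqs perfectly finitely presented perfect scheme by such an étale equivalence relation is a quasi-compact perfect algebraic space. It is perfectly of finite presentation, since both $Z\cap X^w_{\mcP,\gamma}$ and the relation are. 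Separatedness follows because the relation is a closed immersion into $Z\times Z$.

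\textbf{Main obstacle.} The main difficulty is Step 2, specifically making the bound uniform in $i$ near $\infty$. To handle it, we will reduce to the case where $G_\gamma$ is split after an unramified cover. We first pass to $F'$, which preserves perfect finite presentation by descent along finite étale covers. Then we control the Lie algebra lattice condition $\mathrm{Ad}(g^{-1})\gamma\in \mcP$ modulo $\pi^N$, with $N$ depending only on the discriminant valuation. This reduces the question to the truncated isomorphisms $\psi_i$, which are exactly uniform for $i\ge B$.
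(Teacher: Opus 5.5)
\textbf{Verdict.} Your overall architecture matches the paper's: a lattice, a uniformly bounded piece whose translates cover $X:=X^w_{\mcP,\gamma}$, and then a quotient. Your Step 2 plays the role of the paper's proposition producing a pfp closed $Y$ with $X=\bigcup_{\lambda}\lambda(\pi)\cdot Y$. Uniform-in-$i$ bounds amount to boundedness in $\breve F$, and the uniformity of $\mathrm{val}(\alpha(\gamma_i)-1)$ is the lemma on elements of $\mcO$.

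\textbf{The genuine gap is Step 3: the relation you put on $Z\cap X$ is not \'etale.}
\begin{itemize}
\item Its component indexed by $\lambda$ is $R_\lambda=(Z\cap X)\cap\lambda(\pi)^{-1}Z$. This is a \emph{closed} subscheme of $Z\cap X$.
\item The first projection restricted to $R_\lambda$ is this closed immersion. It is \'etale only if $R_\lambda$ is also open.
\item So the \'etaleness of $\underline{\Lambda}_\gamma\times X\to X$ is lost once you pass to a closed fundamental domain and impose the closed condition $\lambda x\in Z$.
\item Concrete failure: take split $\gamma\in\mathrm{PGL}_2$ with root valuation one and $w=1$. Then a single $\mathbb{P}^1$-component of $X$ is a closed fundamental domain, and $\underline{\Lambda}_\gamma\backslash X$ is a nodal rational curve. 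The map $\mathbb{P}^1\to\underline{\Lambda}_\gamma\backslash X$ is the normalization. It is surjective on geometric points but not an epimorphism of \'etale sheaves at the node. Hence your sheaf quotient is not $\underline{\Lambda}_\gamma\backslash X$.
\end{itemize}

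\textbf{How the paper avoids this.}
\begin{itemize}
\item It first shows that $X$ itself is a locally pfp perfect scheme, using local finiteness of the family $\{\lambda(\pi)\cdot Y\}$.
\item It then notes that $\underline{\Lambda}_\gamma\times X\to X\times X$, $(\lambda,x)\mapsto(x,\lambda x)$, is a closed immersion. So $\underline{\Lambda}_\gamma\backslash X$ is the quotient by an \'etale equivalence relation on all of $X$, hence separated and locally pfp.
\item Quasi-compactness comes separately, from the surjection $Y\to\underline{\Lambda}_\gamma\backslash X$.
\end{itemize}
Alternatively, replace $Z\cap X$ by a quasi-compact \emph{open} $U$ with $\Lambda_\gamma U=X$. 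For instance, take the complement of the locally finite union of the translates $\lambda(\pi)Y$ disjoint from $Y$. Then the pieces $U\cap\lambda(\pi)^{-1}U$ are open and your relation becomes \'etale.

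\textbf{Reduction to split $G_\gamma$.} Your reduction ``by descent along finite \'etale covers'' only handles the unramified part.
\begin{itemize}
\item The splitting field $E$ is ramified in general, and its totally ramified part does not change the base $\Spec\mcO/\pi$.
\item What you actually have is a closed immersion $\Gr_{\mcG}\hookrightarrow\Gr_{\mcG_{\mcO_E}}$ into Galois-fixed points.
\item Meanwhile the lattice drops from $\widetilde{\Lambda}_\gamma=X_*(G_\gamma)$ to $\Lambda_\gamma=X_*(G_\gamma)^{I}$, of smaller rank. So quasi-compactness upstairs does not formally descend.
\end{itemize}
The paper supplies the missing argument. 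Take $\widetilde{Y}$ Galois-stable and $S=\{\lambda\colon\lambda(\pi)\widetilde{Y}\cap\widetilde{Y}\neq\emptyset\}$, which is finite. A Galois-fixed point of $\lambda(\pi)\widetilde{Y}$ forces $g(\lambda)-\lambda\in S$ for all $g$. This set equals $\Lambda_\gamma+D$ with $D$ finite, after also passing from $\lambda(\pi_E)$ to $\lambda(\pi)$. Your pointwise nearest-point-projection argument could replace this. You would still have to package the bounds into one pfp closed subscheme over $\mcO/\pi$.

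\textbf{Definition of $\underline{\Lambda}_\gamma$.} It should be the \'etale group scheme attached to the Frobenius module $X_*(G_\gamma)^{I}$ itself, or a Frobenius-stable finite-index sublattice, embedded by $\lambda\mapsto\lambda(\pi)$. A subgroup of the Frobenius coinvariants is not a subgroup of $L_\mcO G_\gamma$. It also has the wrong rank: for an unramified anisotropic $G_\gamma$ it is zero. But quasi-compactness must hold after base change to $\overline{k}$, where $X_*(G_\gamma)^I$ has full rank.
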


This, in particular, implies that there exists a bound $B,$ such that for all $i\geq B,$
$$
\underline{\Lambda}_{\gamma_{i}}\backslash X_{\mcP_{i}, \gamma_i}^{w}\simeq \underline{\Lambda}_{\gamma_{\infty}}\backslash X_{\mcP_{\infty}, \gamma_{\infty}}^{w},
$$
where the left-hand side is the affine Springer fiber defined in terms of Witt vectors \cite{ChiWittAffineSpringer} and the right-hand side is the perfection of the affine Springer fiber defined in terms of power series. 
As alluded to above, stable orbital integrals are essentially 
given by counting $k$-valued points on $\underline{\Lambda}_{\gamma_{i}}\backslash X_{\mcP_{i}, \gamma_i}^{w}$; see \Cref{Subsection: Geometric interpretations of twisted kappa-orbital integrals}.
This then implies the desired equality \eqref{eq: local constancy of stable orbital integrals intro}
(also for parahoric Hecke algebras). 
In fact, we show a similar local constancy result for 
twisted $\kappa$-orbital integrals; see \Cref{Theorem: local constancy of kappa orbital integral}.
For this, we are led to consider a twisted version of affine Springer fibers, which does not seem to have appeared in the literature to the best of our knowledge; see \Cref{Subsection:O-Witt vector twisted affine Springer fibers}.

\subsection{Applications}
Let us state the two applications that we give in this paper; these were already mentioned above.
First, we discuss the base change fundamental lemma.
We give a quick formulation; for any unexplained term, we refer to \Cref{Section: BCFL}.
Let $F$ be a non-archimedean local field, $\widetilde{F}/F$ a degree $r$ unramified extension, $\sigma\in \Gal(\widetilde{F}/F)$ the Frobenius and $G$ an unramified reductive group over $F$.
Let $J=\mcP(\mcO_{F})\subset G(F)$
be a parahoric subgroup and let $\widetilde{J}=\mcP(\mcO_{\widetilde{F}}) \subset G(\widetilde{F})$
be the corresponding parahoric subgroup of $G(\widetilde{F})$.
We consider the parahoric Hecke algebras $\mcH_{J}(G(F))$
and $\mcH_{\widetilde{J}}(G(\widetilde{F}))$.
There exists a base change homomorphism 
$$
b\colon Z(\mcH_{\widetilde{J}}(G(\widetilde{F})))\rightarrow Z(\mcH_{J}(G(F)))
$$
defined by Haines using the Bernstein isomorphism.
Let $\delta\in G(\widetilde{F})$ and consider the norm $N(\delta)=\delta\sigma(\delta)\cdots \sigma^{r-1}(\delta)\in G(\widetilde{F}).$
Assume that there is a strongly regular semisimple element $\gamma\in G(F)$ such that $\gamma$ and $N(\delta)$ are $G(F^{\sep})$-conjugate.
Let $f\in Z(\mcH_{\widetilde{J}}(G(\widetilde{F})))$ and consider the stable twisted orbital integral $STO_{\delta}(f).$

\begin{Theorem}[Base change fundamental lemma, Theorem \ref{Thm: base change fundamental lemma}]
    The following equality holds for any non-archimedean local field $F$ of positive characteristic:
    \[
        STO_{\delta}(f)=SO_{\gamma}(b(f)).
    \]
\end{Theorem}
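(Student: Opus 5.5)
The plan is to deduce the positive characteristic statement from the known characteristic zero base change fundamental lemma (Clozel, Labesse, Haines for central elements of parahoric Hecke algebras). The bridge is the local constancy of twisted $\kappa$-orbital integrals in families of close local fields (\Cref{Theorem: local constancy of kappa orbital integral}).

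We set $F_\infty := F = k((\pi_\infty))$ and choose a family of close local fields $\{(F_i,\pi_i)\}$ approximating $F_\infty$, all with residue field $k$. Their degree $r$ unramified extensions $\widetilde F_i$ then form a family of close local fields approximating $\widetilde F_\infty$, with residue field $k_r$ and the same uniformizers. Using the reductive model $\mcG$ over $W(k)$ and the spread-out parahoric $\mcP$ over $\mcO$ from Section \ref{Subsection:Unramified reductive groups in families}, we obtain parahorics $J_i, \widetilde J_i$ for every $i$. By Kazhdan and Deligne, for $i$ large there are compatible isomorphisms of Hecke algebras $\mcH_{J_\infty}(G_\infty(F_\infty))\simeq \mcH_{J_i}(G_i(F_i))$, and similarly over $\widetilde F_i$. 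These preserve centers, since they match Iwahori--Matsumoto type presentations. Haines' base change map $b$ is defined through the Bernstein isomorphism, i.e.\ through Weyl-invariant functions on the unramified dual torus. Those data depend only on the root datum and on $r$, so the maps $b_i$ and $b_\infty$ are intertwined with the close field isomorphisms: $b_\infty(f_\infty)$ corresponds to $b_i(f_i)$.

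Next we spread out the elements. We lift $\delta_\infty$ to $\delta\in G(\widetilde F)$ over the $\mcO$-family ring attached to the $\widetilde F_i$, which is possible by density. Since Frobenius $\sigma$ acts compatibly on $W(k_r)\otimes\mcO$, the norms $N(\delta_i)$ form a family. We lift $\gamma_\infty$ to $\gamma\in G(F)$ with $\gamma_i$ strongly regular semisimple for $i\geq B$, as in the discussion before \Cref{Theorem: ASF is quasi-compact separated pfp algebraic space}. Stable conjugacy of $\gamma_i$ and $N(\delta_i)$ for large $i$ should follow from the fact that stable conjugacy of strongly regular elements is detected by the invariant polynomials (characteristic-polynomial-type invariants in $\mcG/\!/\mcG$) together with the centralizer torus class. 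The invariants agree mod $\pi^n$ for $i$ large. Sufficiently close strongly regular elements with the same invariants are stably conjugate, since the map $G\to G/\!/G$ is smooth near regular semisimple elements; alternatively, we would perturb $\gamma$ to $N(\delta)$ directly.

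Finally we apply the local constancy theorem twice. The stable orbital integral is the $\kappa=1$ case. The stable twisted orbital integral is the twisted $\kappa$-orbital integral treated via the twisted affine Springer fibers of \Cref{Subsection:O-Witt vector twisted affine Springer fibers}. Together they give $STO_{\delta_i}(f_i)=STO_{\delta_\infty}(f_\infty)$ and $SO_{\gamma_i}(b(f_i))=SO_{\gamma_\infty}(b(f_\infty))$ for $i\geq B'$. Combined with the characteristic zero identity for $F_i$, this yields the claim.

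The main obstacle I expect is compatibility. We must check that the normalizations of Haar measures, the transfer factors (trivial here) and the twisted stable conjugacy classes all match across the family. In particular, we need that the sets of twisted conjugacy classes inside the stable class of $\delta_i$ correspond bijectively, uniformly in $i$. I would handle this through the $k$-point counting interpretation of \Cref{Subsection: Geometric interpretations of twisted kappa-orbital integrals}. There the stable integral is a point count on the quotient $\underline{\Lambda}\backslash X$, summed over the relevant Galois cohomology. That cohomology is computed from the torus $G_\gamma$, which is constant in the family.
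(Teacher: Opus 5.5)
Your overall architecture matches the paper's proof. You choose close local fields approximating $F$ and use the algebra isomorphisms $\psi_i$ of parahoric Hecke algebras; the paper proves these for all $i$ via the Iwahori--Matsumoto presentation. Their compatibility with the Bernstein isomorphisms, via the explicit formula for $\Theta_\lambda$, gives $\psi_i(b(f_\infty))=b(f_i)$. You then apply \Cref{Theorem: local constancy of kappa orbital integral} with $\kappa=1$ to both sides and conclude from Haines' characteristic zero result. Your ``main obstacle'' about matching $\sigma$-conjugacy classes inside the stable class is already handled by that theorem, so it needs no extra work.

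The step that fails as written is spreading out the hypothesis that $\gamma$ and $N(\delta)$ are stably conjugate. If you lift $\gamma_\infty$ and $\delta_\infty$ independently, the invariants of $\gamma_i$ and $N(\delta_i)$ only agree modulo $\pi_i^n$. Stable conjugacy of strongly regular semisimple elements requires the invariants to be \emph{equal}, and closeness plus smoothness of $G\to G/\!/G$ does not turn congruence into equality. Already for $G$ a torus, where stable conjugacy is equality, an independent lift of $\gamma_\infty=N(\delta_\infty)$ typically has $\gamma_i\neq N(\delta_i)$ for every $i$.

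So $\gamma$ has to be built from $\delta$; your unexplained ``perturb $\gamma$ to $N(\delta)$ directly'' must be made precise. The paper does this in \Cref{lem: spreading out norms}:
\begin{enumerate}
\item Lift $\delta_\infty$ to $\delta\in G(\widetilde F_{\geq B})$ with $N(\delta)$ fiberwise strongly regular semisimple.
\item The conjugacy class $G_{\widetilde F_{\geq B}}/G_{N(\delta)}$ is a smooth quasi-affine scheme. It is stable under $\sigma$, since $\sigma(N(\delta))=\delta^{-1}N(\delta)\delta$. Hence it descends along the $\Gal(\widetilde k/k)$-torsor $\Spec\widetilde F_{\geq B}\to\Spec F_{\geq B}$ to a smooth scheme $\mcC$ over $F_{\geq B}$.
\item The hypothesis says exactly that $\gamma_\infty\in\mcC(F_\infty)$. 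Since $\mcO_{\Spa F,\infty}=\varinjlim_B F_{\geq B}$ is henselian with residue field $F_\infty$, after enlarging $B$ you can lift $\gamma_\infty$ to $\gamma\in\mcC(F_{\geq B})$.
\end{enumerate}
Then $\gamma_i$ is $G(F^{\sep}_i)$-conjugate to $N(\delta_i)$ by construction, and is automatically strongly regular semisimple. Your smoothness remark can be turned into the same argument, but only by lifting $\gamma_\infty$ \emph{inside} the fiber through $N(\delta)$, not by comparing invariants of independent lifts.
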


 The corresponding statement is known in characteristic zero for spherical Hecke algebras by the work of Clozel \cite{ClozelBCFL} and Labesse \cite{LabesseBCFL}, and this was extended by Haines \cite{HainesBCFLparahoric} to central elements in parahoric Hecke algebras. 
 Their proof strategies use global trace formula methods.
 The results in positive characteristic that we are aware of are due to Ngô \cite{NgoBCFLGLn}, Lau \cite{EikePhD}, Henniart--Lemaire \cite{HenniartLemaireGLnBook}, Ray-Dulany \cite{Ray-Dulany} and Feng \cite{FengBCFLGLnParahoric}, and they are all limited to the case of $\GL_{n}$.\footnote{T.\ Haines kindly informed us that Weimin Jiang, in his thesis, proves an analogous result for the Bernstein centers of depth zero principal series blocks for unramified reductive groups over local function fields by using trace formula methods. In particular, he gives an alternative proof of the base change fundamental lemma for Iwahori--Hecke algebras in positive characteristic (which differs from our own).}

As a second application, we discuss the standard endoscopic fundamental lemma in Section \ref{Section: Close fields and Endoscopy}.
Let $G$ be an unramified reductive group over $F$
and 
let $(H,s,\xi)$ be an unramified endoscopic datum for $G$ (\Cref{Definition: unramified endoscopic datum}).
Let
$\gamma_{H}\in H(F)$ and $\gamma\in G(F)$ be matching strongly regular semisimple elements, and let $\Delta_{0}(\gamma_{H},\gamma)$ be the Langlands--Shelstad transfer factor.
Consider $f \in \mcH_{\sph}(G(F))$ and let $b(f)\in \mcH_{\sph}(H(F))$ denote its transfer.

\begin{Theorem}[Standard endoscopic fundamental lemma, Theorem \ref{Theorem: Standard Endo FL}]
    The following equality holds for any non-archimedean local field $F$ of positive characteristic:
    \[
        \Delta_{0}(\gamma_{H},\gamma)O^{\kappa}_{\gamma}(f)=SO_{\gamma_{H}}(b(f)).
        \]
\end{Theorem}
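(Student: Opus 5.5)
We deduce the identity in characteristic $p$ from the characteristic zero standard endoscopic fundamental lemma (Waldspurger, Ngô, together with Waldspurger's change of characteristic), using the local constancy of twisted $\kappa$-orbital integrals in families of close local fields (\Cref{Theorem: local constancy of kappa orbital integral}). Let $F_\infty := F$ be of characteristic $p$ with residue field $k$. Fix a family of close local fields $\{(F_i,\pi_i)\}_{i\in\mbN\cup\{\infty\}}$ approximating $F_\infty$ and the ring $\mcO$ of the Li-Huerta construction. Spread $G$ and $H$ out to the reductive group schemes $\mcG,\mcH$ over $W(k)$, and put $G_i,H_i$ for their base changes to $F_i$. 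The unramified endoscopic datum $(H,s,\xi)$ depends only on the dual groups with the unramified Frobenius action, so it gives compatible unramified endoscopic data $(H_i,s,\xi)$ for $G_i$. By the Satake isomorphism the transfer $b$ is compatible with the identifications $\mcH_{\sph}(G_\infty(F_\infty))\simeq\mcH_{\sph}(G_i(F_i))$, and likewise for $H$. Hence $f\mapsto f_i$ satisfies $b(f)_i=b(f_i)$.

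Next, lift $\gamma_H$ and $\gamma$ to $\gamma_{H}^{\mcO}\in H(F_\mcO)$ and $\gamma^{\mcO}\in G(F_\mcO)$, with $F_\mcO=\mcO[1/\pi]$. After discarding finitely many $i$, their fibers $\gamma_{H,i},\gamma_i$ are strongly regular semisimple, as explained before the second main theorem. We need the fibers to remain matching, which we arrange by lifting the characteristic polynomial (the image in the Steinberg quotient) through the map $\mathfrak{c}_H\to\mathfrak{c}_G$ coming from the endoscopic datum. Concretely, first lift $\gamma_H$, then choose $\gamma^{\mcO}$ with the same image in $\mathfrak{c}_G(F_\mcO)$. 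This is possible after shrinking: use a rational conjugate of $\gamma$ given over $F_\infty$, perturbed by an element of $G(F_\mcO)$ close to $1$, together with the smoothness of the Steinberg map on the regular locus. We then apply \Cref{Theorem: local constancy of kappa orbital integral} with $\kappa=s$ for $G$, and its stable version for $H$. This gives
\[ O^{\kappa}_{\gamma_i}(f_i)=O^{\kappa}_{\gamma}(f), \qquad SO_{\gamma_{H,i}}(b(f_i))=SO_{\gamma_H}(b(f)) \]
for all $i\ge B$. The $\kappa$ used for $\gamma_i$ must correspond to the same character of the component group of the centralizer torus. This holds because $G_\gamma$ spreads to an $F_\mcO$-torus whose Galois action on its character lattice becomes constant near $\infty$, by Deligne's equivalence for close fields.

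Third, the transfer factor $\Delta_0(\gamma_{H,i},\gamma_i)$ must equal $\Delta_0(\gamma_H,\gamma)$ for $i\gg0$. The factor $\Delta_0$ is a product of $\Delta_I,\Delta_{II},\Delta_{III_2}$ and the $\Delta_{IV}$-normalization. These factors are built from $a$-data and $\chi$-data chosen for the torus, from Tate–Nakayama pairings, and from values of roots $\alpha(\gamma)-1$. The Galois side is constant by Deligne's theory. The root values have constant valuation and constant leading terms modulo $\pi^n$, so their images under the relevant characters of $F^\times$ are locally constant in the family. This is the step I expect to be the main obstacle, and I would try to avoid it. An unramified datum with hyperspecial $G$ admits the canonical normalization of Hales/Kottwitz. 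Up to the $\Delta_{IV}$ factor, which is absorbed in the normalization of $O^\kappa$, this factor equals the sign $\kappa(\mathrm{inv}(\gamma_0,\gamma))$ relative to a base point $\gamma_0$ in a hyperspecial-compatible position. This sign is expressible through the twisted affine Springer fiber data, and is therefore locally constant.

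Finally, choose $i\ge B$ with $F_i$ of characteristic zero. By the known fundamental lemma over $F_i$ we have $\Delta_0(\gamma_{H,i},\gamma_i)O^{\kappa}_{\gamma_i}(f_i)=SO_{\gamma_{H,i}}(b(f_i))$. Substituting the three equalities above gives the claim over $F$.
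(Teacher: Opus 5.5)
Your overall architecture is the paper's: spread everything out over $F_{\geq B}$, apply \Cref{Theorem: local constancy of kappa orbital integral} to both sides, show the transfer factor is constant near $\infty$, and conclude from characteristic zero. The gap is the transfer factor step, which you yourself single out as the main obstacle. It is not actually proved, and the shortcut you put in its place does not work as stated, for three reasons.
\begin{itemize}
\item A base point $\gamma_0$ that is stably conjugate to $\gamma$ and lies in a hyperspecial subgroup exists only if all eigenvalues of $\gamma$ in a faithful representation are units. This fails, e.g., for $\mathrm{diag}(\pi_F,\pi_F^{-1})\in\mathrm{SL}_2(F)$, and such $\gamma$ are within the scope of the theorem, so you would at least need an additional descent argument.
\item Identifying the Langlands--Shelstad factor $\Delta_0$ in the statement with ``$\Delta_{IV}$ times $\langle\mathrm{inv}(\gamma_0,\gamma),\kappa\rangle$'' is a separate comparison theorem. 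You would have to cite it with its hypotheses and check it in arbitrary residue characteristic, in particular over the function field $F_\infty$ itself. Also, $\Delta_{IV}$ is not absorbed into $O^\kappa$ here; it is a factor of $\Delta_0$.
\item ``Expressible through the twisted affine Springer fiber data, and therefore locally constant'' is not an argument. Those spaces compute $\kappa$-orbital integrals for a fixed base point. The sign $\langle\mathrm{inv}(\gamma_0,\gamma),\kappa\rangle$ is Galois-cohomological: its constancy needs $\gamma_0$, $\gamma$ and the invariant spread out over $F_{\geq B}$, plus a compatibility of Tate--Nakayama duality with close fields, i.e.\ \Cref{Corollary:Tate-Nakayama compatibility}.
\end{itemize}

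What the paper does is essentially your abandoned first sketch, carried out factor by factor in \Cref{proposition: local constancy of transfer factors}. It takes Galois-equivariant $a$-data $a_\alpha\in E^\times_{\geq B}$ (\Cref{Construction:a-data}) and transports $\chi$-data along $W_{F_{\alpha,\infty}}/I^n_\infty\simeq W_{F_{\alpha,i}}/I^n_i$ (\Cref{Construction:chi-data}). Then:
\begin{itemize}
\item $\Delta_I=\kappa_i(\lambda(T_G)_i)$ for a class $\lambda(T_G)$ built over $E_{\geq B}$ from the Tits section, the elements $x_g$ and a conjugator $h_0\in G(E_{\geq B})$; this is handled by \Cref{Corollary:Tate-Nakayama compatibility}.
\item $\Delta_{II}$ and $\Delta_{III_2}$ follow from Deligne's compatibility of local reciprocity with close fields, the latter through the Langlands correspondence for tori.
\item $\Delta_{IV}$ follows from \Cref{Lemma: elements of O}.
\end{itemize}
Your sketch only touches the ingredients of $\Delta_{II}$ and $\Delta_{IV}$.

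All of this requires an admissible embedding $\iota\colon H_{\gamma_H}\simeq G_\gamma$ defined over $F_{\geq B}$. So does the identification of $\kappa_i$ with $\kappa_\infty$ that you need in order to apply \Cref{Theorem: local constancy of kappa orbital integral}. The reason is that $\kappa_i$ is the image of $s$ under the isomorphism $\widehat{T_{G,i}}\simeq\widehat{T_{H,i}}$ induced by $\iota_i$, so constancy of the Galois action on $X_*(G_{\gamma_i})$ alone does not determine it. Your Steinberg-quotient lift only gives fiberwise matching. It can be upgraded (spread out $\iota_\infty$ and use regularity to get $\iota(\gamma_H)=\gamma$ near $\infty$), but you do not do so. The paper instead obtains $\iota$ and $h_0$ directly from \Cref{Lemma: matching pair in families of close local fields} and sets $\gamma:=\iota(\gamma_H)$.
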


As noted above, in contrast to \cite{CasselmanCelyHales} and \cite{GriffinWangBook}, our result is established without any characteristic restrictions, but we rely on the full strength of previous work by Ngô \cite{NgoFL}, Waldspurger \cite{WaldspurgerEndoscopyChangeofChar, Waldspurgertorduenestpassitordue} and Hales \cite{HalesReductionToUnit}.

\subsection{Relation to previous work and possible further applications}
The phenomenon of local constancy of orbital integrals in families of close local fields has already been observed by Lemaire in \cite{LemaireFourier96}.
His results concern $\GL_{n}$; furthermore, his methods are rooted in harmonic analysis and quite different from ours.
In a paper of Henniart--Lemaire \cite{HenniartLemaireInductionAutomorphe}, this approach is generalized to $\kappa$-orbital integrals and they deduce the fundamental lemma for the automorphic induction for $\GL_{n}$ in positive characteristic from the known characteristic zero case using the technique of close local fields.
Our results could also be applied to the fundamental lemma for automorphic induction, recovering their results.

Let us mention some possible further applications. First of all, it should be possible to extend our proof over local function fields of the base change fundamental lemma to the (weighted) fundamental lemma in twisted endoscopy.
Furthermore, our techniques also allow one to transfer the group version of the Jacquet--Rallis fundamental lemma from characteristic zero to arbitrary positive characteristic (cf.\ \cite{AndreasMeAFL}), thereby recovering the recent result of Wang--Zhang \cite{wang2026relativefundamentallemmasspherical} obtained under certain characteristic restrictions.
In another direction, it would be very interesting to see whether one can approach \cite[Conjecture 10.7]{HamacherKimPointCount} using our methods.
We hope to return to this question in the future.

\subsection{Structure of the article}
In Section \ref{Section:Reductive groups in families of close fields}, we provide some foundational material for families of close local fields and reductive groups in that setting. Then in Section \ref{Section:Affine Grassmannian in families of close fields} we construct the profinite family of affine Grassmannians and in Section \ref{Section: Affine Springer fibers in families of close fields} the construction of affine Springer fibers is given in that setting. In Section \ref{Section: Local constancy of stable twisted orbital integrals}, the local constancy result for twisted $\kappa$-orbital integrals is proved. After this we give in Section \ref{Section: BCFL} the proof of the base change fundamental lemma over local function fields, and in the final Section \ref{Section: Close fields and Endoscopy}, our applications to the standard endoscopic fundamental lemma over local function fields are discussed.

\section{Reductive groups in families of close fields}\label{Section:Reductive groups in families of close fields}

In this section, for a family of close fields $\{ (F_i, \pi_i) \}_{i \in \mbN \cup \{ \infty \}}$, we first recall the definition of the associated ring $F$ introduced in the work of Li-Huerta \cite{LH}, and study its fundamental properties. 
We then explain how several topics concerning reductive groups (e.g.\ cocharacter groups of $F$-tori and Bruhat--Tits group schemes) can be naturally developed over this ring $F$.

\subsection{Families of close fields}\label{Subsection:The ring O}

Let $k$ be a perfect field of characteristic $p >0$.
For $i\in \mathbb{N},$ let $F_{i}$ be a totally ramified extension of $W(k)[1/p]$ of degree $e_i$
with ring of integers $\mathcal{O}_{F_{i}}$ and fixed uniformizer $\pi_{i}$. 
Let $F_\infty := k(\!(\pi_\infty)\!)$ with ring of integers $\mcO_{F_\infty}=k[[\pi_\infty]]$.
For every $i \in \mbN$, we denote by
\[
	\psi_{i}\colon \mathcal{O}_{F_\infty}/\pi_{\infty}^{e_{i}} \simlr \mcO_{F_i}/\pi_i^{e_i}
\]
the isomorphism over $k$ sending $\pi_\infty$ to $\pi_i$. 
We assume that $e_{i}\geq i.$
We call such a family $F=\{ (F_i, \pi_i) \}_{i \in \mbN \cup \{ \infty \}}$ a \textit{family of close fields} (with residue field $k$).
We also say that
$F$
is a 
\textit{family of close fields approximating} $F_\infty$.
When $k$ is a finite field, we also call $F$ a \textit{family of close local fields}.

\begin{Definition}[Li-Huerta]\label{def:O}
Let $\pi:=(\pi_{1}, \pi_{2}, \dotsc ,\pi_{\infty})\in \prod_{i\in \mathbb{N}}\mcO_{F_i} \times \mcO_{F_\infty}.$
Let
\[
\mcO_F \subset \prod_{i\in \mathbb{N}}\mcO_{F_i} \times \mcO_{F_\infty}
\]
be the $\pi$-adically complete subring
containing $\pi$
such that
the quotient
$\mcO_F/\pi^{n}$ coincides with the subring of
$\prod_{i\in\mathbb{N}}\mcO_{F_i}/\pi_{i}^{n}\times \mcO_{F_\infty}/\pi_{\infty}^{n}$
consisting of elements
$x=(x_{1},x_{2}, \dotsc,x_{\infty})$
with the property that there exists an integer $B \geq n$
such that for any $i \geq B$, we have $\psi_i(x_\infty)=x_i$.
For any $B \geq 1$,
let $\mcO_{F, \geq B} \subset \prod_{i \geq B }\mcO_{F_i} \times \mcO_{F_\infty}$
be the image of $\mcO_F$ under the natural projection; then $\mcO_F = \prod_{1 \leq i \leq B-1} \mcO_{F_i} \times \mcO_{F, \geq B}$.
We write $F:=\mcO_F[1/\pi]$
and $F_{\geq B}:=\mcO_{F, \geq B}[1/\pi]$
(by abuse of notation).
We often drop $F$ from the notation when there is no confusion.
For example, we write
$\mathcal{O}_i=\mathcal{O}_{F_i}$, $\mathcal{O}=\mathcal{O}_{F}$ and $\mathcal{O}_{\geq B}=\mathcal{O}_{F, \geq B}$, and for a scheme $X$ over $F$, we write $X_i := X_{F_i}$ and $X_{\geq B} := X_{F_{\geq B}}$.
(Throughout this paper, a subscript usually indicates base change.)
\end{Definition}

\begin{Remark}\label{Remark:O mop pi as filtered rings}
    Let $B \geq n \geq 1$.
    Let $(\mathcal{O}_{F, [B, B'], n}, f_{B'})_{B' \geq B}$
    be the filtered system of discrete rings where
    \[
    \mathcal{O}_{F, [B, B'], n} = \prod^{B'}_{i=B}\mathcal{O}_{F_i}/\pi_{i}^{n} \times \mathcal{O}_{F_\infty}/\pi_{\infty}^{n}
    \]
    and $
	f_{B'}\colon \mathcal{O}_{F, [B, B'], n} \to \mathcal{O}_{F, [B, B'+1], n}
$
are the faithfully flat transition maps defined by
$
	(x_{B}, \dotsc,x_{B'},x_{\infty})\mapsto (x_{B},\dotsc,x_{B'}, \psi_{B'+1}(x_{\infty}),x_{\infty}).
$
It follows that
$$
	\varinjlim_{B'\geq B} \mathcal{O}_{F, [B, B'], n}=\mcO_{F, \geq B}/\pi^{n}.
$$
\end{Remark}

\begin{Remark}\label{Remark:adic space Spa E}
    By \cite[Proposition 2.5]{LH},
    the Huber pair $(F, \mcO_F)$ is sheafy, and the underlying topological space of the adic space $\Spa F:=\Spa (F, \mcO_F)$ is naturally homeomorphic to the one-point compactification $\mbN \cup \{ \infty \}$.
    By the proof of \cite[Proposition 2.8]{LH}, $\mcO_{\Spa F, \infty} = \varinjlim_{B} F_{\geq B}$ is a henselian local ring whose residue field is $F_\infty$.
\end{Remark}

\begin{Lemma}\label{Lemma: elements of O}
    Let $a \in F$ be an element such that $a_\infty \in \pi^n_\infty \mcO_{\infty} \backslash \pi^{n+1}_\infty \mcO_{\infty}$ for an integer $n$.
    Then, for some $B \geq 1$, the image $a_{\geq B} \in F_{\geq B}$ of $a$ is contained in $\pi^n \mcO_{\geq B}$ and 
    $a_i \in \pi^n_i \mcO_{i} \backslash \pi^{n+1}_i \mcO_{i}$
    for all $i \geq B$.
\end{Lemma}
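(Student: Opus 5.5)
First I reduce to the case $a \in \mcO$ and then to $n = 0$. Since $F = \mcO[1/\pi]$, write $a = \pi^{-m} b$ with $b \in \mcO$ and $m \geq 0$. Then $b_\infty = \pi_\infty^m a_\infty$ lies in $\pi_\infty^{n+m}\mcO_\infty \backslash \pi_\infty^{n+m+1}\mcO_\infty$, and $n+m \geq 0$. If the statement holds for $b$ with exponent $n+m$, it holds for $a$ with exponent $n$: the relevant conditions are simply multiplied by $\pi^{-m}$ (resp.\ $\pi_i^{-m}$), and $\pi$ is a nonzerodivisor in $\mcO_{\geq B}$, being so componentwise. So I assume $a \in \mcO$ and $n \geq 0$.

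Next I show $a_{\geq B} \in \pi^n \mcO_{\geq B}$ for $B$ large. Consider the image $\bar a$ of $a$ in $\mcO/\pi^{n+1}$, which by \Cref{def:O} is an element $(x_1, x_2, \dotsc, x_\infty)$ of the subring described there. Its component $x_\infty = a_\infty \bmod \pi_\infty^{n+1}$ lies in $\pi_\infty^n \mcO_\infty/\pi_\infty^{n+1}$; write $x_\infty = \pi_\infty^n u$ with $u \in k^\times$, viewing $k \subset \mcO_\infty/\pi_\infty^{n+1}$. By definition there is $B \geq n+1$ such that $x_i = \psi_i(x_\infty) = \pi_i^n u$ for all $i \geq B$, where $\psi_i$ is a $k$-algebra map and $u$ is read through the Teichmüller embedding $k \subset \mcO_i$. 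In particular $a_i \in \pi_i^n \mcO_i \backslash \pi_i^{n+1}\mcO_i$ for all $i \geq B$, which is the second claim.

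For divisibility in $\mcO_{\geq B}$ itself, consider the reduction of $a_{\geq B}$ modulo $\pi^n$. Its $\infty$-component is $0$, and its components for $i \geq B$ are $\psi_i(0) = 0$ modulo $\pi_i^n$, since $n \leq n+1 \leq B$ and reduction modulo $\pi^n$ is compatible with the description of $\mcO/\pi^{n+1}$. Hence $a_{\geq B} \in \ker(\mcO_{\geq B} \to \mcO_{\geq B}/\pi^n) = \pi^n \mcO_{\geq B}$. This equality holds because $\mcO_{\geq B}/\pi^n$ is by definition the quotient; equivalently, $\mcO_{\geq B}$ is the image of $\mcO$, and $\mcO = \prod_{i<B}\mcO_i \times \mcO_{\geq B}$ gives the quotient compatibly.

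The main obstacle is a bookkeeping point. \Cref{def:O} characterizes $\mcO/\pi^n$ as a subring of the product, so the kernel of $\mcO \to \prod_i \mcO_i/\pi_i^n \times \mcO_\infty/\pi_\infty^n$ is exactly $\pi^n\mcO$; I need this kernel statement for $\mcO_{\geq B}$. It follows by projecting, using the product decomposition $\mcO = \prod_{1 \leq i \leq B-1} \mcO_{F_i} \times \mcO_{F, \geq B}$ recorded in \Cref{def:O}. Also, using Teichmüller lifts is harmless: all that matters is that $\psi_i(\pi_\infty^n u)$ is $\pi_i^n$ times a unit modulo $\pi_i^{n+1}$.
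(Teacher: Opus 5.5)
Your argument is correct and takes essentially the same approach as the paper, whose proof is the single sentence that the lemma ``follows immediately from the construction.'' You simply carry out that unpacking explicitly: reduce to $a \in \mcO$, read off $a_i \bmod \pi_i^{n+1}$ through $\psi_i$ for $i \geq B$, and deduce $a_{\geq B} \in \pi^n \mcO_{\geq B}$ from the injectivity of $\mcO_{\geq B}/\pi^n$ into the product, which comes from the decomposition $\mcO = \prod_{i<B}\mcO_i \times \mcO_{\geq B}$ in \Cref{def:O}.
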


\begin{proof}
    This follows immediately from the construction.
\end{proof}

\begin{Corollary}\label{Corollary: open nbd of infinity}
    Let $\infty \in \Spec F$ be the closed point corresponding to $F \to F_\infty$.
    The open subsets $\{ \Spec F_{\geq B} \}_{B \geq 1}$
    of $\Spec F$ form a fundamental system of open neighborhoods of $\infty$.
\end{Corollary}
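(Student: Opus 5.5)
The plan is to verify two things. First, each $\Spec F_{\geq B}$ is an open subset of $\Spec F$ that contains $\infty$. Second, every open neighborhood of $\infty$ contains some $\Spec F_{\geq B}$.

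For the first point, the decomposition $\mcO_F = \prod_{1 \leq i \leq B-1} \mcO_{F_i} \times \mcO_{F, \geq B}$ from Definition \ref{def:O} inverts $\pi$ to give
\[
F = \prod_{1 \leq i \leq B-1} F_i \times F_{\geq B}.
\]
Hence $\Spec F_{\geq B}$ is the open and closed subscheme cut out by the idempotent $e_{\geq B} = (0,\dotsc,0,1,1,\dotsc)$. The map $F \to F_\infty$ factors through the projection onto $F_{\geq B}$, so $\infty \in \Spec F_{\geq B}$.

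For the second point, it suffices to treat basic open neighborhoods $D(a)$ with $a \in F$ and $\infty \in D(a)$, because these form a basis of the topology. The condition $\infty \in D(a)$ says that the image $a_\infty$ is nonzero in the field $F_\infty$. So $a_\infty \in \pi_\infty^n \mcO_\infty \backslash \pi_\infty^{n+1}\mcO_\infty$ for some integer $n$.

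Lemma \ref{Lemma: elements of O} then supplies a $B$ with $a_{\geq B} \in \pi^n \mcO_{\geq B}$ and $a_i \in \pi_i^n\mcO_i \backslash \pi_i^{n+1}\mcO_i$ for all $i \geq B$. Write $a_{\geq B} = \pi^n u$ with $u \in \mcO_{\geq B}$. Each component $u_i$ for $i \geq B$ is a unit, and $u_\infty$ is a unit as well.

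The step needing the most care is showing that $u$ is a unit in $\mcO_{\geq B}$ itself, not merely in the full product $\prod_{i \geq B}\mcO_i \times \mcO_\infty$. I would argue through $\pi$-adic completeness. It suffices to show that $u$ is a unit modulo $\pi$, since then $u = v(1 - \pi w)$ for a unit $v$ and some $w$, and the geometric series inverts $1 - \pi w$.

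Modulo $\pi$, by Remark \ref{Remark:O mop pi as filtered rings}, $\mcO_{\geq B}/\pi$ is the filtered colimit of the rings $\prod_{i=B}^{B'} k \times k$. The image of $u$ is represented in one of these rings by a tuple with unit entries, which is invertible there. Its inverse then maps to an inverse of $u$ in the colimit. Because $u$ and $u^{-1}$ have componentwise-compatible images, the inverse lies in $\mcO_{\geq B}/\pi$.

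It follows that $a_{\geq B}$ is a unit in $F_{\geq B}$. Therefore $\Spec F_{\geq B} \subset D(a)$, which completes the argument.
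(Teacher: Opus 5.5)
Your proof is correct and is essentially the paper's argument: reduce to a basic open $D(a)\ni\infty$, use \Cref{Lemma: elements of O} to see that $\pi^{-n}a$ is a unit of $\mcO_{\geq B}$, and conclude $a\in F_{\geq B}^{\times}$, i.e.\ $\Spec F_{\geq B}\subset D(a)$. You also spell out what the paper leaves to ``the construction'': that $\Spec F_{\geq B}$ is clopen and contains $\infty$, and that componentwise units of $\mcO_{\geq B}$ are units, via the mod-$\pi$ description and $\pi$-adic completeness. One small correction to your lifting step: as written it presupposes a unit $v$ lifting $\bar u$, which is what you are trying to prove; the non-circular form is $uv'=1-\pi w$ with $v'$ a lift of $\bar u^{-1}$.
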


\begin{proof}
    Let $a \in F$ be such that
    $\infty \in D(a) \subset \Spec F$, or equivalently
    $a_\infty \in F^\times_\infty$.
    We have to show that $\Spec F_{\geq B} \subset D(a)$ for some $B \geq 1$.
    By \Cref{Lemma: elements of O}, there exist
    integers $n$ and $B \geq 1$ such that $\pi^n a \in (\mcO_{\geq B})^{\times}$, and hence $a \in (F_{\geq B})^{\times}$.
    This means that $\Spec F_{\geq B} \subset D(a)$.
\end{proof}

Let $\overline{k}$ be an algebraic closure of $k$.
We denote by $F^{\ur}_i:=\varinjlim_{k'} F_i \otimes_{W(k)} W(k')$ the maximal unramified extension of $F_i$, where $k'$ runs over all finite extensions of $k$ in $\overline{k}$,
and by $\breve{F}_i$ the completion of $F^{\ur}_i$.
Then $\breve{F}= \{ (\breve{F}_i, \pi_i) \}_{i \in \mbN \cup \{ \infty \} }$ is a family of close fields with residue field $\overline{k}$, and thus we can form $\mcO_{\breve{F}}$ and $\breve{F}=\mcO_{\breve{F}}[1/\pi]$ as in \Cref{def:O}.
It will also be useful to consider
$\mcO_{F^{\ur}} := \varinjlim_{k'} \mcO_F \otimes_{W(k)} W(k')$
and $F^{\ur}:=\mcO_{F^{\ur}}[1/\pi]$.
We define $\mcO_{F^{\ur}, \geq B}$ and $F^{\ur}_{\geq B}$ in the same way.

For $i \in \mbN \cup \{ \infty \}$, let $\Gamma_i=\Gal(F^{\sep}_i/F_i)$ be the absolute Galois group of $F_i$ where $F^{\sep}_i$ is a separable closure of $F_i$.
We fix an embedding $F^{\ur}_i \hookrightarrow F^{\sep}_i$. 
For $-1 \leq n$, let
$I^n_i \subset \Gamma_i$
be the $n$-th ramification subgroup (as defined in \cite[Chapter IV, Section 3, Remark 1]{SerreLocalfields}).
By \cite{Deligne84},
given the choices of $\pi_i$ and $\pi_\infty$,
we have an isomorphism
\[
    \psi_i \colon \Gamma_{\infty}/I^{e_i}_{\infty}  \overset{\sim}{\to} \Gamma_i/I^{e_i}_i
\]
which is canonical up to inner automorphisms.
This induces 
$I^f_{\infty}/I^{e_i}_{\infty}
\simlr I^f_i/I^{e_i}_i$ for any $-1 \leq f \leq e_i$.
We may and do assume that $\psi_i$ induces the identity
$\Gamma_\infty/I^0_\infty = \Gal(\overline{k}/k) \simlr \Gamma_i/I^0_i = \Gal(\overline{k}/k)$.

We recall an explicit construction of finite \'etale algebras over $F$ given in \cite[Proposition 2.8]{LH}.

\begin{Construction}\label{Construction: etale covering of family of close fields}
    Let $E_\infty$ be a finite separable extension of $F_\infty$
    with residue field $k'$.
    We use the superscript $'$ to indicate base change to $W(k')$.
    For example, we write $F' := F \otimes_{W(k)} W(k')$
    and $\mcO' := \mcO \otimes_{W(k)} W(k')$.
    We choose a uniformizer $\pi_{E_\infty}$ of $E_\infty$
    and let
    \[
    f_\infty(X)= X^r + \pi_\infty( \sum_{1 \leq j \leq r}a_{\infty, j}X^{r-j}) \in \mcO'_\infty[X]
    \]
    be the Eisenstein polynomial of $\pi_{E_\infty}$ (so $a_{\infty, r} \in (\mcO'_\infty)^\times)$.
    We identify $F'_\infty[X]/(f_\infty(X))$ with $E_\infty$ by $X \mapsto \pi_{E_\infty}$.
For each $i \in \mbN$, we choose
    $a_{i, j} \in \mcO'_i$ ($1 \leq j \leq r$) such that $\psi_i(\overline{a}_{\infty, j}) = \overline{a}_{i, j}$ in $\mcO'_{i}/\pi_i^{e_{i}}$.
    Then
    \[
    f_i(X) := X^r + \pi_i( \sum_{1 \leq j \leq r}a_{i, j}X^{r-j}) \in \mcO'_i[X]
    \]
    is an Eisenstein polynomial, and $E_i:=F'_i[X]/(f_i(X))$ is a totally ramified extension of $F'_i$.
    Let $\pi_{E_i} \in E_i$ ($i \in \mbN$) be the image of $X$.
    Then $\{ (E_i, \pi_{E_i}) \}_{i \in \mbN \cup \{ \infty \} }$ forms a family of close fields.
    The ring $E$ associated with $\{ (E_i, \pi_{E_i}) \}_{i \in \mbN \cup \{ \infty \} }$ is a finite \'etale algebra over $F$.
    We note that 
    $\mcO_E$ can be identified with
    $\mcO'[X]/(X^r + \pi( \sum_{1 \leq j \leq r}a_{j}X^{r-j}))$
    where $a_j=(a_{1, j}, a_{2, j}, \dotsc, a_{\infty, j}) \in \mcO'$.
    
    We assume that $E_\infty$ is a finite Galois extension of $F_\infty$ and $I^n_\infty$ acts trivially on $E_\infty$ for some $n \geq 1$.
    It then follows that, for $i \geq n$, the finite extension $E_i$ of $F_i$ is Galois, and $I^n_i$ acts trivially on $E_i$.
    In fact, with the choice of $\pi_{E_i}$, we can identify the triples $\mathrm{Tr}_{rn}(E_i)$
    and $\mathrm{Tr}_{rn}(E_\infty)$ (in the sense of \cite{Deligne84}).
    Then by \cite[Th\'eor\`eme 2.8]{Deligne84}, we have a canonical isomorphism 
    \[
    \theta_i \colon \Gal(E_\infty/F_\infty) \simlr \Gal(E_i/F_i)
    \]
    for each $i \geq n$. 
    There exists an integer $B \geq 1$ such that $\Spec E_{\geq B} \to \Spec F_{\geq B}$ has the structure of a $\Gal(E_\infty/F_\infty)$-torsor
    such that the restriction of the action of an element $g_\infty \in \Gal(E_\infty/F_\infty)$ to the fiber at $i \geq B$ (resp.\ $i= \infty$) coincides with the action of $\theta_i(g_\infty)$ (resp.\ $g_\infty$).
    Moreover, for an $F'_\infty$-embedding $E_\infty \hookrightarrow F^{\sep}_\infty$, there is an $F'_i$-embedding
    $E_i \hookrightarrow F^{\sep}_i$ such that the following diagram commutes:
    \begin{equation}\label{equation:diagram of Galois group of close fields}
        \vcenter{\xymatrix{
\Gamma_\infty/I^n_\infty \ar^-{}[r]  \ar[d]^-{\psi_i}  &    \Gal(E_\infty/F_\infty) \ar[d]^-{\theta_i} \\
\Gamma_i/I^n_i \ar[r]^-{} & \Gal(E_i/F_i).
}}
    \end{equation}
\end{Construction}

For a perfect $\mcO/\pi$-algebra $R$, we define
\[
W_{\mcO, n}(R) := W(R)\otimes_{W(\mcO/\pi)} \mcO/\pi^n \quad \text{and} \quad W_\mcO(R):= \varprojlim_{n} W_{\mcO, n}(R).
\]
Here $W(\mcO/\pi)\rightarrow \mcO$
is the map corresponding to $\mcO/\pi \simeq \varprojlim_{x \mapsto x^p} \mcO/p$ by adjunction.

Let $\Perf_{\mcO/\pi}$ be the category of perfect $\mcO/\pi$-algebras.
The fibered category over $\Perf^{\op}_{\mcO/\pi}$ which associates to a perfect $\mcO/\pi$-algebra $R$ the category of finite projective modules over $W_\mcO(R)$ satisfies \'etale descent
(since for every $n$, $\Spec W_{\mcO, n}(R') \to \Spec W_{\mcO, n}(R)$ is \'etale and surjective if and only if so is $\Spec R' \to \Spec R$).
It follows that for an affine scheme $X$ over $\Spec \mcO$,
the presheaves
$L_{\mcO} X$ and $L^+_{\mcO} X$ defined by
$R \mapsto X(W_\mcO(R)[1/\pi])$ and $R \mapsto X(W_\mcO(R))$, respectively, are \'etale sheaves.

We also consider the sheaf
$L^n_{\mcO} X$ defined by $R \mapsto X(W_{\mcO, n}(R))$.
Recall that for each $i \in \mbN \cup \{ \infty \}$, if $X_i$ is a finite type $\mcO_i$-scheme, then by \cite{Greenberg61} the sheaf
$L^n_{\mcO_i} X_i$
defined by $R \mapsto X(W_{\mcO_i, n}(R))$
is representable by the perfection of a finite type $k$-scheme, which we also denote by $L^n_{\mcO_i} X_i$.
Here $W_{\mcO_i, n}(R):= W(R)\otimes_{W(k)} \mcO_i/\pi^n_i$.
The analogous result in our case is the following:

\begin{Lemma}\label{Lemma:representability of L^n X}
    Let $X$ be a finitely presented $\mcO$-scheme
    and let $X_i:= X \otimes_\mcO \mcO_i$.
    Then there is an integer $B \geq 1$ such that $L^n_{\mcO} X$ is isomorphic to the base change of
    $(\coprod_{i \leq B} L^n_{\mcO_i}X_i) \coprod L^n_{\mcO_\infty}X_\infty$
    over $\mcO_{[1, B], 1}= \prod_{i \leq B}\mathcal{O}_{i}/\pi_{i} \times \mathcal{O}_{\infty}/\pi_{\infty}$
    along the map $\mcO_{[1, B], 1} \to \mcO/\pi$.
    In particular, $L^n_{\mcO} X$ is representable by the perfection of a finitely presented $\mcO/\pi$-scheme.
\end{Lemma}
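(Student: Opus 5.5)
We will reduce to the case of a fixed ring of length $n$ and use the filtered-colimit description of \Cref{Remark:O mop pi as filtered rings}. Since $X$ is finitely presented over $\mcO$, it descends: $X\otimes_\mcO \mcO/\pi^n$ is finitely presented over $\mcO/\pi^n = \prod_{i<B_0}\mcO_i/\pi_i^n \times \varinjlim_{B'} \mcO_{[B_0,B'],n}$. By the standard limit results, there are an integer $B\ge n$ and a finitely presented scheme $Y$ over $\mcO_{[1,B],n}:=\prod_{i\le B}\mcO_i/\pi_i^n\times \mcO_\infty/\pi_\infty^n$ with $Y\otimes_{\mcO_{[1,B],n}}\mcO/\pi^n \simeq X\otimes_\mcO\mcO/\pi^n$. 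Write $Y=\coprod_{i\le B} Y_i \sqcup Y_\infty$. Compatibility with the fibers then gives $Y_i = X_i\otimes \mcO_i/\pi_i^n$ for $i\le B$ and $Y_\infty = X_\infty\otimes\mcO_\infty/\pi^n_\infty$. For $i\le B$ this holds because $\mcO/\pi^n\to\mcO_i/\pi_i^n$ factors through the projection. For $\infty$ we use the projection to the last factor, which is compatible with the transition maps $f_{B'}$, since those keep $x_\infty$. Note that $X(W_{\mcO,n}(R))$ depends only on $X\otimes\mcO/\pi^n$, because $W_{\mcO,n}(R)$ is an $\mcO/\pi^n$-algebra.

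The key step is to compute $W_{\mcO,n}(R)$ for a perfect $\mcO/\pi$-algebra $R$. Put $A:=\mcO_{[1,B],1}$ and let $\alpha\colon A\to \mcO/\pi$ be the map from \Cref{Remark:O mop pi as filtered rings}. I claim that
\[
W_{\mcO,n}(R)\simeq W_{\mcO_{[1,B],n}}(R):=W(R)\otimes_{W(A)}\mcO_{[1,B],n},
\]
where $R$ is regarded as a perfect $A$-algebra via $\alpha$. Over $A$, a perfect algebra $R$ decomposes as $\prod_{i\le B}R_i\times R_\infty$, and $W(R)\otimes_{W(A)}\mcO_{[1,B],n}=\prod_i W_{\mcO_i,n}(R_i)\times W_{\mcO_\infty,n}(R_\infty)$. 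So it suffices to show that the natural map
\[
W(A)\otimes\mcO_{[1,B],n}\to \ldots,
\]
or more precisely $\mcO_{[1,B],n}\otimes_{W(A)}W(\mcO/\pi)\to\mcO/\pi^n$, is an isomorphism. For this, write $\mcO/\pi^n$ as the colimit of the $\mcO_{[B,B'],n}$. Each step $f_{B'}$ is a base change of $W(\mcO_{[B,B'],1})\to W(\mcO_{[B,B'+1],1})$, which splits off a new factor $k$. The map sends $x_\infty$ to $\psi_{B'+1}(x_\infty)$, and this is exactly the map $W(k)\to\mcO_{B'+1}/\pi^n$ on the new component. Here we use that $\psi_{B'+1}$ is $k$-linear and sends $\pi_\infty\mapsto\pi_{B'+1}$, and that $n\le B\le e_{B'+1}$, so $\mcO_\infty/\pi_\infty^n\simeq\mcO_{B'+1}/\pi_{B'+1}^n$ via $\psi$. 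Perfect rings and Witt vectors commute with filtered colimits, so the identification passes to the colimit. I expect this compatibility of $W(-)\otimes$ with the colimit, and checking that the maps $W(\mcO/\pi)\to\mcO$ agree with the $\psi$'s, to be the main obstacle. It should reduce to checking on each factor that the Teichmüller-induced map $W(k)\to \mcO_i/\pi^n_i$ is the structure map, which it is since $k$ is perfect.

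Granting the claim, we get
\[
L^n_\mcO X(R)=Y(W_{\mcO_{[1,B],n}}(R))=\prod_{i\le B}L^n_{\mcO_i}X_i(R_i)\times L^n_{\mcO_\infty}X_\infty(R_\infty),
\]
which is precisely the $R$-points of the base change of $(\coprod_{i\le B}L^n_{\mcO_i}X_i)\sqcup L^n_{\mcO_\infty}X_\infty$ along $\alpha$. By Greenberg \cite{Greenberg61}, each $L^n_{\mcO_i}X_i$ is the perfection of a finite type $k$-scheme, and it is finitely presented because $k$ is a field. Hence the finite disjoint union is the perfection of a finitely presented $A$-scheme, and base change along $\alpha$ gives the final assertion.
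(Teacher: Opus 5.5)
Your proposal is correct and follows the paper's proof: you descend $X\otimes_{\mcO}\mcO/\pi^n$ to a finitely presented $Y$ over $\mcO_{[1,B],n}$, identify $Y$ with $\coprod_{i\le B}X_i\otimes\mcO_i/\pi_i^n \sqcup X_\infty\otimes\mcO_\infty/\pi_\infty^n$ via the projections (the paper's ``splitting''), and conclude, and you also spell out the identification $W_{\mcO,n}(R)\simeq W(R)\otimes_{W(\mcO_{[1,B],1})}\mcO_{[1,B],n}$ that the paper leaves as clear. One small fix is needed: untruncated Witt vectors do not commute with filtered colimits (indeed $W(\mcO/\pi)\neq\varinjlim_{B'}W(\mcO_{[1,B'],1})$), but $\mcO_{[1,B],n}$ is killed by $p^n$, so only $W(\mcO/\pi)/p^n=W_n(\mcO/\pi)$ enters, and truncated Witt vectors do commute with filtered colimits, so your base-change argument goes through.
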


\begin{proof}
    Since $X \otimes_\mcO \mcO/\pi^n$ is of finite presentation over $\mcO/\pi^n= \varinjlim_{B} \mcO_{[1, B], n}$, there is an integer $B \geq 1$ such that $X \otimes_\mcO \mcO/\pi^n$ is isomorphic to the base change of a finite type $\mcO_{[1, B], n}$-scheme $Y$.
    Since $\mcO_{[1, B], n} \to \mcO/\pi^n$ has a splitting, we see that $Y$ is isomorphic to 
    $(\coprod_{i \leq B} X_i\otimes_{\mcO_i} \mcO_i/\pi^n_i) \coprod X_\infty\otimes_{\mcO_{\infty}} \mcO_\infty/\pi^n_\infty$
    over $\mcO_{[1, B], n}$, from which the assertion is clear.
\end{proof}

\subsection{Tori in families of close fields}\label{Tori in families of close fields}

For a torus $T$ over an affine scheme $\Spec A$, we denote by
$\Hom_A(\mbG_{m}, T)$
the group of cocharacters $\mbG_{m} \to T$ over $\Spec A$.

\begin{Situation}\label{Situation: torus}
    Let $T$ be a torus over $\Spec F$.
Let $E_\infty$ be a finite Galois extension of $F_\infty$ such that $T_\infty$ is split over $E_\infty$.
We extend $E_\infty$ to a family of close fields
$E= \{ (E_i, \pi_{E_i}) \}_{i \in \mbN \cup \{ \infty \}}$
as in \Cref{Construction: etale covering of family of close fields}.
Since $\mcO_{\Spa E, \infty}$ is henselian, $T_{\mcO_{\Spa E, \infty}}$ is also split.
Since $\mcO_{\Spa E, \infty} = \varinjlim_B E_{\geq B}$, we have
    \[
    \varinjlim_B \Hom_{E_{\geq B}}(\mbG_{m}, T_{E_{\geq B}}) \simeq \Hom_{\mcO_{\Spa E, \infty}}(\mbG_{m}, T_{\mcO_{\Spa E, \infty}}) \simeq \Hom_{E_\infty}(\mbG_{m}, T_{E_\infty}).
    \]
Since $\Hom_{E_\infty}(\mbG_{m}, T_{E_\infty})$ is a free abelian group of finite rank, there exists a splitting
\begin{equation}\label{equation:lattice in cocharacter group}
    \Hom_{E_\infty}(\mbG_{m}, T_{E_\infty}) \to \Hom_{E_{\geq B}}(\mbG_{m}, T_{E_{\geq B}})
\end{equation}
for some $B \geq 1$, and any two such splittings coincide with each other over $E_{\geq B'}$ for some $B' \geq B$.
We assume that $B$ is large enough such that $\Spec E_{\geq B} \to \Spec F_{\geq B}$ forms a $\Gal(E_\infty/F_\infty)$-torsor as in \Cref{Construction: etale covering of family of close fields}.
After enlarging $B$, we may also assume that the statement of the following proposition holds for the splitting \eqref{equation:lattice in cocharacter group}.
\end{Situation}

\begin{proposition}\label{Proposition: lattices in cocharacter groups}
After enlarging $B$, the splitting \eqref{equation:lattice in cocharacter group}
is
$\Gal(E_\infty/F_\infty)$-equivariant, and for any $i \geq B$, the composition
    \[
    \Hom_{E_\infty}(\mbG_{m}, T_{E_\infty}) \to \Hom_{E_{\geq B}}(\mbG_{m}, T_{E_{\geq B}}) \to \Hom_{E_i}(\mbG_{m}, T_{E_i})
    \]
    is bijective.
\end{proposition}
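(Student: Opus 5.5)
Write $X_* := \Hom_{E_\infty}(\mbG_{m}, T_{E_\infty})$ and let $s \colon X_* \to \Hom_{E_{\geq B}}(\mbG_{m}, T_{E_{\geq B}})$ be the splitting \eqref{equation:lattice in cocharacter group}. I would treat equivariance and bijectivity separately.

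\textbf{Equivariance.} The group $\Gal(E_\infty/F_\infty)$ acts on $\Hom_{E_{\geq B}}(\mbG_{m}, T_{E_{\geq B}})$ through the torsor structure of $\Spec E_{\geq B} \to \Spec F_{\geq B}$ from \Cref{Construction: etale covering of family of close fields}, since $T_{E_{\geq B}}$ is the base change of $T_{F_{\geq B}}$. At the fiber $\infty$, this action is the usual Galois action on $X_*$. For each $g \in \Gal(E_\infty/F_\infty)$, the map $g^{-1}\circ s \circ g$ is a second splitting of the same kind: composing with restriction to $\infty$ gives the identity, because restriction to $\infty$ is equivariant. By the uniqueness statement in \Cref{Situation: torus}, the two splittings agree over $E_{\geq B_g}$ for some $B_g$. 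Since the Galois group is finite, we may take $B' = \max_g B_g$, and then $s$ is equivariant over $E_{\geq B'}$. Strictly speaking, uniqueness only holds after enlarging $B$, so I would run the argument through the colimit $\varinjlim_B \Hom_{E_{\geq B}}$, where both splittings have the same image.

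\textbf{Bijectivity.} Fix a basis $\chi_1,\dots,\chi_d$ of $X_*$. Over $E_{\geq B}$, the cocharacters $s(\chi_j)$ assemble into a homomorphism of tori $\phi\colon \mbG_{m}^d \to T_{E_{\geq B}}$. Its restriction to $\infty$ is an isomorphism. The locus where a homomorphism of tori of equal rank is an isomorphism is open: its kernel is a finite multiplicative group scheme, and $\phi$ is an isomorphism exactly where that kernel has degree $1$. Alternatively, one can use spreading out: the inverse over $\mcO_{\Spa E,\infty} = \varinjlim_B E_{\geq B}$ descends to some $E_{\geq B}$, because all the schemes involved are finitely presented. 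By \Cref{Corollary: open nbd of infinity}, after enlarging $B$, $\phi$ is an isomorphism over $E_{\geq B}$. Hence each fiber $T_{E_i}$ for $i \geq B$ is split, with basis $s(\chi_j)|_{E_i}$, and $\Hom_{E_i}(\mbG_{m}, T_{E_i}) \cong \Hom(\mbG_{m}, \mbG_{m}^d) = \mbZ^d$. Under this identification, the composition in the statement is the identity of $\mbZ^d$, so it is bijective.

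\textbf{Main obstacle.} The delicate step is descending the isomorphism from the henselian local ring to some $E_{\geq B}$. I would handle it by the standard limit argument for finitely presented affine group schemes, applied to $\phi$ and its inverse over the colimit $\mcO_{\Spa E,\infty} = \varinjlim_B E_{\geq B}$. One must also check that the identities $\phi\circ\phi^{-1}=\id$ and $\phi^{-1}\circ\phi=\id$ hold at a finite stage. This follows because morphisms between finitely presented schemes over a filtered colimit of rings are determined at a finite level.
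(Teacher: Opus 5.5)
Your proposal is correct and is essentially the paper's argument. Equivariance comes from the uniqueness of splittings together with the finiteness of $\Gal(E_\infty/F_\infty)$, and bijectivity comes from trivializing $T_{E_{\geq B}}\simeq \mbG_m^N$ near $\infty$ compatibly with the splitting; the only difference is the order of steps (the paper fixes the trivialization first and enlarges $B$ so the splitting sends $(\lambda_j)_\infty$ to $\lambda_j$, while you build the trivialization from the images of a basis, which is the same idea run in reverse), except that one justification needs a small fix: the kernel of $\phi$ need not be finite over all of $\Spec E_{\geq B}$, but \'etale locally $\phi$ is given by a locally constant integer matrix, so its isomorphism locus is open (and over the connected local ring $\mcO_{\Spa E,\infty}$, where $T$ is split, $\phi$ is a constant matrix invertible at the closed point, which is what your spreading-out alternative tacitly uses).
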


\begin{proof}
    The first assertion is clear because $\Gal(E_\infty/F_\infty)$ is finite.
    After enlarging $B$, we have an isomorphism
    $
    T_{E_{\geq B}} \simeq \mbG^N_{m}.
    $
    Via this isomorphism, we identify both $\Hom_{E_\infty}(\mbG_{m}, T_{E_\infty})$
    and $\Hom_{E_i}(\mbG_{m}, T_{E_i})$ for $i \geq B$ with $\mbZ^N$.
    It suffices to prove that the composition is the identity for all $i \geq B$ after enlarging $B$.
    Let $\lambda_j$ denote the cocharacter of $T_{E_{\geq B}}$ corresponding to the $j$-th component $\mbG_m$ ($1 \leq j \leq N$).
    After enlarging $B$, we may assume that the image of $(\lambda_j)_\infty$ under \eqref{equation:lattice in cocharacter group} is $\lambda_j$ for all $j$.
    Then the claim is clear.
\end{proof}

We set $X_*(T_i):=\Hom_{F^{\sep}_i}(\mbG_{m}, T_{F^{\sep}_i})$.
We fix an $F'_\infty$-embedding $E_\infty \hookrightarrow F^{\sep}_\infty$ and
    let $n \geq 1$ be an integer such that $I^n_\infty$ acts trivially on $E_\infty$.
    For each $i \geq n$, we fix an $F'_i$-embedding $E_i \hookrightarrow F^{\sep}_i$ which makes the diagram \eqref{equation:diagram of Galois group of close fields} commute.
    We assume that $B \geq n$.
    Then we can identify $X_*(T_i)$ with $\Hom_{E_i}(\mbG_{m}, T_{E_i})$ for any $n \leq i \leq \infty$, and the composition in \Cref{Proposition: lattices in cocharacter groups}
    induces an isomorphism
    \begin{equation}\label{equation: psi for cocharacter groups}
        \psi_i \colon X_*(T_\infty) \simeq X_*(T_i)
    \end{equation}
    for all $i \geq B$ that is equivariant with respect to the actions of 
    $\Gamma_\infty/I^{n}_\infty \simeq \Gamma_{i}/I^{n}_{i}$.

\begin{Corollary}\label{Corollary: local constancy of cocharacter groups}
    Keep the notation as above.
    Let $t \in T(F^{\ur}_{\geq B})$.
    There exists an integer $B' \geq B$ such that for any $i \geq B'$, we have $\psi_i(w_{T_\infty}(t_\infty))=w_{T_i}(t_i)$ in $X_*(T_i)_{I_i}$.
    Here $w_{T_i} \colon T_i(F^{\ur}_i) \to X_*(T_i)_{I_i}$ is the Kottwitz homomorphism, and we denote the induced isomorphism
    $X_*(T_\infty)_{I_\infty} \simlr X_*(T_i)_{I_i}$ by the same symbol $\psi_i$.
\end{Corollary}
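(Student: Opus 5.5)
The plan is to reduce to the split case via the norm, using an explicit description of the Kottwitz homomorphism for induced tori, or more robustly via the characterization of $w_T$ through valuations of characters. The key input is the compatibility of $\psi_i$ with everything over $E_{\geq B}$.

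First, since $t \in T(F^{\ur}_{\geq B})$ and $F^{\ur}_{\geq B}$ is a filtered colimit, $t$ lies in $T(F'_{\geq B})$ for some finite unramified $W(k')$. Replacing $F$ by $F'$ (and $k$ by $k'$) does not change the inertia groups $I_i$ or the targets $X_*(T_i)_{I_i}$. So we may assume $t \in T(F_{\geq B})$, and we may also enlarge $E$ so that it contains $F'$.

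Next I use the standard description of $w_{T_i}$. Let $E_i^{\ur}$ be the compositum. For the split torus $T_{E_i}$, the Kottwitz map is $T(E_i) \to X_*(T_i)$, sending $s$ to the cocharacter $\lambda$ with $\langle \chi, \lambda \rangle = \mathrm{val}_{E_i}(\chi(s))$ for all characters $\chi$. For general $T_i$, the map $w_{T_i}$ is characterized by the commutative square in which the norm $N \colon T(E^{\ur}_i) \to T(F^{\ur}_i)$ corresponds to the map $X_*(T_i) \to X_*(T_i)_{I_i}$, $\lambda \mapsto$ the class of $\lambda$, with $E_i^{\ur}/F_i^{\ur}$ totally ramified. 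More precisely, for $s \in T(E_i^{\ur})$ we have $w_{T_i}(N(s)) = [w_{T_{E_i}}(s)]$, and the norm is surjective onto $T(F^{\ur}_i)$ up to the kernel considerations. I would avoid surjectivity issues as follows. Since $X_*(T_i)_{I_i}$ is finitely generated, it suffices to show that both $\psi_i(w(t_\infty))$ and $w(t_i)$ agree after the injective-on-free-part and torsion-compatible map induced by viewing $t$ in $T(E)$. Concretely, the restriction $X_*(T)_{I_F}\to$ (via $t\mapsto t$ in $T(E^{\ur})$) corresponds on Kottwitz sides to the norm map $X_*(T)_{I_F} \to X_*(T)^{I_E\text{-}}$, i.e. $[\lambda] \mapsto \sum_{g \in I_F/I_E} g\lambda$. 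This loses torsion, so instead I would use the functoriality of $w$ for the exact sequence $1 \to T \to R_{E/F}T_E \to R_{E/F}T_E/T \to 1$ and the fact that for induced tori the Kottwitz map is computed by valuations. This reduces the claim to tori of the form $R_{E/F}\mbG_m^N$ and their quotients, where $T(F)$-points are computed explicitly.

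For the induced torus $R_{E/F}\mbG_m$, the claim is concrete. We have $X_*(T_i)_{I_i} \simeq \mbZ[\Gal(k_E/k)]$, and $w(x)$ records the valuations of the components of $x \in (E\otimes F^{\ur})^\times$. By \Cref{Lemma: elements of O}, applied to the finitely many components (over $\mcO_E$), the valuations of $x_i$ equal those of $x_\infty$ for $i \gg 0$. Compatibility of $\psi_i$ with these identifications follows from \Cref{Proposition: lattices in cocharacter groups}, since the identifications come from the splitting of $T_{E_{\geq B}}$.

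For general $T$, choose a surjection $Q := R_{E/F}\mbG_m^N \to T$ with torus kernel $S$ over $F_{\geq B}$. Such a surjection exists over $F_\infty$ from Galois lattices, and it spreads out over $F_{\geq B}$ by the torsor structure of \Cref{Construction: etale covering of family of close fields} together with \Cref{Proposition: lattices in cocharacter groups}. The map $w$ is functorial, and $X_*(Q)_I \to X_*(T)_I$ is surjective. By Steinberg/Lang over $F^{\ur}$ ($H^1(F^{\ur}_i,S)=0$), $t$ lifts to $q\in Q(F^{\ur}_i)$, but a uniform lift is needed. Here lies the main obstacle: lifting $t$ uniformly to $Q(F^{\ur}_{\geq B'})$. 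I would try to lift $t_\infty$ to $q_\infty$ over a finite unramified extension, then approximate $q_\infty$ by an element $q\in Q(F^{\ur}_{\geq B'})$ and use smoothness/henselianity of $\mcO_{\Spa F,\infty}$ (\Cref{Remark:adic space Spa E}). Since $Q\to T$ is smooth, the fiber over $t$ has a point over the henselian local ring, namely its $F_\infty$-point $q_\infty$. This gives $q$ with image $t$ near $\infty$, and the claim follows from the induced case.
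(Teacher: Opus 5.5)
Your final argument is correct and is essentially the paper's: the paper lifts $t$ along the norm $\mathrm{Nm}\colon \Res^{E^{\ur}_{\geq B}}_{F^{\ur}_{\geq B}} T_{E^{\ur}_{\geq B}} \to T_{F^{\ur}_{\geq B}}$, using Steinberg's theorem at $\infty$, smoothness of $\mathrm{Nm}$ and henselianity of $\varinjlim_B F^{\ur}_{\geq B}$; it then computes $w_{T_i}(t_i)$ as the image of $\sum_j \omega_i(\mu_j(\tilde t_i))\lambda_j$ and applies \Cref{Lemma: elements of O}, exactly as you do with $Q \to T$. This norm is itself a surjection from an induced torus with torus kernel, so it is surjective on $F^{\ur}_\infty$-points by the same Steinberg input you use for $Q$; hence your detour around ``surjectivity issues'' is unnecessary (as is the middle paragraph via $T \hookrightarrow R_{E/F}T_E$, which would indeed lose torsion), and the canonical norm spares you from choosing and spreading out a presentation $Q \to T$.
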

\begin{proof}
After enlarging $B$,
    we see that
    $\Spec E^{\ur}_{\geq B} \to \Spec F^{\ur}_{\geq B}$
    is naturally a $\Gal(E_\infty/F'_\infty)$-torsor.
    We consider the norm map
    \[
    \mathrm{Nm} \colon \Res^{E^{\ur}_{\geq B}}_{F^{\ur}_{\geq B}} T_{E^{\ur}_{\geq B}} \to T_{F^{\ur}_{\geq B}}.
    \]
    We first prove that after enlarging $B$, there is an element $\tilde{t} \in T(E^{\ur}_{\geq B})$
    such that $\mathrm{Nm}(\tilde{t})=t$.
    Let $\mcO_{F^{\ur}, \infty} := \varinjlim_{B'} 
    F^{\ur}_{\geq B'}$
    and 
    $\mcO_{E^{\ur}, \infty} := \varinjlim_{B'} 
    E^{\ur}_{\geq B'}$.
    It suffices to prove that $t \in T(\mcO_{F^{\ur}, \infty})$ has a lift
    in $T(\mcO_{E^{\ur}, \infty})$ along $\mathrm{Nm}$.
    (We use the same notation for the image of $t$ in $T(\mcO_{F^{\ur}, \infty})$.)
    By Steinberg's theorem, $t_\infty \in T(F^{\ur}_\infty)$ has a lift $\tilde{t}_\infty \in T(E^{\ur}_\infty)$.
    Since $\mcO_{F^{\ur}, \infty}$ is a henselian local ring with residue field $F^{\ur}_\infty$ and $\mathrm{Nm}$ is smooth, it follows that 
    $t$ has a lift in $T(\mcO_{E^{\ur}, \infty})$.

    As in the proof of \Cref{Proposition: lattices in cocharacter groups}, we may assume that
    $
    T_{E_{\geq B}} \simeq \mbG^N_{m}.
    $
    Let $\lambda_j$ (resp.\ $\mu_j$)
    denote the cocharacter
    (resp.\ the character) of $T_{E_{\geq B}}$ corresponding to the $j$-th component $\mbG_m$.
    Then $w_{T_i}(t_i) \in X_*(T_i)_{I_i}$
    is the image of $\sum^N_{j=1} \omega_i(\mu_j(\tilde{t}_i))\lambda_j \in X_*(T_i)$, where $\omega_i \colon (E^{\ur}_i)^\times \twoheadrightarrow \mbZ$ is the (normalized) valuation.
    Therefore, the desired assertion follows from \Cref{Lemma: elements of O}.
\end{proof}

If $k$ is a finite field,
then we have
$H^1(F_i, T_i) \simeq X_*(T_i)_{\Gamma_i, \mathrm{tor}}$
by Tate--Nakayama duality.
We need the following compatibility between Tate--Nakayama duality and families of close local
fields.

\begin{Corollary}\label{Corollary:Tate-Nakayama compatibility}
Assume that $k$ is finite.
    Let $c \in H^1(\Gal(E_\infty/F_\infty), T(E_{\geq B}))$, and for $B \leq i \leq \infty$, let $c_i \in H^1(F_i, T_i)$ be the image of $c$.
    Via Tate--Nakayama duality, we regard $c_i$ as an element of $X_*(T_i)_{\Gamma_i, \mathrm{tor}}$.
    Then there exists an integer $B' \geq B$ such that for all $i \geq B'$,
    we have $\psi_i(c_\infty)=c_i$, where we denote the induced isomorphism
    $X_*(T_\infty)_{\Gamma_\infty, \mathrm{tor}} \simlr X_*(T_i)_{\Gamma_i, \mathrm{tor}}$ by the same symbol $\psi_i$.
\end{Corollary}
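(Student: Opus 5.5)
Since $T$ splits over $E_i$ for all $i \geq B$ and $\Gal(E_i/F_i)$ is finite, $c_i$ is inflated from $H^1(\Gal(E_i/F_i), T(E_i))$. I will compute the Tate--Nakayama isomorphism at this finite level and compare the two sides using the identifications already established. At finite level, Tate--Nakayama duality says that cup product with the fundamental class $u_{E_i/F_i} \in H^2(\Gal(E_i/F_i), E_i^\times)$ gives an isomorphism
\[
\hat H^{-1}(\Gal(E_i/F_i), X_*(T_i)) \simlr H^1(\Gal(E_i/F_i), T(E_i)).
\]
The target here is $X_*(T_i)_{\Gamma_i,\mathrm{tor}}$, since the inertia beyond $\Gal(E_i/F_i)$ acts trivially. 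The inverse can be written explicitly: for a cocycle $g \mapsto c(g)$, write $T(E_i) = X_*(T_i)\otimes E_i^\times$, contract with characters, and take valuations. This leads me to the approach that avoids cup products, which I expect to be cleanest. Namely, use the Kottwitz-type description
\[
H^1(F_i,T_i) \simeq X_*(T_i)_{\Gamma_i,\mathrm{tor}},
\]
obtained by passing to $F_i^{\ur}$. By Steinberg's theorem $H^1(F_i^{\ur}, T_i)=0$, so $c_i$ is inflated from $H^1(\langle\sigma\rangle, T(\breve F_i))$. Such a class is represented by the element $t_i := c(\sigma)$ via $b \mapsto$ its $\sigma$-conjugacy class. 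Kottwitz's isomorphism $B(T_i) \simeq X_*(T_i)_{\Gamma_i}$ is induced by $w_{T_i}$ followed by the projection $X_*(T_i)_{I_i} \to X_*(T_i)_{\Gamma_i}$. It is compatible with Tate--Nakayama on the torsion part, up to a universal sign that is the same for all $i$.

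The steps, in order, are as follows. First, after enlarging $B$ and replacing $E$ by its compositum with an unramified extension if needed, I represent $c$ by a cocycle. Using inflation--restriction along $\Gal(E_\infty/F_\infty) \to \Gal(k'/k)$ together with $H^1(E^{\ur}\text{-level}) = 0$, I find $t \in T(F^{\ur}_{\geq B})$ such that for every $i \geq B$ (including $\infty$) the class $c_i$ corresponds in $B(T_i)$ to $t_i$. Concretely, I would trivialize the restriction of $c$ to the inertia part over $E^{\ur}_{\geq B}$ by a henselian argument. This mimics the proof of \Cref{Corollary: local constancy of cocharacter groups}: lift a trivialization from $\infty$ to $\mcO_{\Spa E,\infty}^{\ur}$ using smoothness, then spread out to some $B$. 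Modifying the cocycle by this coboundary gives an unramified cocycle, determined by the value $t$ at Frobenius, and this $t$ lies in $T(F^{\ur}_{\geq B})$.

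Second, I apply \Cref{Corollary: local constancy of cocharacter groups} to $t$. This gives $\psi_i(w_{T_\infty}(t_\infty)) = w_{T_i}(t_i)$ in $X_*(T_i)_{I_i}$ for $i \geq B'$. Third, I project to $\Gamma$-coinvariants; this projection commutes with $\psi_i$ because $\psi_i$ is $\Gamma_\infty/I_\infty^n \simeq \Gamma_i/I_i^n$-equivariant. Finally, I compare Kottwitz's map with Tate--Nakayama, uniformly in $i$.

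I expect the main obstacle to be the first step, producing a single $t$ over $F^{\ur}_{\geq B}$ that works uniformly for all $i$. The cocycle lives over $E_{\geq B}$, and trivializing its inertia part requires solving $H^1$ equations in a family. I would first try to do this by working at the henselian local ring $\mcO_{\Spa E^{\ur},\infty}$, where the relevant torsor is smooth and has a section on the residue field $E^{\ur}_\infty$ by Steinberg. I would then spread the section out to $E^{\ur}_{\geq B'}$.
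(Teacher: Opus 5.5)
Your proposal is correct and follows essentially the same route as the paper: use Steinberg's theorem at $\infty$ together with smoothness and the henselian local ring at $\infty$ to rewrite $c$, after enlarging $B$ (and passing to a finite unramified extension), as the inflation of an unramified cocycle whose Frobenius value is some $t \in T(F^{\ur}_{\geq B})$. Then identify $c_i$ with the image of $w_{T_i}(t_i)$ in $X_*(T_i)_{\Gamma_i}$ via Kottwitz, and conclude by \Cref{Corollary: local constancy of cocharacter groups} and the $\Gamma_\infty/I^n_\infty \simeq \Gamma_i/I^n_i$-equivariance of $\psi_i$; as you yourself conclude, the opening detour through cup products with the fundamental class is not needed.
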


\begin{proof}
    Let $\mcE$ be a $T$-torsor over $\Spec F_{\geq B}$ corresponding to $c$.
    By Steinberg's theorem, we may assume that $\mcE \times_{\Spec F_{\geq B}} \Spec F'_\infty$ is a trivial $T$-torsor.
    Since $T$ is smooth, then we may assume after enlarging $B$ that
    $\mcE \times_{\Spec F_{\geq B}} \Spec F'_{\geq B}$ is trivial.
    It follows that $c$ is represented by the inflation of
    a $1$-cocycle
    $g \mapsto t_g$
    of $\Gal(F'_\infty/F_\infty)$ in $T(F'_{\geq B})$.
    Then $c_i \in X_*(T_i)_{\Gamma_i, \mathrm{tor}}$
    coincides with the image of $w_{T_i}(t_{\sigma, i}) \in X_*(T_i)_{I_i}$
    in $X_*(T_i)_{\Gamma_i}$, where $\sigma \in \Gal(F'_\infty/F_\infty)$ is the Frobenius element; see \cite[Section 2]{KottwitzIso1}.
    Therefore, the claim follows from \Cref{Corollary: local constancy of cocharacter groups}.
\end{proof}

We give a basic example of a torus over $F$. 
For a reductive group scheme $G$ over an affine scheme $\Spec A$ and a section $\gamma \in G(A)$, we denote by $G_\gamma$ the centralizer of $\gamma,$ i.e.\
the subgroup scheme such that 
$
G_\gamma(R)=\lbrace g\in G(R) \colon g^{-1}\gamma g = \gamma \rbrace
$
for $A$-algebras $R$.
Note that $G_\gamma$ is a finitely presented closed subgroup scheme of $G$.
We say that $\gamma \in G(A)$ is a \textit{fiberwise strongly regular semisimple element} if 
the fiber $\gamma_s \in G(s)$ is strongly regular semisimple (i.e.\ $(G_s)_{\gamma_s}$ is a maximal torus of $G_s$)
for every geometric point $s$ of $\Spec A$.

\begin{Lemma}\label{Lemma: centralizer of fiberwise strongly regular semisimple}
    Let $\gamma \in G(A)$ be a fiberwise strongly regular semisimple element.
    Then $G_\gamma \subset G$ is a maximal torus.
\end{Lemma}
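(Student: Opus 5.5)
The plan is to show that $G_\gamma$ is smooth over $\Spec A$ with connected fibers that are tori of dimension equal to the rank of $G$, and then to invoke the standard fact (SGA 3, Exp.~XII) that a closed subgroup scheme of this type is a maximal torus. Both properties are local on $\Spec A$ and compatible with base change. Since $G_\gamma$ and $\gamma$ are finitely presented, a standard limit argument lets us assume $A$ is noetherian. Indeed, $\Spec A$ is locally a limit of finite type $\mathbb{Z}$-schemes over which $G$ and $\gamma$ descend. The condition ``fiberwise strongly regular semisimple'' cuts out a constructible, in fact open, locus of the base (see the step below), so after enlarging the index it holds on the descended base.

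\textbf{Step 1: fibers.} For every geometric point $s$, the fiber $(G_\gamma)_s = (G_s)_{\gamma_s}$ is a maximal torus of $G_s$ by hypothesis. In particular each fiber is smooth, connected, of dimension $r$ equal to the rank of $G$. The rank is locally constant on $\Spec A$, so we may assume it is constant.

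\textbf{Step 2: flatness, the main obstacle.} Fiberwise smoothness alone does not give smoothness; we need $G_\gamma\to\Spec A$ to be flat. First try the reduced case: over a reduced noetherian base, a finite type group scheme with smooth connected fibers of constant dimension is flat. This follows from SGA 3, Exp.~VI$_B$, or from the fact that the identity component is then open and one can apply the fiberwise dimension criterion. This does not cover nilpotents, however, so I would argue more robustly via the infinitesimal criterion and the Lie algebra. Consider $\mathrm{Lie}(G_\gamma)=\ker(\mathrm{Ad}(\gamma)-1)$ on $\mathfrak g=\mathrm{Lie}(G)$. On each fiber its dimension is exactly $r$. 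The element $\gamma$ is regular semisimple, so the characteristic polynomial of $\mathrm{Ad}(\gamma)$ has $(T-1)^{r}$ exactly dividing it, and the generalized $1$-eigenspace equals the honest $1$-eigenspace. Hence $\ker(\mathrm{Ad}(\gamma)-1)$ is a direct summand of rank $r$ of $\mathfrak g$, with formation commuting with base change. Now check that $G_\gamma$ is formally smooth along the identity section. Given a square-zero thickening $R\to R/I$ and $g\in G_\gamma(R/I)$, lift $g$ to some $\tilde g\in G(R)$, which is possible since $G$ is smooth. The obstruction $\tilde g^{-1}\gamma\tilde g\gamma^{-1}$ lies in $\mathfrak g\otimes I$ and belongs to the image of $\mathrm{Ad}(\gamma)-1$. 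Because $\mathfrak g=\ker(\mathrm{Ad}(\gamma)-1)\oplus\mathrm{im}(\mathrm{Ad}(\gamma)-1)$ as locally free modules, we can correct $\tilde g$ by an element of $\exp(\mathfrak g\otimes I)$ to kill the obstruction. This shows $G_\gamma$ is smooth. This is essentially the argument of SGA 3, Exp.~XIII, for regular elements. The decomposition of $\mathfrak g$ is where strong regular semisimplicity is used in families; this is the step I expect to need the most care.

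\textbf{Step 3: conclusion.} $G_\gamma$ is now a smooth, finitely presented, closed subgroup scheme whose geometric fibers are tori. By SGA 3, Exp.~X, Cor.~4.9, such a group (smooth affine with torus fibers) is a torus over $\Spec A$. Its fibers are maximal tori of $G_s$, so it is a maximal torus of $G$ in the sense of SGA 3, Exp.~XII.
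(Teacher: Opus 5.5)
There is a genuine gap in Step 2, at exactly the point you flagged. You assert that the obstruction $C:=\tilde g^{-1}\gamma\tilde g\gamma^{-1}\in\mathfrak g\otimes I$ lies in the image of $\mathrm{Ad}(\gamma)-1$, but you give no reason, and this is not a formality. Replacing $\tilde g$ by $\tilde g(1+X)$ with $X\in\mathfrak g\otimes I$ changes $C$ by exactly $(\mathrm{Ad}(\gamma)-1)X$. So the class of $C$ in $\coker(\mathrm{Ad}(\gamma)-1)\otimes I$ \emph{is} the obstruction to lifting $g$ to $G_\gamma(R)$. Its vanishing for all $(R,I,g)$ is equivalent to the formal smoothness you are trying to prove.

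Concretely, you would need to know that an infinitesimal conjugate $\tilde g^{-1}\gamma\tilde g$ of $\gamma$ cannot differ from $\gamma$ by a nonzero element of $\ker(\mathrm{Ad}(\gamma)-1)\otimes I$, i.e.\ cannot move in a ``torus direction''. This is true, but it needs global input, such as a maximal torus through $\gamma$ or \'etaleness of $T\to T/W$ at strongly regular points. Nothing in your argument supplies it.

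The direct-sum decomposition $\mathfrak g=\ker\oplus\mathrm{im}$ is also not established over non-reduced bases, since your justification is purely fiberwise. Take $A=k[\epsilon]/(\epsilon^2)$ and the matrix $\bigl(\begin{smallmatrix}1&\epsilon\\0&1\end{smallmatrix}\bigr)$. It has characteristic polynomial $(T-1)^2$ and is the identity on the fiber. Yet its fixed module $A\oplus\epsilon A$ is not a direct summand of $A^2$. So Step 2, as written, does not handle nilpotents, which was its whole purpose. Over a reduced base the decomposition does hold by constancy of rank, but the obstruction claim is still missing.

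The missing idea is the one the paper uses, and it avoids deformation theory entirely. By SGA~3, Exp.~XIII, Th.~3.1, there is a (unique) maximal torus $T\subset G$ with $\gamma\in T(A)$. Since $T$ is commutative, $T\subset G_\gamma$. By hypothesis this inclusion is an isomorphism on all geometric fibers. Since $T$ is flat and $G_\gamma$ is finitely presented, the fibral isomorphism criterion (Conrad, Lemma~B.3.1) gives $T=G_\gamma$.

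If you want to keep your own strategy, you could instead reduce, \'etale-locally on $\Spec A$, to the universal centralizer over the open subscheme of strongly regular semisimple elements of a Chevalley group over $\mathbb{Z}$. That base is smooth over $\mathbb{Z}$, hence reduced, so your reduced-base flatness criterion and SGA~3, Exp.~X, 4.9 apply there. Then $G_\gamma$ is the pullback of that universal centralizer along $\gamma$.
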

    
\begin{proof}
    By \cite[Expos\'e XIII, Th\'eor\`eme 3.1]{SGA3II}, there is a unique maximal torus $T \subset G$ containing $\gamma$.
    The inclusion
    $T \hookrightarrow G_\gamma$
    is an isomorphism on fibers, and hence an isomorphism by \cite[Lemma B.3.1]{ConradReductiveGroupSchemes}.
\end{proof}

Now let $G$ be a reductive group scheme over $\Spec F$.

\begin{proposition}\label{Proposition: strongly regular-semi-simple spreads out}
Let $\gamma_\infty \in G(F_\infty)$ be an element.
\begin{enumerate}
    \item There exist $B \geq 1$ and a lift $\gamma \in G(F_{\geq B})$ of $\gamma_\infty$.
    \item Assume that $\gamma_\infty \in G(F_\infty)$ is strongly regular semisimple.
    Then any lift $\gamma \in G(F_{\geq B})$ of $\gamma_\infty$ becomes fiberwise strongly regular semisimple after enlarging $B$.
\end{enumerate}
\end{proposition}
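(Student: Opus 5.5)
For (1), I will use that $G$ is a finitely presented (affine) $F$-scheme together with the henselian local ring $\mcO_{\Spa F,\infty}=\varinjlim_B F_{\geq B}$ of \Cref{Remark:adic space Spa E}, whose residue field is $F_\infty$. Since $G$ is smooth over $F$, Hensel's lemma for smooth schemes over a henselian local ring lets me lift $\gamma_\infty \in G(F_\infty)$ to a point of $G(\mcO_{\Spa F,\infty})$. Because $G$ is finitely presented, $G(\varinjlim_B F_{\geq B})=\varinjlim_B G(F_{\geq B})$, so this point comes from some $\gamma\in G(F_{\geq B})$.

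An alternative, more hands-on route would be to embed $G\hookrightarrow \mathbb{A}^N$ and approximate the coordinates of $\gamma_\infty$ by elements of $F$ via \Cref{Lemma: elements of O}. Correcting them to satisfy the defining equations, however, again amounts to Hensel's lemma, so the henselian argument is cleaner.

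For (2), I will use the discriminant. Let $D\colon G\to \mathbb{A}^1$ be the function given by the coefficient of $t^{r}$ in $\det(t+1-\mathrm{Ad}(g))$ on $\mathrm{Lie}(G)$, where $r$ is the rank; this is defined over $F$ because $G$ is a reductive group scheme of constant rank, at least after passing to the open pieces where the rank is constant. A geometric fiber $\gamma_s$ is regular semisimple if and only if $D(\gamma_s)\neq 0$. Since $\gamma_\infty$ is regular semisimple, $D(\gamma)_\infty\in F_\infty^\times$. Then \Cref{Corollary: open nbd of infinity}, or directly \Cref{Lemma: elements of O}, shows that after enlarging $B$ we have $D(\gamma)\in F_{\geq B}^\times$. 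Thus $\gamma$ is fiberwise regular semisimple, and its centralizer contains a unique maximal torus $T$ (SGA3 as in \Cref{Lemma: centralizer of fiberwise strongly regular semisimple}) with $T= G_\gamma^{\circ}$ fiberwise.

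The main obstacle is \emph{strong} regularity, i.e.\ that the component group $G_\gamma/T$ is trivial on all fibers near $\infty$. $G_\gamma$ is finitely presented and $T\subset G_\gamma$ is a closed subgroup. Over the regular semisimple locus, the component group scheme $\pi_0:=G_\gamma/T$ is a finite étale group scheme, namely a subgroup of $N_G(T)/T$. Its fiber at $\infty$ is trivial by hypothesis. A finite étale scheme has locally constant degree, so the degree equals $1$ on an open neighborhood of $\infty$. By \Cref{Corollary: open nbd of infinity} this neighborhood contains $\Spec F_{\geq B}$ for some $B$. Hence $T\to G_\gamma$ is an isomorphism on all fibers there, which is exactly fiberwise strong regularity.

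If finite étaleness of $\pi_0$ is awkward to justify, I would instead use the finitely presented closed subscheme $N_G(T)\times_G G_\gamma$ mapping to the finite étale Weyl group scheme $W=N_G(T)/T$. The image of $G_\gamma$ in $W$ is the subscheme of $W$ fixing $\gamma$, which is an open and closed subscheme of $W$ because it is cut out by a condition on a torus section. Its complement to the identity section is then empty at $\infty$, hence empty over some $F_{\geq B}$.
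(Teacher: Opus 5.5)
Your part (1) is the paper's argument: Hensel lifting for the smooth $G$ over the henselian ring $\mcO_{\Spa F,\infty}=\varinjlim_B F_{\geq B}$, followed by finite presentation. The discriminant step in (2) is also fine: once $D(\gamma)\in F_{\geq B}^\times$, it is nonzero at every point of $\Spec F_{\geq B}$, which is what fiberwise regularity requires.

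The gap is in the strong regularity step. You claim that $G_\gamma/T$ is finite étale over the regular semisimple locus, so that its degree is locally constant. This is false. So is the fallback claim that $W_\gamma=\{w\in W\colon w(\gamma)=\gamma\}$ is open and closed in $W$.

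\emph{Counterexample.} Take $G=\mathrm{PGL}_2$ and $\gamma=\mathrm{diag}(t,1)$ with $t\neq 1$, in characteristic $\neq 2$. Then $W_\gamma$ is trivial for $t\neq -1$, but $W_\gamma=W$ at $t=-1$. This already happens inside your family. For $p\neq 2$, lift $\gamma_\infty=\mathrm{diag}(-1,1)$ to $\gamma=\mathrm{diag}(-1+c,1)$ with $c_i=\pi_i^i$ for $i\in\mbN$ and $c_\infty=0$. This $\gamma$ is fiberwise regular semisimple, and $\gamma_i$ is strongly regular for every $i\in\mbN$, but $\gamma_\infty$ is not. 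Local constancy would force the opposite, so the fiber degree of $G_\gamma/T$ is only upper semicontinuous, and an argument based on local constancy proves too much.

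\emph{Repair.} Upper semicontinuity is true, and it is all you need. The steps are:
\begin{enumerate}
\item $W_\gamma$ is a closed subscheme of the finite étale $W=N_G(T)/T$, being an equalizer of two maps $W\to T$ into the separated $T$.
\item The identity section $e$ of $W$ is open and closed. Hence $W_\gamma\cap(W\smallsetminus e)$ is closed in the finite $\Spec F_{\geq B}$-scheme $W\smallsetminus e$, and its image $C\subset \Spec F_{\geq B}$ is closed.
\item On geometric fibers, $Z_{G_{\bar s}}(\gamma_{\bar s})\subset N(T_{\bar s})$ with quotient $W_\gamma(\bar s)$. So $\Spec F_{\geq B}\smallsetminus C$ is exactly the strongly regular locus.
\item This locus is open and contains $\infty$, so it contains some $\Spec F_{\geq B'}$ by \Cref{Corollary: open nbd of infinity}.
\end{enumerate}
The last sentence of your fallback only uses that this image is closed, so it survives once ``open and closed'' is replaced by ``closed in a finite scheme''.

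With this repair, your discriminant step plus semicontinuity amounts to a proof that the strongly regular semisimple locus of $G$ is open. That is exactly the fact the paper's one-line proof of (2) cites before applying \Cref{Corollary: open nbd of infinity}.
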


\begin{proof}
(1) follows since $G$ is smooth,
the local ring $\mcO_{\Spa F, \infty}$ is henselian, and $\mcO_{\Spa F, \infty}= \varinjlim_{B \geq 1} F_{\geq B}$.
Since the strongly regular semisimple elements form an open subset of $G$, the assertion (2) immediately follows from \Cref{Corollary: open nbd of infinity}.
\end{proof}

\subsection{Unramified reductive groups and parahoric subgroups in families of close fields}\label{Subsection:Unramified reductive groups in families}

\begin{Situation}\label{Situation: unramified reductive group in families of close fields}
    Let $\mcG$ be a quasi-split reductive group scheme over $\Spec W(k)$ with
    a maximal torus $\mcT$ and a Borel subgroup $\mcB$ containing $\mcT$.
    We fix a $\Gal(\overline{k}/k)$-stable pinning
    $\{ x_\alpha \}_{\alpha \in \Delta}$
    of $(\mcG, \mcT, \mcB)$ over $W(k)^{\ur} = \varinjlim_{k'}  W(k')$.
    Let $F= \{ (F_i, \pi_{i}) \}_{i \in \mbN \cup \{ \infty \}}$ be a family of close fields with residue field $k$.
    We write
    $(G_F, T_F, B_F):=(\mcG_F, \mcT_F, \mcB_F)$
    and
    $(G_{F_i}, T_{F_i}, B_{F_i}):=(\mcG_{F_i}, \mcT_{F_i}, \mcB_{F_i})$
    for each $i \in \mbN \cup \{ \infty \}$.
    If there is no ambiguity, we simply write
    \[
    (G, T, B):=(G_F, T_F, B_F)
    \quad \text{and} \quad (G_i, T_i, B_i):=    (G_{F_i}, T_{F_i}, B_{F_i}).
    \]
    We note that every unramified reductive group  over $F_i$ $(i \in \mbN \cup \{ \infty \})$ naturally arises in this way.
    Let $\mcS \subset \mcT$ be the maximal split torus over $W(k)$.
    Then $S_i:= \mcS_{F_i}$ is the maximal split subtorus of $T_i$.
    Let $\mcG_{\mathrm{der}} \subset \mcG$ be the derived subgroup scheme and let $\mcG_{\mathrm{sc}} \to \mcG_{\mathrm{der}}$ be the simply connected cover.
    Let $\mcS_{\mathrm{sc}} \subset \mcG_\mathrm{sc}$ be the preimage of $\mcS \subset \mcG$
and write $S_{\mathrm{sc}, i} := (\mcS_{\mathrm{sc}})_{F_i}$.
\end{Situation}

 We write $\mfS:=W(k)[[t]]$.
    Then we define
    \[
    \widetilde{W}:=\widetilde{W}(G_F):=N_{\mcG}(\mcS)(\mfS[1/t])/\mcT(\mfS)
    \]
    which we call the \textit{Iwahori--Weyl group} of $G_F$.
The following lemma justifies our terminology.

\begin{Lemma}\label{Lemma:Iwahori Weyl group}
Let $i \in \mbN \cup \{ \infty \}$.
Let $\widetilde{W}_i:=N_{G_i}(S_i)(F_i)/\mcT_i(\mcO_{i})$ be the Iwahori--Weyl group of $G_i$.
The map $\mfS \to \mcO_{i}$ defined by $t \mapsto \pi_i$ induces an isomorphism
$
\widetilde{W} \simlr \widetilde{W}_i.
$
\end{Lemma}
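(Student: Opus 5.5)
The strategy is to compare both sides with the "abstract" Iwahori--Weyl group defined by the root datum. Concretely, $\widetilde{W}_i$ is an extension of the relative Weyl group by $X_*(T_i)_{I_i}$, and we check that the map induced by $t\mapsto\pi_i$ respects this extension structure.

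\textbf{Step 1 (normalizer).} Since $\mcS\subset\mcG$ is a split torus over $W(k)$, $N_{\mcG}(\mcS)$ is a smooth closed subgroup scheme with $N_{\mcG}(\mcS)^0=Z_{\mcG}(\mcS)=\mcT$ ($\mcG$ is quasi-split and $\mcT$ is the centralizer of the maximal split torus). Moreover $N_{\mcG}(\mcS)/\mcT = W_0$ is the constant finite étale group scheme given by the relative Weyl group. Representatives $n_w$ of $w\in W_0$ exist already in $N_{\mcG}(\mcS)(W(k))$, using the pinning and the Tits lift. Hence for any $W(k)$-algebra domain $A$ we get an exact sequence
\[
1\to \mcT(A)\to N_{\mcG}(\mcS)(A)\to W_0\to 1.
\]
We apply this with $A=\mfS[1/t]$ and $A=F_i$. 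Surjectivity on the right holds because the $n_w$ are integral, and the sequence is exact on the left and in the middle because $\Spec A$ is connected. We get the same description after quotienting by $\mcT(\mfS)$, resp.\ $\mcT(\mcO_i)$.

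\textbf{Step 2 (torus part).} The problem reduces to showing that $\mcT(\mfS[1/t])/\mcT(\mfS)\to \mcT(F_i)/\mcT(\mcO_i)$ is an isomorphism, compatibly with the $W_0$-actions by conjugation. The $W_0$-compatibility is clear, since the $n_w$ lie in $\mcG(W(k))$ and the map is $W(k)$-linear. For the isomorphism, $\mcT$ splits over $W(k')$ for a finite unramified extension $k'/k$. Write $X:=X_*(\mcT)$ with its action of $\sigma=\Frob$.

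\begin{itemize}
\item On the $\mfS$-side: $\mcT(\mfS'[1/t])/\mcT(\mfS')\simeq X$ via $\lambda\mapsto\lambda(t)$, where $\mfS'=W(k')[[t]]$. Since $H^1(\Gal(k'/k),\mcT(\mfS'))=0$ by smoothness and Lang's theorem applied on the special fiber and lifting, taking Galois invariants gives
\[
\mcT(\mfS[1/t])/\mcT(\mfS)\simeq X^{\sigma}.
\]
\item On the $F_i$-side: the same argument with $\pi_i$ gives $X^{\sigma}$, which is the Kottwitz description $X_*(T_i)_{I_i}^{\sigma}=X^{\sigma}$ since $I_i$ acts trivially.
\end{itemize}

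\textbf{Step 3 (conclusion).} Under these identifications the map sends $\lambda(t)\mapsto\lambda(\pi_i)$, so it is the identity on $X^\sigma$. The five lemma applied to the two extensions then gives $\widetilde W\simeq\widetilde W_i$.

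The main obstacle is the vanishing $H^1(\Gal(k'/k),\mcT(\mfS'))=0$, together with its counterpart for $\mcO_i$, which is used to pass to Galois invariants. I would prove it by filtering $\mcT(\mfS')$ by the congruence kernels. The graded pieces of this filtration are $\mathrm{Lie}$-type vector groups and the group $\mcT(k')$, and Lang's theorem together with additive Hilbert 90 kills their $H^1$. Passing to the limit uses completeness of $\mfS'$.
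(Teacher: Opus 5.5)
Your proof is correct and follows essentially the same route as the paper. The constancy of $N_{\mcG}(\mcS)/\mcT$ reduces everything to the torus quotient, which is then handled by splitting $\mcT$ over an unramified extension plus a Lang--Hensel input: the paper proves injectivity of $H^1(\Gal(\overline{k}/k),\mcT(\mfS^{\ur}))\to H^1(\Gal(\overline{k}/k),\mcT(\mcO_{F^{\ur}_i}))$ via triviality of $\mcT$-torsors over $\Spec\mfS$, while you prove outright vanishing via the congruence filtration. In fact the step you flag as the main obstacle can be bypassed: each $\lambda\in X^{\sigma}$ is a cocharacter defined over $W(k)$, so $\lambda(t)$ and $\lambda(\pi_i)$ already show that both torus quotients surject onto $X^{\sigma}$.
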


\begin{proof}
    Note that the quotient $N_{\mcG}(\mcS)/\mcT$ is the constant group scheme associated with the finite group $N_{\mcG}(\mcS)(k)/\mcT(k)$.
    (Indeed, it suffices to check this claim over the special fiber, which is well-known.)
    Using this fact, one can check that the map
    \[
    N_{\mcG}(\mcS)(\mfS[1/t])/\mcT(\mfS[1/t]) \to N_{\mcG}(\mcS)(F_i)/\mcT(F_i)
    \]
    is bijective.
    It then suffices to prove that
    \[
    \mcT(\mfS[1/t])/\mcT(\mfS) \to \mcT(F_i)/\mcT(\mcO_{i})
    \]
    is bijective.
    Since $\mcT_{W(k)^{\ur}}$ is split,
    we have a $\Gal(\overline{k}/k)$-equivariant bijection
    $
    \mcT(\mfS^{\ur}[1/t])/\mcT(\mfS^{\ur}) \simlr \mcT(F^{\ur}_i)/\mcT(\mcO_{F^{\ur}_i})
    $
    where $\mfS^{\ur} := \mfS \otimes_{W(k)} W(k)^{\ur}$.
    Thus it is enough to show that
    \[
    H^1(\Gal(\overline{k}/k), \mcT(\mfS^{\ur})) \to H^1(\Gal(\overline{k}/k), \mcT(\mcO_{F^{\ur}_i}))
    \]
    is injective.
    This is clear since a $\mcT$-torsor over $\Spec \mfS$ is trivial if its restriction to $\Spec \mcO_{i}$ is trivial.
\end{proof}

For our purpose, we need an analogous construction for Bruhat--Tits group schemes for $G_F$.
Here we closely follow the arguments of \cite[Section 4]{PappasZhu} and \cite[Section 2]{FHLR25}.
Let $i \in \mbN \cup \{ \infty \}$.
Let
\[
\mcA(G_i, S_i) \subset \mcB(G_i)
\]
be the apartment associated with $S_i$ in the (reduced) Bruhat--Tits building $\mcB(G_i)$ of $G_i$.
By \Cref{Lemma:Iwahori Weyl group}, the Iwahori--Weyl group $\widetilde{W}$ naturally acts on $\mcA(G_i, S_i)$.
The pinning
$\{ x_\alpha \}_{\alpha \in \Delta}$
gives rise to a hyperspecial vertex
$x_{0, i} \in \mcA(G_i, S_i)$.
Since $\mcA(G_i, S_i)$ is an affine space over $X_{*}(S_{\mathrm{sc}, i}) \otimes_{\mbZ} \mbR$, the bijections
\[
X_{*}(S_{\mathrm{sc}, \infty}) \overset{\sim}{\longleftarrow} \Hom_{\mfS^{\ur}}(\mbG_{m}, (\mcS_{\mathrm{sc}})_{\mfS^{\ur}}) \simlr X_{*}(S_{\mathrm{sc}, i})
\]
induce a natural identification
\begin{equation}\label{equation:identification of apartments}
    \mcA(G_\infty, S_\infty) \simeq \mcA(G_i, S_i)
\end{equation}
which sends $x_{0, \infty}$ to $x_{0, i}$.
This identification is $\widetilde{W}$-equivariant and also identifies the affine roots on both sides.
Similarly,
for the reductive group $\mcG_{K_0((t))}$, where $K_0:=W(k)[1/p]$,
we have a natural identification
\begin{equation}\label{equation:identification of apartments for laurant powerseries}
    \mcA(G_\infty, S_\infty) \simeq \mcA(\mcG_{K_0((t))}, \mcS_{K_0((t))}).
\end{equation}
Let $x_\infty \in \mcA(G_\infty, S_\infty)$ be any point.
Then, via \eqref{equation:identification of apartments} and \eqref{equation:identification of apartments for laurant powerseries},
we have corresponding points $x_i \in \mcA(G_i, S_i)$ for all $i \in \mbN$ and $x_{K_0((t))} \in \mcA(\mcG_{K_0((t))}, \mcS_{K_0((t))})$.
Let $\mathcal{P}_{x_\infty}, \mathcal{P}_{x_i}, \mathcal{P}_{x_{K_0((t))}}$ be the associated Bruhat--Tits group schemes (with connected fibers) over $\mcO_\infty$, $\mcO_i$, $K_0[[t]]$, respectively.
To the compatible family
$x=\{ x_i \}_{i \in \mbN \cup \{ \infty \}}$, one can attach a variant of Bruhat--Tits group schemes as follows.

\begin{proposition}\label{Proposition: Bruhat-Tits group scheme over W(k)[[t]]}
There exists a unique smooth affine group scheme model $\mcP'_{x}$
of $\mcG_{\mfS[1/t]}$ over $\mfS$
with connected fibers satisfying the following properties:
\begin{enumerate}
    \item For each $i \in \mbN \cup \{ \infty \}$, the base change of $\mcP'_{x}$ along the map $\mfS \to \mcO_{i}$ is $\mathcal{P}_{x_i}$.
    \item The base change of $\mcP'_{x}$ along the map $\mfS \to K_0[[t]]$ is $\mathcal{P}_{x_{K_0((t))}}$.
\end{enumerate}
\end{proposition}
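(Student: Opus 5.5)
We follow the strategy of \cite[Section 4]{PappasZhu} and \cite[Section 2]{FHLR25}, working over the two-dimensional regular base $\mfS=W(k)[[t]]$. First we construct the analogue of the Bruhat--Tits schematic root datum over $\mfS$. The torus part is $\mcT_{\mfS}$ itself. For each relative affine root $a=\alpha+m$ of $\mcA(G_\infty,S_\infty)$, which by \eqref{equation:identification of apartments} and \eqref{equation:identification of apartments for laurant powerseries} is simultaneously an affine root for every $\mcA(G_i,S_i)$ and for $\mcA(\mcG_{K_0((t))},\mcS_{K_0((t))})$, we use the pinning to define a smooth model $\mcU_{a}$ of the root subgroup $U_\alpha\subset\mcG_{\mfS[1/t]}$. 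Since $\mcG$ is unramified, each $U_\alpha$ is a product of Weil restrictions of $\mbG_a$ along finite étale extensions $W(k')/W(k)$, with no non-reduced roots. The model is then the $t^{m}$-scaled $\Res\,\mbG_a$, namely $x_\alpha(t^{\lceil -\alpha(x)\rceil}\mfS')$. Its base change along $t\mapsto\pi_i$, respectively $\mfS\to K_0[[t]]$, is the corresponding Bruhat--Tits root subgroup scheme, because the filtrations on root groups are defined by the same valuations of the same coordinate.

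Second, we glue. Following Bruhat--Tits via the big cell (cf.\ \cite[Theorem 4.1]{PappasZhu}), we form $\mcU^-\times\mcT\times\mcU^+$ with $\mcU^{\pm}=\prod_{\alpha>0\,(\text{resp.}<0)}\mcU_{\alpha,x}$. We check that this defines a group germ over $\mfS$, using the commutator relations and the fact that the Chevalley constants lie in $\mbZ$, which is what makes the $x_\alpha$-coordinates compatible. The BT theorem on group germs then gives a smooth affine group scheme $\mcP'_x$ with connected fibers. A cleaner alternative is to define $\mcP'_x$ as the schematic closure / dilatation construction: start from the reductive model $\mcG_{\mfS}$ (which handles $x=x_0$) and, for general $x$, take the affine smooth group scheme whose $R$-points are stabilizers, or use the explicit description via $\mfS$-points of the big cell with Hartogs extension over the codimension-$2$ point. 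Here $\mfS$ is regular local of dimension $2$, so affine smooth group schemes over the punctured spectrum $\Spec\mfS\setminus\{(p,t)\}$ extend by taking global sections, as in \cite{PappasZhu}. Concretely, we construct $\mcP'_x$ over $\mfS[1/p]$ and over $\mfS_{(p)}^{\wedge}$-type localizations, glue over the punctured spectrum, and take $\Spec\Gamma(\mcO)$. Affineness and smoothness over the closed point follow as in \cite[Section 4]{PappasZhu}, using that the big-cell description makes the global sections ring a polynomial-like ring.

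Third, properties (1) and (2) follow since the big cell and group law are defined over $\mfS$ and base change commutes with the construction. Each base change is then a smooth connected model whose $\breve{\mcO}$-points equal the parahoric, for instance via the big cell and Iwahori decomposition. By the characterization of Bruhat--Tits group schemes as the unique smooth affine connected models with given points over the strict henselization, these base changes are $\mcP_{x_i}$ and $\mcP_{x_{K_0((t))}}$. Uniqueness of $\mcP'_x$ holds because a smooth affine model over the normal domain $\mfS$ is determined by its ring of functions inside $\mcO(\mcG_{\mfS[1/t]})$. That ring equals the functions regular on the generic fibers over the height-one primes $(t)$ and $(p)$ together with $\mfS[1/t]$, and the localizations at $(t)$, namely $\mfS_{(t)}$ with residue field $K_0((t))$-ish / $K_0[[t]]$, are pinned down by (2) and by the $i=\infty$ condition (1) at $(p)$.

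The main obstacle is the second step: showing that the extension over the closed point $(p,t)$ of $\Spec\mfS$ remains smooth and affine with connected fibers. Here we would imitate \cite[Section 4]{PappasZhu} closely.
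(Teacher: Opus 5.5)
Your overall route matches the paper's. You build models of $\mcT_{\mfS}$ and of the root groups over $\mfS$ from the concave function of $x$, and glue them via the big cell; the paper imports this step from \cite[Proposition 2.6]{FHLR25}, you from \cite[Section 4]{PappasZhu}. You then compare the base changes with the Bruhat--Tits construction and prove uniqueness by an intersection argument. However, Step 1 rests on a false claim: an unramified group can have non-reduced relative roots, and then its root groups are not Weil restrictions of $\mbG_a$. For instance, take the quasi-split unitary group in an odd number $n\geq 3$ of variables attached to $W(k_2)/W(k)$, with $k_2/k$ quadratic. It is a quasi-split reductive $W(k)$-group of the kind considered here, and its relative root system is of type $BC$. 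For a root $a$ with $2a$ also a root, $U_a$ is a non-commutative (Heisenberg-type) group, and its Bruhat--Tits filtration is not of the form $x_\alpha(t^{\lceil-\alpha(x)\rceil}\mfS')$. So neither your models $\mathcal{U}_{a,x}$ nor your commutator check with integral Chevalley constants apply as stated. The paper avoids this by proving uniqueness first and then reducing to the case where $\mcT$ is split. Over $\mfS\otimes_{W(k)}W(k')$ all roots are absolute roots of a split group, and the models $t^{f(a)}\mbG_a$ (with $f(a)\in\mbZ$) glue exactly as you describe. Since $x$ is Galois-fixed, uniqueness over $\mfS\otimes_{W(k)}W(k')$ yields a descent datum along the finite \'etale Galois cover $\mfS\to\mfS\otimes_{W(k)}W(k')$. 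This datum is effective because the model is affine.

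Two smaller points. First, your uniqueness argument is correct in substance but muddled. Only the height-one prime $(t)$ matters, because $(p)$ lies in $D(t)$, so $A_{(p)}$ is already a localization of $A[1/t]$. Condition (1) at $i=\infty$ concerns the whole fiber $V(p)=\Spec k[[t]]$, not a localization at $(p)$, and it plays no role. Also, $\mfS_{(t)}$ has residue field $K_0$ and completion $K_0[[t]]$. The paper's version needs no normality: $\mfS=K_0[[t]]\cap\mfS[1/t]$ inside $K_0((t))$, i.e.\ $0\to\mfS\to K_0[[t]]\oplus\mfS[1/t]\to K_0((t))$ is exact. Tensoring with the flat $\mfS$-algebra $A$ gives $A=(A\otimes_{\mfS}K_0[[t]])\cap A[1/t]$, so only (2) and the generic fiber are used.

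Second, drop the ``cleaner alternative''. The localizations at $(p)$ contribute nothing, since there the model is just $\mcG$. Taking $\Spec\Gamma$ of a smooth affine group over the punctured spectrum does not by itself give finite type or smoothness over the closed point $(p,t)$. Likewise, the group-germ theorem by itself only produces a smooth separated group scheme. Affineness over the two-dimensional base must be imported from \cite{PappasZhu} or \cite{FHLR25}, as the paper does.
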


\begin{proof}
    We first prove the uniqueness of $\mcP'_{x}$.
    Let $\mcP'_{x} = \Spec A$.
    Since $A$ is flat over $\mfS$, we may regard $A$, $A\otimes_{\mfS} K_0[[t]]$ and $A[1/t]$ as subrings of $A\otimes_{\mfS} K_0((t))$,
    and we have
    $
    A = (A\otimes_{\mfS} K_0[[t]]) \cap (A[1/t])
    $
    (since $\mfS = K_0[[t]] \cap \mfS[1/t]$).
    The uniqueness of $A$ then follows.

    In order to construct $\mcP'_{x}$, we may assume that $\mcT$ is split by Galois descent and uniqueness.
    Let $R({G_\infty, T_\infty})$ be the set of roots for $(G_\infty, T_\infty)$, and let
    $f \colon R({G_\infty, T_\infty}) \to \mbR$
    be the concave function associated with $x_\infty$.
    Then, by the same argument as in \cite[Proposition 2.6]{FHLR25}, we can show that the models $\mcT_{\mfS}$ and $t^{f(a)}\mathbb{G}_{a}$ for all $a \in R({G_\infty, T_\infty})$
    birationally glue to a smooth affine group scheme model
    $\mcP'_{x}$
    of $\mcG_{\mfS[1/t]}$ over $\mfS$
    with connected fibers.
    (We note that $f(a) \in \Z$ by construction.)
    Comparing with the construction of
    Bruhat--Tits group schemes, we see that the model $\mcP'_{x}$ satisfies the desired properties.
\end{proof}

\begin{Corollary}\label{Corollary:Bruhat-Tits group scheme over O}
    There exists a unique smooth affine group scheme model $\mcP_{x}$
of $G_F$ over $\mcO$
with connected fibers such that
for each $i \in \mbN \cup \{ \infty \}$, the base change of $\mathcal{P}_{x}$ along the map $\mcO \to \mcO_{i}$ is $\mathcal{P}_{x_i}$.
Moreover, there exists a closed embedding $\mathcal{P}_{x} \hookrightarrow \GL_n$ of group schemes over $\mcO$ such that the fppf quotient $\GL_n/\mathcal{P}_{x}$ is (representable by) a smooth quasi-affine $\mcO$-scheme.
\end{Corollary}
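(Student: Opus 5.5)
The natural strategy is base change. We take $\mcP_x := \mcP'_x \otimes_{\mfS} \mcO$ along the ring map $\mfS = W(k)[[t]] \to \mcO$ sending $t \mapsto \pi$. This map exists because $\mcO$ is $\pi$-adically complete, and it composes with each projection $\mcO \to \mcO_i$ to give the map $t\mapsto \pi_i$. The resulting group is smooth and affine with connected fibers, since every fiber of $\mcO$ lies over a fiber of $\mfS$. Its generic fiber is $\mcG_{\mfS[1/t]} \otimes F = G_F$. By \Cref{Proposition: Bruhat-Tits group scheme over W(k)[[t]]}(1), its base change to each $\mcO_i$ is $\mcP_{x_i}$. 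Existence is therefore formal.

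Uniqueness I would prove as in \Cref{Proposition: Bruhat-Tits group scheme over W(k)[[t]]}. Write $\mcP_x = \Spec A$ with $A$ flat over $\mcO$. The first step is that $\mcO = \bigl(\prod_i \mcO_i \times \mcO_\infty\bigr) \cap F$ inside $\prod_i F_i \times F_\infty$. This holds because an element of $F$ that is integral at every $i$ has a representative $\pi^{-n}y$ with $y \in \mcO$ and all $y_i \in \pi_i^n\mcO_i$. Then $y$ lies in $\pi^n\mcO$, since membership in $\pi^n\mcO$ is tested in $\mcO/\pi^n$, and $\mcO/\pi^n$ embeds in the product by the definition of $\mcO$.

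The main obstacle is upgrading this identity to $A$. Tensoring with a flat module does not obviously commute with infinite products, so we want to avoid the naive version. Instead, I would rely on the fact that $\mcO$ is a filtered colimit of finite products $\mcO_{[1,B],n}$ modulo $\pi^n$. With this, any two models $A, A'$ with the same generic fiber and the same fibers at every $i$ can be compared mod $\pi^n$ for each $n$. There the comparison reduces, after enlarging $B$, to finitely many factors together with the $\infty$ factor, using finite presentation. Passing to the limit over $n$ then uses $\pi$-adic completeness and $\pi$-torsion-freeness, so that $A$ is recovered as $A[1/\pi] \cap \widehat{A}$.

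Alternatively, one could characterize $\mcP_x(R)$ for flat $\mcO$-algebras $R$ as the elements of $G(R[1/\pi])$ whose images in each $G(R \otimes \mcO_i [1/\pi_i])$ lie in $\mcP_{x_i}$. I expect this step to need the most care.

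For the embedding, I would first take a closed embedding $\mcP'_x \hookrightarrow \GL_{n,\mfS}$ with smooth quasi-affine quotient. This is available over the two-dimensional regular base $\mfS$ by the argument of Pappas--Zhu \cite[Section 4]{PappasZhu} / \cite{FHLR25}: use a faithful representation on a finite free submodule of the coordinate ring, so that $\mcP'_x$ is the stabilizer of a line in some tensor construction, and the quotient is quasi-affine. Base-changing along $\mfS\to\mcO$ then preserves the closed embedding. It also preserves representability of the fppf quotient $\GL_n/\mcP'_x$ by a smooth quasi-affine scheme, since quotient formation commutes with flat base change and quasi-affineness is stable under base change.
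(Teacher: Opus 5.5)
\textbf{Existence and the embedding.} These parts match the paper. You base change $\mcP'_x$ along $t\mapsto\pi$, and you take the Pappas--Zhu embedding over the two-dimensional regular base $\mfS$ for $\mcP'_x$ before base changing. One small correction: $\mfS\to\mcO$ is not flat. If it were, $\mcO/\pi$ would be flat over $\mfS/t=W(k)$, but it is a nonzero $\mbF_p$-algebra. So justify the last step instead by the fact that the fppf quotient sheaf commutes with arbitrary base change, and that smoothness and quasi-affineness are stable under base change.

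\textbf{The gap is uniqueness.} This is the only non-formal part of the statement, and you leave it as a sketch. Your identity $\mcO=(\prod_i\mcO_i\times\mcO_\infty)\cap F$ is correct, but the proposed upgrade to $A$ does not work as written. The rings $A/\pi^n$ and $A'/\pi^n$ do not sit inside a common ring. The isomorphisms $A/\pi^n\simeq A'/\pi^n$ you would get by descending both to $\mcO_{[1,B],n}$ depend on choices. They need not be compatible as $n$ varies, nor with the identification $A[1/\pi]=A'[1/\pi]$. So there is nothing to glue, and ``$A=A[1/\pi]\cap\widehat{A}$'' presupposes exactly such an ambient ring. It also needs a separatedness statement you have not checked over the non-noetherian ring $\mcO$. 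Your alternative characterization has the same problem: it is stated, not proved.

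\textbf{The missing idea.} Argue element by element inside $A_F:=A[1/\pi]=A'[1/\pi]$, and propagate integrality from the single fibre at $\infty$ to a neighbourhood $\mcO_{\geq B}$.
\begin{enumerate}
\item Both $A$ and $A'$ are finitely presented, being smooth and affine. By symmetry it suffices to show $A_{\mcO_{\geq B}}\subset A'_{\mcO_{\geq B}}$ for some $B$. Since $A$ is finitely generated, it suffices to treat one element $x\in A$.
\item Choose $n\geq 1$ with $y:=\pi^n x\in A'$. Since $x_\infty\in A_{\mcO_\infty}=A'_{\mcO_\infty}$, the image $\overline{y}$ vanishes in $A'\otimes_{\mcO}\mcO_\infty/\pi_\infty$.
\item $A'/\pi$ is finitely presented over $\mcO/\pi=\varinjlim_B\mcO_{[1,B],1}$. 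So after enlarging $B$, both $A'/\pi$ and $\overline{y}$ descend to $\mcO_{[1,B],1}$.
\item The composite $\mcO_{[1,B],1}\to\mcO_{\geq B+1}/\pi$ factors through the $\infty$-factor. Hence $\overline{y}=0$ in $A'\otimes_{\mcO}\mcO_{\geq B+1}/\pi$.
\item This means $\pi^{n-1}x\in A'_{\mcO_{\geq B+1}}$. The division by $\pi$ takes place in $A_{F_{\geq B+1}}$, which contains $A'_{\mcO_{\geq B+1}}$ because the latter is a direct factor of $A'$.
\item Repeating this $n$ times gives $x\in A'_{\mcO_{\geq B}}$ for some $B$.
\item Finally, $\mcO=\prod_{i<B}\mcO_i\times\mcO_{\geq B}$. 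On the finitely many factors $i<B$ the two models agree by hypothesis, so $A=A'$.
\end{enumerate}
This is the paper's \Cref{Lemma: uniqueness of integral model}. Note that it uses only the fibre at $\infty$ together with finitely many other fibres. The same argument proves $A=\{a\in A_F\colon a_i\in A_{\mcO_i}\ \text{for all } i\}$, which is exactly what your alternative characterization would need.
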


We call $\mcP_{x}$ the \textit{Bruhat--Tits group scheme over $\mcO$} associated with $x=\{ x_i \}_{i \in \mbN \cup \{ \infty \}}$.

\begin{proof}
    The uniqueness follows from \Cref{Lemma: uniqueness of integral model} below.
    By \Cref{Proposition: Bruhat-Tits group scheme over W(k)[[t]]}, the base change $\mathcal{P}_{x} := \mathcal{P}'_{x} \times_{\Spec \mfS} \Spec \mcO$, where $\mfS \to \mcO$ is defined by $t \mapsto \pi$, satisfies the desired properties.
    The last statement follows from \cite[Corollary 11.7]{PappasZhu} (by applying it to $\mcP'_{x}$).
\end{proof}

The following lemma is used in the proof of \Cref{Corollary:Bruhat-Tits group scheme over O}.

\begin{Lemma}\label{Lemma: uniqueness of integral model}
    Let $A$ be a finitely presented $F$-algebra.
    Let $R, R' \subset A$ be finitely presented $\mcO$-subalgebras such that $R[1/\pi] = R'[1/\pi] =A$.
    We assume that $R_{\mcO_{\infty}}=R'_{\mcO_{\infty}}$ in $A_{F_\infty}$.
    Then there exists an integer $B \geq 1$ such that
    $R_{\mcO_{\geq B}}=R'_{\mcO_{\geq B}}$ in $A_{F_{\geq B}}$.
\end{Lemma}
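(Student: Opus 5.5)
By symmetry it suffices to show that, after enlarging $B$, we have $R_{\mcO_{\geq B}} \subset R'_{\mcO_{\geq B}}$ inside $A_{F_{\geq B}}$; the reverse inclusion follows by exchanging $R$ and $R'$ and taking the larger $B$.

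First I would reduce to finitely many elements. Since $R$ is finitely presented over $\mcO$, choose generators $r_1,\dotsc,r_m$ of $R$ as an $\mcO$-algebra. It then suffices to show that each image $(r_j)_{\geq B}$ lies in $R'_{\mcO_{\geq B}}$ for $B$ large. Since $R'[1/\pi]=A$, there is an integer $N\geq 0$ with $\pi^N r_j \in R'$ for all $j$. So the problem becomes: given $s:=\pi^N r_j\in R'$ whose image in $R'_{\mcO_\infty}$ is divisible by $\pi_\infty^N$ (because $r_j\in R_{\mcO_\infty}=R'_{\mcO_\infty}$), show that $s$ becomes divisible by $\pi^N$ in $R'_{\mcO_{\geq B}}$ for some $B$.

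I would rephrase this via $R'/\pi^N R'$, a finitely presented $\mcO/\pi^N$-algebra. Here $\mcO/\pi^N=\varinjlim_{B'}\mcO_{[1,B'],N}$ as in \Cref{Remark:O mop pi as filtered rings}, with the convention in \Cref{Lemma:representability of L^n X}. Exactly as in the proof of that lemma, for $B$ large $R'/\pi^N$ is the base change of $\prod_{i<B} R'_{\mcO_i}/\pi_i^N \times R'_{\mcO_\infty}/\pi_\infty^N$ along $\mcO_{[1,B],N}\to\mcO/\pi^N$. I would also choose $B$ so that the image $\bar s\in R'/\pi^N$ comes from some element $(\bar s_1,\dotsc,\bar s_{B-1},\bar s_\infty)$ at that finite level. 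On the $\infty$-component we have $\bar s_\infty=0$ by hypothesis. The element then maps into $R'/\pi^N \otimes \mcO_{\geq B}/\pi^N = R'_{\mcO_{\geq B}}/\pi^N$ via a map that, on the factor $\mcO_{\geq B}/\pi^N$, only uses the $\infty$-component, transported through the $\psi_i$. Hence $\bar s$ vanishes in $R'_{\mcO_{\geq B}}/\pi^N$, so $s=\pi^N s'$ with $s'\in R'_{\mcO_{\geq B}}$.

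It remains to conclude $r_j=s'$ in $A_{F_{\geq B}}$. Both satisfy $\pi^N r_j=\pi^N s'$, and $\pi$ is a non-zero-divisor on $A_{F_{\geq B}}$ since $\pi$ is invertible there. Hence $(r_j)_{\geq B}=s'\in R'_{\mcO_{\geq B}}$.

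The main obstacle is making the identification of $R'/\pi^N$ with a base change from $\mcO_{[1,B],N}$ precise, and checking that the projection to $\mcO_{\geq B}/\pi^N$ factors through the $\infty$-component. This uses that $\mcO_{[1,B],N}\to\mcO/\pi^N$ sends the $\infty$-factor idempotent onto the $\geq B$ part. I would also need $R'\otimes_\mcO \mcO_{\geq B}=R'_{\mcO_{\geq B}}$ to be compatible with reduction mod $\pi^N$, but this is just right exactness of $\otimes$.
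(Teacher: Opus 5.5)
Your proof is correct and follows the paper's argument: reduce to algebra generators of $R$, clear denominators into $R'$, descend $R'$ modulo a power of $\pi$ to a finite level $\mcO_{[1,B],\ast}$ using finite presentation, and use that the $\infty$-component vanishes to kill the element on the part indexed by $i \geq B+1$. The only difference is cosmetic: you work modulo $\pi^N$ in one step, whereas the paper works modulo $\pi$ and removes one power of $\pi$ at a time by induction.
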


\begin{proof}
    It suffices to show that
    $R_{\mcO_{\geq B}} \subset R'_{\mcO_{\geq B}}$ in $A_{F_{\geq B}}$ for some $B \geq 1$.
    Since $R$ is a finitely generated $\mcO$-algebra, it suffices to show that for an element $x \in R$, there exists some $B \geq 1$ such that $x \in R'_{\mcO_{\geq B}}$.
    Let $n \geq 1$ be such that $y:=\pi^nx \in R'$.
    It suffices to prove $\pi^{n-1}x \in R'_{\mcO_{\geq B}}$ for some $B$, or equivalently, $\overline{y}=0$ in $R'_{\mcO_{\geq B}/\pi}$.
    Since $R'_{\mcO/\pi}$ is finitely presented over $\mcO/\pi$, there exists an $\mcO_{[1, B], 1}$-algebra $R'_B$ for some $B$ such that
    $
    R'_B \otimes_{\mcO_{[1, B], 1}} \mcO/\pi \simeq R'_{\mcO/\pi}
    $
    and $\overline{y} \in R'_B$.
    Since $\overline{y}=0$ in $R'_{\mcO_{\infty}/\pi_\infty}$ by assumption, it follows that $\overline{y}=0$ in 
    $R'_{\mcO_{\geq B+1}/\pi}$.
\end{proof}

\subsection{Parahoric Hecke algebras in families of close local fields}
\label{Subsection:Parahoric Hecke algebras in families of close local fields}

In this subsection, we consider parahoric Hecke algebras in families of close local fields.
We keep the notation of \Cref{Subsection:Unramified reductive groups in families}.
We assume that the residue field $k$ is finite.
Let
$\mathbf{a}_i \subset \mcA(G_i, S_i)$
be the standard chamber (corresponding to the pinning $\{ x_\alpha \}_{\alpha \in \Delta}$), so that the hyperspecial vertex $x_{0, i}$ lies in the closure $\overline{\mathbf{a}}_i$.
Let $I_i \subset G(F_i)$ be the corresponding Iwahori subgroup.
As in \Cref{Subsection:Unramified reductive groups in families}, we choose a compatible family
$x=\{ x_i \}_{i \in \mbN \cup \{ \infty \}}$
of points $x_i \in \mcA(G_i, S_i)$
and let $J_i:=\mcP_{x_i}(\mcO_i) \subset G(F_i)$ be the corresponding parahoric subgroup.
Without loss of generality, we may assume that $x_i \in \overline{\mathbf{a}}_i$.
Then $I_i \subset J_i$.

We recall that
the chamber $\mathbf{a}_i \subset \mcA(G_i, S_i)$
determines a splitting
\[
\widetilde{W}_i = W_{\mathrm{aff}, i} \rtimes (\widetilde{W}_i)_{\mathbf{a}_i}
\]
where $W_{\mathrm{aff}, i}$ is the subgroup generated by the set $\Delta_{\mathrm{aff}, i}$ of affine simple reflections along the walls bounding $\mathbf{a}_i$ and $(\widetilde{W}_i)_{\mathbf{a}_i}$ is the stabilizer of $\mathbf{a}_i$.
Let $(W_{\mathrm{aff}, i})_{x_i} \subset W_{\mathrm{aff}, i}$ be the stabilizer of $x_i$.
Then, by \cite[Theorem 7.8.1]{KPbook}, we have natural bijections
\[
\widetilde{W}_i \simlr I_i \backslash G(F_i) / I_i \quad \text{and} \quad 
(W_{\mathrm{aff}, i})_{x_i}\backslash\widetilde{W}_i/(W_{\mathrm{aff}, i})_{x_i} \simlr J_i \backslash G(F_i) / J_i.
\]

Let $\mcH_{J_i}(G(F_i))$ be the parahoric Hecke algebra associated with $J_i$.
This is a $\mathbb{C}$-algebra under convolution $\ast$, 
with respect to the Haar measure $dg$ on $G(F_i)$ normalized so that $J_i$ has volume $1$.
In the special case where $J_i=I_i$,
we write $\cdot$ for the convolution $\ast$ on the Iwahori--Hecke algebra $\mcH_{I_i}(G(F_i))$.
A $\mathbb{C}$-basis of $\mcH_{J_i}(G(F_i))$ is given by the indicator functions of the double cosets $J_iwJ_i$, where $w \in (W_{\mathrm{aff}, i})_{x_i}\backslash\widetilde{W}_i/(W_{\mathrm{aff}, i})_{x_i}$; we will denote these functions by $T_{w}=1_{J_iwJ_i}$.
By \Cref{Lemma:Iwahori Weyl group},
we have a natural isomorphism
$\widetilde{W}_\infty \simlr \widetilde{W}_i$
for each $i \in \mbN$, which induces a bijection
\[
(W_{\mathrm{aff}, \infty})_{x_\infty}\backslash\widetilde{W}_\infty/(W_{\mathrm{aff}, \infty})_{x_\infty}
\simlr
(W_{\mathrm{aff}, i})_{x_i}\backslash\widetilde{W}_i/(W_{\mathrm{aff}, i})_{x_i}, \quad  w_\infty \mapsto w_i.
\]
Thus, we have the following isomorphism of $\mbC$-vector spaces:
\begin{equation}\label{equation: parahoric Hecke algebras}
    \psi_i \colon \mcH_{J_\infty}(G(F_\infty)) \simlr \mcH_{J_i}(G(F_i)), \quad T_{w_\infty} \mapsto T_{w_i}.
\end{equation}

\begin{proposition}\label{Proposition:algebra isomorphism of parahoric Hecke algebras}
    The map $\psi_i \colon \mcH_{J_\infty}(G(F_\infty)) \simlr \mcH_{J_i}(G(F_i))$ is an isomorphism of $\mbC$-algebras for all $i \in \mbN$.
\end{proposition}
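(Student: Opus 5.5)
The plan is to reduce to the Iwahori case and use the Iwahori--Matsumoto presentation. By the Iwahori--Matsumoto theorem, which holds for any quasi-split (indeed any connected reductive) group over a non-archimedean local field, the Iwahori--Hecke algebra $\mcH_{I_i}(G(F_i))$ has a presentation depending only on three pieces of data. The first is the quasi-Coxeter structure $\widetilde{W}_i = W_{\mathrm{aff}, i} \rtimes (\widetilde{W}_i)_{\mathbf{a}_i}$, including the length function and the set $\Delta_{\mathrm{aff}, i}$. The second is the parameters $q_s = [I_i s I_i : I_i]$ for $s \in \Delta_{\mathrm{aff}, i}$. The third is the normalization of the Haar measure. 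The defining relations are:
\[
T_w \cdot T_{w'} = T_{ww'} \text{ if } \ell(ww') = \ell(w)+\ell(w'), \qquad (T_s - q_s)(T_s+1)=0 \text{ for } s \in \Delta_{\mathrm{aff}, i}.
\]
Hence it suffices to check three things. First, the isomorphism $\widetilde{W}_\infty \simeq \widetilde{W}_i$ of \Cref{Lemma:Iwahori Weyl group} preserves chambers, simple affine reflections and lengths. Second, the parameters $q_s$ agree. Third, one can pass from $I$ to a general parahoric $J$.

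For the first point, the identification \eqref{equation:identification of apartments} is $\widetilde{W}$-equivariant, identifies affine roots, and sends $x_{0,\infty}$ to $x_{0,i}$. Both $\mathbf{a}_\infty$ and $\mathbf{a}_i$ are determined by the pinning together with the affine root hyperplanes, so the identification carries $\mathbf{a}_\infty$ to $\mathbf{a}_i$. It follows that the isomorphism $\widetilde{W}_\infty \simeq \widetilde{W}_i$ identifies $\Delta_{\mathrm{aff}}$, the stabilizers $(\widetilde{W})_{\mathbf{a}}$, and the length functions, since length counts the affine root hyperplanes separating $\mathbf{a}$ from $w\mathbf{a}$.

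For the second point, $q_s$ equals $|k|^{d_s}$, where $d_s$ is the dimension of the root subgroup quotient for the affine root $a$ defining the wall of $s$; equivalently, $d_s = [\mcP_{s}(k) : \text{Borel image}]$ read off in the reductive quotient of the parahoric attached to the wall. By \Cref{Corollary:Bruhat-Tits group scheme over O}, or more simply \Cref{Proposition: Bruhat-Tits group scheme over W(k)[[t]]}, the group schemes $\mcP_{x_i}$ for $x$ on a given wall all arise by base change from a single model $\mcP'_x$ over $\mfS$. Their special fibers are all base changes of $\mcP'_x \otimes_{\mfS} k$ along $\mfS \to k$, so they are literally the same $k$-group for all $i$. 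Therefore the indices $q_s = |\overline{\mcP}_s(k)/\overline{\mcP}_{\mathbf{a}}(k)|$ coincide. This step is where I expect the only real care is needed: one must make sure the parameters, including the possibly unequal ones arising for non-split unramified groups such as unitary groups, are read from the special fiber. The common model over $\mfS$ handles this uniformly.

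For the third point, let $e_J = \mathrm{vol}(I)^{-1} 1_J$ be written as $\sum_{w \in (W_{\mathrm{aff}})_x} T_w$, up to the normalization factor $\sum_{w} q_w = [J:I]$, which matches on both sides by the second point. Then $\mcH_J = e_J \mcH_I e_J$ with compatible measure normalizations. Moreover, $T^{J}_{w} = 1_{JwJ}$ is expressed as the sum of the $T_{v}$ over $v$ in the double coset $(W_{\mathrm{aff}})_x w (W_{\mathrm{aff}})_x$, using $JwJ = \bigsqcup I v I$ over that double coset (Bruhat decomposition in \cite{KPbook}). This expression is the same on both sides. Consequently the vector space isomorphism \eqref{equation: parahoric Hecke algebras} is the restriction of the Iwahori algebra isomorphism $T_{w_\infty} \mapsto T_{w_i}$. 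It is multiplicative after rescaling convolution by $[J:I]$, and that factor is identical on both sides, which proves the proposition.
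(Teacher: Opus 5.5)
Your proposal is correct and has the same overall architecture as the paper's proof. Both use the Iwahori--Matsumoto presentation, then the equality of the parameters $q_s=[IsI:I]$, then the passage to $J$ via $\mcH_J\subset\mcH_I$ and the rescaling $[J:I](f\ast f')=f\cdot f'$.

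The genuine difference is in how you match the $q_s$. The paper reduces to $\mcG=\mcG_{\mathrm{sc}}$ and reads $[I_is_iI_i:I_i]$ off the affine root system via Tits' formula, which is visibly independent of $i$. You instead note that $[P_s:I]$ is determined by the special fiber of the parahoric $\mcP_{x_s}$, for a point $x_s$ on the wall of $s$, together with the image of the Iwahori in it. By \Cref{Proposition: Bruhat-Tits group scheme over W(k)[[t]]}, all these special fibers are base changes of one $\mfS$-model along the same map $\mfS\to k$, $t\mapsto 0$, so they are literally independent of $i$. This is more conceptual and treats non-split unramified groups uniformly, with no root-system bookkeeping.

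Two points need tidying:
\begin{enumerate}
\item The index $[P_s:I]$ equals $1+q_s$, not $q_s$. This is harmless for the comparison.
\item The special fiber of the Iwahori scheme is not a subgroup of $\mcP_{x_s}\otimes k$. You must use the image of the special-fiber map from the Iwahori scheme to $\mcP_{x_s}\otimes k$. You also need to know that this map is the reduction of a homomorphism $\mcP'_y\to\mcP'_{x_s}$ over $\mfS$, for $y\in\mathbf{a}$. That homomorphism exists: use the description $A=(A\otimes_{\mfS}K_0[[t]])\cap A[1/t]$ from the uniqueness part of that proposition, together with the corresponding inclusion of coordinate rings over $K_0[[t]]$.
\end{enumerate}

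Finally, the paper avoids computing $[J:I]$ by the trick $T_1\mapsto T_1$. You instead compute $[J:I]=\sum_{w\in(W_{\mathrm{aff}})_x}q_w$ directly from the matched parameters. Either works.
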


\begin{proof}
Although this is probably well-known to specialists, we provide a sketch of the argument.
    We first treat Iwahori--Hecke algebras.
    By the Iwahori--Matsumoto presentation given in \cite{RostamiBernsteinGeneralReductive},
    in order to see that $\psi_i$ is a homomorphism of $\mbC$-algebras in this case,
    it suffices to show that
    $[I_\infty s_\infty I_\infty \colon I_\infty]=[I_i s_i I_i \colon I_i]$
    for all $s_\infty \in \Delta_{\mathrm{aff}, \infty}$.
    For this, we may assume that $\mathcal{G}=\mathcal{G}_{\mathrm{sc}}$, and then
    $[I_i s_i I_i \colon I_i]$ can be expressed in terms of affine root systems as in \cite[Section 3.3.1]{TitsReductiveGroups}, which shows that it is independent of $i \in \mbN \cup \{ \infty \}$.

    Now we consider the general case.
    The parahoric Hecke algebra $\mcH_{J_i}(G(F_i))$ is naturally a $\mbC$-vector subspace of $\mcH_{I_i}(G(F_i))$.
    The isomorphisms $\psi_i$ are compatible with the embeddings $\mcH_{J_i}(G(F_i)) \hookrightarrow \mcH_{I_i}(G(F_i))$.
    For two functions $f, f' \in \mcH_{J_i}(G(F_i))$, we have $[J_i \colon I_i](f \ast f') = f \cdot f'$.
    It follows from the discussion in the previous paragraph that the map $\psi_i \colon \mcH_{J_\infty}(G(F_\infty)) \simlr \mcH_{J_i}(G(F_i))$
    preserves multiplication up to a scalar multiple.
    Since $\psi_i$ sends $T_1$ to $T_1$, this scalar multiple must in fact be equal to $1$.
    Hence $\psi_i$ is a $\mbC$-algebra isomorphism for all $i$.
\end{proof}

\begin{Remark}\label{Remark:work of Ganapathy}
    Analogous statements for congruence subgroups of Iwahori subgroups or special maximal parahoric subgroups are already known in several cases; see \cite{KazhdanCloseField, GanapathyGSp4, GanapathyHeckealgebra, LH}.
    In such situations, it may be necessary to exclude finitely many $i$.
\end{Remark}

\section{Affine Grassmannian in families of close fields}\label{Section:Affine Grassmannian in families of close fields}

In this section, we introduce the affine Grassmannian in our setting and prove its representability
and finiteness following the by now standard arguments in the literature.

\subsection{$\mcO$-Witt vector affine Grassmannian}\label{Subsection:O-Witt vector affine Grassmannian}

We start by recalling some background on perfect schemes.
We say that a morphism $f \colon X \to Y$ of perfect schemes over $\mbF_p$ is \textit{locally perfectly finitely presented}
if for any affine open subscheme $V=\Spec A \subset Y$ and any affine open subscheme $U=\Spec B \subset X$ with $f(U) \subset V$, the map $A \to B$ is perfectly finitely presented in the sense of \cite[Definition 3.10]{BhattScholzeProjectivity}, i.e.\ $B = (B_0)_{\perf}$ for some finitely presented $A$-algebra $B_0$.
If $f$ is moreover qcqs (quasi-compact and quasi-separated), then we say that $f$ is \textit{perfectly finitely presented} (or \textit{pfp} for short); see \cite[Proposition 3.11]{BhattScholzeProjectivity} for equivalent definitions.
For a functor $f \colon \mcF \to \mcF'$ on $\Perf_{\mcO/\pi}$ which is relatively representable by a morphism of perfect schemes, we say that $f$ is pfp if it is relatively representable by a pfp morphism.

\begin{Definition}\label{Definition:affine Grassmannian GLn}
    Let $n$ be a positive integer.
Let $\Gr_{\GL_n}$ be the functor on
$\Perf_{\mcO/\pi}$
sending $R$ to the set of isomorphism classes of pairs
$(M, \iota)$
where $M$ is a finite projective $W_\mcO(R)$-module of rank $n$ and $\iota$ is an isomorphism $\iota \colon M[1/\pi] \overset{\sim}{\to} W_\mcO(R)[1/\pi]^{\oplus n}$.
We also call such a pair $(M, \iota)$ a lattice $M$ in $W_\mcO(R)[1/\pi]^{\oplus n}$.
\end{Definition}

We shall prove that the functor $\Gr_{\GL_n}$ is an increasing union of perfections of finitely presented projective $\mcO/\pi$-schemes (see \Cref{Theorem:affine Grassmannian of GLn is representable}). To ensure the finite presentation property, we first note the following lemma:

\begin{Lemma}\label{Lemma:lattices inclusion closed condition}
    Let $R$ be a perfect $\mcO/\pi$-algebra.
    Let $M_1$ and $M_2$ be finite projective $W_\mcO(R)$-modules
    with an identification
    $M_1[1/\pi] \simeq M_2[1/\pi]$.
    There exists a finitely generated ideal $I \subset R$ such that a homomorphism
    $R \to R'$ of perfect $\mcO/\pi$-algebras factors through $(R/I)_{\perf}$ if and only if $(M_1)_{W_{\mcO}(R')} \subset (M_2)_{W_{\mcO}(R')}$.
\end{Lemma}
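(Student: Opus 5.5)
Since $M_1, M_2$ are finite projective, I will first reduce to the case where both are free, working Zariski locally on $\Spec R$. Finite projective $W_\mcO(R)$-modules become free Zariski locally on $\Spec W_\mcO(R)$, and $\Spec W_\mcO(R)$ has the same open subsets as $\Spec R$ because $\pi$ is topologically nilpotent and $W_\mcO(R)$ is $\pi$-adically complete. The condition in the statement is a statement about closed subschemes of $\Spec R$ that is compatible with base change, so it glues. We can cover $\Spec R$ by finitely many $D(f_j)$ and take $I$ to be generated by the local ideals (after clearing denominators); this stays finitely generated up to radical, which suffices since we only care about $(R/I)_{\perf}$. Alternatively, one can embed $M_2$ as a direct summand of a free module $W_\mcO(R)^{m}$ with complement $N$. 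Then $M_1 \subset M_2$ if and only if $M_1 \subset W_\mcO(R)^m$ and the projection of $M_1$ to $N[1/\pi]$ vanishes.

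In the free case, choose bases and write the inclusion $M_1[1/\pi] \simeq M_2[1/\pi]$ as a matrix $A \in M_n(W_\mcO(R)[1/\pi])$ expressing a basis of $M_1$ in terms of a basis of $M_2$. Multiply by $\pi^N$ so that $\pi^N A$ has entries in $W_\mcO(R)$. Then $(M_1)_{W_\mcO(R')} \subset (M_2)_{W_\mcO(R')}$ holds if and only if every entry $a$ of $\pi^N A$ maps into $\pi^N W_\mcO(R')$. So the problem reduces to the following claim: for a single element $a \in W_\mcO(R)$, the locus where $a \in \pi^N W_\mcO(R')$ is cut out by a finitely generated ideal up to perfection. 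The comparison with $R'$ uses that $W_\mcO(-)$ commutes with base change in the appropriate sense. Concretely, $W_{\mcO,N}(R') = W_{\mcO,N}(R)\otimes_{W_{\mcO,N}(R)} W_{\mcO,N}(R')$, and $\pi$-torsion-freeness (flatness) lets us inductively divide by $\pi$.

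For the claim, I induct on $N$. The condition $a \equiv 0 \bmod \pi$ in $W_\mcO(R')$ means that the image $\bar a \in W_\mcO(R')/\pi = R'$ vanishes. Here I use $W_{\mcO,1}(R) = W(R)\otimes_{W(\mcO/\pi)}\mcO/\pi = R$, since $W(R)/p$ maps to $R$ and the map $W(\mcO/\pi)\to\mcO/\pi$ kills $p$. So this condition holds if and only if $R \to R'$ kills $\bar a \in R$, which is the ideal $(\bar a)$. Having passed to $R_1 := (R/\bar a)_{\perf}$, we have $a = \pi a_1$ in $W_\mcO(R_1)$ with $a_1$ unique, because $W_\mcO(R_1)$ is $\pi$-torsion free. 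This holds since $W_\mcO(R_1)$ is the $\pi$-adic completion of the flat base change $W(R_1)\widehat{\otimes}\mcO$, and $\mcO$ is flat over $W(\mcO/\pi)$ in the relevant sense. Then repeat with $a_1$. After $N$ steps we obtain a finitely generated ideal $I$ (generated by lifts to $R$ of the successive elements, which is legitimate up to radical). The torsion-freeness guarantees that the condition over $R'$ is equivalent to factoring through each successive quotient.

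The main obstacle is the $\pi$-torsion-freeness of $W_\mcO(R)$ for arbitrary perfect $R$, together with its compatibility with base change. To establish it, I would use the description from \Cref{Remark:O mop pi as filtered rings}: $\mcO/\pi^n$ is a filtered colimit of finite products of $\mcO_i/\pi_i^n$. Hence $W_{\mcO,n}(R)$ decomposes, locally on the profinite base $\mbN\cup\{\infty\}$, into products of $W_{\mcO_i,n}(R_i)$ with $R_i$ the corresponding factor of $R$. Each of these is $\pi_i$-torsion free modulo $\pi_i^n$, in the sense that multiplication by $\pi_i$ induces $W_{\mcO_i,n-1}\simeq \pi_iW_{\mcO_i,n}$, because $W(R_i)$ is $p$-torsion free and $\mcO_i$ is finite free over $W(k)$ or equal to $k[[\pi_\infty]]$. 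These exact sequences pass to the filtered colimit and then to the limit.
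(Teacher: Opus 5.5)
Your proposal is correct and is essentially the paper's argument. The paper also peels off one power of $\pi$ at a time (induction on $m$ with $M_1\subset\pi^{-m}M_2$, reducing to $M_1\subset\pi^{-1}M_2$) and takes $I$ to be generated by the coordinates of the images of generators of $M_1$ in the finite projective $R$-module $\pi^{-1}M_2/M_2$, embedded as a direct summand of $R^n$; this replaces your reduction to free $M_2$, and the $\pi$-torsion-freeness of $W_{\mcO}(R')$ that you verify is used there only implicitly. Of your two reductions, prefer the direct-summand one. The Zariski-local one is stated loosely: $\Spec W_{\mcO}(R)\neq\Spec R$, so one would have to pass through $W_{\mcO}(R_f)$.
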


\begin{proof}
    We have $M_1 \subset \pi^{-m} M_2$ for some  positive integer $m$.
    By induction on $m$, we may assume that $M_1 \subset \pi^{-1} M_2$.
    Then $M_1 \subset M_2$ if and only if
    the image of $M_1$ in the finite projective $R$-module $M:=\pi^{-1} M_2/M_2$ is zero.
    We embed $M$ into $R^n$ as a direct summand for some $n$.
    Let $f_1, \dotsc, f_l$ be generators of $M_1$.
    Then let $I \subset R$ be the ideal generated by the entries of the images of $f_1, \dotsc, f_l$ in $R^n$.
    We easily see that this $I$ satisfies the desired property.
\end{proof}

As in \cite{ZhuAffineGrassmannianMixed} and \cite{BhattScholzeProjectivity}, we first show the representability of a subfunctor $\overline{\Gr}_{\GL_n, N} \subset \Gr_{\GL_n}$ that one might think of as the space of isogenies of height $N$.

\begin{proposition}\label{Proposition:Schbert varieties and Demazure resolutions for GLn}
    Let $N$ be a positive integer.
    Consider the subfunctor
    $\overline{\Gr}_{\GL_n, N} \subset \Gr_{\GL_n}$
    sending $R$ to the set of isomorphism classes of 
    lattices $M$ in $W_\mcO(R)[1/\pi]^{\oplus n}$ which are contained in $W_\mcO(R)^{\oplus n}$ and satisfy $\bigwedge^nM \simeq \pi^NW_\mcO(R)$.
Then $\overline{\Gr}_{\GL_n, N}$ is representable by the perfection of a finitely presented projective $\mcO/\pi$-scheme.
\end{proposition}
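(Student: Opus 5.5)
Our strategy is to adapt the Bhatt--Scholze / Zhu argument for Witt vector affine Grassmannians, but we do not need their full machinery: thanks to \Cref{Lemma:representability of L^n X}, the family over $\mcO/\pi$ is already ``constant near $\infty$'' at each finite level. The first observation is that a lattice $M \subset W_\mcO(R)^{\oplus n}$ with $\bigwedge^n M \simeq \pi^N W_\mcO(R)$ automatically contains $\pi^N W_\mcO(R)^{\oplus n}$. To see this, use adjugate matrices, working locally on $R$ where $M$ is free. Consequently $M$ is determined by its image $\overline{M} := M/\pi^N W_\mcO(R)^{\oplus n}$ inside $Q:=W_{\mcO,N}(R)^{\oplus n}$. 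This identifies $\overline{\Gr}_{\GL_n,N}(R)$ with the set of $W_{\mcO,N}(R)$-submodules $\overline{M}\subset Q$ satisfying two conditions: $Q/\overline{M}$ is a finite projective $R$-module of rank $N$ after passing through the $\pi$-adic filtration, and $\overline{M}$ is stable under multiplication by $\pi$. Since $W_{\mcO,N}(R)$ is finite free of rank $N$ over $W_{\mcO,N}(R)$-Frobenius-twisted copies of $R$ only in the local-field fibers, I will instead argue via the filtration $\pi^j Q/\pi^{j+1}Q \simeq R^{\oplus n}$, which holds uniformly because $\pi$ is a nonzerodivisor on $W_\mcO(R)$ with quotient $R$.

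Concretely, I would embed $\overline{\Gr}_{\GL_n,N}$ into a product of ordinary Grassmannians by sending $M$ to the flag of subquotients $(M\cap \pi^jW_\mcO(R)^{\oplus n} + \pi^{j+1}W_\mcO(R)^{\oplus n})/\pi^{j+1}W_\mcO(R)^{\oplus n}$, exactly as in \cite{ZhuAffineGrassmannianMixed}. A cleaner alternative is to realize $\overline{\Gr}_{\GL_n,N}$ as a closed subfunctor of the perfection of the Grassmannian $\Gr(nN-N, Q)$ of $R$-submodules of the finite projective $R$-module $Q$, where $Q$ is viewed through $W_{\mcO,N}$. The condition of being a $W_{\mcO,N}(R)$-submodule is then a closed condition, cut out by finitely many equations. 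For this I would use that $L^N_\mcO \mathbb{A}^1$ is representable by \Cref{Lemma:representability of L^n X}, so multiplication is given by finitely presented data. I would also need that the locus where the quotient is projective over $W_{\mcO,N}$ (equivalently, that $M$ is projective over $W_\mcO$) is closed. This locus is controlled by the ranks of the graded pieces $\pi^jQ/(\pi^{j+1}Q+\pi^j\overline{M})$ being locally constant, and the equality $\bigwedge^nM\simeq\pi^N$ forces the total rank.

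For a simpler route to finite presentation, and the one I would try first, I would use \Cref{Lemma:representability of L^n X} directly. The functor $R\mapsto\{$submodules of $W_{\mcO,N}(R)^{\oplus n}$ with the above properties$\}$ is defined purely in terms of $W_{\mcO,N}$. Since $\mcO/\pi^N=\varinjlim_B \mcO_{[1,B],N}$ and $\mcO_{[1,B],N}\to\mcO/\pi^N$ splits, the functor is, for some $B$, the base change along $\mcO_{[1,B],1}\to\mcO/\pi$ of $\coprod_{i\le B}\overline{\Gr}_{\GL_n,N,i}\amalg \overline{\Gr}_{\GL_n,N,\infty}$. For each single field $F_i$, the object $\overline{\Gr}_{\GL_n,N,i}$ is the perfection of a projective $k$-scheme, by \cite{BhattScholzeProjectivity}, \cite{ZhuAffineGrassmannianMixed}, and the classical equal-characteristic case. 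In fact $B=N$ suffices, because $W_{\mcO,N}(R)$ agrees with $W(R)\otimes\mcO_\infty/\pi_\infty^N$ on the part over $i\ge N$, by construction of $\mcO$. A base change of a finite disjoint union of perfections of projective finitely presented schemes is again of that form.

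The main obstacle is making the identification over $i\ge B$ precise. Over the component $\Spec\mcO_{\ge B}/\pi$, which is a profinite family rather than a single point, one must check that $W_{\mcO,N}(R)\simeq W(R)\otimes_{W(k)}\mcO_\infty/\pi_\infty^N$ functorially in $R$, and that this respects the element $\pi$. This holds because $\mcO_{\ge B}/\pi^N\simeq(\mcO_\infty/\pi_\infty^N)\otimes_k \mcO_{\ge B}/\pi$ compatibly with $\pi$ when $e_i\ge N$. I would verify this isomorphism via \Cref{Remark:O mop pi as filtered rings}, using that each $\psi_i$ is a $k$-algebra isomorphism sending $\pi_\infty\mapsto\pi_i$.
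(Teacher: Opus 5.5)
You take a different route from the paper. The paper reruns the Bhatt--Scholze argument in the family: the resolution $\widetilde{\Gr}_{\GL_n,N}$, which is visibly pfp; spreading out $(\widetilde{\Gr}_{\GL_n,N},\mcL)$ to get semiampleness fibre by fibre; Stein factorization; and a comparison of the two equivalence relations. You instead want to split off finitely many fibres and show that over $\Spec \mcO_{\geq B}/\pi$ the functor is the constant equal-characteristic one. Your ring-level input is correct: for $B\geq m$ one has $\mcO_{\geq B}/\pi^m\simeq \mcO_{\geq B}/\pi\otimes_k k[t]/(t^m)$ with $\pi\mapsto t$, so $W_{\mcO,m}(R)\simeq R[t]/(t^m)$ for perfect $\mcO_{\geq B}/\pi$-algebras $R$.

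The step that carries all the weight is the claim that $\overline{\Gr}_{\GL_n,N}$ ``is defined purely in terms of $W_{\mcO,N}$''. This is not established, and the criteria you give for it are wrong. The definition requires $M$ to be finite projective over the full ring $W_\mcO(R)$, so you need a correct finite-level test for that, and yours fail:
\begin{itemize}
\item The ranks of the graded pieces of the $\pi$-adic filtration of $W_\mcO(R)^{\oplus n}/M$ are not locally constant on $\overline{\Gr}_{\GL_n,N}$. For $n=N=2$, the point $\pi W_\mcO(R)^{\oplus 2}$ lies in the closure of the orbit of $\langle e_1,\pi^2e_2\rangle$, and the rank of the first graded piece jumps from $1$ to $2$ there. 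Imposing such conditions cuts out Schubert cells, not their closures.
\item Projectivity of the quotient over $W_{\mcO,N}(R)$ is not equivalent to projectivity of $M$. For $M=\pi W_\mcO(R)^{\oplus 2}$ and $N=2$, the quotient $R^{\oplus 2}$ is not $W_{\mcO,2}(R)$-projective, although $M$ is free.
\item On fibres with $e_i<N$ the quotient is not an $R$-module, so your Grassmannian of $R$-submodules only makes sense on the tail.
\end{itemize}
So identifying the rings $W_{\mcO,N}(R)$ does not by itself identify the functors. The spreading-out argument used for $L^n_\mcO X$ cannot replace this step, since it presupposes the finite presentation you are trying to prove.

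The gap can be filled as follows.
\begin{itemize}
\item Your adjugate argument gives $\pi^NW_\mcO(R)^{\oplus n}\subseteq M\subseteq W_\mcO(R)^{\oplus n}$. Since $\pi$ is a nonzerodivisor on $W_\mcO(R)$ with quotient $R$, $M$ is $\pi$-torsion free and $\pi$-adically complete.
\item Hence $M$ is finite projective if and only if $M/\pi M$ is a finite projective $R$-module: lift $M/\pi M$ to a finite projective $P$, lift the isomorphism to $P\to M$, and use completeness.
\item Because $\pi^{N+1}W_\mcO(R)^{\oplus n}\subseteq \pi M$, the module $M/\pi M$ depends only on the image of $M$ in $W_{\mcO,N+1}(R)^{\oplus n}$.
\item Since $R$ is perfect, hence reduced, the condition $\bigwedge^nM=\pi^NW_\mcO(R)$ can be checked at geometric points. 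There it says that $W_\mcO(R)^{\oplus n}/M$ has length $N$.
\end{itemize}
Consequently, over $\mcO_{\geq N+1}/\pi$, where $W_{\mcO,N+1}(R)\simeq R[t]/(t^{N+1})$ compatibly with $\pi\mapsto t$, the functor is the base change from $k$ of the perfected equal-characteristic $\overline{\Gr}_{\GL_n,N}$. That object is a closed subscheme of a classical Grassmannian. The finitely many remaining fibres are single-field cases, handled by Bhatt--Scholze as in the paper.

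With this repair your argument is more elementary than the paper's. It also gives the explicit bound $B=N+1$, which the paper's method does not provide; your $B=N$ may work too but needs a separate argument. As written, however, the identification on the tail is unsupported.
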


\begin{proof}
    Let $\widetilde{\Gr}_{\GL_n, N}$ be the functor on
    $\Perf_{\mcO/\pi}$ sending $R$ to the set of isomorphism classes of $N$-tuples
    \[
    M_N \subset \cdots \subset M_1 \subset M_0=W_\mcO(R)^{\oplus n}
    \]
    of lattices (of rank $n$) such that every $M_j/M_{j+1}$ is an invertible $R$-module.
    By the same arguments as in \cite[Proposition 7.11 and Proposition 8.6]{BhattScholzeProjectivity}, we see that
    the functor
    $\widetilde{\Gr}_{\GL_n, N} \to \overline{\Gr}_{\GL_n, N}$
    defined by $\{ M_j \} \mapsto M_N$ is relatively representable by a proper surjective pfp morphism, and that $\widetilde{\Gr}_{\GL_n, N}$
    is representable by the perfection of a smooth projective $\mcO/\pi$-scheme.
    Let $\mcL$ be the line bundle on $\widetilde{\Gr}_{\GL_n, N}$
    given by
    $
    \bigotimes^{N-1}_{j=0} M_j/M_{j+1}.
    $
    Let $\widetilde{\Gr}_{\GL_n, N, i}$ and $\mcL_i$ denote the base changes of $\widetilde{\Gr}_{\GL_n, N}$
    and $\mcL$ along the map $\mcO/\pi \to \mcO_i/\pi_i=k$ for $i \in \mbN \cup \{ \infty \}$.
    Since $\widetilde{\Gr}_{\GL_n, N}$ is pfp over $\mcO/\pi = \varinjlim_B \mcO_{[1, B], 1}$ and each $\mcO_{[1, B], 1} \to \mcO/\pi$ has a splitting,
    there is some $B \geq 1$ such that
    $(\widetilde{\Gr}_{\GL_n, N}, \mcL)$ is the base change of
    \[
    (\coprod_{i \leq B} (\widetilde{\Gr}_{\GL_n, N, i}, \mcL_i)) \coprod (\widetilde{\Gr}_{\GL_n, N, \infty}, \mcL_\infty)
    \]
    along $\mcO_{[1, B], 1} \to \mcO/\pi$ (see \cite[Proposition 3.12]{BhattScholzeProjectivity}).
    By the same arguments as in the proof of \cite[Theorem 8.3]{BhattScholzeProjectivity}, 
    one can show that $\mcL_i$ is semiample on $\widetilde{\Gr}_{\GL_n, N, i}$ for each $i$, and we then take the Stein factorization 
    $\phi \colon \widetilde{\Gr}_{\GL_n, N} \to X$,
    where $X$ is the perfection of a finitely presented projective $\mcO/\pi$-scheme and $\phi$ is a proper surjective pfp morphism.
    We consider two closed subschemes
    \[
    \widetilde{\Gr}_{\GL_n, N} \times_X \widetilde{\Gr}_{\GL_n, N} \quad \text{and} \quad
    \widetilde{\Gr}_{\GL_n, N} \times_{\overline{\Gr}_{\GL_n, N}} \widetilde{\Gr}_{\GL_n, N}
    \]
    of $\widetilde{\Gr}_{\GL_n, N} \times_{\Spec \mcO/\pi} \widetilde{\Gr}_{\GL_n, N}$.
    By the same argument as in the proof of \cite[Theorem 8.3]{BhattScholzeProjectivity},
    these two subschemes coincide with each other after restricting to perfect $\mcO_i/\pi_i$-algebras.
    Since they are pfp over $\mcO/\pi$, it follows that 
    \[\widetilde{\Gr}_{\GL_n, N} \times_X \widetilde{\Gr}_{\GL_n, N}=
    \widetilde{\Gr}_{\GL_n, N} \times_{\overline{\Gr}_{\GL_n, N}} \widetilde{\Gr}_{\GL_n, N}.
    \]
    As explained in \textit{loc.cit.},
    this implies that $X \simeq \overline{\Gr}_{\GL_n, N}$.
    In particular $\overline{\Gr}_{\GL_n, N}$ is representable by the perfection of a finitely presented projective $\mcO/\pi$-scheme.
\end{proof}

Now we can show the representability of $\Gr_{\GL_n}$.

\begin{Theorem}\label{Theorem:affine Grassmannian of GLn is representable}
    The functor $\Gr_{\GL_n}$ is an increasing union of (subfunctors which are representable by) perfections of finitely presented projective $\mcO/\pi$-schemes along closed immersions.
\end{Theorem}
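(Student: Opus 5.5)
We will exhaust $\Gr_{\GL_n}$ by the bounded subfunctors
\[
\Gr_{\GL_n}^{(m,N)} \subset \Gr_{\GL_n},
\]
which send $R$ to the set of lattices $M$ satisfying $\pi^{m}W_\mcO(R)^{\oplus n} \subset M \subset \pi^{-m}W_\mcO(R)^{\oplus n}$. Multiplication by $\pi^{m}$ sends such an $M$ to $\pi^m M$, which lies between $\pi^{2m}W_\mcO(R)^{\oplus n}$ and $W_\mcO(R)^{\oplus n}$. So it suffices to work with lattices $M \subset W_\mcO(R)^{\oplus n}$ that contain $\pi^{2m}W_\mcO(R)^{\oplus n}$.

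For such $M$, the determinant $\bigwedge^n M = \pi^{N'}\cdot(\text{invertible})$ need not have a constant exponent $N'$ on $\Spec R$. We handle this as follows.
\begin{itemize}
\item The function $x \mapsto N'(x)$, where $N'(x)$ is read off from the length of $W_\mcO(R)^{\oplus n}/M$ at the point $x$, is locally constant. Indeed, the quotient $W_\mcO(R)^{\oplus n}/M$ is killed by $\pi^{2m}$, and the graded pieces of its $\pi$-adic filtration are finite projective $R$-modules, as in \cite{BhattScholzeProjectivity}.
\item Hence $\Spec R$ decomposes into open and closed pieces indexed by $0 \le N' \le 2mn$.
\item Therefore the functor of lattices with $\pi^{2m}W_\mcO(R)^{\oplus n} \subset M \subset W_\mcO(R)^{\oplus n}$ is the disjoint union over these $N'$ of the subfunctors of $\overline{\Gr}_{\GL_n, N'}$ cut out by the condition $\pi^{2m}W_\mcO(R)^{\oplus n} \subset M$.
\end{itemize}

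By \Cref{Lemma:lattices inclusion closed condition}, the inclusion condition $\pi^{2m}W_\mcO(R)^{\oplus n} \subset M$ is represented by the perfection of a finitely presented closed subscheme. Applying \Cref{Proposition:Schbert varieties and Demazure resolutions for GLn}, each $\Gr_{\GL_n}^{(m,N)}$ is then representable by the perfection of a finitely presented projective $\mcO/\pi$-scheme. In fact the exponent $N'$ is automatically bounded by $2mn$, so the index $N$ can be dropped.

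Next we show that the transition maps $\Gr_{\GL_n}^{(m)} \to \Gr_{\GL_n}^{(m+1)}$ are closed immersions. These maps are monomorphisms of functors. Their images are cut out by the two conditions $\pi^m W_\mcO^{\oplus n} \subset M$ and $M \subset \pi^{-m}W_\mcO^{\oplus n}$, and both are closed, finitely presented conditions by \Cref{Lemma:lattices inclusion closed condition}.

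Finally we show that the union is all of $\Gr_{\GL_n}$. Given $(M,\iota)$ over $R$, the finite generation of $M$ and of $W_\mcO(R)^{\oplus n}$, together with $M[1/\pi] = W_\mcO(R)[1/\pi]^{\oplus n}$, give some $m$ with $\pi^mW_\mcO(R)^{\oplus n} \subset M \subset \pi^{-m}W_\mcO(R)^{\oplus n}$, globally on $\Spec R$. So the union is exhausting as a presheaf. For étale sheafification the inclusion is automatic, since $\Gr_{\GL_n}$ is already defined as a functor of lattices and we match it directly.

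The main obstacle is the local constancy of $N'$. This requires showing that $W_\mcO(R)^{\oplus n}/M$ has the right structure: that $M/\pi^{2m}W_\mcO(R)^{\oplus n}$ is a finite projective $W_{\mcO,2m}(R)$-module with projective graded pieces. I would argue this by reducing to the local rings of $R$ and using the fact that $W_{\mcO, n}(R)$ is finite free over $W_n(R)$ locally over $\Spec \mcO/\pi$. Alternatively, and I would try this first, I would reduce to the case $M \subset W_\mcO^{\oplus n}$ and use $\bigwedge^n M \subset W_\mcO(R)$: this is an invertible ideal $\pi^{N'}u$, whose exponent can be detected on the points of $\Spec R$ exactly as in \cite[Lemma 6.?]{BhattScholzeProjectivity}, via the analogous statement for each $\mcO_i$.
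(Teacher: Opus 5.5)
Your overall strategy is the paper's: exhaust by the bounded pieces $\Gr^{[-m,m]}_{\GL_n}$ (closed by \Cref{Lemma:lattices inclusion closed condition}), translate by a power of $\pi$, split into finitely many open and closed pieces according to the exponent of $\bigwedge^n M$, embed each piece by a pfp closed immersion into some $\overline{\Gr}_{\GL_n,N}$, and conclude with \Cref{Proposition:Schbert varieties and Demazure resolutions for GLn}. Your exhaustion step and your closedness of the transition maps are fine.

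Your first justification of the local constancy of $N'$ fails, though. This is the one in your bullet list, and the first option in your last paragraph. The graded pieces $\pi^jQ/\pi^{j+1}Q$ of $Q=W_\mcO(R)^{\oplus n}/M$ need not be projective $R$-modules. Also, $M/\pi^{2m}W_\mcO(R)^{\oplus n}$ is not projective over $W_{\mcO,2m}(R)$ even when $R$ is a field: take $M=\pi W_\mcO(R)^{\oplus n}$.

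Here is a concrete counterexample. Let $R=k[t^{1/p^\infty}]$ lie over the point $\infty$, so that $W_\mcO(R)=R[[\pi]]$, and let $M\subset R[[\pi]]^{\oplus 2}$ be spanned by $(\pi,t)$ and $(0,\pi)$. Then $\pi^2R[[\pi]]^{\oplus 2}\subset M$ and $\bigwedge^2M=\pi^2R[[\pi]]$. But $Q/\pi Q\simeq R\oplus R/tR$ has rank $1$ where $t\neq 0$ and rank $2$ at $t=0$; this is the degeneration of the open cell of $\overline{\Gr}_{\GL_2,2}$ to the point $\pi W_\mcO(R)^{\oplus 2}$. Only the total length is locally constant. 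Similarly, $W_{\mcO,n}(R)$ is not finite free over $W_n(R)$ near $\infty$: there it equals $R[\pi]/\pi^n$, which is killed by $p$.

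So you must use your second route, the determinant, but not ``via the analogous statement for each $\mcO_i$''. The fiber of $\Spec R$ over $\infty$ is closed but not open, so you cannot reduce to single fibers. Instead, run the argument directly over $W_\mcO(R)$, using that it is $\pi$-torsion free and $\pi$-adically complete with $W_\mcO(R)/\pi=R$ reduced. Zariski locally, $\bigwedge^nM=fW_\mcO(R)$ with $fg=\pi^c$. The loci $\{x\colon v_x(f)\ge j\}$ are closed: divide $f$ by $\pi$ successively over the reduced closed subsets where its reduction vanishes. The same holds for $g$, so $x\mapsto v_x(f)=c-v_x(g)$ is locally constant. On the open and closed locus where it equals $N'$, reducedness of $R$ gives $f=\pi^{N'}u$ with $u\in W_\mcO(R)^\times$. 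With this in place your argument goes through and coincides with the paper's; the paper itself leaves this decomposition implicit.
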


\begin{proof}
    The functor $\Gr_{\GL_n}$ is an increasing union of subfunctors
    $\Gr^{[-m, m]}_{\GL_n}$
    parametrizing lattices $M$ in $W_\mcO(R)[1/\pi]^{\oplus n}$ between $\pi^mW_\mcO(R)^{\oplus n}$ and $\pi^{-m}W_\mcO(R)^{\oplus n}$.
    The inclusion $\Gr^{[-m, m]}_{\GL_n} \subset \Gr_{\GL_n}$
    is (relatively representable by) a 
pfp closed immersion by \Cref{Lemma:lattices inclusion closed condition}.
Moreover $\Gr^{[-m, m]}_{\GL_n}$ is a disjoint union of finitely many open and closed subfunctors which admit a pfp closed immersion into $g\overline{\Gr}_{\GL_n, N} \subset \Gr_{\GL_n}$ for some $g \in \GL_n(\mcO[1/\pi])$ and some large $N$.
Thus the assertion follows from \Cref{Proposition:Schbert varieties and Demazure resolutions for GLn}.
\end{proof}

Let $\mcP$ be a smooth affine group scheme over $\Spec \mcO$
with a closed embedding $\iota_\mcP \colon \mcP \hookrightarrow \GL_n$ of group schemes over $\mcO$ such that the fppf quotient $\GL_n/\mcP$ is (representable by) a smooth quasi-affine $\mcO$-scheme.

\begin{proposition}\label{Proposition:etale descent for torsors}
    The fibered category over $\Perf^{\op}_{\mcO/\pi}$ which associates to a perfect $\mcO/\pi$-algebra $R$ the category of $\mcP$-torsors over $\Spec W_\mcO(R)$ satisfies \'etale descent.
\end{proposition}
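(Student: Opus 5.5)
The plan is to reduce to the case of vector bundles, where descent was already observed before \Cref{Lemma:representability of L^n X}, using the embedding $\iota_\mcP \colon \mcP \hookrightarrow \GL_n$ and the quasi-affineness of $\GL_n/\mcP$. A $\mcP$-torsor $\mcE$ over $\Spec W_\mcO(R)$ is equivalent to a pair $(M, s)$. Here $M$ is the $\GL_n$-torsor $\mcE \times^{\mcP} \GL_n$, that is, a finite projective $W_\mcO(R)$-module of rank $n$. The datum $s$ is a section of the twisted quotient $M \times^{\GL_n} (\GL_n/\mcP)$ over $\Spec W_\mcO(R)$. Conversely, a section $s$ recovers $\mcE$ as the fiber of $\mathrm{Isom}(W_\mcO(R)^n, M) \to M\times^{\GL_n}(\GL_n/\mcP)$ over $s$. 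This equivalence is fppf-local on $\Spec W_\mcO(R)$ and functorial, so it holds for every $R$.

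We then verify descent for each part of the pair. Given an \'etale cover $R \to R'$ of perfect $\mcO/\pi$-algebras, the modules descend by the descent for finite projective $W_\mcO(R)$-modules noted earlier. For the sections, we need the following: for a quasi-affine finitely presented $W_\mcO(R)$-scheme $Y$, formed \'etale locally from $\GL_n/\mcP$ by twisting, the functor $R \mapsto Y(W_\mcO(R))$ is an \'etale sheaf, and morphisms are compatible with it. When $Y$ is affine this is immediate. The map $W_\mcO(R) \to W_\mcO(R')$ is faithfully flat, being \'etale modulo every $\pi^n$ and $\pi$-adically complete (one checks flatness on the $\pi$-adic inverse system, as $W_{\mcO,n}(R')$ is \'etale over $W_{\mcO,n}(R)$). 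Moreover $W_\mcO(R'\otimes_R R') = W_\mcO(R')\widehat\otimes_{W_\mcO(R)} W_\mcO(R')$, so the equalizer sequence of rings is exact by fpqc descent for modules, after passing to $\pi$-adic limits. Sheafiness of the homomorphisms between torsors reduces to the affine scheme $\mathrm{Isom}$, which is a $\mcP$-torsor hence affine, so it is handled the same way.

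We expect the main obstacle to be the quasi-affine (non-affine) case of $Y$, where the completed tensor products do not coincide with ordinary ones. The first approach to try: write $Y$ as an open subscheme of an affine $\overline{Y}$, for instance $\Spec$ of its global sections, or an open of an affine scheme of finite presentation. A descended point of $\overline{Y}$ then lies in $Y$, because lying in an open subscheme can be checked after the faithfully flat map $\Spec W_\mcO(R') \to \Spec W_\mcO(R)$, which is surjective.

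An alternative that avoids quasi-affineness is to use $\mathrm{Isom}$ directly. A descent datum for $\mcP$-torsors over the cover gives a $\mcP$-torsor on $\Spec W_\mcO(R)$ by fpqc descent of affine schemes along $W_\mcO(R)\to W_\mcO(R')$. This works provided we justify that a descent datum relative to $W_\mcO(R'\otimes_R R')$ induces one relative to $W_\mcO(R')\otimes_{W_\mcO(R)}W_\mcO(R')$. Because $\mcE$ is affine and finitely presented, its points only see the ring modulo $\pi^n$ together with the limit, and modulo $\pi^n$ the two rings agree. Hence we would perform descent modulo each $\pi^n$, where it is honest \'etale descent of affine torsors, and take the limit using smoothness of $\mcP$. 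By smoothness, torsors over $W_\mcO(R)$ are determined by their compatible reductions modulo $\pi^n$ (henselian pair, Elkik-type algebraization for affine smooth schemes).
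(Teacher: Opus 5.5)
Your main line is essentially the argument the paper invokes. The paper's proof only cites B\"ultel--Pappas and Zhu's quasi-affine-quotient argument, and your proposal fills it in with the same ingredients: reduction via $\iota_\mcP$ to pairs (finite projective module $M$, section of the quasi-affine twist $M\times^{\GL_n}(\GL_n/\mcP)$); the already-noted descent for finite projective $W_\mcO(R)$-modules; the affine case from exactness of $W_\mcO(R)\to W_\mcO(R')\rightrightarrows W_\mcO(R'\otimes_R R')$, checked modulo $\pi^n$ and passed to the limit; and $\mathrm{Isom}$-schemes for full faithfulness.

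One justification needs repair. You claim $W_\mcO(R)\to W_\mcO(R')$ is faithfully flat because it is \'etale modulo every $\pi^n$ and both rings are $\pi$-adically complete. That inference is not valid for non-Noetherian rings, since $\pi$-adic completions of flat algebras need not be flat. This is exactly what you lean on in the quasi-affine step. The first sentence of your alternative sketch, descent along $W_\mcO(R)\to W_\mcO(R')$, has the same problem.

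You do not need flatness there. Let $y\in\overline{Y}(W_\mcO(R))$ be the descended point and set $U:=y^{-1}(Y)$. Then $U$ is open and contains the image of $\Spec W_\mcO(R')\to\Spec W_\mcO(R)$. That image contains $V(\pi)\cong\Spec R$, because $W_\mcO(R')/\pi=R'$ and $\Spec R'\to\Spec R$ is surjective. Since $W_\mcO(R)$ is $\pi$-adically complete, $\pi$ lies in its Jacobson radical, so every closed point of $\Spec W_\mcO(R)$ lies in $V(\pi)\subset U$. An open subset of an affine scheme containing all closed points is the whole scheme, so $y\in Y(W_\mcO(R))$.

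With this replacement your main argument is complete, and the alternative route is unnecessary. Its limit step would in any case come back to the same quasi-affine argument, or to an algebraization statement you have not proved.
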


\begin{proof}
    This follows from arguments analogous to those in the proofs of \cite[Proposition B.0.2]{Bultel-Pappas} and \cite[Proposition 1.2.6]{Zhuintroaffinegrassmannians}.
\end{proof}

\begin{Definition}\label{Definition:affine Grassmannian of parahoric}
    Let $\Gr_{\mcP}$ be the functor on $\Perf_{\mcO/\pi}$ sending $R$ to the set of isomorphism classes of pairs
$(\mcE, \iota)$
where $\mcE$ is a $\mcP$-torsor over $\Spec W_\mcO(R)$
and $\iota$ is a trivialization
$
\iota \colon \mcE_{W_\mcO(R)[1/\pi]} \overset{\sim}{\to} \mcP_{W_\mcO(R)[1/\pi]}.
$
The functor $\Gr_{\mcP}$ forms an \'etale sheaf  (by \Cref{Proposition:etale descent for torsors}), which is canonically isomorphic to the quotient sheaf
$L_{\mcO}\mcP/L^+_{\mcO}\mcP$.
\end{Definition}

When $\mcP=\mcP_x$ is the Bruhat--Tits group scheme over $\mcO$ constructed in \Cref{Corollary:Bruhat-Tits group scheme over O}, the functor $\Gr_{\mcP}$ is also called the \textit{partial affine flag variety}.

As usual, we may deduce the representability of $\Gr_{\mcP}$ from the known result for $\GL_{n}$.

\begin{Corollary}\label{Corollary:affine Grassmannian of parahoric is representable}
The induced functor
    $\Gr_{\mcP} \to \Gr_{\GL_n}$
    is a pfp locally closed immersion.
    If $\mcP=\mcP_x$ is the Bruhat--Tits group scheme over $\mcO$ constructed in \Cref{Corollary:Bruhat-Tits group scheme over O}, then the functor $\Gr_{\mcP}$ is an increasing union of perfections of finitely presented projective $\mcO/\pi$-schemes along closed immersions.
\end{Corollary}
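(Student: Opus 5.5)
The plan has two parts. First, show that $\Gr_{\mcP} \to \Gr_{\GL_n}$ is a pfp locally closed immersion by the standard argument of \cite[Proposition 1.2.6]{Zhuintroaffinegrassmannians} and \cite{ZhuAffineGrassmannianMixed}, adapted to $W_\mcO$. Second, for $\mcP=\mcP_x$, upgrade this to properness by reducing to the individual fibers.

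For the first part, let $Q:=\GL_n/\mcP$, a smooth quasi-affine $\mcO$-scheme. A point of $\Gr_{\GL_n}(R)$ is a $\GL_n$-torsor $\mcE'$ over $W_\mcO(R)$ with a trivialization $\iota'$ over $W_\mcO(R)[1/\pi]$. A reduction of $\mcE'$ to $\mcP$ compatible with $\iota'$ is the same as a section of $\mcE'\times^{\GL_n}Q$ over $W_\mcO(R)$ extending the section over $W_\mcO(R)[1/\pi]$ given by $\iota'$ and the base point of $Q$. By \Cref{Proposition:etale descent for torsors} and étale descent for $\GL_n$-torsors, this identifies $\Gr_{\mcP}$ with the subfunctor of $\Gr_{\GL_n}$ where this extension exists; the extension is unique since $W_\mcO(R)$ is $\pi$-torsion free.

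To see that this subfunctor is locally closed, choose a $\GL_n$-equivariant embedding $Q\hookrightarrow \overline{Q}$ into an affine $\mcO$-scheme of finite presentation, open in its closure, with $\overline{Q}\subset V$ for a finite free $\GL_n$-representation $V$. This is possible because $Q$ is quasi-affine of finite presentation. Extending a section to $\overline{Q}$ is the condition that certain finitely many coordinate functions, a priori in $W_\mcO(R)[1/\pi]$, lie in $W_\mcO(R)$. This is a lattice-inclusion condition of the form treated in \Cref{Lemma:lattices inclusion closed condition}, so it is cut out by a finitely generated ideal, i.e.\ it is a pfp closed condition on each bounded piece $\Gr^{[-m,m]}_{\GL_n}$. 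Landing in the open $Q\subset\overline{Q}$ is then an open condition. It suffices to impose it modulo $\pi$, because $W_\mcO(R)$ is $\pi$-adically complete and $\pi$-henselian, so a section lands in an open subscheme if and only if its reduction does. That reduction is the $R$-point obtained after applying $\overline{Q}$ over $\mcO/\pi$, which gives a pfp open. Hence $\Gr_{\mcP}\cap \Gr^{[-m,m]}_{\GL_n}$ is a pfp locally closed subscheme of the perfection of a finitely presented projective $\mcO/\pi$-scheme.

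For the second part, with $\mcP=\mcP_x$, write $\Gr_{\mcP}$ as an increasing union of the qc pieces $Y_m:=\Gr_{\mcP}\cap \Gr^{[-m,m]}_{\GL_n}$. The task is to show that each $Y_m$ is closed in $\Gr^{[-m,m]}_{\GL_n}$, or at least contained in a closed pfp union of finitely many Schubert-type closed pieces. The plan is to use the same spreading-out trick as in \Cref{Proposition:Schbert varieties and Demazure resolutions for GLn}. Since $Y_m$ and its scheme-theoretic closure are pfp over $\mcO/\pi=\varinjlim_B \mcO_{[1,B],1}$, after enlarging $B$ they are base changed from a disjoint union of their fibers at $i\le B$ and at $\infty$. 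On each fiber, the partial affine flag variety of the Bruhat--Tits group $\mcP_{x_i}$ is ind-projective: for $i\in\mbN$ by \cite{ZhuAffineGrassmannianMixed,BhattScholzeProjectivity}, and for $i=\infty$ by the classical result of \cite{PappasZhu}, after perfection. So the fiberwise immersions $(Y_m)_i\to \Gr^{[-m,m]}_{\GL_n,i}$ are closed. Hence $Y_m\to \Gr^{[-m,m]}_{\GL_n}$ is a closed immersion, being so after base change from $\mcO_{[1,B],1}$.

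The main obstacle is this last step. Closedness is not formally inherited from fibers over a non-noetherian base, and the pfp descent to $\mcO_{[1,B],1}$ must be applied carefully to the pair consisting of $Y_m$ and its closure. If needed, I would instead prove properness of $Y_m$ directly via the valuative criterion for pfp morphisms. This reduces to rank-one valuation rings $R$ over a single point $i$, where it is the fiberwise ind-properness already known.
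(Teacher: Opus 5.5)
Your proposal is correct and follows the paper's proof: the locally closed immersion is Zhu's argument, which you spell out in the $W_\mcO$-setting, and for $\mcP=\mcP_x$ projectivity of the quasi-projective pieces is deduced, as in the paper, from ind-properness on each fiber via the pfp spreading-out along the split map $\mcO_{[1,B],1}\to\mcO/\pi$. The worry in your last paragraph is unnecessary. Descend the morphism $Y_m\to\Gr^{[-m,m]}_{\GL_n}$ itself (by \cite[Proposition 3.12]{BhattScholzeProjectivity}) and pull back along the splitting; this exhibits it as the base change of $\coprod_{i\le B}(Y_m)_i\sqcup(Y_m)_\infty\to\coprod_{i\le B}\Gr^{[-m,m]}_{\GL_n,i}\sqcup\Gr^{[-m,m]}_{\GL_n,\infty}$, so you need neither scheme-theoretic closures nor the valuative criterion.
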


\begin{proof}
    The first assertion follows from the same argument as in \cite[Proposition 1.2.6]{Zhuintroaffinegrassmannians}.
    We then see that $\Gr_{\mcP}$ is an increasing union of perfections of 
    finitely presented 
    \textit{quasi-projective} $\mcO/\pi$-schemes along closed immersions.
    If $\mcP=\mcP_x$, then these schemes are in fact \textit{projective} since $\Gr_{\mcP}$ is known to be ind-proper after restricting to each $\mcO_i/\pi_i$ by \cite[Section 1.4.2]{ZhuAffineGrassmannianMixed}.
\end{proof}

We need to discuss how to decide whether certain locally closed sub-ind-schemes of $\Gr_{\mcP}$ are actually schemes.

\begin{Definition}\label{Definition:bounded subset}
We write $\mcP=\Spec A$.
        A subset $\Omega \subset \mcP(F)$ is \textit{bounded} if for all $f\in A,$ there is a positive integer $N$ such that the set $\lbrace f(b)\in F \colon  b\in \Omega \rbrace$ is contained in $\pi^{-N}\mcO.$
\end{Definition}

\begin{Remark}\label{Remark:equivalent conditions for bounded subsets}
    Let $i \colon \mcP_F \hookrightarrow \mathbb{A}^{m}_{F}$ be a closed immersion of $F$-schemes.
    For a subset $\Omega \subset \mcP(F)$, the following conditions are equivalent.
    \begin{enumerate}
        \item $\Omega \subset \mcP(F)$ is bounded.
        \item 
        There exists an $N > 0$ such that $i(\Omega) \subset F^{m}$ is contained in $\pi^{-N}\mcO^m.$
        \item There exists an $N > 0$ such that for all $g \in \iota_\mcP(\Omega) \subset \GL_{n}(F)$, we have
        $\pi^N \mcO^n \subset g(\mcO^n) \subset \pi^{-N} \mcO^n$.
    \end{enumerate}
\end{Remark}

Let $\Gr_{\mcP} \hookrightarrow \Gr_{\GL_n}$ be the pfp locally closed immersion as in \Cref{Corollary:affine Grassmannian of parahoric is representable}.
    As in the proof of \Cref{Theorem:affine Grassmannian of GLn is representable}, let $\Gr^{[-N, N]}_{\GL_n} \subset \Gr_{\GL_n}$
    be the subfunctor parametrizing lattices $M$ in $W_\mcO(R)[1/\pi]^{\oplus n}$ between $\pi^NW_\mcO(R)^{\oplus n}$ and $\pi^{-N}W_\mcO(R)^{\oplus n}$.

\begin{proposition}\label{Proposition: quasi-compact sub-ind-scheme of affine Grassmannian}
    Let $Z \hookrightarrow \Gr_{\mcP}$ be a pfp locally closed immersion (so that $Z$ is a sub-ind-scheme).
    The following assertions are equivalent.
    \begin{enumerate}
        \item $Z$ is quasi-compact.
        \item The preimage of $Z(\breve{\mcO}/\pi)$ in $\mcP(\breve{F})$ 
    under the projection
    $\mcP(\breve{F}) \to \Gr_{\mcP}(\breve{\mcO}/\pi)=\mcP(\breve{F})/\mcP(\breve{\mcO})$
    is bounded in the sense of \Cref{Definition:bounded subset}.
        \item $Z$ is contained in $\Gr^{[-N, N]}_{\GL_n}$ for some $N$.
    \end{enumerate}
    Moreover, if $Z$ satisfies these equivalent conditions, then $Z$ is a pfp $\mcO/\pi$-scheme.
\end{proposition}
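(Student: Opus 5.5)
We prove the cycle (3)$\Rightarrow$(1)$\Rightarrow$(2)$\Rightarrow$(3) and then deduce the final assertion.

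For (3)$\Rightarrow$(1), note that $\Gr^{[-N,N]}_{\GL_n}$ is, by the proof of \Cref{Theorem:affine Grassmannian of GLn is representable} and \Cref{Proposition:Schbert varieties and Demazure resolutions for GLn}, the perfection of a finitely presented projective $\mcO/\pi$-scheme. Since $\Gr_{\mcP}\to\Gr_{\GL_n}$ is a pfp locally closed immersion (\Cref{Corollary:affine Grassmannian of parahoric is representable}), the composite $Z\cap\Gr^{[-N,N]}_{\GL_n}\hookrightarrow \Gr^{[-N,N]}_{\GL_n}$ is a pfp locally closed immersion of perfect schemes. In particular it is quasi-compact, so $Z=Z\cap\Gr^{[-N,N]}_{\GL_n}$ is a pfp $\mcO/\pi$-scheme. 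This argument also gives the final assertion once the equivalence is known.

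For (1)$\Rightarrow$(3), the ind-scheme $\Gr_{\mcP}$ is the increasing union of the closed subfunctors $\Gr_{\mcP}\cap\Gr^{[-m,m]}_{\GL_n}$. A quasi-compact scheme mapping into such an ind-scheme factors through some stage, by the standard argument: the preimages of the stages form an increasing chain of closed subsets whose union is $Z$, and one uses quasi-compactness. A cleaner route is to check on field-valued points, pass to a finite affine open cover of $Z$, and note that for an affine $\Spec R\to\Gr_{\GL_n}$ the lattice $M\subset W_{\mcO}(R)[1/\pi]^n$ is bounded by some $\pi^{\pm m}$, since $M$ and its dual are finitely generated.

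For (2)$\Leftrightarrow$(3), I would translate via \Cref{Remark:equivalent conditions for bounded subsets}(3). An element $g\in\mcP(\breve F)$ maps into $\Gr^{[-N,N]}_{\GL_n}(\breve{\mcO}/\pi)$ exactly when $\pi^N\breve{\mcO}^n\subset g\breve{\mcO}^n\subset\pi^{-N}\breve{\mcO}^n$. Here one should be careful that the bounded-set criterion is stated with $F$; applied to $\breve F$ it is the same argument. Hence (3) implies (2) immediately. For the converse, suppose (2) holds with some $N$. Then every $\breve{\mcO}/\pi$-point of $Z$ lies in $\Gr^{[-N,N]}_{\GL_n}$.

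The main obstacle is showing that this forces $Z\subset \Gr^{[-N,N]}_{\GL_n}$ as functors. The locus $Z\cap \Gr^{[-N,N]}_{\GL_n}\subset Z$ is closed and pfp by \Cref{Lemma:lattices inclusion closed condition}. It therefore suffices to show that it contains every point of each quasi-compact stage $Z_m:=Z\cap\Gr^{[-m,m]}_{\GL_n}$, since a closed immersion of perfect schemes that is bijective on points is an isomorphism. Points of $Z_m$ lie over $i\in\mbN\cup\{\infty\}$, with residue fields perfect extensions of $k$. To reduce to $\breve{\mcO}/\pi$-points, use that the closed points of the fibre over $i$ are dense there, since the fibre is a perfection of a finite type $k$-scheme, and that closed points are $\overline k$-valued. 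Such a point gives a $\breve{\mcO}_i/\pi_i=\overline k$-point, which lifts to an $\breve{\mcO}/\pi$-point through the surjection $\breve{\mcO}/\pi\to\overline k$ at $i$, using the product decomposition.

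At $i=\infty$ this needs care, because $\breve{\mcO}/\pi\to\overline k$ at $\infty$ is not split by a section. Instead, I would use \Cref{Lemma:representability of L^n X}-type spreading out. A $\overline k$-point of the fibre $Z_{m,\infty}$ comes from some $g_\infty\in\mcP(\breve F_\infty)$. By the henselian property in \Cref{Remark:adic space Spa E} and the smoothness of $\mcP$, it lifts to $g\in\mcP(\breve F_{\geq B})$, which we extend arbitrarily at $i<B$. This $g$ gives an $\breve{\mcO}/\pi$-point of $\Gr_{\mcP}$ whose fibre at $\infty$ is the given point. Its membership in $Z$ must be checked, again after enlarging $B$, using that $Z_m$ is pfp over $\mcO/\pi$ and \Cref{Lemma: uniqueness of integral model}-style arguments. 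Then (2) gives the bound at that point. By density of closed points in each fibre, and since each fibre is covered this way, we conclude $Z_m\subset\Gr^{[-N,N]}_{\GL_n}$ for all $m$, hence (3).
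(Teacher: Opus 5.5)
Your overall route is the paper's: (1)$\Leftrightarrow$(3) and the final assertion come from $\Gr^{[-N,N]}_{\GL_n}$ being the perfection of a finitely presented projective $\mcO/\pi$-scheme, and (2)$\Leftrightarrow$(3) is read off from the lattice criterion for bounded sets after passing to $\overline{k}$. For (1)$\Rightarrow$(3), only your lattice argument on a finite affine cover is valid. The purely topological one is not, since on $\mbN\cup\{\infty\}$ the closed sets $\{1,\dots,n,\infty\}$ exhaust the space without stabilizing. The paper treats (2)$\Rightarrow$(3) as immediate. You correctly notice that one must still pass from $\breve{\mcO}/\pi$-points of $Z$ to all of $Z$, but your lifting argument for this step fails.

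A $\breve{\mcO}/\pi$-point of $Z$ is a section over all of $\Spec \breve{\mcO}/\pi$, compatible with the structure map to $\Spec \mcO/\pi$. Splitting off the clopen point $i\in\mbN$ gives $\breve{\mcO}/\pi=\overline{k}\times R'$ and $Z(\breve{\mcO}/\pi)=Z_i(\overline{k})\times Z(R')$. So a point of $Z_i(\overline{k})$ lifts only if $Z(R')\neq\emptyset$; the surjection $\breve{\mcO}/\pi\to\overline{k}$ only restricts points, it does not extend them. Likewise at $\infty$, extending $g$ ``arbitrarily at $i<B$'' yields a point of $\Gr_{\mcP}$, not of $Z$, and enlarging $B$ controls only the tail.

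This is not a technicality. Take $Z:=\Gr_{\mcP}\times_{\Spec \mcO/\pi}\Spec \mcO_1/\pi_1$; it is a clopen, hence pfp locally closed, sub-ind-scheme with $Z(\breve{\mcO}/\pi)=\emptyset$. So (2) holds vacuously, while (1) and (3) fail whenever $G$ is nontrivial. The paper's one-line appeal to the remark passes over the same point. The statement therefore needs an extra hypothesis, e.g.\ that $Z\to\Spec \mcO/\pi$ is surjective; alternatively, (2) should be replaced by a bound, uniform in $i$, on the lifts of $Z_i(\overline{k})$ to $\mcP(\breve{F}_i)$.

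Under surjectivity your argument can be completed, and without henselian lifting. Each stage $Z_m$ is pfp, so, as in the proof of local constancy of Schubert varieties, over $\mcO_{\geq B}/\pi$ it is the constant family $(Z_m)_\infty\times_k \Spec \mcO_{\geq B}/\pi$. Choose a stage containing the given $\overline{k}$-point of a fiber together with some point of $Z_\infty$, and take $B$ larger than the index of that fiber. Then the given point extends to a $\breve{\mcO}/\pi$-point of $Z$: use a constant section on $i\geq B$, and chosen points of the nonempty fibers $Z_j(\overline{k})$ for the finitely many remaining $j$. Hence every $\overline{k}$-point of every fiber is $N$-bounded, and your closed-point density and reducedness argument gives $Z\subset\Gr^{[-N,N]}_{\GL_n}$.
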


\begin{proof}
    By the proof of \Cref{Theorem:affine Grassmannian of GLn is representable}, $\Gr^{[-N, N]}_{\GL_n}$ is the perfection of a finitely presented projective $\mcO/\pi$-scheme.
    Then it is clear that $(1)$ is equivalent to $(3)$ and $Z$ is a scheme if $Z$ satisfies these equivalent conditions.
    In order to prove that $(2) \Leftrightarrow (3)$, we may assume that $k= \overline{k}$.
    The claim then follows from \Cref{Remark:equivalent conditions for bounded subsets}.
\end{proof}

\subsection{Schubert varieties}\label{Subsection:Schubert varieties}

Let $\mcP:=\mcP_{x}$ be the smooth affine group scheme over $\mcO$ constructed in \Cref{Corollary:Bruhat-Tits group scheme over O}.
We will discuss Schubert varieties for the affine Grassmannian $\Gr_{\mcP}$ and deduce that they are locally constant in families of close fields (see \Cref{Corollary: Schubert variety local constant}).

Let $\widetilde{W}$ be the Iwahori--Weyl group of $G$; see \Cref{Subsection:Unramified reductive groups in families}.
The map $\mfS \to \mcO$, $t \mapsto \pi$ induces a map
$\widetilde{W} \to \Gr_{\mcP}(\mcO/\pi)$.
Let $w \in \widetilde{W}$ be an element with image $\overline{w} \in \Gr_{\mcP}(\mcO/\pi)$.
Then the $L^+_\mcO \mcP$-orbit
\[
S^\circ_w:= L^+_\mcO \mcP \cdot \overline{w}  \subset \Gr_{\mcP}
\]
is a pfp locally closed subscheme of $\Gr_{\mcP}$
(i.e.\ $S^\circ_w \hookrightarrow \Gr_{\mcP}$ is a locally closed immersion and $S^\circ_w$ is pfp over $\mcO/\pi$).
More precisely, we have the following result.

\begin{proposition}\label{Proposition:Schubert varieties}
    Let $S_w \subset \Gr_{\mcP}$ be the Zariski closure of $S^\circ_w$.
    Then $S_w$ is the perfection of a finitely presented projective $\mcO/\pi$-scheme and $S^\circ_w \subset S_w$ is a quasi-compact open subset.
\end{proposition}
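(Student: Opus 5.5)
The plan is to realize $S_w$ as the image of a perfected Demazure-type map from a finitely presented projective scheme over $\mcO/\pi$, and to obtain quasi-compactness of $S^\circ_w$ from boundedness.

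\emph{Step 1: $S^\circ_w$ lies in a projective piece.} The orbit $S^\circ_w = L^+_\mcO\mcP\cdot \overline{w}$ is the image of $L^+_\mcO\mcP$. The action of $L^+_\mcO\mcP$ on $\overline{w}$ factors through a finite-level quotient $L^m_\mcO\mcP$ for $m$ large: the stabilizer contains the kernel of $L^+_\mcO\mcP \to L^m_\mcO\mcP$, because $w$ has bounded matrix entries in $\GL_n$. By \Cref{Lemma:representability of L^n X}, $L^m_\mcO\mcP$ is the perfection of a finitely presented $\mcO/\pi$-scheme, hence quasi-compact. The map $L^m_\mcO\mcP \to \Gr_{\mcP}$, $g\mapsto g\overline{w}$, therefore has quasi-compact image. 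Since the preimage of that image in $\mcP(\breve F)$ is $\mcP(\breve\mcO)\,w\,\mcP(\breve\mcO)$, which is bounded, \Cref{Proposition: quasi-compact sub-ind-scheme of affine Grassmannian} places it inside $\Gr_{\mcP}\cap\Gr^{[-N,N]}_{\GL_n}$. This intersection is the perfection of a finitely presented projective $\mcO/\pi$-scheme by \Cref{Corollary:affine Grassmannian of parahoric is representable} together with the proof of \Cref{Theorem:affine Grassmannian of GLn is representable}.

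\emph{Step 2: $S_w$ and finite presentation.} Define $S_w$ as the scheme-theoretic image of the proper map from a perfected Demazure resolution into this projective piece. Concretely, use the convolution product
\[
L^+_\mcO\mathcal{P}_{s_1}\times^{L^+_\mcO\mcI}\cdots\times^{L^+_\mcO\mcI} L^+_\mcO\mathcal{P}_{s_r}\cdot\tau/L^+_\mcO\mcP,
\]
attached to a reduced expression of $w$, built from the $\mcO$-Bruhat--Tits group schemes of \Cref{Corollary:Bruhat-Tits group scheme over O} for the Iwahori $\mcI$ and the minimal parahorics $\mcP_{s_j}$. This product is an iterated $\mathbb{P}^1$-bundle (perfected) over $\mcO/\pi$, hence proper and pfp. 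The image of a pfp proper map into a pfp separated scheme is closed. The remaining issue is whether its reduced image is finitely presented; \textbf{this is the main obstacle}, since over the non-noetherian base $\mcO/\pi$ a closure need not be pfp. I would argue by spreading out. Since $\mcO/\pi=\varinjlim_B\mcO_{[1,B],1}$, each pfp datum (the projective piece, the resolution, the map) descends to some $\mcO_{[1,B],1}$ and decomposes as a disjoint union of fibers over $i\le B$ and over $\infty$, as in the proof of \Cref{Proposition:Schbert varieties and Demazure resolutions for GLn}. The image then decomposes the same way, each fiber being a Schubert variety over $k$, which is a perfected projective variety. Base changing along $\mcO_{[1,B],1}\to\mcO/\pi$ yields a pfp closed subscheme $S_w'$ containing $S^\circ_w$, whose fibers over every $i$ are the classical Schubert varieties.

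To identify $S_w'$ with the Zariski closure $S_w$, I would compare at the topological level, where $\Spec\mcO/\pi$ is profinite. Because the image $S_w'$ is closed and $S^\circ_w$ is dense in each fiber, together with the descent above, $S^\circ_w$ is dense in $S_w'$. The underlying reduced structures therefore agree, and perfect schemes are reduced, so $S_w'=S_w$.

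\emph{Step 3: openness.} $S_w\setminus S^\circ_w$ is the union of the images of the finitely many $S_{w'}$ with $w'<w$ in the Bruhat order, each closed by Step 2. So $S^\circ_w$ is open in $S_w$, and it is quasi-compact as the image of $L^m_\mcO\mcP$.
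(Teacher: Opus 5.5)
Your proof is correct, and its decisive step is the paper's. Since $\mcO/\pi=\varinjlim_B\mcO_{[1,B],1}$ and each $\mcO_{[1,B],1}\to\mcO/\pi$ is split, every pfp datum is the base change of the disjoint union of its fibers over $i\le B$ and over $\infty$. This reduces the statement to the classical one over $k$.

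\emph{Where you differ is in what you spread out.} The paper spreads out the orbit map $L^n_\mcO\mcP\to Z$, $g\mapsto g\overline{w}$, directly. Then $S^\circ_w\subset S_w$ is the base change of $\coprod_{i\le B}(S^\circ_{w_i}\subset S_{w_i})\sqcup(S^\circ_{w_\infty}\subset S_{w_\infty})$, so finite presentation, projectivity and openness of the cell are inherited all at once.

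\emph{Your Demazure resolution is not load-bearing.} After descending, you only use the fiberwise fact that the closure of the cell is the classical Schubert variety. The resolution also brings obligations you did not discharge:
\begin{enumerate}
\item For non-split unramified $G$, the steps $L^+_\mcO\mcP_s/L^+_\mcO\mcI$ are flag varieties of rank-one groups over $k$, not $\mathbb{P}^1$. Examples are $\Res_{k'/k}\mathbb{P}^1$, or the $3$-dimensional flag variety at the hyperspecial vertex of an unramified $U_3$.
\item For non-Iwahori $\mcP$, your convolution map has image the closure of $L^+_\mcO\mcI\cdot\overline{w}$. This is $S_w$ only if $w$ is replaced by the longest element of its double coset under the finite Weyl group of $\mcP$.
\end{enumerate}

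\emph{Your Step 3 is valid but unnecessary.} The stratification $S_w\setminus S^\circ_w=\bigcup_{w'<w}S_{w'}$ can be checked fiberwise, since the Bruhat order is independent of $i$. It is superseded by the direct descent of the open cell.

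\emph{What your write-up adds.} It makes explicit a point the paper leaves implicit: why the Zariski closure of $S^\circ_w$ equals the base-changed closed subscheme. Density in every fiber over the profinite space $\Spec\mcO/\pi$ implies density, and a perfect closed subscheme is determined by its underlying set.

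\emph{A small point in Step 1.} Appealing to the boundedness criterion presupposes that $S^\circ_w$ is already a pfp locally closed subscheme. It is simpler to note directly that $L^+_\mcO\mcP(R)\subset\GL_n(W_\mcO(R))$ preserves $\pi^{\pm N}W_\mcO(R)^{\oplus n}$, while $w$ is bounded.
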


\begin{proof}
    There is an $n \geq 1$ such that the action of $L^+_\mcO \mcP$ on $\overline{w}$ factors through $L^n_\mcO \mcP$.
    By \Cref{Lemma:representability of L^n X}, $L^n_\mcO \mcP$ is finitely presented over $\mcO/\pi$.
    Let $Z \subset \Gr_{\mcP}$ be a closed subscheme which is pfp over $\mcO/\pi$ such that
    $S^\circ_w \subset Z$.
    Consider the map
    $L^n_\mcO \mcP \to Z$, $g \mapsto g\overline{w}$.
    Then for some $B \geq 1$, this map can be identified with the base change of the natural map
    \[
    (\coprod_{i \leq B} L^n_{\mcO_i} \mcP_i) \coprod L^n_{\mcO_\infty} \mcP_\infty \to (\coprod_{i \leq B} Z_i) \coprod Z_\infty
    \]
    along $\mcO_{[1, B], 1} \to \mcO/\pi$.
    Thus the assertion follows from the corresponding statements for $\Gr_{\mcP_i}$ ($i \in \mbN \cup \{ \infty \}$).
\end{proof}

For $i \in \mbN \cup \{ \infty \}$, we denote by $S^\circ_{w_i} \subset S_{w_i} \subset \Gr_{\mcP_i}$ the base change of $S^\circ_{w} \subset S_{w} \subset \Gr_{\mcP}$ along $\mcO/\pi \to \mcO_i/\pi_i=k$.
These are the (perfect) Schubert cell and the (perfect) Schubert variety in
$\Gr_{\mcP_i}$, respectively, associated with the image $w_i \in \widetilde{W}_i$ of $w$.
We note that for $i=\infty$, these objects are defined as finite type $k$-schemes, but in our context we only consider their perfections.

\begin{Corollary}[Local constancy of Schubert varieties]\label{Corollary: Schubert variety local constant}
    For each $w \in \widetilde{W}$,
    there is some $B \geq 1$ (depending on $w$) such that for all $i \geq B$, we have an isomorphism
    $S_{w_i} \simeq S_{w_\infty}$
    which induces 
    $S^\circ_{w_i} \simeq S^\circ_{w_\infty}$.
\end{Corollary}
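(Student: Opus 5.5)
The plan is to deduce the corollary directly from \Cref{Proposition:Schubert varieties} and the standard spreading-out principle over $\mcO/\pi = \varinjlim_B \mcO_{[1,B],1}$, already used in \Cref{Lemma:representability of L^n X} and \Cref{Proposition:Schbert varieties and Demazure resolutions for GLn}.

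First, by \Cref{Proposition:Schubert varieties}, $S_w$ is the perfection of a finitely presented projective $\mcO/\pi$-scheme and $S^\circ_w \subset S_w$ is a quasi-compact open subscheme. Since $S^\circ_w$ is quasi-compact, its complement is cut out by finitely many equations, so the pair $(S_w, S^\circ_w)$ is perfectly finitely presented over $\mcO/\pi$. By \cite[Proposition 3.12]{BhattScholzeProjectivity}, perfectly finitely presented schemes and open immersions between them descend along the filtered colimit $\mcO/\pi = \varinjlim_B \mcO_{[1,B],1}$. Hence there exist an integer $B_0$ and a pfp $\mcO_{[1,B_0],1}$-scheme $Y$ with a quasi-compact open $Y^\circ \subset Y$ whose base change is $(S_w,S^\circ_w)$.

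Second, since $\mcO_{[1,B_0],1}$ is a finite product of copies of $k$ and the map to $\mcO/\pi$ splits, $Y$ decomposes as $\coprod_{i\le B_0} Y_i \coprod Y_\infty$, compatibly with $Y^\circ$, as in the proof of \Cref{Lemma:representability of L^n X}. I then compute the fiber at $i$ with $B_0 < i < \infty$. The point $i$ corresponds to the composite $\mcO_{[1,B_0],1} \to \mcO/\pi \to \mcO_i/\pi_i = k$. On the component for $\infty$, this map sends $x_\infty$ to $\psi_i(x_\infty) \bmod \pi_i$, which is the identity on $k$. It kills the components for $j \le B_0$. Thus for every $i > B_0$ the fiber $(S_{w_i}, S^\circ_{w_i})$ is identified with $(Y_\infty, Y^\circ_\infty)$. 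The same computation at $i=\infty$ gives $(S_{w_\infty}, S^\circ_{w_\infty})$, so taking $B=B_0+1$ yields $S_{w_i}\simeq S_{w_\infty}$ restricting to $S^\circ_{w_i}\simeq S^\circ_{w_\infty}$.

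The main subtlety is checking that base changing $S_w$ and $S^\circ_w$ to the fiber at $i$ really gives the Schubert variety and cell of $\Gr_{\mcP_i}$ at $w_i$. For the cell, the orbit map $L^n_\mcO\mcP\to\Gr_\mcP$ base changes to the orbit map for $\mcP_i$, by \Cref{Lemma:representability of L^n X} together with the fiberwise description of $\Gr_\mcP$. For the closure, I would use the fact that formation of the scheme-theoretic image commutes with the flat base change along the flat maps $\mcO_{[1,B],1}\to\mcO/\pi$, combined with the proof of \Cref{Proposition:Schubert varieties}. This is exactly how the paper defines $S_{w_i}$ there, so the step is routine once the decomposition is fixed.
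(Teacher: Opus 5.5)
Your proposal is correct and follows essentially the same route as the paper: use finite presentation to spread $S_w$ out over some $\mcO_{[1,B],1}$, decompose over this finite product of copies of $k$, and observe that for every $i > B$ the fiber is the $\infty$-component. The only difference is cosmetic: you descend the pair $(S_w, S^\circ_w)$ in one step, whereas the paper first descends $S_w$ and then enlarges $B$ so that the isomorphism also matches the open cells.
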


\begin{proof}
    Since $S_w$ is finitely presented over $\mcO/\pi$, there exists some $B \geq 1$ such that
    $S_w$
    is the base change of
    $(\coprod_{i \leq B-1} S_{w_i}) \coprod S_{w_\infty}$
    along the map $\mcO_{[1, B-1], 1} \to \mcO/\pi$.
    It then follows that $S_{w_i} \simeq S_{w_\infty}$ for all $i \geq B$.
    After increasing $B$ if necessary, we may assume that this isomorphism induces $S^\circ_{w_i} \simeq S^\circ_{w_\infty}$.
\end{proof}

\begin{Remark}\label{Remark:Bando}
    Bando \cite{BandoGeoSatakeSpringer} also proved local constancy results for Schubert varieties as in \Cref{Corollary: Schubert variety local constant} in the hyperspecial case.
    His method also implies that the bound $B$ can in principle be computed, whereas in our method, we are not able to control $B$.
\end{Remark}

\section{Affine Springer fibers in families of close fields}\label{Section: Affine Springer fibers in families of close fields}

In this section, we introduce affine Springer fibers in our setting and show that certain quotients of these affine Springer fibers are perfect algebraic spaces that are perfectly finitely presented over $\mcO/\pi$.
As mentioned in the introduction, we also introduce a twisted version of affine Springer fibers and prove analogous results.
These results will be crucially used later to verify the local constancy of twisted $\kappa$-orbital integrals in families of close local fields.

\subsection{Perfectly finitely presented algebraic spaces}\label{Subsection: Perfectly finitely presented algebraic spaces}

Let $X$ be an algebraic space over $\mbF_p$.
Let $\Frob_X \colon X \to X$ be the Frobenius morphism, i.e.\ for a scheme $S$ over $\mbF_p$, the map $\Frob_X \colon X(S) \to X(S)$ is induced by the Frobenius endomorphism $\Frob_S \colon S \to S$.
We say that $X$ is \textit{perfect} if $\Frob_X$ is an isomorphism.

\begin{Remark}\label{Remark:perfect algebraic space}
    Let $f \colon U \to X$ be an \'etale morphism where $U$ is a scheme.
    Then the relative Frobenius morphism $(\Frob_U, f) \colon U \to U \times_{X, \Frob_X} X$ is an isomorphism.
    Thus the following assertions are equivalent:
    \begin{enumerate}
        \item $X$ is perfect.
        \item Every scheme $U$ \'etale over $X$ is perfect.
        \item There exists an \'etale surjection $U \to X$ where $U$ is a perfect scheme.
    \end{enumerate}
\end{Remark}

For a morphism $f \colon X \to Y$ of perfect schemes, being locally perfectly finitely presented is \'etale local on the source and target.
Thus we can extend the definition of locally perfectly finitely presented morphisms to morphisms of perfect algebraic spaces over $\mbF_p$.

\begin{proposition}\label{Proposition: pfp morphisms and limit preserving functors}
    Let $f \colon X \to Y$ be a morphism of perfect algebraic spaces over $\mbF_p$.
    Then $f$ is locally perfectly finitely presented if and only if for any cofiltered system $(Z_i, f_{ij})$ of qcqs perfect algebraic spaces over $Y$ with affine transition maps,
    the natural map
        $
        \varinjlim_i \Mor_{Y}(Z_i, X) \to \Mor_Y(\varprojlim_i Z_i, X)
        $
        is bijective.
        (The limit $\varprojlim_i Z_i$ is a (perfect) algebraic space over $Y$ by \cite[Tag 07SF]{stacks}.)
\end{proposition}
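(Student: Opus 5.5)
The plan is to reduce to the corresponding characterization for ordinary algebraic spaces, using the perfection functor. Recall the standard fact (\cite[Tag 04AK]{stacks} and its algebraic-space version \cite[Tag 049J]{stacks}): a morphism of algebraic spaces is locally of finite presentation if and only if it is limit preserving, i.e.\ $\varinjlim_i \Mor_Y(Z_i,X)\to \Mor_Y(\varprojlim_i Z_i, X)$ is bijective for all cofiltered systems of qcqs $Y$-spaces with affine transition maps. The argument should run in parallel with \cite[Proposition 3.11]{BhattScholzeProjectivity}, which treats perfect schemes. Both sides of the claim are \'etale local on $X$ and $Y$, in the following sense:
\begin{itemize}
\item[(a)] local perfect finite presentation is \'etale local by definition;
\item[(b)] the limit-preserving property is \'etale local on source and target for algebraic spaces, by the usual argument of \cite[Tag 049J]{stacks}. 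The only input needed there is that \'etale morphisms are limit preserving and that \'etale coverings of a limit descend to some finite stage (\cite[Tag 07SF]{stacks} and the companion results on limits of algebraic spaces).
\end{itemize}
By (a) and (b), we may choose \'etale covers by perfect affine schemes (\Cref{Remark:perfect algebraic space}) and reduce to $X=\Spec B$, $Y=\Spec A$ with $A$, $B$ perfect. One should also check that the restricted test category (perfect $Z_i$) suffices. Here $\varprojlim Z_i$ of perfect spaces is perfect, and every \'etale $U$ over a perfect space is perfect, so the \'etale-local reductions stay inside perfect objects.

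In the affine case, a $Y$-morphism $Z\to X$ from a perfect $Z$ is the same as an $A$-algebra map $B\to\Gamma(Z,\mcO_Z)$.
\begin{itemize}
\item For the forward direction, write $B=(B_0)_{\perf}$ with $B_0$ finitely presented over $A$. Since $Z$ is perfect, maps $B\to\Gamma(Z,\mcO_Z)$ correspond to maps $B_0\to\Gamma(Z,\mcO_Z)$. For qcqs $Z_i$ with affine transition maps, $\Gamma(\varprojlim Z_i,\mcO)=\varinjlim\Gamma(Z_i,\mcO)$ (\cite[Tag 07SF]{stacks}, Tag 0GQB-type statements for qcqs spaces). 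Finite presentation of $B_0$ then gives the bijection.
\item For the converse, restrict to affine perfect $Z_i=\Spec C_i$. Then $B$ is a compact object in perfect $A$-algebras, i.e.\ $\Hom_A(B,-)$ commutes with filtered colimits. By \cite[Proposition 3.11]{BhattScholzeProjectivity}, this is equivalent to $B$ being perfectly finitely presented over $A$.
\end{itemize}

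The main obstacle I expect is the \'etale-local reduction (b) in the perfect setting. In particular, showing that the limit-preserving condition for all test systems of perfect algebraic spaces descends along \'etale covers of $X$ requires care. The approach would be to first note that, for an \'etale cover $U\to X$, a map $\varprojlim Z_i\to X$ pulls back to an \'etale cover of $\varprojlim Z_i$. That cover descends to an \'etale cover of some $Z_i$ by limit results for algebraic spaces, and it is perfect. Applying the property for $U$ on this cover and gluing via the (also limit-preserving) fiber product $U\times_XU$ then gives the result, exactly mirroring \cite[Tag 049J]{stacks}.
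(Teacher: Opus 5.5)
Your proposal is correct and takes essentially the same route as the paper. The paper deduces the statement from the perfect-scheme case \cite[Proposition 3.11]{BhattScholzeProjectivity} via the standard \'etale-local arguments of \cite[Tag 04AK and Tag 0CP4]{stacks}, and your outline spells out exactly that: reduction to affine $X$ and $Y$ (gluing through $U\times_X U$, descending \'etale covers of the limit), then the affine case via compactness in perfect $A$-algebras. Your opening remark about reducing to ordinary algebraic spaces via the perfection functor is never used and can be dropped.
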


\begin{proof}
    By a standard argument as in \cite[Tag 04AK and Tag 0CP4]{stacks}, one can deduce this from the corresponding statement for perfect schemes \cite[Proposition 3.11]{BhattScholzeProjectivity}.
\end{proof}

A morphism $f \colon X \to Y$ of perfect algebraic spaces over $\mbF_p$ is called perfectly finitely presented (or pfp for short) if it is qcqs and locally perfectly finitely presented.
Let
$\mathrm{AS}^{\mathrm{pfp}}_{/ X}$ denote the category of perfect algebraic spaces $Z$ over $\mbF_p$ with a pfp morphism $Z \to X$.

\begin{proposition}\label{Proposition: category of pfp algebraic spaces}
    Let $(X_i, f_{ij})$ be a cofiltered system of qcqs perfect algebraic spaces over $\mbF_p$ with affine transition maps.
    Let $X:=\varprojlim_i X_i$.
    Then the natural functor
    $
    {2-\varinjlim}_{i} \mathrm{AS}^{\mathrm{pfp}}_{/ X_i} \to \mathrm{AS}^{\mathrm{pfp}}_{/ X}
    $
    is an equivalence.
\end{proposition}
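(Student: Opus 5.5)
The plan is to reduce to the classical noetherian approximation results for algebraic spaces, \cite[Tag 07SK]{stacks} and its relatives, using that perfection identifies pfp objects over a perfect base with perfections of finitely presented objects. First, the key comparison: for a qcqs perfect algebraic space $X$ over $\mbF_p$, every pfp algebraic space $Z \to X$ should be the perfection of a finitely presented algebraic space $Z_0 \to X$. Moreover, morphisms between perfections are computed as
\[
\Mor_X(Z_{\perf}, Z'_{\perf}) = \Mor_X(Z_{\perf}, Z') = \varinjlim_{\Frob} \Mor_X(Z, Z').
\]
For schemes this is \cite[Proposition 3.11]{BhattScholzeProjectivity}. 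For algebraic spaces I would prove it by working étale locally on $X$ and on $Z$, using \Cref{Remark:perfect algebraic space} to check that étale covers of perfect spaces are perfect, and then gluing the finitely presented models. This gluing requires the pfp analogue of the statement for étale descent data.

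Next I would prove full faithfulness. Let $Z, Z'$ be pfp over $X_i$. For $j \geq i$,
\[
\Mor_{X_j}(Z_{X_j}, Z'_{X_j}) = \Mor_{X_i}(Z_{X_j}, Z').
\]
Since $(Z_{X_j})_j$ is a cofiltered system of qcqs perfect algebraic spaces over $X_i$ with affine transition maps and limit $Z_X$, \Cref{Proposition: pfp morphisms and limit preserving functors} applied to $Z' \to X_i$ gives
\[
\varinjlim_j \Mor_{X_i}(Z_{X_j}, Z') = \Mor_{X_i}(Z_X, Z') = \Mor_X(Z_X, Z'_X).
\]
This settles full faithfulness. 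Here one should note that the base change $Z_{X_j}$ is again perfect and qcqs, which holds because fiber products of perfect spaces are perfect and the transition maps are affine.

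For essential surjectivity, let $Z \to X$ be pfp. By the first step, write $Z = (Z_0)_{\perf}$ with $Z_0 \to X$ of finite presentation. Classical approximation \cite[Tag 07SK]{stacks} for the cofiltered limit of qcqs algebraic spaces with affine transitions then gives some $i$ and a finitely presented $Z_{0,i} \to X_i$ with $Z_{0,i} \times_{X_i} X \cong Z_0$. Taking perfections, $(Z_{0,i})_{\perf}$ is pfp over $X_i$, and its base change to $X$ is $((Z_{0,i})_X)_{\perf} = (Z_0)_{\perf} = Z$, because perfection commutes with base change along maps of perfect spaces.

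The main obstacle is the first step for algebraic spaces rather than schemes, namely producing a finitely presented model of a pfp algebraic space. If the direct gluing is awkward, I would try the following alternative. Choose an étale presentation $U \to Z$ with $U$ an affine pfp scheme over $X$ (after reducing to affine $X$), and note that $R = U \times_Z U$ is a qcqs pfp scheme. Then descend the groupoid $(R \rightrightarrows U)$ to some $X_i$ using the scheme case of full faithfulness and essential surjectivity, which is \cite[Proposition 3.12]{BhattScholzeProjectivity}. This step also uses that étaleness descends to some $X_i$. Finally, form the quotient algebraic space over $X_i$, which is perfect by \Cref{Remark:perfect algebraic space}; this bypasses the need for non-perfect models entirely.
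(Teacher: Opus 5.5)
Your full-faithfulness argument is correct. The alternative you sketch at the end is, in substance, the paper's proof: the paper only says that the argument of \cite[Tag 07SK]{stacks} goes through with \cite[Proposition 3.12]{BhattScholzeProjectivity} replacing the classical statement for finitely presented schemes. If you write that route out, add two routine points you skipped.

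First, the descended groupoid must be an \'etale equivalence relation. That is, $R_i\to U_i\times_{X_i}U_i$ must be a monomorphism; otherwise $U_i/R_i$ need not be an algebraic space. This holds for large $i$: the diagonal $R_i\to R_i\times_{U_i\times_{X_i}U_i}R_i$ becomes an isomorphism over $X$, so by full faithfulness it is already an isomorphism at a finite stage.

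Second, the reduction to affine $X$ needs an argument. Pull back an affine \'etale cover $W_0\to X_0$ to $W_i=W_0\times_{X_0}X_i$ and solve the problem over $W=\varprojlim_i W_i$. Then descend the descent datum over $W\times_X W$, and its cocycle condition over the triple product, using full faithfulness. Conclude by effectivity of \'etale descent for algebraic spaces; perfectness and being pfp are both \'etale local.

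Your primary route, by contrast, has a genuine gap at its first step, and that step is where all the work lies. Gluing finitely presented models \'etale locally does not work as described. The maps $s,t\colon R\rightrightarrows U$ of an \'etale presentation of $Z$ are maps between perfections. On chosen models they are only Frobenius-twisted morphisms, in general neither isomorphisms nor \'etale.

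For example, take $U=\Spec k[x]_{\perf}\sqcup\Spec k[y]_{\perf}$ with the involution exchanging the components via $y\mapsto x^p$ and $x\mapsto y^{1/p}$. This involution is not induced by anything on the model $\Spec k[x]\sqcup\Spec k[y]$. The models must be re-chosen compatibly, and a ``pfp analogue of \'etale descent'' only returns perfect objects, so it cannot do this for you.

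One way to repair the step is as follows.
\begin{enumerate}
\item By relative approximation, write $Z=\varprojlim_\mu Z_\mu$ with $Z_\mu\to X$ of finite presentation and affine transition maps, so that $Z=\varprojlim_\mu(Z_\mu)_{\perf}$.
\item Apply \Cref{Proposition: pfp morphisms and limit preserving functors} to $\id_Z$. This exhibits $Z$ as a retract of some $W=(Z_\mu)_{\perf}$, hence as the equalizer of $\id_W$ and an idempotent $e$.
\item The map $e$ is the perfection of an $X$-morphism $e_0\colon Z'_\mu\to Z_\mu$ from a Frobenius twist $Z'_\mu$ of $Z_\mu$, under which $\id_W$ corresponds to the $n$-fold Frobenius $F^n\colon Z'_\mu\to Z_\mu$.
\item Hence the equalizer is the perfection of $Z'_\mu\times_{(F^n,e_0),\,Z_\mu\times_X Z_\mu,\,\Delta}Z_\mu$, which is finitely presented over $X$.
\end{enumerate}

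With that lemma, your primary route becomes a genuinely different proof. It uses the classical \cite[Tag 07SK]{stacks} as a black box instead of rerunning its proof in the perfect category. The cost is relative approximation for algebraic spaces plus this model-existence lemma.
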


\begin{proof}
By a standard argument as in \cite[Tag 07SK]{stacks}, one can deduce this from the corresponding statement for perfect schemes \cite[Proposition 3.12]{BhattScholzeProjectivity}.
\end{proof}

\subsection{$\mcO$-Witt vector affine Springer fibers}\label{Subsection:O-Witt vector affine Springer fibers}

We work in the setting of \Cref{Subsection:Unramified reductive groups in families}.
Let $\mcP:=\mcP_x$ be the smooth affine group scheme over $\mcO=\mcO_F$ as in \Cref{Corollary:Bruhat-Tits group scheme over O}.
Let $Z \subset \Gr_{\mcP}$ be a pfp locally closed subscheme which is stable under the action of $L^+_\mcO \mcP$ by left multiplication.
We will use $Z$ to bound certain denominators.
Let $\gamma \in G(F)$ be a fiberwise strongly regular semisimple element, so that the centralizer
$G_\gamma$ is a maximal torus of $G$ (\Cref{Lemma: centralizer of fiberwise strongly regular semisimple}).

\begin{Definition}[Affine Springer fiber]\label{def: Affine Springer fiber O}
    The \textit{affine Springer fiber} associated with
    the triple
    $(\mcP, \gamma, Z)$ is
    the subfunctor
    \[
    X^Z_{\mcP, \gamma} \subset \Gr_{\mcP}=L_\mcO \mcP/L^+_\mcO \mcP
    \]
    such that, for any perfect $\mcO/\pi$-algebra $R$, 
    $X^Z_{\mcP, \gamma}(R)$ consists of elements
    $g \in \Gr_{\mcP}(R)$ satisfying $\overline{g^{-1}_x \gamma g_x} \in Z(\kappa(x))$ for every geometric point $x$ of $\Spec R$, where $g_x \in L_\mcO \mcP(\kappa(x))=\mcP(W_\mcO(\kappa(x))[1/\pi])$ is a lift of the image of $g$ in $\Gr_{\mcP}(\kappa(x))=L_\mcO \mcP(\kappa(x))/L^+_\mcO \mcP(\kappa(x))$.
    Note that $X^Z_{\mcP, \gamma} \hookrightarrow \Gr_{\mcP}$ is a pfp locally closed immersion.
\end{Definition}

For the $F$-torus $G_\gamma$, we assume that we are in \Cref{Situation: torus} and use the notation there.
We denote by
$
\Lambda_\gamma \subset \Hom_{E_\infty}(\mbG_{m}, (G_\gamma)_{E_\infty})
$
the subgroup consisting of elements fixed by the inertia subgroup of $\Gal(E_\infty/F_\infty)$.
With the notation of \Cref{Construction: etale covering of family of close fields},
we have 
\[
\Lambda_\gamma=\Hom_{F'_\infty}(\mbG_{m}, (G_\gamma)_{F'_\infty}).
\]
We embed $\Lambda_\gamma$ into 
$\Hom_{E_{\geq B}}(\mbG_{m}, (G_\gamma)_{E_{\geq B}})$
via the splitting \eqref{equation:lattice in cocharacter group}
\[
\Hom_{E_\infty}(\mbG_{m}, (G_\gamma)_{E_\infty}) \to \Hom_{E_{\geq B}}(\mbG_{m}, (G_\gamma)_{E_{\geq B}}).
\]
Since $\Lambda_\gamma$
is contained in $\Hom_{F'_{\geq B}}(\mbG_{m}, (G_\gamma)_{F'_{\geq B}})$,
we then obtain an injection
\[
\Lambda_\gamma \to L_\mcO G_\gamma(\mcO'_{\geq B}/\pi), \quad \lambda \mapsto \lambda(\pi).
\]
We will identify $\Lambda_\gamma$ with the image of this injection.
Since $L_\mcO G_\gamma$ acts on $X^Z_{\mcP, \gamma}$ by left multiplication, this injection induces an action of $\Lambda_\gamma$ on
$(X^Z_{\mcP, \gamma})_{\mcO'_{\geq B}/\pi}$.
We note that this action is compatible with the actions of $\Gal(E_\infty/F_\infty)$.

\begin{proposition}\label{Proposition: quasi-compact subscheme Y in ASF}
    There exists a pfp closed subscheme $Y \subset (X^Z_{\mcP, \gamma})_{\mcO'_{\geq B}/\pi}$ such that
    \[
    (X^Z_{\mcP, \gamma})_{\mcO'_{\geq B}/\pi}=\bigcup_{\lambda \in \Lambda_\gamma} \lambda(\pi) \cdot Y.
    \]
\end{proposition}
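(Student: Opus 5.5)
The plan is to produce $Y$ by a boundedness argument, reducing the existence of a pfp closed fundamental domain to a bound on the $\breve{F}$-points via \Cref{Proposition: quasi-compact sub-ind-scheme of affine Grassmannian}. Concretely, I would look for an integer $N$ such that every point of $X^Z_{\mcP,\gamma}$ over $\breve{\mcO}/\pi$ can be translated by some $\lambda(\pi)$, $\lambda\in\Lambda_\gamma$, into $\Gr^{[-N,N]}_{\GL_n}$. Then I would set
\[
Y:=(X^Z_{\mcP,\gamma})_{\mcO'_{\geq B}/\pi}\cap \Gr^{[-N,N]}_{\GL_n}.
\]
This is a pfp closed subscheme of $(X^Z_{\mcP,\gamma})_{\mcO'_{\geq B}/\pi}$, since $X^Z_{\mcP,\gamma}\hookrightarrow \Gr_{\mcP}$ is a pfp locally closed immersion and $\Gr^{[-N,N]}_{\GL_n}$ is quasi-compact. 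Each translate $\lambda(\pi)\cdot Y$ is again closed. The union of these translates is a union of locally closed sub-ind-schemes, so the equality with $(X^Z_{\mcP,\gamma})_{\mcO'_{\geq B}/\pi}$ can be checked on geometric points. It is therefore enough to verify it on $\overline{k}$-points lying over each point $i\in \mbN_{\geq B}\cup\{\infty\}$, and I may pass to $\breve{F}$.

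The main step is a uniform bound in $i$. Fix a geometric point over $i$, and take $g\in G(\breve{F}_i)$ with $g^{-1}\gamma_i g$ bounded, meaning its class lies in $Z$. Such a $g$ determines, via the Kottwitz map $w_{T_i}$ applied to the torus $G_{\gamma_i}$, a class in $X_*(G_{\gamma_i})_{I_i}$, and I would translate $g$ by $\lambda(\pi_i)$ to normalize it. The classical argument of Kazhdan--Lusztig (and Chi in the Witt case) shows $\Lambda_{\gamma_i}\backslash X^Z_{\mcP_i,\gamma_i}$ is quasi-compact. It proceeds by combining two facts: the torus part of $g$ modulo $\Lambda$ has bounded valuations, and the root-direction part is bounded by the discriminant of $\gamma_i$ together with the bound coming from $Z$. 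The issue is that this yields a bound $N_i$ depending on $i$. To get a uniform $N$ I would make each ingredient uniform. First, the discriminant valuation $\mathrm{val}(D(\gamma_i))$ is eventually constant, by \Cref{Lemma: elements of O} applied to $D(\gamma)\in F$. Second, $Z$ is quasi-compact, so it sits inside some $\Gr^{[-M,M]}_{\GL_n}$. Third, the splitting field $E$ and the identification $\psi_i$ of cocharacter lattices from \Cref{Proposition: lattices in cocharacter groups} are compatible, so $\Lambda_\gamma$ maps isomorphically onto $\Lambda_{\gamma_i}$ for $i\geq B$. Fourth, the image of $G_\gamma(\breve{\mcO}_{F'})$, the bounded part of the torus, is controlled uniformly once $G_\gamma$ splits over $E_{\geq B}$ with $T_{E_{\geq B}}\simeq \mbG_m^N$.

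The key technical point is bounding the root part uniformly. Given $g$, I would write $g=tu$ after conjugating into the split torus over $E$ (with $E/F$ étale). The condition that $g^{-1}\gamma g$ is bounded then becomes $\mathrm{Ad}(\gamma^{-1})-1$ acting on $u$, and the entries of this operator have valuations equal to $\mathrm{val}(\alpha(\gamma)-1)$, which are eventually constant in $i$ by \Cref{Lemma: elements of O} applied over $E$. The ramification index of $E_i/F_i$ is independent of $i$, so the resulting bounds are uniform. I expect this uniformity step to be the main obstacle, in particular controlling the Iwasawa/Cartan decomposition for $\mcP_i$ uniformly over $E_i$. As a fallback, I would try a more geometric route. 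Choose $Y$ over a finite part by applying the classical result at $i=\infty$, where $\Lambda_{\gamma_\infty}\backslash X$ is quasi-compact, so a $Y_\infty$ exists. Spread $Y_\infty$ out to $Y_{\geq B}$ using finite presentation (\Cref{Lemma:representability of L^n X}, \Cref{Proposition: category of pfp algebraic spaces}). Then check the covering property near $\infty$ through the uniform discriminant and lattice bounds above. Finitely many small $i$ are handled individually by Chi's result and adjoined to $Y$.
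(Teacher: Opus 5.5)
Your central estimate is the right one, and it is the paper's Step~1. It consists of an Iwasawa decomposition followed by an induction along an ordering of the positive roots (needed because $U$ is not abelian). This bounds $u$ using the fact that each $\alpha(\gamma)-1$ is $\pi^{n_\alpha}$ times a unit over $\mcO_{\geq B}$, by \Cref{Lemma: elements of O}. The paper gets uniformity in $i$ for free: it runs this argument once over the family ring $\breve F$, with boundedness in the sense of \Cref{Definition:bounded subset}, and then invokes \Cref{Proposition: quasi-compact sub-ind-scheme of affine Grassmannian}. Your plan to ``make each ingredient uniform'' amounts to the same thing, provided the conjugating element into the split torus is taken in $G(E_{\geq B})$. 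Your worry about Iwasawa decompositions for $\mcP_i$ is not a real obstacle. Boundedness does not depend on the integral model, so the hyperspecial decomposition suffices. The paper instead reduces to $\mcG$ via the proper maps $\Gr_{\mcI}\to\Gr_{\mcG}$ and $\Gr_{\mcI}\to\Gr_{\mcP}$.

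The genuine gap is the case where $G_\gamma$ is not split.

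\emph{The normalization step fails.} Your sentence that $g$ ``determines, via the Kottwitz map applied to $G_{\gamma_i}$, a class in $X_*(G_{\gamma_i})_{I_i}$'' does not make sense as stated, since $g\notin G_\gamma$. Moreover, a ramified $G_\gamma$ lies in no Borel subgroup defined over $\breve F$, so there is no Iwasawa decomposition adapted to it. What conjugating into the split torus over $E$ actually gives is $g=\lambda(\pi_E)\,b$, with $b$ bounded and $\lambda$ an arbitrary element of $\widetilde\Lambda_\gamma=\Hom_{E_\infty}(\mbG_m,(G_\gamma)_{E_\infty})$. The proposition instead requires translates $\lambda(\pi)$ with $\lambda$ in the smaller lattice $\Lambda_\gamma$; after reducing to $E/F$ totally ramified, $\Lambda_\gamma=\widetilde\Lambda_\gamma^{\Gal(E_\infty/F_\infty)}$. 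Your remark on ``the bounded part of the torus'' concerns only elements of $G_\gamma$, so it does not bridge this.

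\emph{The missing idea is the descent in the paper's Step~2.}
First, replace $\pi_E$ by $\pi=u\pi_E^r$, enlarging $\widetilde Y$ by the $\lambda(u)$ and by representatives of $\widetilde\Lambda_\gamma/r\widetilde\Lambda_\gamma$. Next, make $\widetilde Y$ stable under $\Gal(E_\infty/F_\infty)$. A Galois-fixed point of $\lambda(\pi)\widetilde Y$ then forces $\tau(\lambda)-\lambda\in S:=\{\mu\colon \mu(\pi)\widetilde Y\cap\widetilde Y\neq\emptyset\}$ for all $\tau$. Since $S$ is finite, such $\lambda$ lie in $\Lambda_\gamma+D$ for a finite set $D$. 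Finally, $Y:=X^Z_{\mcG,\gamma}\cap\bigcup_{\lambda\in D}\lambda(\pi)\widetilde Y$ works. Without this step you have not shown that $\Lambda_\gamma$-translates suffice.

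\emph{Your fallback is circular.} Spreading out $Y_\infty$ only shows something fiber-range-dependent: for each $N'$, the open complement of the finitely many relevant translates inside $X^Z_{\mcP,\gamma}\cap\Gr^{[-N',N']}_{\GL_n}$ is empty for $i\geq B(N')$. Since $X^Z_{\mcP,\gamma}$ is not yet known to be locally pfp, $B(N')$ may grow with $N'$. Excluding that is precisely the uniform bound you set out to prove.
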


\begin{proof}
    We note that the corresponding statement for a single $F_i$ $(i \in \mbN)$ follows either from an argument similar to that in \cite[Section 2]{KazhdanLusztig}, or by directly applying the proof given below for families of close fields.
    Therefore, in the proof below, 
    we may freely enlarge $B$ (often without explicit mention).
    Moreover, by (unramified) Galois descent, we may assume that
    $\mcG$ is split and also that $E$ is totally ramified over $F$, that is, $\mcO'=\mcO$.

    \textit{Step 1.}
    We first assume that $x_\infty=x_{0, \infty}$ (with the notation of \Cref{Subsection:Unramified reductive groups in families}) so that
    $\mcP=\mcP_x$
    is the split reductive group scheme $\mcG_\mcO$.
    For simplicity, we write $\mcG=\mcG_\mcO$.
    We furthermore assume that 
    $G_\gamma$ is equal to the split maximal torus $T$ in \Cref{Situation: unramified reductive group in families of close fields}.
    Let $U$ be the unipotent radical of the Borel subgroup $B \subset G$.\footnote{By abuse of notation, we use the same letter $B$ to denote both the Borel subgroup and the bound $B \ge 1$ for the indices of close fields. This should not cause any confusion in context.}
    Let $Y_U$ be the intersection of the $L_\mcO U$-orbit of $\overline{1} \in \Gr_\mcG$ with $X^Z_{\mcG, \gamma}$.
    Then $Y_U \hookrightarrow X^Z_{\mcG, \gamma}$ is a pfp locally closed immersion.
    By the Iwasawa decomposition (on each fiber $F_i$),
    we have 
    $(X^Z_{\mcG, \gamma})_{\mcO_{\geq B}/\pi}=\bigcup_{\lambda \in \Lambda_\gamma} \lambda(\pi) \cdot Y_U$.

    We claim that $Y_U$ is quasi-compact.
    For this, by \Cref{Proposition: quasi-compact sub-ind-scheme of affine Grassmannian},
    it is enough to prove that the set
    $
    \{ u \in U(\breve{F}) \colon u^{-1} \gamma u \in \Omega  \}
    $
    is bounded in $G(\breve{F})$, where
    $\Omega \subset G(\breve{F})$ is the preimage of $Z(\breve{\mcO}/\pi)$ under the projection
    $G(\breve{F}) \to \Gr_{\mcG}(\breve{\mcO}/\pi)$.
    Let us choose an ordering on the basis $\Delta$ and use this to define
    the reverse lexicographical ordering on
    the set of positive roots
    $\lbrace \alpha_{1}, \dotsc, \alpha_{m} \rbrace$.
    Let $U_{i} \subset U$ be the root group corresponding to $\alpha_i$
    with a trivialization $q_i \colon \mbG_a \simeq U_i$.
    The multiplication map
    $\prod^m_{i=1}U_i \to U$ is an isomorphism, and moreover 
    $U_{\leq j}:=\prod^j_{i=1}U_i$ is a closed subgroup scheme of $U$ and is normalized by $U_{j+1}$; see \cite[Proposition 5.1.14]{ConradReductiveGroupSchemes}.
    Given $u=u_{1}\cdots u_{m} \in U(\breve{F})$ with $u_{i} = q_i(c_i) \in U_i(\breve{F})$, we define
    $v_i:=u^{-1}_i (\gamma u_i \gamma^{-1}) \in U_i(\breve{F})$.
    Moreover we define
    $v'_1:=v_1$ and $v'_{i} := (u^{-1}_i v'_{i-1} u_i) v_i$; then $v'_{m}= (u^{-1} \gamma u) \gamma^{-1}$.
    When $u^{-1} \gamma u$ is contained in the bounded subset $\Omega$, then the term $v'_{m}$ is bounded.
    Since 
    $u^{-1}_m v'_{m-1} u_m \in U_{\leq m-1}$
    and $v_m \in U_m$, it follows that both of these terms are bounded.
    Using that $v_m=q_m((\alpha_m(\gamma)-1)c_m)$ and
    that $\alpha_m(\gamma)-1$ is nonzero on each fiber $F_i$ (since $\gamma$ is fiberwise strongly regular semisimple), we see that the term $u_m=q_m(c_m)$ is also bounded.
    This implies that $v'_{m-1}$ is also bounded.
    By repeating this process, we get the desired bound on all $u_{i}$.
    This proves our claim.
    Then the desired statement holds for the Zariski closure $Y$ of $Y_U$.

    \textit{Step 2.}
    We next drop the assumption on $G_\gamma$.
    We can assume without loss of generality that 
    $(G_\gamma)_{E_{\geq B}}$ and the split maximal torus $T_{E_{\geq B}}$ are $G(E_{\geq B})$-conjugate.
    Since $\Gr_{\Res^{\mcO_E}_\mcO \mcG_{\mcO_E}} = \Gr_{\mcG_{\mcO_E}}$, 
    the affine Grassmannian $\Gr_{\mcG_{\mcO_E}}$
    admits a natural action of $\Gal(E_\infty/F_\infty)$
    and there is a natural pfp closed immersion
    $\Gr_\mcG \hookrightarrow \Gr_{\mcG_{\mcO_E}}$
    with $\Gr_\mcG \subset (\Gr_{\mcG_{\mcO_E}})^{\Gal(E_\infty/F_\infty)}$.
    We choose a pfp closed subscheme $\widetilde{Z} \subset \Gr_{\mcG_{\mcO_E}}$ which is stable under the actions of $L^+_{\mcO_E}\mcG_{\mcO_E}$ and $\Gal(E_\infty/F_\infty)$ such that $Z \subset \widetilde{Z}$.
    Let $\widetilde{\Lambda}_\gamma:=\Hom_{E_\infty}(\mbG_{m}, (G_\gamma)_{E_\infty})$.
    It follows from the result proved in the previous case that there exists a pfp closed subscheme $\widetilde{Y} \subset (X^{\widetilde{Z}}_{\mcG_{\mcO_E}, \gamma})_{\mcO_{\geq B}/\pi}$ such that
    \begin{equation}\label{equation:widetilde lambda orbits}
        (X^{\widetilde{Z}}_{\mcG_{\mcO_E}, \gamma})_{\mcO_{\geq B}/\pi}=\bigcup_{\lambda \in \widetilde{\Lambda}_\gamma} \lambda(\pi_E) \cdot \widetilde{Y}.
    \end{equation}
    Here $\pi_E \in \mcO_E$ is the element defined in \Cref{Construction: etale covering of family of close fields}.
    We have $\pi=u\pi^r_E$ for some $u \in \mcO^\times_E$.
    By enlarging $\widetilde{Y}$, we may assume that $\lambda(u)\cdot \widetilde{Y}=\widetilde{Y}$ for all $\lambda \in \widetilde{\Lambda}_\gamma$.
    Let $\lambda_1, \dotsc, \lambda_m \in \widetilde{\Lambda}_\gamma$
    be a complete set of representatives for $\widetilde{\Lambda}_\gamma/r \widetilde{\Lambda}_\gamma$.
    Then, by replacing $\widetilde{Y}$ with $\bigcup^{m}_{i=1} \lambda_i(\pi_E) \cdot \widetilde{Y}$, we may assume that
    \eqref{equation:widetilde lambda orbits}
    remains valid after replacing $\pi_E$ by $\pi$.
    After enlarging $\widetilde{Y}$, we may assume further that $\widetilde{Y}$ is $\Gal(E_\infty/F_\infty)$-stable.
    Let $S := \{  \lambda \in \widetilde{\Lambda}_\gamma \colon \lambda(\pi) \cdot \widetilde{Y} \cap \widetilde{Y} \neq \emptyset \}$, which is a finite set.
    There exists a finite subset $D  \subset \widetilde{\Lambda}_\gamma$
    such that
    \[
    \Lambda_\gamma+D = \{ \lambda \in \widetilde{\Lambda}_\gamma \colon g(\lambda)-\lambda \in S,  \, \forall g \in \Gal(E_\infty/F_\infty) \}.
    \]
    Now we define
    $
    Y := (X^{Z}_{\mcG, \gamma})_{\mcO_{\geq B}/\pi} \cap (\bigcup_{\lambda\in D} \lambda(\pi)\cdot\widetilde{Y}),
    $
    for which one can check that $(X^Z_{\mcG, \gamma})_{\mcO_{\geq B}/\pi}=\bigcup_{\lambda \in \Lambda_\gamma} \lambda(\pi) \cdot Y.$
    
    \textit{Step 3.}
    Finally, we drop the assumption on $\mcP=\mcP_x$.
    If $\mcP_x$ is the standard Iwahori subgroup $\mcI$ (i.e.\ $x$ lies in the standard chamber), then the desired statement can be deduced from the case of $\mcG$ since the natural map
    $\Gr_{\mcI} \to \Gr_{\mcG}$ is proper.
    For a general $\mcP_x$, we may assume that $x$ is contained in the closure of the standard chamber, and then we are reduced to the case of $\mcI$ since
    the natural map
    $\Gr_{\mcI} \to \Gr_{\mcP}$ is proper.
\end{proof}

\begin{Corollary}\label{Corollary: ASF is a scheme}
    The affine Springer fiber $X^Z_{\mcP, \gamma}$ is a perfect scheme which is locally perfectly finitely presented over $\mcO/\pi$.
\end{Corollary}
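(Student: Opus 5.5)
The plan is to combine the pfp locally closed immersion $X^Z_{\mcP, \gamma} \hookrightarrow \Gr_{\mcP}$ with \Cref{Proposition: quasi-compact subscheme Y in ASF}, and to deduce the scheme property from the covering by translates of a quasi-compact piece. The question is Zariski local on $\Spec \mcO/\pi$. By \Cref{Remark:O mop pi as filtered rings}, $\Spec \mcO/\pi$ is the disjoint union of the finitely many clopen points $i<B$ and $\Spec \mcO_{\geq B}/\pi$. On each single point $i<B$ the claim is the known statement over a single field. Indeed, \Cref{Proposition: quasi-compact subscheme Y in ASF} also holds there by its proof, and one can then run the argument below for that $i$. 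So it suffices to treat $\Spec \mcO_{\geq B}/\pi$.

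Next, reduce to the base change along $\mcO_{\geq B}/\pi \to \mcO'_{\geq B}/\pi$. This map is finite étale, in fact a product of finite étale extensions $k \to k'$ on each factor, and after enlarging $B$ it is a $\Gal(k'/k)$-torsor. Being a perfect scheme locally pfp descends along this map by effective descent for quasi-affine or ind-scheme data. Concretely, $X^Z_{\mcP,\gamma}$ is already a subfunctor of the ind-scheme $\Gr_{\mcP}$. It is a scheme if it is covered by Zariski opens of $\Gr$-type subschemes, and openness and schemeness can be checked after faithfully flat, finite étale base change because the subfunctor is given. The cleaner alternative is to produce the open cover directly downstairs, by taking Galois-stable unions as below.

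Over $\mcO'_{\geq B}/\pi$, let $Y$ be the pfp closed subscheme with $X = \bigcup_{\lambda} \lambda(\pi)Y$. Each $\lambda(\pi)Y$ is a pfp closed subscheme of $\Gr_{\mcP}$, hence contained in some $\Gr^{[-N,N]}_{\GL_n}$. The key step is to show that the family $\{\lambda(\pi)Y\}$ is locally finite: any quasi-compact closed $W=\Gr^{[-N,N]}_{\GL_n}\cap X$ meets only finitely many translates. This follows from a boundedness argument via \Cref{Proposition: quasi-compact sub-ind-scheme of affine Grassmannian}. If $\lambda(\pi)y \in W$ with $y\in Y$ bounded, then $\lambda(\pi)$ lies in a bounded subset of $G_\gamma(\breve F)$. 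Bounded subsets of $\Lambda_\gamma$ embedded via $\lambda\mapsto\lambda(\pi)$ are finite, which one checks via valuations of characters on each fiber using \Cref{Lemma: elements of O}, uniformly in $i$.

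Given local finiteness, set $X_N := X\cap \Gr^{[-N,N]}_{\GL_n}$. Each $X_N$ is then a finite union of pfp closed pieces, hence a pfp closed subscheme by \Cref{Proposition: quasi-compact sub-ind-scheme of affine Grassmannian}. Choose $N$ so large that the finite union $Y_N$ of translates meeting $X_{N_0}$ lies in $X_N$. Then the complement in $X_N$ of the finitely many translates disjoint from $X_{N_0}$ that meet $X_N$ is an open $U_{N_0}\subset X_N$ containing $X_{N_0}$. Moreover $U_{N_0}$ equals an open subfunctor of $X$, since any point of $X$ near it lies in one of the finitely many translates. Thus $X=\bigcup U_{N_0}$ is a union of open subfunctors, each representable by a quasi-compact open of a perfect pfp scheme. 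Hence $X$ is a perfect scheme, locally pfp over $\mcO/\pi$.

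The main obstacle is making the openness of $U_{N_0}$ in $X$ precise in the ind-setting; the local finiteness uniform over the family is routine via the fiberwise valuation argument.
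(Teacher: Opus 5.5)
Your proposal is correct and is essentially the paper's proof. The paper likewise disposes of the finitely many fibers $i<B$ by the single-field result (Chi), then deduces the statement over $\mcO_{\geq B}/\pi$ from the covering $X=\bigcup_{\lambda}\lambda(\pi)Y$ together with the boundedness criterion for quasi-compact sub-ind-schemes, by citing Yun's argument; that argument is exactly the local-finiteness and open-neighbourhood construction you carry out. The one step to tighten is the uniformity in $i$ of the finiteness of $\{\lambda : \lambda(\pi)Y\cap W\neq\emptyset\}$. The lemma on elements of $\mcO$ only controls one element at a time, so it does not give this. Instead, write the characters of the split torus $(G_\gamma)_{E_{\geq B}}\simeq\mathbb{G}_m^N$ as restrictions of functions on $\GL_n$ with coefficients in $E_{\geq B}$, so with uniformly bounded denominators, and note that their valuations on $\lambda(\pi_i)$ do not depend on $i\geq B$.
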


\begin{proof}
    The corresponding statement for a single $F_i$ $(i \in \mbN)$ follows from \cite[Theorem 5.4]{ChiWittAffineSpringer} (or by directly applying the proof given for families of close fields here).
    Thus it suffices to prove the statement for $(X^Z_{\mcP, \gamma})_{\mcO_{\geq B}/\pi}$ for some $B$.
    We can deduce this from \Cref{Proposition: quasi-compact subscheme Y in ASF} and \Cref{Proposition: quasi-compact sub-ind-scheme of affine Grassmannian} by the same argument as in \cite[Section 2.5.3]{YunLectureNote}.
\end{proof}

Let $\underline{\Lambda}_\gamma$ be the \'etale group scheme over $k$ attached to the $\Gal(k'/k)$-module $\Lambda_\gamma$,
and let us denote
its base change to $\Spec \mcO_{\geq B}/\pi$ by the same notation.
The action of $\Lambda_\gamma$ on
$(X^Z_{\mcP, \gamma})_{\mcO'_{\geq B}/\pi}$ 
naturally extends to an action of $\underline{\Lambda}_\gamma$ on $(X^Z_{\mcP, \gamma})_{\mcO_{\geq B}/\pi}$.

The main result of this section is the following theorem.

\begin{Theorem}\label{Theorem: ASF is quasi-compact separated pfp algebraic space}
The quotient $\underline{\Lambda}_\gamma \backslash (X^Z_{\mcP, \gamma})_{\mcO_{\geq B}/\pi}$ is a separated pfp perfect algebraic space over $\Spec \mcO_{\geq B}/\pi$.
\end{Theorem}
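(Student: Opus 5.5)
We work after base change to $\mcO'_{\geq B}/\pi$, where $\underline{\Lambda}_\gamma$ becomes the constant group $\Lambda_\gamma$. The statement for $\mcO_{\geq B}/\pi$ then follows by finite étale (Galois) descent along $\Spec \mcO'_{\geq B}/\pi \to \Spec \mcO_{\geq B}/\pi$: the $\Gal(k'/k)$-action on the quotient is compatible with the $\Gal(E_\infty/F_\infty)$-equivariance of the $\Lambda_\gamma$-action. Separatedness and pfp descend along finite étale covers. Write $X := (X^Z_{\mcP,\gamma})_{\mcO'_{\geq B}/\pi}$, which is a perfect scheme locally pfp over the base by \Cref{Corollary: ASF is a scheme}.

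\emph{Step 1: the action is free.} Suppose $\lambda(\pi)$ fixes a geometric point $g$ of $\Gr_{\mcP}$ lying over some $i \geq B$. Then $\lambda(\pi_i)$ lies in a parahoric subgroup $g\mcP_i(\breve{\mcO}_i)g^{-1}$, which is bounded. Hence $\lambda(\pi_i)$ lies in the maximal bounded subgroup of $G_\gamma(\breve{F}_i)$, i.e.\ in the kernel of the Kottwitz map. The Kottwitz map sends $\lambda(\pi_i)$ to the image of $\psi_i(\lambda)$ in $X_*(T_i)_{I_i}$. Since $\lambda$ is inertia-invariant, this image vanishes only if $\lambda=0$, using that $X_*^{I}\to X_{*,I}$ is injective (its kernel is killed by the norm, and $X_*^I$ is torsion free). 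The same argument applies at $i=\infty$.

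\emph{Step 2: the action is properly discontinuous.} By \Cref{Proposition: quasi-compact subscheme Y in ASF} we have $X=\bigcup_\lambda \lambda(\pi)Y$ with $Y$ a pfp closed subscheme. The set $S=\{\lambda : \lambda(\pi)Y\cap Y\neq\emptyset\}$ is finite. To see this, use \Cref{Proposition: quasi-compact sub-ind-scheme of affine Grassmannian}: $Y \subset \Gr^{[-N,N]}_{\GL_n}$ for some $N$, whereas $\lambda(\pi)$ moves lattices by amounts growing with $|\lambda|$, uniformly in $i$ because the cocharacter lattices are identified via \Cref{Proposition: lattices in cocharacter groups}.

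Since $\Lambda_\gamma$ acts freely, the graph map $\Lambda_\gamma\times X\to X\times X$ is a monomorphism. Being a disjoint union of closed immersions $\lambda\mapsto \Gamma_{\lambda(\pi)}$ ($X$ is separated as a sub-ind-scheme of an ind-projective object), it is locally finite once restricted over $Y\times Y$, because only $\lambda \in S - S$ contribute. Hence $\Lambda_\gamma\times X\to X\times X$ is a closed immersion, and the quotient sheaf $\Lambda_\gamma\backslash X$ is an algebraic space, with $X \to \Lambda_\gamma\backslash X$ an étale surjection (a $\Lambda_\gamma$-torsor). It is separated by the closed-immersion property, perfect by \Cref{Remark:perfect algebraic space}, and locally pfp because being locally pfp is étale local.

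\emph{Step 3: quasi-compactness and quasi-separatedness.} The composite $Y\to X\to \Lambda_\gamma\backslash X$ is surjective, so the quotient is quasi-compact. Separatedness gives quasi-separatedness. Therefore the quotient is pfp.

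The main obstacle is Step 2: the finiteness of $S$ and the closedness of the graph must hold uniformly over the profinite base, not just fiberwise. If the lattice estimate is awkward, the first thing I would try is to check finiteness on the fibers $i\in[B,B']$ and at $\infty$. I would then spread it out using that $Y$ and its finitely many translates $\lambda(\pi)Y$ with $\lambda\in S$ are pfp, so that, after enlarging $B$, the relevant emptiness of intersections at $\infty$ propagates to all $i\geq B$ by the limit argument of \Cref{Proposition: category of pfp algebraic spaces}.
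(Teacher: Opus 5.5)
Your proposal is correct and follows the paper's route. The paper asserts in one line that the graph map $\underline{\Lambda}_\gamma\times X\to X\times X$ is a closed immersion, deduces a separated, locally pfp perfect algebraic space, and obtains quasi-compactness from the pfp subscheme $Y$ of \Cref{Proposition: quasi-compact subscheme Y in ASF}; your freeness and local-finiteness arguments supply the justification that line leaves implicit.

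Two small touch-ups, neither of which breaks the argument:
\begin{enumerate}
\item The maximal bounded subgroup of $G_\gamma(\breve{F}_i)$ is the preimage under the Kottwitz map of the torsion subgroup of $X_*((G_\gamma)_i)_{I_i}$, not its kernel. Your norm argument still gives injectivity of $X_*^{I}\to X_{*,I}/\mathrm{tors}$, so freeness holds.
\item Local finiteness of the graphs should be checked on quasi-compact opens of $X\times X$, not merely over the closed pieces $\lambda_1(\pi)Y\times\lambda_2(\pi)Y$. Such opens lie in some $\Gr^{[-N,N]}_{\GL_n}\times\Gr^{[-N,N]}_{\GL_n}$, so your boundedness estimate shows they meet only finitely many graphs.
\end{enumerate}
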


\begin{proof}
    The map
    $\underline{\Lambda}_\gamma \times (X^Z_{\mcP, \gamma})_{\mcO_{\geq B}/\pi} \to (X^Z_{\mcP, \gamma})_{\mcO_{\geq B}/\pi} \times (X^Z_{\mcP, \gamma})_{\mcO_{\geq B}/\pi}$, $(g, x) \mapsto (x, g\cdot x)$
    is a closed immersion.
    Thus the quotient $\underline{\Lambda}_\gamma \backslash (X^Z_{\mcP, \gamma})_{\mcO_{\geq B}/\pi}$ is a separated perfect algebraic space
    which is locally perfectly finitely presented 
    over $\mcO_{\geq B}/\pi$.
    The fact that $\underline{\Lambda}_\gamma \backslash (X^Z_{\mcP, \gamma})_{\mcO_{\geq B}/\pi}$ is quasi-compact follows from \Cref{Proposition: quasi-compact subscheme Y in ASF}.
\end{proof}

\subsection{$\mcO$-Witt vector twisted affine Springer fibers}\label{Subsection:O-Witt vector twisted affine Springer fibers}

We introduce the geometric objects relevant for the geometric interpretation of twisted $\kappa$-orbital integrals.

\begin{Situation}\label{Situation: twisted affine Springer fiber}
We assume that $k$ is a finite field.
Let $\widetilde{k} \subset \overline{k}$ be a finite extension of $k$ of degree $r$.
For a $W(k)$-algebra $R$, we write
$\widetilde{R} = R \otimes_{W(k)} W(\widetilde{k})$.
For a smooth affine group scheme $H$ over $\Spec R$, we write
$H_{\widetilde{R}} = H \times_{\Spec R} \Spec \widetilde{R}$
and
$\widetilde{H}:= \Res^{\widetilde{R}}_R (H_{\widetilde{R}})$ by a slight abuse of notation.
We have that
$
(\widetilde{H})_{\widetilde{R}} \simeq H_{\widetilde{R}}\times \cdots \times H_{\widetilde{R}},
$
where the factors are indexed by $\Gal(\widetilde{k}/k)$ and the $j$-th entry corresponds to $\sigma^{j}\in \Gal(\widetilde{k}/k)$ for $0 \leq j \leq r-1.$
Here $\sigma \in \Gal(\widetilde{k}/k)$ is the Frobenius element.
Let $s \colon \widetilde{H} \to \widetilde{H}$ be the automorphism induced by $\sigma$ (i.e.\ for all $R$-algebras $A$, $s\colon \widetilde{H}(A)=H(A\otimes_{R} \widetilde{R})\rightarrow \widetilde{H}(A)=H(A\otimes_{R} \widetilde{R})$ is given by $\id \otimes \sigma$), which corresponds to
 $
(x_{0},\dotsc,x_{r-1})\mapsto (x_{1},\dotsc,x_{r-1}, x_{0})
$
over $\widetilde{R}$.
Note that we have an embedding of $R$-group schemes $H\hookrightarrow \widetilde{H}$, and $H\simeq \widetilde{H}^{s=1}.$
\end{Situation}

Applying this notation to \Cref{Situation: unramified reductive group in families of close fields}, we obtain
the quasi-split reductive group scheme
$\widetilde{\mcG}=\Res^{W(\widetilde{k})}_{W(k)} (\mcG_{W(\widetilde{k})})$
over $W(k)$ with maximal torus $\widetilde{\mcT}$ and Borel subgroup $\widetilde{\mcB}$
and a natural $\Gal(\overline{k}/k)$-stable pinning of
$(\widetilde{\mcG}, \widetilde{\mcT}, \widetilde{\mcB})$
induced by $\{ x_\alpha \}_{\alpha \in \Delta}$.
We also have the corresponding objects
$(\widetilde{G}, \widetilde{T}, \widetilde{B})$ over $F$ and 
$(\widetilde{G}_i, \widetilde{T}_i, \widetilde{B}_i)$ over $F_i$.
Let $x = \{ x_i \}_{i \in \mbN \cup \{ \infty \}}$ be a compatible family of points $x_i \in \mcA(G_i, S_i)$ as in \Cref{Subsection:Unramified reductive groups in families}.
This naturally gives rise to a compatible family of points, which will also be denoted by $x = \{ x_i \}_{i \in \mbN \cup \{ \infty \}}$, in the corresponding apartments of $\mcB(\widetilde{G}_i)$, and the associated Bruhat--Tits group scheme over $\mcO$ in the sense of \Cref{Corollary:Bruhat-Tits group scheme over O} is isomorphic to $\widetilde{\mcP}_x= \Res^{\widetilde{\mcO}}_\mcO ((\mcP_x)_{\widetilde{\mcO}})$; see for example \cite[Section 7.9]{KPbook}.
We will simply write $\widetilde{\mcP}=\widetilde{\mcP}_x$.

\begin{Definition}\label{Definition: sigma centralizer}
    Let $\delta \in \widetilde{G}(F)=G(\widetilde{F})$ be an element.
    We assume that $N(\delta):=\delta \cdot s(\delta)\cdots s^{r-1}(\delta)\in G(\widetilde{F})$
    is a fiberwise strongly regular semisimple element.
The \textit{$\sigma$-centralizer} of $\delta$ is 
the closed $F$-subgroup $\widetilde{G}_{\delta s} \subset \widetilde{G}$
defined by
$\widetilde{G}_{\delta s}(A) = \{ g \in \widetilde{G}(A) \colon g\delta s(g)^{-1}= \delta \}$.
The $0$-th projection
$(\widetilde{G})_{\widetilde{F}} \simeq G_{\widetilde{F}}\times \cdots \times G_{\widetilde{F}} \to G_{\widetilde{F}}$
induces an isomorphism
$(\widetilde{G}_{\delta s})_{\widetilde{F}} \simlr (G_{\widetilde{F}})_{N(\delta)}$ (cf.\ \cite[Lemma 5.4]{KottwitzRationalConjugacy}).
Thus $\widetilde{G}_{\delta s}$ is a torus over $F$.
\end{Definition}

\begin{Definition}[Twisted affine Springer fiber]\label{Definition: twisted affine Springer fiber}
    Let $Z \subset \Gr_{\widetilde{\mcP}}$ be a pfp locally closed subscheme which is stable under the action of $L^+_\mcO \widetilde{\mcP}$.
The \textit{twisted affine Springer fiber} associated with
    the triple
    $(\widetilde{\mcP}, \delta, Z)$ is
    the subfunctor
    \[
    X^Z_{\widetilde{\mcP}, \delta s} \subset \Gr_{\widetilde{\mcP}}=L_\mcO \widetilde{\mcP}/L^+_\mcO \widetilde{\mcP}
    \]
    such that, for any perfect $\mcO/\pi$-algebra $R$, 
    $X^Z_{\widetilde{\mcP}, \delta s}(R)$ consists of elements
    $g \in \Gr_{\widetilde{\mcP}}(R)$
    satisfying $\overline{g^{-1}_x \delta s(g_x)} \in Z(\kappa(x))$ for every geometric point $x$ of $\Spec R$, where $g_x \in L_\mcO \widetilde{\mcP}(\kappa(x))$ is a lift of the image of $g$ in $\Gr_{\widetilde{\mcP}}(\kappa(x))$.
    We note that 
    $X^Z_{\widetilde{\mcP}, \delta s}  \hookrightarrow \Gr_{\widetilde{\mcP}}$ is a pfp locally closed immersion.
\end{Definition}

\begin{Remark}\label{Remark: twisted ASF is a generalization of ASF}
    If $r=1$, then $s= \id$, so twisted affine Springer fibers coincide with affine Springer fibers introduced in \Cref{def: Affine Springer fiber O}.
    In this case, we will use the notation $\gamma$ instead of $\delta$; then $\widetilde{G}_{\delta s}=G_\gamma$.
\end{Remark}

For the $F$-torus $\widetilde{G}_{\delta s}$, we assume that we are in \Cref{Situation: torus} and use the notation there.
Let
$
\Lambda_{\delta s} \subset \Hom_{E_\infty}(\mbG_{m}, (\widetilde{G}_{\delta s})_{E_\infty})
$
be the subgroup consisting of elements fixed by the inertia subgroup of $\Gal(E_\infty/F_\infty)$.
The image of $\Lambda_{\delta s}$ under the splitting \eqref{equation:lattice in cocharacter group}
\[
\Hom_{E_\infty}(\mbG_{m}, (\widetilde{G}_{\delta s})_{E_\infty}) \to \Hom_{E_{\geq B}}(\mbG_{m}, (\widetilde{G}_{\delta s})_{E_{\geq B}})
\]
is contained in $\Hom_{F'_{\geq B}}(\mbG_{m}, (\widetilde{G}_{\delta s})_{F'_{\geq B}})$.
Thus we have an injection
$
\Lambda_{\delta s} \to L_\mcO \widetilde{G}_{\delta s}(\mcO'_{\geq B}/\pi), \lambda \mapsto \lambda(\pi).
$
We will identify $\Lambda_{\delta s}$ with the image of this injection.
Since $L_\mcO \widetilde{G}_{\delta s}$ acts on $X^Z_{\widetilde{\mcP}, \delta s}$ by left multiplication, this injection induces an action of $\Lambda_{\delta s}$ on
$(X^Z_{\widetilde{\mcP}, \delta s})_{\mcO'_{\geq B}/\pi}$.

The results of the previous subsection can be extended to the twisted setting as follows.

\begin{Theorem}\label{Theorem: finiteness of twisted affine Springer fiber}
     \begin{enumerate}
         \item There exists a pfp closed subscheme $Y \subset (X^Z_{\widetilde{\mcP}, \delta s})_{\mcO'_{\geq B}/\pi}$ such that
    \[
    (X^Z_{\widetilde{\mcP}, \delta s})_{\mcO'_{\geq B}/\pi}=\bigcup_{\lambda \in \Lambda_{\delta s}} \lambda(\pi) \cdot Y.
    \]
    \item $X^Z_{\widetilde{\mcP}, \delta s}$ is a perfect scheme which is locally perfectly finitely presented over $\mcO/\pi$.
    \item Let $\underline{\Lambda}_{\delta s}$ be the \'etale group scheme over $\mcO_{\geq B}/\pi$ attached to $\Lambda_{\delta s}$.
    Then the quotient $\underline{\Lambda}_{\delta s} \backslash (X^Z_{\widetilde{\mcP}, \delta s})_{\mcO_{\geq B}/\pi}$ 
    is a separated pfp perfect algebraic space over $\mcO_{\geq B}/\pi$.
     \end{enumerate}
\end{Theorem}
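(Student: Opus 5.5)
The strategy is to reduce all three assertions to the untwisted results of \Cref{Subsection:O-Witt vector affine Springer fibers}, via the standard trick that identifies $\sigma$-twisted conjugation in $\widetilde{G}$ with ordinary conjugation by the norm after base change to $\widetilde{\mcO}$. Parts (2) and (3) then follow from (1) exactly as \Cref{Corollary: ASF is a scheme} and \Cref{Theorem: ASF is quasi-compact separated pfp algebraic space} follow from \Cref{Proposition: quasi-compact subscheme Y in ASF}. For (2), $X^Z_{\widetilde{\mcP}, \delta s}$ is a $\Lambda_{\delta s}$-stable union of translates of a pfp closed subscheme of $\Gr_{\widetilde{\mcP}}$, so the argument of \cite[Section 2.5.3]{YunLectureNote} applies (for $i<B$ one uses the single-field analogue, proved by the same method). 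For (3), the action map of the étale group $\underline{\Lambda}_{\delta s}$ is a closed immersion into the product, since $\Lambda_{\delta s}$ is torsion free and acts through $L_\mcO\widetilde{G}_{\delta s}$. Quasi-compactness of the quotient comes from (1).

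For (1), I work as in \Cref{Proposition: quasi-compact subscheme Y in ASF}, freely enlarging $B$ and using unramified Galois descent. After base change along $\mcO\to\widetilde{\mcO}$ (which splits $\widetilde{k}$ into $r$ copies of $\widetilde{k}$ over $\widetilde{k}$), the group $\Gr_{\widetilde{\mcP}}$ becomes the product $\prod_{j=0}^{r-1}\Gr_{\mcP}$. In these coordinates, write $g=(g_0,\dots,g_{r-1})$ and $\delta=(\delta_0,\dots,\delta_{r-1})$. The map $g\mapsto g^{-1}\delta s(g)$ has components $g_j^{-1}\delta_j g_{j+1}$. Consider the map
\[
\Phi\colon X^Z_{\widetilde{\mcP},\delta s}\otimes\widetilde{\mcO}/\pi \to X^{Z'}_{\mcP,N(\delta)}\otimes\widetilde{\mcO}/\pi,\quad g\mapsto g_0,
\]
where $Z'$ is a pfp $L^+_\mcO\mcP$-stable subscheme containing the image of $Z^r$ under the convolution/multiplication map. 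Such a $Z'$ exists because the convolution of finitely many Schubert varieties is contained in a finite union of Schubert varieties, and these are pfp by \Cref{Proposition:Schubert varieties}. Indeed, $g_0^{-1}N(\delta)g_0$ is the product of the $r$ elements $g_j^{-1}\delta_jg_{j+1}$ (indices mod $r$), so it is bounded by $Z'$. The fibre of $\Phi$ over a fixed $g_0$ consists of $(g_1,\dots,g_{r-1})$ with $g_{j+1}\in \delta_j^{-1}g_jZ$. Inductively each $g_j$ is constrained to a bounded set of the form $g_0\cdot(\text{pfp set})$, so $\Phi$ is quasi-compact relative to the source and is compatible with the $L_\mcO\widetilde{G}_{\delta s}$ action. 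Here \Cref{Definition: sigma centralizer} identifies $(\widetilde{G}_{\delta s})_{\widetilde F}$ with $G_{N(\delta)}$ via the $0$-th projection, so $\Lambda_{\delta s}$ acts on the target through $\Lambda_{N(\delta)}$.

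Next I apply \Cref{Proposition: quasi-compact subscheme Y in ASF} to $N(\delta)$ over $\widetilde{F}$, which is itself a family of close fields. This gives a pfp $Y'$ with target $=\bigcup_{\lambda}\lambda(\pi)Y'$. The candidate is $Y:=\Phi^{-1}(Y')$. It is pfp closed, since by the bounding in the previous paragraph it lies in a bounded region, so \Cref{Proposition: quasi-compact sub-ind-scheme of affine Grassmannian} applies. It also covers the source by $\Lambda_{N(\delta)}$-translates, with $\Lambda_{N(\delta)}$ identified with the inertia-invariant cocharacters of $\widetilde{G}_{\delta s}$ over $\widetilde{F}$. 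Finally, we descend from $\widetilde{\mcO}$ to $\mcO$ by replacing $Y$ with the union of its $\Gal(\widetilde{k}/k)$-conjugates, which stays pfp. We then pass from $\Lambda$ over $\widetilde{F}'$ to $\Lambda_{\delta s}$ by a finite-index adjustment as in Step 2 of that proof: choose coset representatives $D$ and replace $Y$ by $\bigcup_{d\in D} d(\pi)Y$.

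The main obstacle is making the boundedness of the fibres of $\Phi$ uniform in the family, i.e.\ showing that $Y=\Phi^{-1}(Y')$ lands in some $\Gr^{[-N,N]}_{\GL_n}$. I would handle this via criterion (2) of \Cref{Proposition: quasi-compact sub-ind-scheme of affine Grassmannian} over $\breve F$. There, boundedness of $g_0$ together with $g_j^{-1}\delta_jg_{j+1}\in\Omega$ (with $\Omega$ a bounded set) gives boundedness of each $g_{j+1}$, since products and inverses of bounded subsets of $\GL_n(\breve F)$ are bounded. A secondary subtlety is checking that the identification of $\Lambda_{\delta s}$ with $\Lambda_{N(\delta)}$ is compatible with the chosen splittings \eqref{equation:lattice in cocharacter group} after enlarging $B$. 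This should follow from \Cref{Proposition: lattices in cocharacter groups}.
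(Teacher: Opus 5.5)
Your proposal is correct and follows essentially the same route as the paper. The paper also enlarges the residue field so that it contains $\widetilde{k}$, splits $\Gr_{\widetilde{\mcP}}$ into an $r$-fold product of copies of $\Gr_{\mcP}$, and uses the $0$-th projection to obtain a quasi-compact morphism onto the untwisted affine Springer fiber $X^{Z'}_{\mcP_{\mcO'}, N(\delta)}$. It then deduces (1) from \Cref{Proposition: quasi-compact subscheme Y in ASF}, and (2) and (3) as in the untwisted case.

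Your final ``finite-index adjustment'' is harmless but unnecessary. Since $\widetilde{F}/F$ is unramified, the $0$-th projection already identifies $\Lambda_{\delta s}$ exactly with the inertia-invariant cocharacters of $G_{N(\delta)}$, once the splittings are matched after enlarging $B$, so no coset representatives are needed.
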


\begin{proof}
    For (1), we may assume that $\widetilde{k} \subset k'$ by (unramified) Galois descent.
    Then $(\Gr_{\widetilde{\mcP}})_{\mcO'/\pi} = (\Gr_{\mcP})_{\mcO'/\pi} \times \cdots \times (\Gr_{\mcP})_{\mcO'/\pi}$.
    We choose a pfp locally closed subscheme
    $W \subset(\Gr_{\mcP})_{\mcO'/\pi}$
    which is stable under the action of $L^+_\mcO \mcP$ such that $Z_{\mcO'/\pi} \subset W \times  \cdots \times W$.
    Then $(X^Z_{\widetilde{\mcP}, \delta s})_{\mcO'/\pi}$ is contained in
    \[
    \{ (g_0, \dotsc, g_{r-1}) \in (\Gr_{\mcP})_{\mcO'/\pi} \times \cdots \times (\Gr_{\mcP})_{\mcO'/\pi} \colon \overline{g^{-1}_j \sigma^{j}(\delta) g_{j+1}} \in W \, (0 \leq j \leq r-1) \}
    \]
    where we write
    $\delta = (\delta_0, \dotsc, \delta_{r-1}) \in \widetilde{G}(F') = G(F') \times \cdots \times G(F')$
    and $g_r:=g_0$.
    Using this description, we can find a pfp closed subscheme $Z' \subset (\Gr_{\mcP})_{\mcO'/\pi}$ which is stable under the action of $L^+_\mcO \mcP$ such that
    the $0$-th projection
    $(\Gr_{\widetilde{\mcP}})_{\mcO'/\pi}  \to (\Gr_{\mcP})_{\mcO'/\pi}$
    induces a quasi-compact morphism
    $(X^Z_{\widetilde{\mcP}, \delta s})_{\mcO'/\pi} \to X^{Z'}_{\mcP_{\mcO', N(\delta)}}$, where the target is the affine Springer fiber associated with the triple
    $(\mcP_{\mcO'}, N(\delta), Z')$ (in the sense of \Cref{def: Affine Springer fiber O}).
    Now the assertion (1) can be deduced from \Cref{Proposition: quasi-compact subscheme Y in ASF}.

    The assertions (2) and (3) follow from (1) by the same arguments as in \Cref{Corollary: ASF is a scheme} and \Cref{Theorem: ASF is quasi-compact separated pfp algebraic space}.
\end{proof}

\section{Local constancy of twisted $\kappa$-orbital integrals}\label{Section: Local constancy of stable twisted orbital integrals}

We continue to work under the setup of \Cref{Subsection:O-Witt vector twisted affine Springer fibers}.
The purpose of this section is to prove that the twisted $\kappa_i$-orbital integrals
$TO^{\kappa_i}_{\delta_i}(f_i)$
are locally constant around $\infty$ for suitable choices of $\kappa_i$ and functions $f_i$ in the parahoric Hecke algebras associated with $\mcP_{x_i}(\widetilde{\mcO}_i)$.
In the sequel, we will only use the fact that both the $\kappa$-orbital integrals and the stable twisted orbital integrals are locally constant around $\infty.$
In order to treat these two situations simultaneously, we state and prove the local constancy for twisted $\kappa$-orbital integrals.

Without loss of generality, we may assume that $x = \{ x_i \}_{i \in \mbN \cup \{ \infty \}}$ lies in the closure of the standard chamber, and we write $\widetilde{J}_i:=\mcP_{x_i}(\widetilde{\mcO}_i)$
for the corresponding parahoric subgroup of $G(\widetilde{F}_i)$.

\subsection{Geometric interpretations of twisted $\kappa$-orbital integrals}\label{Subsection: Geometric interpretations of twisted kappa-orbital integrals}

We first recall the definition of twisted $\kappa$-orbital integrals.
In this subsection,
we fix an $i \in \mbN \cup \{ \infty \}$
and work over $F_i$ as the base field.
We will omit the subscript $i$ from the notation for simplicity.
Thus, we just write
$F=F_i$, $\widetilde{F}=\widetilde{F}_i$ and $\delta=\delta_i \in G(\widetilde{F})=\widetilde{G}(F)$.
Recall that $\widetilde{F}$ is an unramified extension of $F$ of degree $r$.
We also recall that $N(\delta) = \delta \cdot s(\delta) \cdots s^{r-1}(\delta) \in \widetilde{G}(F)$ is assumed to be strongly regular semisimple.

Following \cite{KottwitzRationalConjugacy}, for an element $\delta' \in \widetilde{G}(F)$, we say that $\delta$ and $\delta'$ are \textit{$F^{\sep}$-$\sigma$-conjugate}, if there exists an element $h\in \widetilde{G}(F^{\sep})$ such that
$
\delta'=h^{-1}\delta s(h).
$
We define
$$
\mcD(\widetilde{G}_{\delta s}/F):=\ker(H^{1}(F, \widetilde{G}_{\delta s})\to H^{1}(F, \widetilde{G})).
$$
Then $\mcD(\widetilde{G}_{\delta s}/F)$
is in bijection with
the set of $\sigma$-conjugacy classes inside the $F^{\sep}$-$\sigma$-conjugacy class of $\delta \in \widetilde{G}(F)$.
Let
$g \mapsto t_g \in \widetilde{G}_{\delta s}(F^{\sep})$
be a $1$-cocycle representing an element $t$ of 
$\mcD(\widetilde{G}_{\delta s}/F)$; thus there exists $h\in \widetilde{G}(F^{\sep})$ such that $t_g=h^{-1}g(h)
$
for any $g \in \Gamma=\Gal(F^{\sep}/F)$.
Then
$\delta(t):=h \delta s(h)^{-1}$ 
is invariant under the action of $\Gamma$ so that it lies in $\widetilde{G}(F)$.
By construction, $\delta(t)$ is $F^{\sep}$-$\sigma$-conjugate to $\delta$.
This sets up the desired bijection.

\begin{Remark}\label{Remark: D(G_gamma) and B(G_gamma)}
    The map
    $H^{1}(F, \widetilde{G}_{\delta s})\to H^{1}(F, \widetilde{G})$
of pointed sets
can be identified with
the homomorphism
$X_*(\widetilde{G}_{\delta s})_{\Gamma, \tors}=\pi_1(\widetilde{G}_{\delta s})_{\Gamma, \tors} \to \pi_1(\widetilde{G})_{\Gamma, \tors}$ (see for example \cite[Theorem 11.7.7]{KPbook}), and thus $\mcD(\widetilde{G}_{\delta s}/F)\subset H^1(F, \widetilde{G}_{\delta s})$ is a normal subgroup.
The natural homomorphism
$H^1(F, \widetilde{G}_{\delta s}) \to B(\widetilde{G}_{\delta s})$
induces a bijection
\[
\mcD(\widetilde{G}_{\delta s}/F) \simlr \ker(B(\widetilde{G}_{\delta s}) \to B(\widetilde{G})).
\]
(See also the discussion in \cite[Section 15.5]{GoreskyKottwitzMacPhersonUnramified}. Here for a reductive group $H$ over $F$, we let $B(H)$ denote the set of $\sigma$-conjugacy classes in $H(\breve{F})$.)
The Kottwitz homomorphism induces an isomorphism
$B(\widetilde{G}_{\delta s}) \simlr X_*(\widetilde{G}_{\delta s})_{\Gamma}$,
and the natural homomorphism
$H^1(F, \widetilde{G}_{\delta s}) \to B(\widetilde{G}_{\delta s})$ can be identified with the inclusion
$X_*(\widetilde{G}_{\delta s})_{\Gamma, \tors} \hookrightarrow X_*(\widetilde{G}_{\delta s})_{\Gamma}$; see \cite{KottwitzIso1} for details.
\end{Remark}

We normalize our Haar measures as follows: Let $dg$ be the Haar measure on $\widetilde{G}(F)=G(\widetilde{F})$ such that the volume of $\widetilde{J}$ is $1$
and
let $dg_{\delta}$ be the Haar measure 
on $\widetilde{G}_{\delta s}(F)$
such that
the volume of $\widetilde{G}_{\delta s}(F)^{0}$ is $1$, where $\widetilde{G}_{\delta s}(F)^{0} \subset \widetilde{G}_{\delta s}(F)$ is the kernel of the Kottwitz homomorphism.
For $f\in C^{\infty}_{c}(\widetilde{G}(F)),$
the twisted orbital integral
$TO_{\delta}(f)$
is defined by
\[
TO_{\delta}(f)  = \int_{\widetilde{G}_{\delta s}(F)\backslash \widetilde{G}(F)}f(g^{-1}\delta s(g))\frac{dg}{dg_{\delta}}.
\]
For an element $t \in \mcD(\widetilde{G}_{\delta s}/F)$,
we set $TO_{t}(f):=TO_{\delta(t)}(f)$
where $\delta(t) \in \widetilde{G}(F)$ is a representative of the $\sigma$-conjugacy class corresponding to $t$ in the $F^{\sep}$-$\sigma$-conjugacy class of $\delta$.
Let 
$\kappa \colon H^1(F, \widetilde{G}_{\delta s}) \simeq X_*(\widetilde{G}_{\delta s})_{\Gamma, \tors} \to \mbC^\times$
be a character.
The \textit{twisted $\kappa$-orbital integral}
$TO^{\kappa}_{\delta}(f)$ is then defined by
\[
TO^{\kappa}_{\delta}(f):= \sum_{t \in \mcD(\widetilde{G}_{\delta s}/F)} \langle t, \kappa \rangle^{-1}  \cdot TO_t(f),
\]
where $\langle t, \kappa \rangle \in \mbC^\times$ denotes the image of $t$ under $\kappa$.
If $\kappa=1$, then $STO_\delta(f):=TO^{1}_{\delta}(f)$ is called the \textit{stable twisted orbital integral}.
When $\widetilde{F}=F$, we will write
$\gamma:=\delta$
and
$O^{\kappa}_{\gamma}(f):=TO^{\kappa}_{\delta}(f)$, which we call the \textit{$\kappa$-orbital integral}.
We call $SO_\gamma(f):=O^{1}_{\gamma}(f)$ the \textit{stable orbital integral}.

Let $\widetilde{W}(\widetilde{G})$ be the Iwahori--Weyl group of $\widetilde{G}$, which is isomorphic to
the Iwahori--Weyl group of $G_{\widetilde{F}}$.
Let $w \in \widetilde{W}(\widetilde{G})$
and let $T_w = 1_{\widetilde{J}w\widetilde{J}} \in \mcH_{\widetilde{J}}(\widetilde{G}(F))$.
We shall give a geometric interpretation of $TO^\kappa_{\delta}(T_w)$, following a similar approach to that of \cite{GoreskyKottwitzMacPhersonUnramified}.
Let $X^w_{\widetilde{\mcP}, \delta s} \subset \Gr_{\widetilde{\mcP}}$ be the twisted affine Springer fiber associated with the triple
$(\widetilde{\mcP}, \delta, S^\circ_w)$ (defined in the same way as in \Cref{Definition: twisted affine Springer fiber}).
As in \Cref{Subsection:O-Witt vector twisted affine Springer fibers},
let $\Lambda_{\delta s}:=X_*(\widetilde{G}_{\delta s})^I$, which we identify with the image of the map
$\Lambda_{\delta s} \to \widetilde{G}_{\delta s}(F^{\ur})$, $\lambda \mapsto \lambda(\pi)$.
Let $\underline{\Lambda}_{\delta s}$ be the corresponding \'etale group scheme over $k$.
Then 
the quotient
$\underline{\Lambda}_{\delta s} \backslash X^w_{\widetilde{\mcP}, \delta s}$
is a separated pfp perfect algebraic space over $k$.
Let
$\mcD(\Lambda_{\delta s}/F) \subset \mcD(\widetilde{G}_{\delta s}/F)$
be the image of the homomorphism
$
\ker(H^1(k, \Lambda_{\delta s}) \to H^1(F, \widetilde{G})) \to \mcD(\widetilde{G}_{\delta s}/F).
$
Let
\[
c_1 := \vert \ker (H^1(k, \Lambda_{\delta s})\rightarrow H^1(F, \widetilde{G}_{\delta s})) \vert \quad \text{and} \quad 
c_2:=\frac{\vert \coker((\Lambda_{\delta s})_{\Gamma}\rightarrow X_{*}(\widetilde{G}_{\delta s})_{\Gamma}) \vert}{\vert \ker ((\Lambda_{\delta s})_{\Gamma}\rightarrow X_{*}(\widetilde{G}_{\delta s})_{\Gamma})\vert }.
\]

\begin{proposition}\label{Proposition:geometric interpretation of stable twisted orbital integrals}
We fix a character 
$B(\widetilde{G}_{\delta s}) \simeq X_*(\widetilde{G}_{\delta s})_{\Gamma} \to \mbC^\times$ whose restriction to $H^1(F, \widetilde{G}_{\delta s})$ is $\kappa$, and we denote it also by $\kappa$.
Let $\{ t_1, \dotsc, t_m \} \subset \mcD(\widetilde{G}_{\delta s}/F)$ be a complete set of representatives for the quotient $\mcD(\widetilde{G}_{\delta s}/F)/\mcD(\Lambda_{\delta s}/F)$.
Then for each $1 \leq j \leq m$, we have
\[
\sum_{\epsilon \in \mcD(\Lambda_{\delta s}/F)} \langle t_j\epsilon^{-1}, \kappa \rangle^{-1} \cdot TO_{t_j\epsilon^{-1}}(T_w) = \frac{\langle t_j, \kappa \rangle^{-1}}{c_1c_2} \sum_{x \in (\underline{\Lambda}_{\delta s} \backslash X^w_{\widetilde{\mcP}, \delta s})(\overline{k})^{t_j\sigma=1}} \langle \lambda_x, \kappa \rangle.
\]
The right-hand side is defined as follows:
We choose a lift of $t_j$ in $\widetilde{G}_{\delta s}(\breve{F})$, which we denote by the same notation $t_j$, and
$(\underline{\Lambda}_{\delta s} \backslash X^w_{\widetilde{\mcP}, \delta s})(\overline{k})^{t_j\sigma=1}$ is the set of fixed points of $t_j\sigma$ on $(\underline{\Lambda}_{\delta s} \backslash X^w_{\widetilde{\mcP}, \delta s})(\overline{k})$.
For any $x \in (\underline{\Lambda}_{\delta s} \backslash X^w_{\widetilde{\mcP}, \delta s})(\overline{k})^{t_j\sigma=1}$,
we choose a lift $\widetilde{x} \in X^w_{\widetilde{\mcP}, \delta s}(\overline{k})$
and let
$\lambda_x \in (\Lambda_{\delta s})_\Gamma$
be the image of the element $\lambda_{\widetilde{x}} \in \Lambda_{\delta s}$ such that $t_j\cdot \sigma(\widetilde{x})=\lambda_{\widetilde{x}} \cdot \widetilde{x}$.
(Notice that $\lambda_x$ is independent of the choice of $\widetilde{x}$.)

In particular, we have
\[
TO^\kappa_\delta(T_w)= \frac{1}{c_1c_2} \sum^m_{j=1} \left( \langle t_j, \kappa \rangle^{-1} \sum_{x \in (\underline{\Lambda}_{\delta s} \backslash X^w_{\widetilde{\mcP}, \delta s})(\overline{k})^{t_j\sigma=1}} \langle \lambda_x, \kappa \rangle
\right).
\]
\end{proposition}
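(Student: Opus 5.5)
We follow the approach of Kottwitz and Goresky--Kottwitz--MacPherson, turning each twisted orbital integral into a count of fixed points of a twisted Frobenius on the twisted affine Springer fiber, and then grouping the inner forms $t_j\epsilon^{-1}$.

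\textbf{Step 1: one class $t$ at a time.} Fix $t\in\mcD(\widetilde{G}_{\delta s}/F)$ and choose its lift $t\in\widetilde{G}_{\delta s}(\breve{F})$. By Lang's theorem for the connected group $\widetilde{G}$ over $\breve{F}$ (Steinberg), write $t=h^{-1}\sigma(h)$ with $h\in\widetilde{G}(\breve{F})$. Then $\delta(t)=h\delta s(h)^{-1}$ is a representative lying in $\widetilde{G}(F)$. Conjugation by $h$ identifies three sets:
\begin{itemize}
\item the $\sigma$-fixed points of $X^w_{\widetilde{\mcP},\delta(t)s}(\overline{k})$;
\item the $t\sigma$-fixed points of $X^w_{\widetilde{\mcP},\delta s}(\overline{k})$;
\item the set $\{g\in\widetilde{G}(F)/\widetilde{J}: g^{-1}\delta(t)s(g)\in\widetilde{J}w\widetilde{J}\}$.
\end{itemize}
The last identification uses that $\Gr_{\widetilde{\mcP}}(\overline{k})^{\sigma}=\widetilde{G}(F)/\widetilde{J}$, which holds because $H^1$ of the connected parahoric is trivial by Lang's theorem. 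Unfolding the integral, and using $\mathrm{vol}(\widetilde{J})=1$,
\[
TO_t(T_w)=\sum_{g\in \widetilde{G}_{\delta(t)s}(F)\backslash X^{w}_{\widetilde{\mcP},\delta(t)s}(\overline{k})^{\sigma}} \mathrm{vol}\bigl(\widetilde{G}_{\delta(t)s}(F)\cap g\widetilde{J}g^{-1}\bigr)^{-1}.
\]
Since the intersection lies in the bounded part $\widetilde{G}_{\delta s}(F)^0$, it is a finite-index subgroup there. The next step replaces this quotient by a quotient by the lattice $\Lambda$, writing $\Lambda=\Lambda_{\delta s}$ and $T:=\widetilde{G}_{\delta s}$ for the twisted centralizer torus; all forms $\widetilde{G}_{\delta(t)s}$ are identified with $T$ via $h$.

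\textbf{Step 2: pass to $\Lambda$-orbits.} We use the exact sequence
\[
0\to T(F)^0\to T(F)\to X_*(T)_{I}^{\Gamma}\to 0,
\]
together with $\Lambda=X_*(T)^{I}$ and the norm map $\Lambda\to X_*(T)_I$. This relates orbits of $T(F)$ to orbits of $\Lambda^{\sigma}$ after accounting for stabilizers, and the orbit--stabilizer bookkeeping is the source of $c_2$, the ratio of cokernel to kernel of $\Lambda_\Gamma\to X_*(T)_\Gamma$. We then count $t\sigma$-fixed points $x$ of $\Lambda\backslash X$ rather than of $X$. A fixed point $x$ lifts to $\widetilde{x}$ with $t\sigma(\widetilde{x})=\lambda_{\widetilde{x}}\widetilde{x}$. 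The class of $\lambda_x$ in $\Lambda_\Gamma=H^1(\langle\sigma\rangle,\Lambda)$-type data detects which element $\epsilon\in\mcD(\Lambda/F)$ is needed to correct $t$ so that $\widetilde{x}$ itself becomes a fixed point. Concretely, replacing the lift of $t$ by $t\lambda_{\widetilde{x}}^{-1}$ makes $\widetilde{x}$ genuinely $\sigma$-fixed for the form $t\epsilon^{-1}$ with $\epsilon=[\lambda_x]$. Thus the fixed points of $t_j\sigma$ on $\Lambda\backslash X$ decompose according to $\epsilon\in\mcD(\Lambda/F)$, and the $\epsilon$-piece computes $TO_{t_j\epsilon^{-1}}(T_w)$ up to the constants. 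The constant $c_1$ appears because different $\epsilon\in H^1(k,\Lambda)$ can give the same class in $H^1(F,T)$. Finally, the weight is
\[
\langle t_j\epsilon^{-1},\kappa\rangle^{-1}=\langle t_j,\kappa\rangle^{-1}\langle\lambda_x,\kappa\rangle,
\]
using the extension of $\kappa$ to $X_*(T)_\Gamma\simeq B(T)$. This extension is needed because $\lambda_x$ is only defined in $\Lambda_\Gamma$, and its image in $B(T)$ is what records the change of form.

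\textbf{Step 3: assemble.} The second formula in the proposition follows by summing the first over $j=1,\dots,m$, since $\{t_j\epsilon^{-1}\}$ runs over $\mcD(T/F)$.

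\textbf{Main obstacle.} The delicate part is the exact constant $1/(c_1c_2)$, which requires careful orbit--stabilizer bookkeeping. We must compare $T(F)$-orbits, weighted by inverse stabilizer volumes with $\mathrm{vol}(T(F)^0)=1$, against $\Lambda$-orbits of fixed points, while tracking the non-injectivity of $\Lambda_\Gamma\to X_*(T)_\Gamma$ and of $H^1(k,\Lambda)\to H^1(F,T)$. We must also check that $\lambda_x$ is independent of the choices of $\widetilde{x}$ and of the lift of $t_j$; independence of the lift holds because changing it by $T(\breve{F})^0$ does not affect Kottwitz classes. We would carry out this bookkeeping first in the untwisted split case as a check and then transport it via $h$.
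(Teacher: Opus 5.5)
Your architecture is the paper's. You move to $\delta(t_j)$ via $h$ and sort the $t_j\sigma$-fixed points of $\underline{\Lambda}_{\delta s}\backslash X^w_{\widetilde{\mcP},\delta s}$ by the class $\epsilon$ of $\lambda_x$. Each piece is identified with $\Lambda_{\delta s}^{\sigma}$-orbits on $k$-points of the twisted affine Springer fiber of $\delta(t_j\epsilon^{-1})$. Then $c_2$ comes from the GKM orbit--stabilizer count and $c_1$ is the fibre size of $\ker(H^1(k,\Lambda_{\delta s})\to H^1(F,\widetilde{G}))\to \mcD(\Lambda_{\delta s}/F)$. (The paper phrases the sorting through $k$-points of the quotient as pairs $(\epsilon, f\colon P_\epsilon\to X)$, which amounts to your normalized lifts.)

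\textbf{The gap.} The step that fails as written is ``replacing $t$ by $t\lambda_{\widetilde{x}}^{-1}$ makes $\widetilde{x}$ $\sigma$-fixed for the form $t\epsilon^{-1}$''. For $t\lambda_{\widetilde{x}}^{-1}$ to be a form at all, it must be $\sigma$-conjugate to $1$ in $\widetilde{G}(\breve{F})$, equivalently $\epsilon\in\ker(H^1(k,\Lambda_{\delta s})\to H^1(F,\widetilde{G}))$. Only then does it define an element of $\mcD(\widetilde{G}_{\delta s}/F)$ with a representative $\delta(t\epsilon^{-1})\in\widetilde{G}(F)$ whose orbital integral appears on the left. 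A priori $\lambda_x$ is just some element of $\Lambda_\Gamma$. If the condition failed, the right-hand side would contain fixed points with no counterpart on the left.

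Your Step 1 obscures this by invoking ``Lang's theorem for $\widetilde{G}$ over $\breve{F}$''. That statement is false, since $B(\widetilde{G})$ is nontrivial. The element $h$ exists only because $t\in\mcD(\widetilde{G}_{\delta s}/F)$, and that reason is not available for $t\lambda_{\widetilde{x}}^{-1}$.

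The missing argument is the one the paper takes from the end of the proof of GKM, Theorem 15.8: Lang's theorem for the \emph{connected parahoric}. Write $\widetilde{x}=g\widetilde{\mcP}(\breve{\mcO})$. The relation $t\sigma(\widetilde{x})=\lambda_{\widetilde{x}}\widetilde{x}$ says $g^{-1}\lambda_{\widetilde{x}}^{-1}t\,\sigma(g)\in\widetilde{\mcP}(\breve{\mcO})$. By Lang this equals $j\sigma(j)^{-1}$ for some $j\in\widetilde{\mcP}(\breve{\mcO})$, so $t\lambda_{\widetilde{x}}^{-1}=(gj)\sigma(gj)^{-1}$.

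With this in hand, the rest goes through. Normalize lifts so that $\lambda_{\widetilde{x}}$ equals a fixed representative of $\epsilon$; such lifts are unique up to $\Lambda_{\delta s}^{\sigma}$. Translating by an element that $\sigma$-conjugates $t\epsilon^{-1}$ to $1$ identifies the $\epsilon$-piece with $\Lambda^{\sigma}\backslash X^w_{\widetilde{\mcP},\delta(t\epsilon^{-1})s}(k)$, and GKM 15.4 finishes.

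\textbf{A smaller inaccuracy.} The stabilizer $\widetilde{G}_{\delta(t)s}(F)\cap g\widetilde{J}g^{-1}$ need not lie in the Kottwitz kernel $\widetilde{G}_{\delta s}(F)^0$. For example, take $\mathrm{SL}_2$ and the norm-one torus $T$ of a ramified quadratic extension. Then $T(F)$ fixes an edge of the tree, so it lies in a conjugate of every parahoric, while $[T(F):T(F)^0]=2$. This does not hurt, because the count only needs the stabilizers to meet $\Lambda^{\sigma}$ trivially. That gives $\sum\mathrm{vol}(\mathrm{Stab})^{-1}=|\Lambda^{\sigma}\backslash X(k)|/\mathrm{vol}(\Lambda^{\sigma}\backslash T(F))$, with $\mathrm{vol}(\Lambda^{\sigma}\backslash T(F))=|\coker(\Lambda^{\sigma}\to X_*(T)_I^{\sigma})|$. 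This equals $c_2$ by the six-term sequence for $1-\sigma$ applied to $0\to\Lambda\to X_*(T)_I\to C\to 0$ with $C$ finite, using $|C^{\sigma}|=|C_{\sigma}|$.
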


\begin{proof}
   We fix $j$ and omit the subscript $j$ for simplicity.
    There exists an element $h\in \widetilde{G}(\breve{F})$
    such that 
    $t=h^{-1}\sigma(h).$
    Let
    $\delta(t):=h\delta s(h)^{-1} \in \widetilde{G}(F)$.
    Then we have an isomorphism
    $\widetilde{G}_{\delta s} \overset{\sim}{\to} \widetilde{G}_{\delta(t) s}$, $g \mapsto hgh^{-1}$, which is defined over $F$.
    Moreover, the map $L_{\mcO}\widetilde{G} \to L_{\mcO}\widetilde{G}$, $g \mapsto hg$, induces isomorphisms
    $(X^w_{\widetilde{\mcP}, \delta s})_{\overline{k}} \overset{\sim}{\to} (X^w_{\widetilde{\mcP}, \delta(t) s})_{\overline{k}}$ and
    $
    (\underline{\Lambda}_{\delta s} \backslash X^w_{\widetilde{\mcP}, \delta s})_{\overline{k}} \overset{\sim}{\to} (\underline{\Lambda}_{\delta(t) s} \backslash X^w_{\widetilde{\mcP}, \delta(t) s})_{\overline{k}}
    $
    over $\overline{k}$.
    The latter isomorphism induces a bijection
    \[
    (\underline{\Lambda}_{\delta s} \backslash X^w_{\widetilde{\mcP}, \delta s})(\overline{k})^{t\sigma=1}\overset{\sim}{\to} (\underline{\Lambda}_{\delta(t) s} \backslash X^w_{\widetilde{\mcP}, \delta(t) s})(k).
    \]
    The set $(\underline{\Lambda}_{\delta(t) s} \backslash X^w_{\widetilde{\mcP}, \delta(t) s})(k)$
    can be identified with the set of isomorphism classes of pairs
    $(\epsilon, f \colon P_\epsilon \to X^w_{\widetilde{\mcP}, \delta(t) s})$
    where
    \begin{itemize}
        \item $\epsilon \in H^1(k, \Lambda_{\delta s})$, which we view as an element of $H^1(k, \Lambda_{\delta(t) s})$ via the isomorphism $\Lambda_{\delta s} \simeq \Lambda_{\delta(t) s}$ induced by $g \mapsto hgh^{-1}$,
        \item $P_\epsilon$ is the corresponding $\underline{\Lambda}_{\delta(t) s}$-torsor over $\Spec k$, and 
        \item $f \colon P_\epsilon \to X^w_{\widetilde{\mcP}, \delta(t) s}$ is a $\underline{\Lambda}_{\delta(t)s}$-equivariant map over $k$.
    \end{itemize}
    We choose a $1$-cocycle $\Gal(\overline{k}/k) \to \Lambda_{\delta s}$ representing $\epsilon$, and denote by the same notation $\epsilon \in \Lambda_{\delta s}$ the image of $\sigma$.
    Then we may assume that $P_\epsilon(\overline{k})=\Lambda_{\delta(t)s}$ and the action of $\sigma$ is determined by $x \mapsto h\epsilon h^{-1}\sigma(x)$.
    If there is a $\underline{\Lambda}_{\delta(t)s}$-equivariant map $f \colon P_\epsilon \to X^w_{\widetilde{\mcP}, \delta(t) s}$, then
    the element $y:=f(1) \in X^w_{\widetilde{\mcP}, \delta(t) s}(\overline{k})$
    satisfies $\sigma(y)=h \epsilon h^{-1} y$.
    This implies that for the point $x \in (\underline{\Lambda}_{\delta s} \backslash X^w_{\widetilde{\mcP}, \delta s})(\overline{k})^{t\sigma=1}$ corresponding to a pair $(\epsilon, f) \in (\underline{\Lambda}_{\delta(t) s} \backslash X^w_{\widetilde{\mcP}, \delta(t) s})(k)$, we have $\lambda_x=\epsilon$.
    Moreover, from the equality $\sigma(y)=h \epsilon h^{-1} y$, one can deduce that 
    $h \epsilon^{-1} h^{-1} = h'^{-1}\sigma(h')$
    for some $h' \in \widetilde{G}(\breve{F})$; this follows from the same argument as in the last paragraph of the proof of \cite[Theorem 15.8]{GoreskyKottwitzMacPhersonUnramified}.
    Then
    $t\epsilon^{-1}=(h'h)^{-1}\sigma(h'h)$.
    In particular $t\epsilon^{-1}$ is trivial in $B(\widetilde{G})$, or equivalently $\epsilon \in \ker(H^1(k, \Lambda_{\delta s}) \to H^1(F, \widetilde{G}))$.
    We set $\delta(t\epsilon^{-1}):=h'\delta(t)s(h')^{-1} \in \widetilde{G}(F)$.
    The set of isomorphism classes of
    $\underline{\Lambda}_{\delta(t)s}$-equivariant maps $f \colon P_\epsilon \to X^w_{\widetilde{\mcP}, \delta(t) s}$
    is naturally identified with
    the set
    $\Lambda^{\Gamma}_{\delta(t\epsilon^{-1})s}\backslash (X^w_{\widetilde{\mcP}, \delta(t\epsilon^{-1})s}(k))$.
    On the other hand, by the same argument as in \cite[Section 15.4]{GoreskyKottwitzMacPhersonUnramified}, we see that
    \[
    TO_{t\epsilon^{-1}}(T_w)=TO_{\delta(t\epsilon^{-1})}(T_w)=\frac{1}{c_2} \vert \Lambda^{\Gamma}_{\delta(t\epsilon^{-1})s}\backslash (X^w_{\widetilde{\mcP}, \delta(t\epsilon^{-1})s}(k)) \vert.
    \]
    Putting everything together, we obtain
    \begin{align*}
    \frac{\langle t, \kappa \rangle^{-1}}{c_1c_2} \sum_{x \in (\underline{\Lambda}_{\delta s} \backslash X^w_{\widetilde{\mcP}, \delta s})(\overline{k})^{t_j\sigma=1}} \langle \lambda_x, \kappa \rangle
    &= \frac{\langle t, \kappa \rangle^{-1}}{c_1c_2} \sum_{\epsilon \in \ker(H^1(k, \Lambda_{\delta s}) \to H^1(F, \widetilde{G}))} 
    \langle \epsilon, \kappa \rangle \vert \Lambda^{\Gamma}_{\delta(t\epsilon^{-1})s}\backslash (X^w_{\widetilde{\mcP}, \delta(t\epsilon^{-1})s}(k)) \vert \\
    &= \frac{1}{c_1} \sum_{\epsilon \in \ker(H^1(k, \Lambda_{\delta s}) \to H^1(F, \widetilde{G}))} \langle t\epsilon^{-1}, \kappa \rangle^{-1} \cdot TO_{t\epsilon^{-1}}(T_w) \\
    &= \sum_{\epsilon \in \mcD(\Lambda_{\delta s}/F)} \langle t\epsilon^{-1}, \kappa \rangle^{-1} \cdot TO_{t\epsilon^{-1}}(T_{w}).
    \end{align*}
\end{proof}

\begin{Remark}\label{Remark: relation to affine DL varieties}
It is known (see \cite{ZhouZhuOrbitalIntegralsADLV} for example) that twisted orbital integrals can be described using affine Deligne--Lusztig varieties. However, our twisted affine Springer fibers are distinct from affine Deligne--Lusztig varieties in general, and the former may be more useful for describing \textit{stable} twisted orbital integrals.
\end{Remark}

\subsection{Local constancy of twisted $\kappa$-orbital integrals}\label{Subsection:Local constancy of stable twisted orbital integrals}

Now we return to our original setup, namely that of \Cref{Subsection:O-Witt vector twisted affine Springer fibers}.
Let $\kappa_\infty \colon X_*(\widetilde{G}_{\delta_\infty s})_{\Gamma_\infty, \tors} \to \mbC^{\times}$
be a character.
After enlarging $B$ if necessary, we have isomorphisms
$\psi_i \colon X_*(\widetilde{G}_{\delta_\infty s}) \simeq X_*(\widetilde{G}_{\delta_i s})$
for all $i \geq B$ as in \eqref{equation: psi for cocharacter groups}.
Let $\kappa_i \colon X_*(\widetilde{G}_{\delta_i s})_{\Gamma_i, \tors} \to \mbC^{\times}$
be the character corresponding to $\kappa_\infty$ via $\psi_i$.
As in \Cref{Proposition:algebra isomorphism of parahoric Hecke algebras}, we have isomorphisms
\[
\psi_i \colon \mcH_{\widetilde{J}_\infty}(\widetilde{G}(F_\infty)) \simlr \mcH_{\widetilde{J}_i}(\widetilde{G}(F_i))
\]
for all $i \in \mbN$.
Let $f_\infty \in \mcH_{\widetilde{J}_\infty}(\widetilde{G}(F_\infty))$ be a function,
and let $f_i := \psi_i(f_\infty) \in \mcH_{\widetilde{J}_i}(\widetilde{G}(F_i))$.

We can now prove the local constancy of twisted $\kappa$-orbital integrals in families of close local fields, which is our main technical result. 

\begin{Theorem}[Local constancy of twisted $\kappa$-orbital integrals]\label{Theorem: local constancy of kappa orbital integral}
    There exists an integer $B' \geq B$ such that for all $i \geq B'$, we have
    \[
    TO^{\kappa_i}_{\delta_i}(f_i) = TO^{\kappa_\infty}_{\delta_\infty}(f_\infty).
    \]
\end{Theorem}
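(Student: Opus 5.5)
By linearity, and since $\psi_i$ sends $T_{w_\infty}$ to $T_{w_i}$, it suffices to treat $f_\infty = T_{w_\infty}$ for a single $w \in \widetilde{W}(\widetilde{G})$. I will compare both sides through the formula of \Cref{Proposition:geometric interpretation of stable twisted orbital integrals}, which expresses $TO^{\kappa_i}_{\delta_i}(T_{w_i})$ as
\[
\frac{1}{c_{1,i}c_{2,i}}\sum_{j}\langle t_{j,i},\kappa_i\rangle^{-1}\sum_{x}\langle\lambda_x,\kappa_i\rangle .
\]
The aim is to show that every ingredient of this formula is matched by $\psi_i$ for $i \gg 0$.

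\textbf{Group-theoretic data.} By \eqref{equation: psi for cocharacter groups} and \Cref{Proposition: lattices in cocharacter groups}, $\psi_i$ gives an isomorphism $X_*(\widetilde{G}_{\delta_\infty s})\simeq X_*(\widetilde{G}_{\delta_i s})$ equivariant for $\Gamma_\infty/I^n_\infty\simeq\Gamma_i/I^n_i$. It therefore identifies:
\begin{itemize}
\item $\Lambda_{\delta_\infty s}=X_*^{I_\infty}$ with $\Lambda_{\delta_i s}$;
\item the coinvariants $X_*(\cdot)_{\Gamma}$ and their torsion parts;
\item the maps $(\Lambda)_\Gamma\to X_*(\cdot)_\Gamma$, and hence $c_2$.
\end{itemize}
For $\widetilde{G}$ itself, $\pi_1(\widetilde{G})$ is unramified and literally constant. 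Hence $\mcD(\widetilde{G}_{\delta_i s}/F_i)$, $\mcD(\Lambda_{\delta_i s}/F_i)$, $H^1(k,\Lambda)$ and $c_1$ all correspond under $\psi_i$ via Tate--Nakayama (\Cref{Remark: D(G_gamma) and B(G_gamma)}), and $\kappa_i$ corresponds to $\kappa_\infty$ by definition. I extend $\kappa_\infty$ to $X_*(\cdot)_{\Gamma_\infty}$ and transport it to $\kappa_i$.

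\textbf{Lifting the representatives $t_j$.} The representatives $t_{j,\infty}$ of $\mcD/\mcD(\Lambda)$ should lift to classes over the family. Each $t_{j,\infty}$ lies in $X_*(\cdot)_{\Gamma,\tors}=H^1(F_\infty,\widetilde{G}_{\delta s})$ and is realized by a cocycle of $\Gal(E'_\infty/F_\infty)$ with values in $\widetilde{G}_{\delta s}(E'_\infty)$. Since the torus is smooth and $\mcO_{\Spa F,\infty}$ is henselian, it spreads to a cocycle with values in $\widetilde{G}_{\delta s}(E'_{\geq B})$, possibly after enlarging $E$. \Cref{Corollary:Tate-Nakayama compatibility} then gives $\psi_i(t_{j,\infty})=t_{j,i}$ for $i\geq B'$. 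I also choose a lift $t_j\in\widetilde{G}_{\delta s}(F^{\ur}_{\geq B})$ of the Frobenius value, by the same Steinberg-plus-henselian argument as in \Cref{Corollary:Tate-Nakayama compatibility}.

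\textbf{Geometric side.} Set $X:=\underline{\Lambda}_{\delta s}\backslash(X^{S^\circ_w}_{\widetilde{\mcP},\delta s})_{\mcO_{\geq B}/\pi}$. By \Cref{Theorem: finiteness of twisted affine Springer fiber}(3) and \Cref{Corollary: Schubert variety local constant}, which identifies $S^\circ_w$ fiberwise with the Schubert cells, $X$ is a separated pfp perfect algebraic space over $\mcO_{\geq B}/\pi=\varinjlim_{B'}\mcO_{[B,B'],1}$. Base-changing to $\mcO'$, the element $t_j$ acts on $X_{\mcO'_{\geq B}/\pi}$. The map $\tilde x\mapsto(t_j\sigma(\tilde x),\tilde x)$ defines a $\underline{\Lambda}$-torsor datum, and the class $\lambda_x\in(\Lambda)_\Gamma$ is locally constant. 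Concretely, I would consider the finitely many open-closed pieces of the "twisted Frobenius" descent datum $t_j\sigma$: the twisted form $X^{(t_j)}$ of $X$ over $\mcO_{\geq B}/\pi$ obtained by descending $X_{\mcO'}$ along $t_j\sigma$, decomposed according to the value of $\lambda_x$ in the finite set of relevant classes (finitely many because $X$ is quasi-compact). Each piece is pfp over $\mcO_{\geq B}/\pi$. \Cref{Proposition: category of pfp algebraic spaces} then shows that, for $B'$ large, each piece is the base change of $\coprod_{i<B'}(\cdot)_i\sqcup(\cdot)_\infty$ along $\mcO_{[B,B'-1],1}\to\mcO_{\geq B}/\pi$. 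Its fibers at $i\geq B'$ are therefore isomorphic to its fiber at $\infty$ as $k$-spaces, so the numbers of $k$-points agree. The fiber at $i$ of $X$ must be identified with the object in \Cref{Proposition:geometric interpretation of stable twisted orbital integrals} for $F_i$, with $\lambda(\pi)$ specializing to $\lambda(\pi_i)$, which holds by construction. Summing the matched terms gives the equality.

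\textbf{Main obstacle.} I expect the hardest step to be making the descent along $t_j\sigma$ and the locally constant function $x\mapsto\lambda_x$ algebro-geometric over the whole family, rather than fiber by fiber. The plan is to handle this by working on $X_{\mcO'_{\geq B}/\pi}$ and using that $\Lambda$ acts freely with closed graph (as in the proof of \Cref{Theorem: ASF is quasi-compact separated pfp algebraic space}). I would express the locus where $t_j\sigma(\tilde x)=\lambda\tilde x$ as the pullback of the diagonal along a pfp map, intersected with a quasi-compact fundamental domain $Y$ from \Cref{Theorem: finiteness of twisted affine Springer fiber}(1). Only finitely many $\lambda$ then occur.
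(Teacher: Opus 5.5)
Your overall plan matches the paper's. You reduce to $T_{w}$, match $c_1$, $c_2$, $\mcD(\Lambda_{\delta s}/F)\subset\mcD(\widetilde{G}_{\delta s}/F)$ and $\kappa$ via $\psi_i$, and spread out the representatives $t_j$ by henselianity and the compatibility of Kottwitz maps. You then compare the two sides of \Cref{Proposition:geometric interpretation of stable twisted orbital integrals} using that $\underline{\Lambda}_{\delta s}\backslash X^w_{\widetilde{\mcP},\delta s}$ is pfp.

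\textbf{The gap is in the geometric step.} You assert that $x\mapsto\lambda_x$ is locally constant, decompose the twisted form $X^{(t_j)}$ into finitely many open--closed pfp pieces according to $\lambda_x$, and count $k$-points piece by piece. Such a decomposition does not exist in general. Take $G=\mathrm{SL}_2$, $r=1$, $w=1$, $t_1=1$, and $\gamma$ regular in an unramified anisotropic torus with $\mathrm{val}(\alpha(\gamma)-1)=1$.
\begin{itemize}
\item Over $\overline{k}$, $X_\gamma$ is an infinite chain of $\mathbb{P}^1$'s, and $\Lambda_\gamma\simeq\mbZ$ (Frobenius acting by $-1$) translates the chain.
\item The quotient $\underline{\Lambda}_\gamma\backslash X_\gamma$ is a rational curve with a single $k$-rational node whose two branches are interchanged by Frobenius.
\item The node and the $q+1$ smooth $k$-points have different values of $\lambda_x\in(\Lambda_\gamma)_\Gamma\simeq\mbZ/2$. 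This difference is exactly what distinguishes the two stable conjugates in $O^\kappa_\gamma$.
\end{itemize}
Yet the node lies in the closure of the smooth locus. So $\lambda_x$ is not Zariski-locally constant on $X^{(t_j)}$, and there are no pieces to which \Cref{Proposition: category of pfp algebraic spaces} could be applied.

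\textbf{What is missing.} You must not decompose the positive-dimensional space. There are two ways around this.
\begin{itemize}
\item Restrict first to the locus of $k$-points, i.e.\ the equalizer of the $q$-Frobenius of $X^{(t_j)}$ and the identity. The $q$-Frobenius is linear over $\mcO_{\geq B}/\pi$, since this ring is a filtered colimit of finite products of $k$. The equalizer is pfp and fiberwise a finite set of $k$-points, so near $\infty$ it is a finite disjoint union of copies of the base. On it, the class of the restricted $\underline{\Lambda}_{\delta s}$-torsor is constant near $\infty$. Your ``pullback of the diagonal'' remark points this way, but you only use it to bound the set of $\lambda$'s.
\item The paper's route: apply \Cref{Proposition: category of pfp algebraic spaces} to the quotient together with the automorphism induced by $\widetilde{t}_j$. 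Both descend to some $\mcO_{[B,B'],1}$, so for $i\geq B'$ the fibers at $i$ and at $\infty$ are identified by an isomorphism defined over $k$. This isomorphism commutes with $\sigma$ and intertwines $\widetilde{t}_{j,i}$ with $\widetilde{t}_{j,\infty}$, hence bijects the $\widetilde{t}_j\sigma$-fixed $\overline{k}$-points. \Cref{Proposition: pfp morphisms and limit preserving functors} then makes this bijection carry $\lambda_x$ to $\lambda_x$ pointwise.
\end{itemize}

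\textbf{A secondary problem.} $X^{(t_j)}$ exists only if $t_j\sigma$ is an honest descent datum, i.e.\ $N_{k'/k}(t_j)$ acts trivially. A lift of the Frobenius value alone does not guarantee this. Lifting a cocycle value by value via henselianity also does not preserve the cocycle identity unless you lift inside the norm-one torus. The paper avoids twisted forms altogether: it only needs $\widetilde{t}_j$ to represent the class in $B(\widetilde{G}_{\delta s})$ and works with $\widetilde{t}_j\sigma$-fixed $\overline{k}$-points.
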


\begin{proof}
    We will enlarge $B$ freely (often without explicitly mentioning it).
    It suffices to prove the assertion for $f_\infty=T_{w_\infty}$ for each $w \in \widetilde{W}(\widetilde{G}_F)$.
    Then $f_i = T_{w_i}$.
    By (the proof of) \Cref{Proposition: lattices in cocharacter groups}, we may assume that
    there exists a $\Gal(\overline{k}/k)$-equivariant isomorphism
    $\pi_1(\widetilde{G}_i) \simeq \pi_1(\widetilde{G}_\infty)$ which is compatible with
    $X_*(\widetilde{G}_{\delta_\infty s}) \simeq X_*(\widetilde{G}_{\delta_i s})$
    for all $i \geq B$.
    Then both $c_1$ and $c_2$ appearing in \Cref{Proposition:geometric interpretation of stable twisted orbital integrals} are independent of $i$.
    Moreover, we may naturally identify $\mcD(\Lambda_{\delta_\infty s}/F_\infty) \subset \mcD(\widetilde{G}_{\delta_\infty s}/F_\infty)$ with $\mcD(\Lambda_{\delta_i s}/F_i) \subset \mcD(\widetilde{G}_{\delta_i s}/F_i)$.
    Let $\{ t_{1, \infty}, \dotsc, t_{m, \infty} \} \subset \mcD(\widetilde{G}_{\delta_\infty s}/F_\infty)$ be a complete set of representatives for $\mcD(\widetilde{G}_{\delta_\infty s}/F_\infty)/\mcD(\Lambda_{\delta_\infty s}/F_\infty)$
    and let $\{ t_{1, i}, \dotsc, t_{m, i} \} \subset \mcD(\widetilde{G}_{\delta_i s}/F_i)$ be the corresponding elements.
    Let $\widetilde{t}_{j, \infty} \in \widetilde{G}_{\delta_\infty s}(\breve{F}_\infty)$ be a lift of $t_{j, \infty}$.
    Since $\mcO_{\Spa \breve{F}, \infty}$ is henselian, we may assume that $\widetilde{t}_{j, \infty}$ has a lift $\widetilde{t}_{j} \in \widetilde{G}_{\delta s}(\breve{F}_{\geq B})$.
    By \Cref{Corollary: local constancy of cocharacter groups}, we may assume that the fiber $\widetilde{t}_{j, i} \in \widetilde{G}_{\delta_i s}(\breve{F}_{i})$ is a lift of $t_{j, i}$.
    
    Let $X^{w}_{\widetilde{\mcP}, \delta s}$
    be
    the twisted affine Springer fiber 
    associated with the triple $(\widetilde{\mcP}, \delta, S^\circ_w)$.
    By \Cref{Theorem: finiteness of twisted affine Springer fiber}, the quotient $\underline{\Lambda}_{\delta s} \backslash (X^{w}_{\widetilde{\mcP}, \delta s})_{\mcO_{\geq B}/\pi}$ is a separated pfp perfect algebraic space over $\Spec \mcO_{\geq B}/\pi$.
    Applying \Cref{Proposition: category of pfp algebraic spaces} to $\underline{\Lambda}_{\delta s} \backslash (X^{w}_{\widetilde{\mcP}, \delta s})_{\mcO_{\geq B}/\pi}$ and its automorphism induced by $\widetilde{t}_{j}$, we obtain an isomorphism
    $\underline{\Lambda}_{\delta_{\infty} s} \backslash X^{w_\infty}_{\widetilde{\mcP}_\infty, \delta_\infty s} \simeq \underline{\Lambda}_{\delta_i s} \backslash X^{w_i}_{\widetilde{\mcP}_i, \delta_i s}$
    which is compatible with the actions of $\widetilde{t}_{j, \infty}$ and $\widetilde{t}_{j, i}$ for all $i \geq B$.
    In particular
    we have 
    \[
    (\underline{\Lambda}_{\delta_{\infty} s} \backslash X^{w_\infty}_{\widetilde{\mcP}_\infty, \delta_\infty s})(\overline{k})^{\widetilde{t}_{j, \infty}\sigma=1}\simeq 
    (\underline{\Lambda}_{\delta_i s} \backslash X^{w_i}_{\widetilde{\mcP}_i, \delta_i s})(\overline{k})^{\widetilde{t}_{j, i}\sigma=1}.
    \]
    Moreover, by \Cref{Proposition: pfp morphisms and limit preserving functors}, we may assume that the maps $x \mapsto \lambda_x$ appearing in \Cref{Proposition:geometric interpretation of stable twisted orbital integrals}
    for $i \geq B$ and $i=\infty$
    agree with each other under these identifications.
    By applying this argument to all
    $1 \leq j \leq m$, we can conclude from \Cref{Proposition:geometric interpretation of stable twisted orbital integrals} (where we may extend $\kappa_i$ to a character
    $X_*(\widetilde{G}_{\delta_i s})_{\Gamma_i} \to \mbC^{\times}$
    compatibly) that
    $
    TO^{\kappa_i}_{\delta_i}(T_{w_i}) = TO^{\kappa_\infty}_{\delta_\infty}(T_{w_\infty})
    $
    for all $i \geq B$ after enlarging $B$.
\end{proof}

\section{Base change fundamental lemma for local function fields}\label{Section: BCFL}

In this section, we apply the results obtained in the previous sections to prove the base change fundamental lemma for local function fields.

\subsection{Statement of the base change fundamental lemma}\label{Subsection:Statement of the base change fundamental lemma}
We state the base change fundamental lemma for central elements in parahoric Hecke algebras following \cite{HainesBCFLparahoric} (although the statement is formulated in characteristic zero in \textit{loc.cit.}, it remains valid in positive characteristic as well).
Note that our statement will be valid for a general local function field, but we will restrict ourselves to strongly regular semisimple conjugacy classes (see \Cref{Remark BCFL in char 0} for comments on this assumption).

We keep the notation from \Cref{Situation: unramified reductive group in families of close fields} and assume $k$ is finite.
At the moment, we work over $F_i$ for some $i\in \mathbb{N}\cup \lbrace \infty \rbrace$ and omit the subscript $i$ from the notation.
Let $\mcH_{I}(G(F))$ (resp.\ $\mcH_{J}(G(F))$) be the Iwahori (resp.\ parahoric) Hecke algebra as in \Cref{Subsection:Parahoric Hecke algebras in families of close local fields}.
Furthermore let $\widetilde{k}/k$ be a finite extension of degree $r$, and keep the notation from \Cref{Situation: twisted affine Springer fiber}.
Let $\mcH_{\widetilde{I}}(G(\widetilde{F}))$ (resp.\ $\mcH_{\widetilde{J}}(G(\widetilde{F}))$) be the corresponding Iwahori (resp.\ parahoric) Hecke algebra.

We give a quick recollection of the base change homomorphism
$
b\colon Z(\mcH_{\widetilde{J}}(G(\widetilde{F})))\rightarrow Z(\mcH_{J}(G(F)))
$
as defined by Haines \cite[Section 3]{HainesBCFLparahoric}.
Let $R=\mathbb{C}[X_{*}(S)]$
and let ${}_F W:=W(S)$ be the relative Weyl group.
Recall the Bernstein isomorphism (of $\mbC$-algebras)
\[
R^{{}_F W} \simlr Z(\mcH_{I}(G(F)))
\]
from \cite[Section 3.1]{HainesBCFLparahoric}.
It is obtained as the restriction of the map
$R \to \mcH_{I}(G(F))$
of $\mbC$-vector spaces
sending $\lambda\in X_{*}(S)$ to an element $\Theta_{\lambda}\in \mcH_{I}(G(F))$ that can be described as follows:
We may write $\lambda=\lambda_{1}-\lambda_{2},$ where $\lambda_{1},\lambda_{2}\in X_{*}(S)$ are $B$-dominant, and then
\begin{equation}\label{eq: Writing out Thetalambda in terms of the basis}
    \Theta_{\lambda}=\delta_{B}^{1/2}(\lambda(\pi_{F}))T_{\lambda_1(\pi_{F})} \cdot T_{\lambda_2(\pi_{F})}^{-1}.
\end{equation}
See \cite[Remark 1.7.2]{HainesKottwitzAPrasadIwahoriHecke}. 
Although \cite{HainesBCFLparahoric} assumes that $F$ has characteristic zero and \cite{HainesKottwitzAPrasadIwahoriHecke} also assumes that $G$ is split, the argument also works when $G$ is unramified and $F$ is of positive characteristic; see also \cite{RostamiBernsteinGeneralReductive} and \cite[Appendix]{HainesBernsteincenter}.
Haines proves in \cite[Theorem 3.1.1]{HainesBCFLparahoric} that convolution with the characteristic function $1_{J}$ gives an isomorphism of $\mathbb{C}$-algebras
$
-\cdot 1_{J}\colon Z(\mcH_{I}(G(F)))\simeq Z(\mcH_{J}(G(F))).
$
Composing with the Bernstein isomorphism we obtain the following isomorphism, which we also call the Bernstein isomorphism:
\[
R^{{}_F W} \simlr Z(\mcH_{J}(G(F))).
\]
Let $S^{\widetilde{F}}$ be the maximal $\widetilde{F}$-split subtorus of $T$ which is defined over $F$
and let ${}_{\widetilde{F}} W=W(S^{\widetilde{F}})$ be the corresponding relative Weyl group.
Observe that we obtain a norm map
$$
N\colon \mathbb{C}[X_{*}(S^{\widetilde{F}})]^{{}_{\widetilde{F}}W}\rightarrow \mathbb{C}[X_{*}(S)]^{{}_{F}W}
$$
induced by
$
\lambda\mapsto \sum_{i=0}^{r-1}\sigma^{i}(\lambda).
$
\begin{Definition}[Base change homomorphism]\label{Def: base change homomorphism}
    The base change homomorphism
    \[
    b\colon Z(\mcH_{\widetilde{J}}(G(\widetilde{F})))\rightarrow Z(\mcH_{J}(G(F)))
    \]
    is the unique $\mathbb{C}$-algebra homomorphism making the following diagram commute:
    $$
    \xymatrix{
    \mathbb{C}[X_{*}(S^{\widetilde{F}})]^{{}_{\widetilde{F}}W} \ar[r]^{} \ar[d]^{N} & Z(\mcH_{\widetilde{J}}(G(\widetilde{F}))) \ar[d]^{b} \\
    \mathbb{C}[X_{*}(S)]^{{}_{F}W} \ar[r]^{} & Z(\mcH_{J}(G(F))),
    }
    $$
    where the horizontal maps are the Bernstein isomorphisms.
\end{Definition}

The base change fundamental lemma for local function fields is stated as follows:

\begin{Theorem}[Base change fundamental lemma]\label{Thm: base change fundamental lemma}
    We assume that $F=k((\pi_\infty))$.
    Let $f\in Z(\mcH_{\widetilde{J}}(G(\widetilde{F})))$ with base change $b(f)\in Z(\mcH_{J}(G(F))).$
    Let $\gamma\in G(F)$
    be a strongly regular semisimple element which is $G(F^{\sep})$-conjugate to
    $N(\delta)=\delta \cdot \sigma(\delta)\cdots \sigma^{r-1}(\delta) \in G(\widetilde{F})$
    for some $\delta\in G(\widetilde{F})$.
    Then we have the equality
    \[
    STO_{\delta}(f)=SO_{\gamma}(b(f)).
    \]
\end{Theorem}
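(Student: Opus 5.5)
We deduce the statement from the characteristic zero case (Haines \cite{HainesBCFLparahoric}, extending Clozel and Labesse) via a family of close local fields approximating $F=F_\infty=k((\pi_\infty))$ and the local constancy result \Cref{Theorem: local constancy of kappa orbital integral}. Fix such a family $\{(F_i,\pi_i)\}$ with residue field $k$, for instance $F_i = W(k)[1/p](p^{1/e_i})$ with $e_i\geq i$. Spread out $G$, $\mcP_x$ and $\widetilde{\mcP}_x$ as in \Cref{Subsection:Unramified reductive groups in families} and \Cref{Subsection:O-Witt vector twisted affine Springer fibers}. By linearity, and since the Bernstein basis of $Z(\mcH_{\widetilde J})$ is indexed by ${}_{\widetilde F}W$-orbit sums in $X_*(S^{\widetilde F})$, which is independent of $i$, it suffices to treat $f_\infty = z_\mu^\infty$, the image of a monomial orbit sum $z_\mu$.

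\emph{Step 1: lifting $\delta$ and $\gamma$.} By \Cref{Proposition: strongly regular-semi-simple spreads out} applied to $\widetilde{G}$, we lift $\delta_\infty$ to $\delta\in\widetilde{G}(F_{\geq B})$, so that $N(\delta)$ is fiberwise strongly regular semisimple. We also lift $\gamma_\infty$ to $\gamma \in G(F_{\geq B})$, which is fiberwise strongly regular semisimple. The $G(F_\infty^{\sep})$-conjugacy of $\gamma_\infty$ and $N(\delta_\infty)$ must then be propagated to $\gamma_i$ and $N(\delta_i)$. Since both elements are strongly regular, stable conjugacy amounts to having equal images in $G/\!\!/G$ together with a torsor condition. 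To arrange this, I would first lift $N(\delta)$, then choose $\gamma$ with the same image in the Steinberg quotient over $F_{\geq B}$, using the henselian property of $\mcO_{\Spa F,\infty}$ and the smoothness of the fiber of $G^{\mathrm{sr}}\to G/\!\!/G$. For strongly regular elements, equal characteristic polynomial data then forces stable conjugacy on every fiber $i\geq B$.

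\emph{Step 2: matching Hecke functions.} We need $\psi_i(b_\infty(f_\infty)) = b_i(\psi_i(f_\infty))$ on centers. By \Cref{Proposition:algebra isomorphism of parahoric Hecke algebras}, $\psi_i$ is an algebra isomorphism, so it preserves centers. By \eqref{eq: Writing out Thetalambda in terms of the basis}, $\Theta_\lambda$ is expressed through $T_{\lambda_j(\pi)}$, inverses of these, and $\delta_B^{1/2}(\lambda(\pi))$. Each of these ingredients is independent of $i$ under $\psi_i$, because the root data, $q$ and the Iwahori--Weyl group coincide and $\psi_i$ sends $\lambda(\pi_\infty)$ to $\lambda(\pi_i)$. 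Hence $\psi_i$ intertwines the Bernstein isomorphisms for $G$ and for $G_{\widetilde F}$. Convolution with $1_J$ is also preserved. Since the norm map $N$ is the same for all $i$, $b$ commutes with $\psi_i$.

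\emph{Step 3: conclusion.} Apply \Cref{Theorem: local constancy of kappa orbital integral} twice with $\kappa=1$. First, with degree $r$ twisting, it gives $STO_{\delta_i}(f_i)=STO_{\delta_\infty}(f_\infty)$. Second, with $r=1$ and the function $b(f_\infty)$, whose transport is $b(f_i)$ by Step 2, it gives $SO_{\gamma_i}(b(f_i))=SO_{\gamma_\infty}(b(f_\infty))$; these hold for all $i\geq B'$. Haines' characteristic zero theorem gives $STO_{\delta_i}(f_i)=SO_{\gamma_i}(b(f_i))$ for each such $i$, and comparing the three equalities yields the claim.

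I expect Step 1 to be the main obstacle. One has to ensure that stable conjugacy, rather than only geometric conjugacy of the norm, persists fiberwise. One also has to check that the normalizations of measures and the definition of stable twisted orbital integrals, which sum over $\mcD(\widetilde G_{\delta s}/F)$, match Haines' conventions. The first fallback is to replace $\gamma_i$ by any element stably conjugate to $N(\delta_i)$, since stable orbital integrals depend only on the stable class.
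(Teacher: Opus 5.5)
Your proposal is correct and follows the paper's proof: lift $\delta_\infty$ and $\gamma_\infty$ to the family, use the compatibility of $\psi_i$ with the Bernstein isomorphisms and the norm map to get $\psi_i(b(f_\infty))=b(f_i)$, apply \Cref{Theorem: local constancy of kappa orbital integral} with $\kappa=1$ twice, and conclude from Haines' characteristic-zero theorem.

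The only difference is in Step~1, and it is cosmetic. The paper lifts $\gamma_\infty$ to an $F_{\geq B}$-point of the smooth quasi-affine conjugacy class $G_{\widetilde{F}_{\geq B}}/G_{N(\delta)}$, after descending it along $\Spec \widetilde{F}_{\geq B}\to\Spec F_{\geq B}$; this descent is possible because $\sigma(N(\delta))=\delta^{-1}N(\delta)\delta$. That conjugacy class is exactly your fiber of $G^{\mathrm{sr}}\to G/\!\!/G$, so the paper simply avoids the detour through the Steinberg quotient. Your worry about stable versus geometric conjugacy is also unnecessary: for strongly regular elements, whose centralizers are tori, $G(\overline{F})$-conjugacy already gives the required $G(F^{\sep})$-conjugacy.
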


\begin{Remark}\label{Remark BCFL in char 0}
    If $F$ is of characteristic zero, then \Cref{Thm: base change fundamental lemma} is proved in \cite{HainesBCFLparahoric}.
    In fact, in \textit{loc.cit.}, it is formulated and proved not only for strongly regular semisimple elements but also for general semisimple elements.
    Our result extends immediately to regular semisimple elements using $z$-extensions but we expect that additional work would be necessary in positive characteristic to further reduce to general semisimple elements.
    Moreover, in \textit{loc.cit.}, it is also proved that if $\gamma$ is not a norm, then $SO_{\gamma}(b(f))=0$.
    One might expect that the vanishing of $SO_{\gamma}(b(f))$ when $\gamma$ is not a norm is approachable via the technique of close fields.
    However, in characteristic zero, this vanishing assertion is proved by purely local arguments (following Labesse \cite[Proposition 3.7.2]{LabesseAsterisque}).
    Since these arguments are already available in positive characteristic, we will not pursue this direction further here.
\end{Remark}
\subsection{Proof of \Cref{Thm: base change fundamental lemma}}\label{Proof of BCFL}

Let $F=\lbrace (F_{i},\pi_{i}) \rbrace_{i\in \mathbb{N} \cup \lbrace \infty \rbrace}$ be a family of close local fields as in \Cref{Subsection:Unramified reductive groups in families}.
We first note the following compatibility between Bernstein isomorphisms and a family of close local fields.

\begin{Remark}\label{Remark:compatibility of base change map in close local fields}
    By \Cref{Proposition:algebra isomorphism of parahoric Hecke algebras}, we have isomorphisms of \textit{$\mbC$-algebras}
\[
\psi_i \colon \mcH_{J_\infty}(G(F_\infty)) \simlr \mcH_{J_i}(G(F_i)),
\]
which induce $Z(\mcH_{J_\infty}(G(F_\infty))) \simlr Z(\mcH_{J_i}(G(F_i)))$.
Moreover, we have a natural identification
\[
\mathbb{C}[X_{*}(S_\infty)]^{{}_{F_\infty}W} \simlr \mathbb{C}[X_{*}(S_i)]^{{}_{F_i}W}.
\]
By the formula \eqref{eq: Writing out Thetalambda in terms of the basis}, Bernstein isomorphisms are compatible with these identifications (in an obvious sense).
In the hyperspecial case, Bernstein isomorphisms agree with Satake isomorphisms, and thus we also see that Satake isomorphisms are compatible with these identifications.
\end{Remark}

We also need the following lemma.

\begin{Lemma}\label{lem: spreading out norms}
    Let $\gamma_{\infty}\in G(F_{\infty})$
    be a strongly regular semisimple element
    which is $G(F^{\sep}_\infty)$-conjugate to
    $N(\delta_\infty)$
    for some $\delta_\infty \in G(\widetilde{F}_\infty)$.
    Then there exist $B$ and lifts $\gamma_{\geq B}\in G(F_{\geq B})$
    and
    $\delta_{\geq B}\in G(\widetilde{F}_{\geq B})$
    of $\gamma_\infty$ and $\delta_\infty$, respectively, such that for all $i \geq B$,
    $\gamma_{i}$ is $G(F^{\sep}_i)$-conjugate to
    $N(\delta_i)$.
\end{Lemma}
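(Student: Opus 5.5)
The idea is to spread out the conjugating element itself, using that $\mcO_{\Spa F, \infty}=\varinjlim_B F_{\geq B}$ is henselian with residue field $F_\infty$. Conjugacy over $F^{\sep}_i$ is not directly a condition over $F_{\geq B}$. So I will instead encode it as the existence of a rational point on a smooth $F_{\geq B}$-scheme, namely a torsor under the centralizer torus, and then lift.

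First, lift $\delta_\infty$ to some $\delta\in G(\widetilde{F}_{\geq B})=\widetilde{G}(F_{\geq B})$ by \Cref{Proposition: strongly regular-semi-simple spreads out}(1), applied to $\widetilde{G}$. Then $N(\delta)\in G(\widetilde{F}_{\geq B})$ is a lift of $N(\delta_\infty)$. Since $N(\delta_\infty)$ is $G(F^{\sep}_\infty)$-conjugate to the strongly regular $\gamma_\infty$, it is strongly regular semisimple. By \Cref{Proposition: strongly regular-semi-simple spreads out}(2), after enlarging $B$, $N(\delta)$ is fiberwise strongly regular semisimple.

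Next, choose a finite Galois extension $E_\infty/F_\infty$ containing $\widetilde{F}_\infty$ and an element $g_\infty\in G(E_\infty)$ with $g_\infty^{-1}\gamma_\infty g_\infty=N(\delta_\infty)$. This exists because the transporter between two semisimple conjugate elements is a torsor under a smooth torus, so it has a point over a finite separable extension. Extend $E_\infty$ to a family $E$ as in \Cref{Construction: etale covering of family of close fields}, which contains $\widetilde{F}$ after unramified base change. Lift $\gamma_\infty$ to $\gamma\in G(F_{\geq B})$; it is fiberwise strongly regular after enlarging $B$.

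Now consider the transporter scheme $\mathrm{Tr}:=\{g\in G_{E_{\geq B}} : g^{-1}\gamma g=N(\delta)\}$ over $E_{\geq B}$. On each fiber, $\gamma_i$ and $N(\delta)_i$ are strongly regular semisimple. The key claim is that $\mathrm{Tr}$ is a torsor under the torus $G_\gamma$, or at least smooth over the open locus where it is nonempty. To prove it, I would use the morphism $G\times G_\gamma\backslash\to G$ and the fact that the orbit map $G/G_\gamma\to G$, $g\mapsto g\gamma g^{-1}$, is smooth at points with strongly regular image. Equivalently, $\mathrm{Tr}$ is the fiber of the smooth adjoint-orbit morphism, and it is a pseudo-torsor for $G_\gamma$ that is flat and locally of finite presentation where nonempty.

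Granting smoothness near the point $g_\infty$, I lift $g_\infty$ to a point $g\in\mathrm{Tr}(\mcO_{\Spa E,\infty})$ by henselianity. Then $g$ is defined over $E_{\geq B}$ after enlarging $B$. Taking fibers gives $g_i^{-1}\gamma_i g_i=N(\delta_i)$ with $g_i\in G(E_i)\subset G(F^{\sep}_i)$, which is what we want.

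The main obstacle is the smoothness of $\mathrm{Tr}$ in a neighborhood of $g_\infty$ over the non-noetherian base. I would avoid a general statement and argue locally. Conjugacy of regular semisimple elements is detected by the Chevalley map $\chi\colon G\to T/W$, and $\chi$ is smooth on the regular locus. Lifting both elements only gives $\chi(\gamma)=\chi(N(\delta))$ at $\infty$, not over $F_{\geq B}$.

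To fix this, I would \emph{not} lift $\gamma$ independently. Instead, define $\gamma:=g\,N(\delta)\,g^{-1}$ for a lift $g\in G(E_{\geq B})$ of $g_\infty$. Then $\gamma$ is Galois-invariant at $\infty$ but need not be defined over $F$. So instead I lift $\gamma_\infty$ arbitrarily and then correct it within its stable class. Since $\chi(N(\delta))\in (T/W)(F_{\geq B})$ lifts $\chi(\gamma_\infty)$, and $\chi$ is smooth at $\gamma_\infty$, henselianity produces a lift $\gamma\in G(F_{\geq B})$ with $\chi(\gamma)=\chi(N(\delta))$. Here $N(\delta)$ is $\sigma$-conjugate-invariant up to conjugacy, so its characteristic polynomial is $F$-rational.

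Fiberwise, equality of $\chi$ for strongly regular semisimple elements of a quasi-split group implies $G(F^{\sep}_i)$-conjugacy, which is exactly the claim. I expect this last step, rationality of $\chi(N(\delta))$ over $F_{\geq B}$, to be the delicate point. I would handle it by noting that $N(s(\delta))=\delta^{-1}N(\delta)\delta$, so $\chi(N(\delta))$ is $s$-invariant and hence descends to $F_{\geq B}$.
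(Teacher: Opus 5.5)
Your final argument (the Chevalley-map version) is correct, but it uses a different device from the paper's to encode stable conjugacy. Delete the transporter attempt: with $\gamma$ and $N(\delta)$ lifted independently, nothing forces $\mathrm{Tr}$ to have nonempty fibres at finite $i$, so it cannot be smooth or flat at $g_\infty$, as you noticed yourself.

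\textbf{The paper's route.} The paper also lifts $\delta_\infty$ first. It then descends the whole orbit scheme $G_{\widetilde F_{\geq B}}/G_{N(\delta)}$, which is smooth and quasi-affine, along the $\Gal(\widetilde k/k)$-torsor $\Spec\widetilde F_{\geq B}\to\Spec F_{\geq B}$; the descent datum comes from $s(N(\delta))=\delta^{-1}N(\delta)\delta$. This gives a smooth $F_{\geq B}$-scheme $\mcC$ with $\gamma_\infty\in\mcC(F_\infty)$, and that point is lifted by henselianity. Geometric conjugacy of $\gamma_i$ and $N(\delta_i)$ is then automatic, and no invariant theory enters.

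\textbf{Your route.} You descend only the point $\chi(N(\delta))$ of the affine scheme $T/W$, which is a more elementary descent. You then lift $\gamma_\infty$ inside the fibre $\chi^{-1}(c)$, which near strongly regular semisimple points is essentially the same scheme as $\mcC$. The cost is a few extra inputs that you should state explicitly:
\begin{enumerate}
\item a Chevalley map $\mcG\to\mcT/W$ defined over $W(k)$ and restricting to the quotient map on $\mcT$ (Chevalley restriction over $\mbZ$ plus unramified descent);
\item the fibral criterion, to get smoothness of $\chi$ over the henselian local ring at $\gamma_\infty$ from smoothness on the fibre over $F_\infty$;
\item the fact that, for $i\in\mbN$, semisimple elements of $G(\overline{F_i})$ with the same image under $\chi$ are conjugate. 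This is fine, since $F_i^{\sep}$ is algebraically closed in characteristic zero.
\end{enumerate}

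\textbf{One correction.} $\chi$ is not smooth on the whole regular locus unless the derived group is simply connected. For $\mathrm{PGL}_2$ in characteristic $\neq 2$ one has $\chi(g)=\mathrm{tr}(g)^2/\det(g)$, whose differential vanishes at the regular semisimple image of $\mathrm{diag}(-1,1)$. What is true, and all you actually use, is smoothness at strongly regular semisimple $t\in T$. There $G/T\times T\to G$ is \'etale because $\alpha(t)\neq 1$ for all roots. Also $W$ acts freely on $t$, so $T\to T/W$ is \'etale at $t$.
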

\begin{proof}
    By \Cref{Proposition: strongly regular-semi-simple spreads out}, we find $B$ and a lift
    $\delta \in G(\widetilde{F}_{\geq B})$ of $\delta_\infty$, and 
    we may assume that the centralizer $G_{N(\delta)}$ of 
    $N(\delta) \in G(\widetilde{F}_{\geq B})$
    is a maximal torus.
    Then the conjugacy class $G_{\widetilde{F}_{\geq B}}/G_{N(\delta)}$
    of $N(\delta)$
    forms a smooth quasi-affine scheme over $\Spec \widetilde{F}_{\geq B}$
    (see \cite[Theorem 2.3.1]{ConradReductiveGroupSchemes} for example).
    It descends to a smooth quasi-affine scheme $\mcC$ over $\Spec F_{\geq B}$
    along the $\Gal(\widetilde{k}/k)$-torsor 
    $\Spec \widetilde{F}_{\geq B} \to \Spec F_{\geq B}$.
    By assumption, we have the $F_{\infty}$-valued point $\gamma_{\infty}\in \mcC(F_\infty)$ and by smoothness, we find after increasing $B$ a lift $\gamma \in \mcC(F_{\geq B})$ of $\gamma_\infty$.
    By construction
    $\gamma_{i}$ is $G(F^{\sep}_i)$-conjugate to
    $N(\delta_i)$ for all $i \geq B$.
\end{proof}

We now give the proof of Theorem \ref{Thm: base change fundamental lemma}.

\begin{proof}
Let $\gamma_{\infty}\in G(F_{\infty})$
    be a strongly regular semisimple element
    which is $G(F^{\sep}_\infty)$-conjugate to
    $N(\delta_\infty)$
    for some $\delta_\infty \in G(\widetilde{F}_\infty)$.
    Let $f_\infty \in Z(\mcH_{\widetilde{J}_\infty}(G(\widetilde{F}_\infty)))$. 
    As in \Cref{Proposition:algebra isomorphism of parahoric Hecke algebras}, we have isomorphisms
    $
\psi_i \colon \mcH_{\widetilde{J}_\infty}(G(\widetilde{F}_\infty)) \simlr \mcH_{\widetilde{J}_i}(G(\widetilde{F}_i))
$
    of $\mbC$-algebras for all $i \in \mbN$.
Let $f_i := \psi_i(f_\infty) \in Z(\mcH_{\widetilde{J}_i}(G(\widetilde{F}_i)))$.
Similarly, we have 
$
\psi_i \colon \mcH_{J_\infty}(G(F_\infty)) \simlr \mcH_{J_i}(G(F_i)).
$
Then we have
$\psi_i(b(f_\infty))=b(f_i)$
by \Cref{Remark:compatibility of base change map in close local fields}.
Let $\gamma_{\geq B}\in G(F_{\geq B})$
    and
    $\delta_{\geq B}\in G(\widetilde{F}_{\geq B})$
    be lifts of $\gamma_\infty$ and $\delta_\infty$, respectively, as constructed in \Cref{lem: spreading out norms}.
Then, by Theorem \ref{Theorem: local constancy of kappa orbital integral}, there exists $B' \geq B$ such that
$SO_{\gamma_i}(b(f_i)) = SO_{\gamma_\infty}(b(f_\infty))$
and $STO_{\delta_i}(f_i) = STO_{\delta_\infty}(f_\infty)$
for all $i \geq B'$.
Thus, the desired result follows from the known result \cite{HainesBCFLparahoric} in characteristic zero.
\end{proof}

\section{Close local fields and endoscopy}\label{Section: Close fields and Endoscopy}

In this final section, we prove the standard endoscopic fundamental lemma in positive characteristic.
For this purpose, we show that
the Langlands--Shelstad transfer factors in the unramified endoscopic setting are locally constant for families of close local fields (see \Cref{proposition: local constancy of transfer factors} for the precise statement).
By combining this local constancy with our results from previous sections, we complete the proof of the fundamental lemma by reducing the assertion to the case of characteristic zero.

\subsection{Recollections on endoscopy and the standard endoscopic fundamental lemma}\label{Subsection: Recollections on endoscopy and the standard endoscopic fundamental lemma}

We briefly recall the parts of the theory of endoscopy that are needed for our situation at hand; since we are working with unramified groups, the discussion simplifies considerably compared to the general case.
We follow Hales' definition of an unramified endoscopic datum as given in \cite{HalesSimpleDefinitionTransferUnramified} or \cite{HalesReductionToUnit}, which itself is based on Langlands--Shelstad's definition in \cite{LanglandsShelstadTransfer} (see also \cite[Appendix A]{GriffinWangBook}).

In this subsection, let $F$ be a non-archimedean local field with finite residue field $k$.
We consider a pinned reductive group scheme
$(\mcG, \mcT, \mcB, \{ x_\alpha \}_{\alpha \in \Delta})$
over $W(k)$
as in \Cref{Situation: unramified reductive group in families of close fields}.
Let $G:=\mcG_F$,
let $
(\hat{G}, \hat{T}, \hat{B})$
be the Langlands dual group over $\mbC$ with the pinning corresponding to $(\mcG, \mcT, \mcB, \{ x_\alpha \}_{\alpha \in \Delta})$, and let
${}^LG :=\hat{G}\rtimes W_{F}$
be the $L$-group.
Here $W_{F} \subset \Gamma=\Gal(F^{\sep}/F)$ is the Weil group of $F$.

\begin{Definition}\label{Definition: unramified endoscopic datum}
    An unramified endoscopic datum for $G$ is a triple $(\mcH,s,\xi),$ where
    \begin{enumerate}
        \item $\mcH$ is a reductive group scheme over $\mcO_F$ with a $\Gamma$-stable pinning, with $H:=\mathcal{H}_F$ and ${}^L H = \hat{H} \rtimes W_F$,
        \item $s \in \hat{G}$ is a semisimple element, and
        \item $\xi\colon {}^L H\hookrightarrow {}^L G$ is a homomorphism of extensions of $W_{F},$
    \end{enumerate}
    satisfying the following conditions:
        \begin{enumerate}
            \item[(i)] $\xi$ induces an isomorphism $\hat{H}\simeq \Cent_{\hat{G}}(s)^{\circ}.$
            \item[(ii)] There exists a $1$-cocycle $b$ of $W_F$ in $Z(\hat{G})$ which is trivial in $H^{1}(W_{F},Z(\hat{G}))$ such that
            $
            s^{-1}\xi(x,w)s=b(w)\xi(x,w).
            $
            \item[(iii)] $\xi$ factors through a map $\hat{H}\rtimes \Gal(E/F)\rightarrow \hat{G}\rtimes \Gal(E/F)$ for 
            a finite unramified extension $E/F$.
        \end{enumerate}
\end{Definition}

\begin{Remark}\label{Remark:simplified conditions}
    We identify $\hat{H}$ with $\Cent_{\hat{G}}(s)^{\circ}$ via $\xi$.
Up to equivalence (in the sense of \cite{LanglandsShelstadTransfer}), we may assume that
$s \in \hat{H}$ is invariant with respect to the action of $\Gamma$ on $\hat{H}$.
We may also assume that
the maximal torus
of
$\hat{H}$
associated with the pinning of $H$
is $\hat{T}$
and the Borel subgroup
of $\hat{H}$
associated with the pinning of $H$, which we denote by $\hat{B}_H$, is contained in $\hat{B}$; in particular $s \in \hat{T}$.
\end{Remark}

We recall the following construction from \cite[Section 1.3]{LanglandsShelstadTransfer}.
Here we will use \cite[Corollary 2.2]{KottwitzRationalConjugacy} in our situation where $F$ is not necessarily perfect.
Although Kottwitz uses \cite[Theorem 9.8]{SteinbergRegSS} (in the proof of \cite[Lemma 2.1]{KottwitzRationalConjugacy}), the needed statement for regular semisimple conjugacy classes over imperfect fields is proved in \cite[Section 8.6]{Borel-Springer}, which suffices for our purposes.

\begin{Construction}\label{Construction: transfering strongly regular semisimple elements endoscopic groups}
Let $(\mcH,s,\xi)$ be an unramified endoscopic datum for $G$.
Let $\gamma_H \in H(F)$ be a strongly regular semisimple element.
Choose a Borel subgroup $B'_{H}\subset H_{F^{\sep}}$ containing $(H_{\gamma_H})_{F^{\sep}}$.
This, together with $\hat{B}_H$, induces a canonical isomorphism
$\hat{T} \simeq \widehat{H_{\gamma_H}}$ with dual
$(H_{\gamma_H})_{F^{\sep}}\simeq T_{F^{\sep}}$.
By \cite[Corollary 2.2]{KottwitzRationalConjugacy}, 
a conjugate $\iota$
of
the composite
$(H_{\gamma_H})_{F^{\sep}} \simeq T_{F^{\sep}} \hookrightarrow G_{F^{\sep}}$
by some element $h\in G(F^{\sep})$ is defined over $F$.
Such an embedding $\iota \colon H_{\gamma_H} \hookrightarrow G$ over $F$ will be called an \textit{admissible embedding}.
Let $\gamma := \iota(\gamma_H) \in G(F)$.
We call $\gamma_H$ \textit{strongly $G$-regular semisimple} if $\gamma$ is strongly regular semisimple.
(This notion is independent of the choice of $\iota$.)
Then $\iota$ induces an isomorphism $H_{\gamma_H} \simlr G_\gamma$.
\end{Construction}

We fix a strongly $G$-regular semisimple element $\gamma_{H} \in H(F)$ 
and an admissible embedding $\iota \colon H_{\gamma_H} \hookrightarrow G$ as in \Cref{Construction: transfering strongly regular semisimple elements endoscopic groups}.
Let $\gamma:=\iota(\gamma_H)$.
Then we have $s\in (\widehat{H_{\gamma_H}})^{\Gamma}$ and it induces an element of $(\widehat{G_{\gamma}})^{\Gamma}$,
whose image in $\pi_{0}((\widehat{G_{\gamma}})^{\Gamma})$
is identified with a character $\kappa\colon H^{1}(F,G_{\gamma})=X_{*}(G_{\gamma})_{\Gamma, \tors}\rightarrow \mathbb{C}^{*}$ by Tate--Nakayama duality.

Let $\Delta_{0}(\gamma_{H},\gamma)$ denote the transfer factor in \cite{LanglandsShelstadTransfer} (see also \Cref{Rem: Transfer factors after endo FL}).
Furthermore, we denote by 
\[
b_{\xi}\colon \mcH_{\sph}(G(F))\rightarrow \mcH_{\sph}(H(F))
\]
the $\mathbb{C}$-algebra homomorphism defined via $\xi\colon {}^LH\rightarrow {}^LG$ and the Satake isomorphisms, where $\mcH_{\sph}(G(F))$ and $\mcH_{\sph}(H(F))$
are the spherical Hecke algebras associated with $\mcG(\mcO_F)$ and $\mcH(\mcO_F)$, respectively.
The fundamental lemma for standard endoscopy in positive characteristic is the following statement.

\begin{Theorem}[Standard endoscopic fundamental lemma]\label{Theorem: Standard Endo FL}
We assume that $F$ is of positive characteristic.
    For any $f\in \mcH_{\sph}(G(F))$, the following equality holds
    \begin{equation}\label{eq: identity standard endo FL}
    \Delta_{0}(\gamma_{H},\gamma)O^{\kappa}_{\gamma}(f)=SO_{\gamma_{H}}(b_{\xi}(f)).
    \end{equation}
\end{Theorem}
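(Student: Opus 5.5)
The plan mirrors the proof of \Cref{Thm: base change fundamental lemma}. Write $F_\infty:=F=k((\pi_\infty))$ and choose a family of close local fields $\{(F_i,\pi_i)\}_{i\in\mbN\cup\{\infty\}}$ approximating $F_\infty$. Since $\mcH$ and $\mcG$ are reductive group schemes over $W(k)$ with pinnings, and the datum $(s,\xi)$ factors through an unramified quotient $\Gal(E/F)$ of $W_F$, the datum transports verbatim to an unramified endoscopic datum $(\mcH,s,\xi_i)$ for $G_i$ for every $i$. The dual groups, the Frobenius actions and $s\in\hat T^{\Gamma}$ are literally the same objects.

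\textbf{Step 1: spread out the elements.} By \Cref{Proposition: strongly regular-semi-simple spreads out}, lift $\gamma_{H,\infty}$ to a fiberwise strongly $G$-regular semisimple $\gamma_H\in H(F_{\geq B})$. The torus $H_{\gamma_H}$ then satisfies \Cref{Situation: torus}. To transfer the admissible embedding, note that admissible embeddings $\iota$ are the $F$-points of a $G$-torsor-like scheme, namely the $G$-conjugates of the composite $(H_{\gamma_H})\simeq T\hookrightarrow G$ over a splitting field. I would construct this as a smooth scheme over $F_{\geq B}$ via the étale cover $E_{\geq B}$ of \Cref{Construction: etale covering of family of close fields}, and lift $\iota_\infty$ using henselianity of $\mcO_{\Spa F,\infty}$, as in \Cref{lem: spreading out norms}. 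This produces $\gamma=\iota(\gamma_H)\in G(F_{\geq B})$ specializing to admissible pairs $(\gamma_{H,i},\gamma_i)$. Via \eqref{equation: psi for cocharacter groups} and \Cref{Corollary:Tate-Nakayama compatibility}, the character $\kappa_i$ attached to $s$ corresponds to $\kappa_\infty$ under $\psi_i$. This holds because $\kappa$ is computed from $s$ through the identification $\widehat{G_\gamma}\simeq\hat T$, which is Galois-equivariantly the same on both sides for $i\geq B$.

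\textbf{Step 2: local constancy of the two sides.} Let $f_\infty\in\mcH_{\sph}(G(F_\infty))$ and $f_i=\psi_i(f_\infty)$. By \Cref{Remark:compatibility of base change map in close local fields}, the Satake isomorphisms are compatible with $\psi_i$. Since $b_\xi$ is defined purely on the dual side, we get $\psi_i(b_\xi(f_\infty))=b_{\xi_i}(f_i)$. \Cref{Theorem: local constancy of kappa orbital integral} with $r=1$ then gives $O^{\kappa_i}_{\gamma_i}(f_i)=O^{\kappa_\infty}_{\gamma_\infty}(f_\infty)$ and $SO_{\gamma_{H,i}}(b(f_i))=SO_{\gamma_{H,\infty}}(b(f_\infty))$ for all $i\geq B'$.

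\textbf{Step 3: local constancy of transfer factors (the main obstacle).} I would prove $\Delta_0(\gamma_{H,i},\gamma_i)=\Delta_0(\gamma_{H,\infty},\gamma_\infty)$ for $i\gg0$, using the unramified simplification in which, with the canonical normalization, $\Delta_0=\Delta_I\Delta_{II}\Delta_{III_2}\Delta_{IV}$ (with $\Delta_{III_1}$ trivial up to choice). The factors are handled as follows.
\begin{itemize}
\item $\Delta_{IV}$ is a ratio of absolute values $|\alpha(\gamma)-1|$. It is locally constant by \Cref{Lemma: elements of O}.
\item $\Delta_{II}$ involves $\chi$-data and values of $\alpha(\gamma)-1$ over the splitting fields of the root orbits. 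It reduces to \Cref{Lemma: elements of O} applied in $E_{\geq B}$, once the $\chi$-data and $a$-data are chosen in families. For the $a$-data, take roots $\alpha(\gamma)-1$ themselves. For the $\chi$-data, choose unramified or tamely ramified characters, matched through Deligne's isomorphism $\Gamma_\infty/I^{n}_\infty\simeq\Gamma_i/I^n_i$, making sure their conductors stay bounded.
\item $\Delta_I$ and $\Delta_{III_2}$ are Tate--Nakayama pairings of classes in $H^1(F,G_\gamma)$ and $H^1(W_F,\widehat{G_\gamma})$ built from the splitting invariant and the $a$-data. For these I would use \Cref{Corollary:Tate-Nakayama compatibility} and \Cref{Corollary: local constancy of cocharacter groups}. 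The splitting invariant is a cocycle built from the pinning and $\gamma$, which spreads out over $E_{\geq B}$.
\end{itemize}
Bookkeeping that all choices can be made uniformly in $i$ is the delicate part. Alternatively, Hales' explicit unramified formula could be used in place of the factor-by-factor argument.

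\textbf{Conclusion.} For $i\geq B'$ with $i\in\mbN$, \eqref{eq: identity standard endo FL} is known in characteristic zero, by Ngô together with Waldspurger's transfer and Hales' reduction to the unit. Combining this with Steps 2 and 3 yields the identity at $i=\infty$.
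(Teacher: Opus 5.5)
Your overall architecture matches the paper's. You spread out $\gamma_H$ and the admissible embedding by smooth lifting near $\infty$. You get local constancy of both orbital-integral sides from \Cref{Theorem: local constancy of kappa orbital integral} and Satake compatibility. You then treat $\Delta_0$ factor by factor, with $a$-data and $\chi$-data chosen in families. The gap is in Step 3.

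First, your specific choices of data are not admissible.
\begin{itemize}
\item The choice $a_\alpha=\alpha(\gamma)-1$ violates $a_{-\alpha}=-a_\alpha$, because $(-\alpha)(\gamma)-1=-\alpha(\gamma)^{-1}(\alpha(\gamma)-1)$. The trick $a_\alpha=d\alpha(X)$, which kills $\Delta_{II}$, only works for Lie algebras.
\item Tame $\chi$-data need not exist. For a symmetric root, $\chi_\alpha$ must restrict on $F^\times_{\pm\alpha}$ to the quadratic character of $F_\alpha/F_{\pm\alpha}$. When $p=2$ and this extension is ramified, that character is wildly ramified, so every admissible $\chi_\alpha$ is wild. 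This case is within the scope of the theorem.
\end{itemize}
The paper instead takes arbitrary $a$-data in $E^\times_{\geq B}$ with $a_{g\alpha}=g(a_\alpha)$ and $a_{-\alpha}=-a_\alpha$. It takes arbitrary $\chi$-data at $\infty$ that are trivial on $I^n_\infty$ (possibly wild), and transports them to $i\geq B$ via Deligne's isomorphism.

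Second, once the $\chi$-data are allowed to be ramified, as they must be in general, the tools you cite do not suffice.
\begin{itemize}
\item $\Delta_{III_2}$ is built from the $\chi$-data, not the $a$-data. It equals $\chi_\mu(\gamma)$, where $\chi_\mu$ is the character of $T_G(F)$ attached by the Langlands correspondence for tori to the class $\mu\in H^1(W_F,\widehat{T_G})$. This class measures the non-commutativity of the square of $L$-embeddings.
\item \Cref{Corollary:Tate-Nakayama compatibility} only concerns $H^1(F,T)\simeq X_*(T)_{\Gamma,\mathrm{tor}}$. \Cref{Corollary: local constancy of cocharacter groups} only concerns the Kottwitz map. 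When $\chi_\mu$ is ramified, it is nontrivial on the kernel of the Kottwitz map, so $\chi_\mu(\gamma)$ is not determined by the Kottwitz image of $\gamma$.
\end{itemize}
The missing input is the compatibility of local class field theory, and hence of the Langlands correspondence for tori, with Deligne's close-field isomorphisms. This is Deligne's Proposition 3.6.1, used as in Aubert--Varma. With it, one shows $\mu_i=\mu_\infty$ under $H^1(W_{F_i}/I^n_i,\widehat{T_{G,i}})\simeq H^1(W_{F_\infty}/I^n_\infty,\widehat{T_{G,\infty}})$, and deduces $\chi_{\mu_i}(\gamma_i)=\chi_{\mu_\infty}(\gamma_\infty)$.

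The same input is needed in two further places. For $\Delta_{II}$, \Cref{Lemma: elements of O} only controls valuations, whereas a ramified $\chi_\alpha$ sees $(\alpha(\gamma)-1)/a_\alpha$ modulo higher unit groups. It is also needed to check that the transported $\chi_{\alpha,i}$ are again valid $\chi$-data. Your treatment of $\Delta_I$ and $\Delta_{IV}$ is fine, and with these repairs your argument coincides with the paper's.
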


\begin{Remark}\label{Rem: Transfer factors after endo FL}
    Let $\gamma_G \in G(F)$ be an element which is $G(F^{\sep})$-conjugate to $\gamma$.
    Under the choices of the admissible embedding $\iota$, $a$-data and $\chi$-data (in the sense of \cite{LanglandsShelstadTransfer}),
    we define
    the transfer factor
    \[
    \Delta_0(\gamma_H, \gamma_G)=\Delta_I(\gamma_H, \gamma_G)\Delta_{II}(\gamma_H, \gamma_G)\Delta_{III_1}(\gamma_H, \gamma_G)\Delta_{III_2}(\gamma_H, \gamma_G)\Delta_{IV}(\gamma_H, \gamma_G)
    \]
    in the same way as in \cite{LanglandsShelstadTransfer}.
    Although $F$ is assumed to be of characteristic zero in \textit{loc.cit.}, the same constructions work; see \cite[Appendix A]{GriffinWangBook}.
    One can show that $\Delta_0(\gamma_H, \gamma_G)$ is independent of the choices of $\iota$, $a$-data and $\chi$-data.
    The left-hand side of
    \eqref{eq: identity standard endo FL} is equal to 
    \[
    \sum_{\gamma_G} \Delta_{0}(\gamma_{H},\gamma_G) O_{\gamma_G}(f)
    \]
    where $\gamma_G \in G(F)$ runs over a complete set of representatives of conjugacy classes inside the $G(F^{\sep})$-conjugacy class of $\gamma$, and in particular it is independent of $\iota$.
\end{Remark}

\subsection{Proof of Theorem \ref{Theorem: Standard Endo FL}}\label{Subsection: Proof of Theorem Standard Endo FL}

Let $F_\infty$ be a non-archimedean 
local field of positive characteristic
with finite residue field $k$.
We shall prove \Cref{Theorem: Standard Endo FL} for $F_\infty$.
We choose
a family
$F=\lbrace (F_{i},\pi_{i}) \rbrace_{i\in \mathbb{N} \cup \lbrace \infty \rbrace}$
of close local fields
approximating $F_\infty$, and we use the notation of \Cref{Section:Reductive groups in families of close fields}.
Let $(\mcH_\infty, s, \xi_\infty)$
be an unramified endoscopic datum for $G_\infty$.
Without loss of generality, we may assume that $(\mcH_\infty, s, \xi_\infty)$ satisfies the conditions given in \Cref{Remark:simplified conditions}.
There exists a reductive group scheme $\mcH$ over $W(k)$ with a $\Gal(\overline{k}/k)$-stable pinning such that its base change to $\mcO_\infty$ is $\mcH_\infty$.
Let $\mcH_i := \mcH_{\mcO_i}$
and $H_i := \mcH_{F_i}$ for $i \in \mbN \cup \{ \infty \}$.
Note that for all $i$, we have
a canonical identification $\hat{H}_{i}=\hat{H}_{\infty}$, and
$\xi_\infty \colon {}^L H_\infty\hookrightarrow {}^L G_\infty$ 
naturally induces a homomorphism
$\xi_i \colon {}^L H_i\hookrightarrow {}^L G_i$
of extensions of $W_{F_i}$
such that 
$(\mcH_i, s, \xi_i)$
is an unramified endoscopic datum for $G_i$.

Let $H:=\mcH \times_{\Spec W(k)} \Spec F$.
Let $\gamma_{H_{\infty}}\in H(F_{\infty})$ be a strongly $G$-regular semisimple element.
We may spread out $\gamma_{H_{\infty}}$ by \Cref{Proposition: strongly regular-semi-simple spreads out} to an element $\gamma_{H_{\geq B}} \in H(F_{\geq B})$ for some $B \geq 1$ that is fiberwise strongly regular semisimple.
For the torus $H_{\gamma_H}$ over $F_{\geq B}$,
we assume that we are in \Cref{Situation: torus} and use the notation there.
In particular $E=\{ (E_i, \pi_{E_i}) \}_{i \in \mbN \cup \{ \infty \}}$ is a family of close local fields over $F$, and $H_{\gamma_H}$ is split over $E_{\geq B}$.
We may further assume that 
there exists a Borel subgroup $B'_H \subset H_{E_{\geq B}}$ containing $H_{\gamma_H}$.
As in \Cref{Construction: transfering strongly regular semisimple elements endoscopic groups}, this induces
a canonical isomorphism
$(H_{\gamma_H})_{E_{\geq B}} \simeq T_{E_{\geq B}}$.

\begin{Lemma}\label{Lemma: matching pair in families of close local fields}
    Let $T_H$ be an $F_{\geq B}$-torus which is split over $E_{\geq B}$, and let
    $\iota \colon (T_H)_{E_{\geq B}} \hookrightarrow G_{E_{\geq B}}$
    be
    a closed embedding over $E_{\geq B}$ such that the image of $\iota$ is a maximal torus of $G_{E_{\geq B}}$.
    We assume that some conjugate of $\iota_\infty \colon (T_H)_{E_{\infty}} \to G_{E_{\infty}}$ under $G(E_\infty)$ is defined over $F_\infty$.
    Then some conjugate of $\iota$ under $G(E_{\geq B})$ is defined over $F_{\geq B}$ after enlarging $E$ and $B$.
\end{Lemma}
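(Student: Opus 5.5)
We would reformulate ``defined over $F_{\geq B}$'' as a Galois-descent condition and then spread out the solution found at $\infty$ using the henselian property of $\mcO_{\Spa E,\infty}=\varinjlim_B E_{\geq B}$.

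\textbf{Step 1: the cocycle.} After enlarging $B$, $\Spec E_{\geq B}\to\Spec F_{\geq B}$ is a $\Gamma_E:=\Gal(E_\infty/F_\infty)$-torsor (\Cref{Construction: etale covering of family of close fields}). A $G(E_{\geq B})$-conjugate $\mathrm{Int}(g)\circ\iota$ descends to $F_{\geq B}$ if and only if it commutes with the $\Gamma_E$-actions, i.e.
\[
\tau(g)\,{}^{\tau}\iota\,\tau(g)^{-1}=g\,\iota\,g^{-1}\quad\text{for all }\tau\in\Gamma_E .
\]
Let $T':=\iota(T_H)$ and $N:=N_{G}(T')$. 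Since both ${}^\tau\iota$ and $\iota$ have maximal tori as images, ${}^\tau\iota$ and $\iota$ are conjugate fiberwise near $\infty$. Concretely, we would use two facts. First, conjugacy of maximal tori over $\mcO_{\Spa E,\infty}$ lifts from the residue field $E_\infty$, because the transporter is smooth and the local ring is henselian. Second, the Weyl group of $T'$ is a finite constant group after enlarging $E$, so it matches the element at $\infty$ after enlarging $B$. Together these give, after enlarging $B$, elements $n_\tau\in G(E_{\geq B})$ with ${}^\tau\iota=\mathrm{Int}(n_\tau)^{-1}\circ\iota$, uniquely defined modulo $T'(E_{\geq B})$. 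The condition then becomes: the class $\tau\mapsto n_\tau$ is a coboundary in the appropriate nonabelian sense. Equivalently, the twisted form of $G/N$, or more directly the scheme
\[
\mcC:=\{\,g\in G_{E_{\geq B}}\;:\;\mathrm{Int}(g)\circ\iota\ \text{is }\Gamma_E\text{-equivariant}\,\}
\]
descended along the torsor, should have a point. We would set up $\mcC$ as a $\Gamma_E$-twisted form of a right $T'$-torsor over $G$, more precisely as the $F_{\geq B}$-scheme $\mcX$ of embeddings of $T_H$ into $G$ that are $G$-conjugate to $\iota$ étale-locally. This $\mcX$ is the descent of $G_{E_{\geq B}}/N(T')$-type data, the homogeneous space $G/T'$ twisted by the cocycle, and it is smooth and quasi-affine over $F_{\geq B}$, as in the proof of \Cref{lem: spreading out norms} (using \cite[Theorem 2.3.1]{ConradReductiveGroupSchemes}).

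\textbf{Step 2: lifting the point at $\infty$.} By hypothesis $\mcX(F_\infty)\neq\emptyset$: the given conjugate $\mathrm{Int}(g_\infty)\circ\iota_\infty$ defined over $F_\infty$ is such a point. Since $\mcX$ is smooth over $F_{\geq B}$ and $\mcO_{\Spa F,\infty}=\varinjlim F_{\geq B}$ is henselian with residue field $F_\infty$ (\Cref{Remark:adic space Spa E}), this point lifts to $\mcX(F_{\geq B'})$ for some $B'\geq B$. That gives an $F_{\geq B'}$-embedding $\iota'\colon T_H\hookrightarrow G$ which is étale-locally conjugate to $\iota$.

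\textbf{Step 3: a global conjugating element.} It remains to show that $\iota'_{E}$ and $\iota$ are conjugate under $G(E_{\geq B'})$ itself, not merely étale-locally. The transporter $\{g:\mathrm{Int}(g)\circ\iota=\iota'\}$ is a $T'$-torsor over $E_{\geq B'}$, and since $T'$ is split, its class lies in $H^1(E_{\geq B'},\mbG_m^N)=\mathrm{Pic}^N$. It has a point at $\infty$, by Hilbert 90 over $E_\infty$. Smoothness plus the henselian property then give a point after enlarging $B'$. Over the semilocal-like rings $E_{\geq B}$ one could alternatively enlarge $E$ to kill the class; this is where we would use ``after enlarging $E$''.

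\textbf{Main obstacle.} We expect the main obstacle to be Step 1: constructing $\mcX$ cleanly as a smooth scheme over $F_{\geq B}$ through descent along the $\Gamma_E$-torsor, and checking that the Galois actions at the fibers $i\geq B$ match those at $\infty$ via $\theta_i$. We would handle this by working entirely with the scheme-theoretic torsor structure given in \Cref{Construction: etale covering of family of close fields}, so that no fiberwise identification is needed.
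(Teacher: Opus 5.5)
Your proposal is correct and follows essentially the paper's route: spread the Weyl-group cocycle out from $\infty$, form the twist of $G/T'$ as a smooth quasi-affine $F_{\geq B}$-scheme (your $\mathcal{X}$ is exactly this twist), lift the $F_\infty$-point through the henselian local ring $\mcO_{\Spa F,\infty}$, and then lift to $G(E_{\geq B})$ using that $T'$ is split. One correction is needed in Step 1: the conjugacy of ${}^\tau\iota_\infty$ and $\iota_\infty$ does not follow from both images being maximal tori, since such embeddings can differ by a non-Weyl automorphism of the torus; it comes from the hypothesis, which gives ${}^\tau\iota_\infty=\mathrm{Int}(\tau(h_\infty)^{-1}h_\infty)\circ\iota_\infty$ for any $h_\infty$ with $\mathrm{Int}(h_\infty)\circ\iota_\infty$ defined over $F_\infty$, and this is the ``element at $\infty$'' your spreading-out argument needs.
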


\begin{proof}
    We may assume that $\iota((T_H)_{E_{\geq B}}) \subset G_{E_{\geq B}}$ descends to a maximal $F_{\geq B}$-torus $T' \subset G_{F_{\geq B}}$.
    Let $h_\infty \in G(E_\infty)$ be such that $\ad(h_\infty) \circ \iota_\infty$ is defined over $F_\infty$.
    Then, for any element $g \in \Gal(E_\infty/F_\infty)$,
    we have
    $h^{-1}_\infty g(h_\infty) \in N_G(T')(E_\infty)$.
    Let $W(T')$ be the Weyl group scheme over $\Spec F_{\geq B}$ associated with $T'$.
    Let $w_{g, \infty} \in W(T')(E_\infty)$ be the image of $h^{-1}_\infty g(h_\infty)$.
    Then $g \mapsto w_{g, \infty}$ is a $1$-cocycle.
    Since $W(T')$ is finite and \'etale over $\Spec F_{\geq B}$, we have $W(T')(\mcO_{\Spa E, \infty})=W(T')(E_\infty)$.
    So, after increasing $B$, we can extend the $1$-cocycle to a $1$-cocycle
    $g \mapsto w_g$
    of $\Gal(E_\infty/F_\infty)$
    in $W(T')(E_{\geq B})$
    such that
    $\ad(w_g) = \iota \circ ({}^g\iota)^{-1}$.
    The quotient $G_{F_{\geq B}}/T'$ is a smooth quasi-affine scheme over $F_{\geq B}$ (see \cite[Theorem 2.3.1]{ConradReductiveGroupSchemes}).
    We twist $G_{F_{\geq B}}/T'$ by $g \mapsto w^{-1}_g$ and obtain a smooth quasi-affine scheme ${}^*(G_{F_{\geq B}}/T')$ over $F_{\geq B}$
    as in \cite[Section 2.1]{KottwitzRationalConjugacy}.
    Then $h_\infty$ gives rise to an element in ${}^*(G_{F_{\geq B}}/T')(F_\infty)$.
    Since ${}^*(G_{F_{\geq B}}/T')$ is smooth, this element extends to an element $\overline{h} \in {}^*(G_{F_{\geq B}}/T')(F_{\geq B})$.
    Since $T'$ is split over $E_{\geq B}$, we can find an element $h \in G(E_{\geq B})$ which induces $\overline{h}$.
    Then one can check that $\ad(h) \circ \iota$ is defined over $F_{\geq B}$ as in the proof of \cite[Corollary 2.2]{KottwitzRationalConjugacy}.
\end{proof}

By \Cref{Lemma: matching pair in families of close local fields}, 
after enlarging $B$,
there exists an element $h_0 \in G(E_{\geq B})$
such that the conjugate of
$(H_{\gamma_H})_{E_{\geq B}} \simeq T_{E_{\geq B}} \hookrightarrow G_{E_{\geq B}}$
by $h_0$ is defined over $F_{\geq B}$.
We denote this conjugate
by 
$\iota \colon H_{\gamma_H} \hookrightarrow G_{F_{\geq B}}$.
Let $\gamma:=\iota(\gamma_H) \in G(F_{\geq B})$.
By \Cref{Proposition: strongly regular-semi-simple spreads out}, we may assume that $\gamma$ is fiberwise strongly regular semisimple.
Then we have
$
\iota \colon H_{\gamma_H} \simlr G_\gamma
$
over $F_{\geq B}$.
We note that $\iota$ fiberwise gives an admissible embedding in the sense of \Cref{Construction: transfering strongly regular semisimple elements endoscopic groups}.
It suffices to prove \eqref{eq: identity standard endo FL} for $\gamma_{H_\infty}$ and $\gamma_\infty$
(see \Cref{Rem: Transfer factors after endo FL}).
Note that the statement is known to be true
for non-archimedean local fields of characteristic zero in all residue characteristics by the work of Ngô \cite{NgoFL}, Waldspurger \cite{WaldspurgerEndoscopyChangeofChar} (see also Cluckers--Hales--Loeser \cite{CluckersHalesLoeser}), \cite{Waldspurgertorduenestpassitordue} and Hales \cite{HalesReductionToUnit}.
Therefore, it suffices to show that 
both sides of the equality \eqref{eq: identity standard endo FL} are locally constant around $\infty$.
This follows from \Cref{Theorem: local constancy of kappa orbital integral}
and \Cref{Remark:compatibility of base change map in close local fields}
together with the following proposition:

\begin{proposition}[Local constancy of transfer factors]\label{proposition: local constancy of transfer factors}
    After possibly enlarging $B,$ we have for all $i\geq B$
    $$
    \Delta_{0}(\gamma_{H_{i}},\gamma_{i})=\Delta_{0}(\gamma_{H_{\infty}},\gamma_{\infty}).
    $$
\end{proposition}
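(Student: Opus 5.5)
We treat the five factors of $\Delta_0=\Delta_I\Delta_{II}\Delta_{III_1}\Delta_{III_2}\Delta_{IV}$ one at a time and show that each is constant in $i\geq B$ after enlarging $B$. This requires choosing the auxiliary data ($a$-data, $\chi$-data, the admissible embedding $\iota$) uniformly in the family. By \Cref{Rem: Transfer factors after endo FL}, $\Delta_0$ is independent of these choices, so we are free to make them compatibly.

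We first fix the combinatorial data. Over $E_{\geq B}$ the torus $G_\gamma$ is split, and by \Cref{Proposition: lattices in cocharacter groups} and \eqref{equation: psi for cocharacter groups} we have Galois-equivariant identifications $\psi_i\colon X_*(G_{\gamma_\infty})\simeq X_*(G_{\gamma_i})$. These are compatible with the identifications of root systems coming from the embedding $\iota$, which is defined over $F_{\geq B}$, and with $s\in\hat T$. Hence the $\Gamma$-orbits of roots, the symmetric and asymmetric orbits, and the fields $F_\alpha$, $F_{\pm\alpha}$ (finite étale over $F_{\geq B}$, cut out by $E$) correspond under Deligne's isomorphisms (\Cref{Construction: etale covering of family of close fields}), and so does the character $\kappa$.

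We take $a$-data $a_\alpha=\alpha(\gamma)-1\in E_{\geq B}$, which is nonzero fiberwise by strong regularity. For $\chi$-data we choose unramified or tamely ramified characters $\chi_\alpha$, with $\chi_\alpha$ trivial on norms from $F_{\alpha}$ for symmetric $\alpha$. These can be transported along $\Gamma_\infty/I^n_\infty\simeq\Gamma_i/I^n_i$ for $i$ large, since by local class field theory in families (Deligne) characters of conductor $\le n$ correspond.

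With these choices the factors are handled as follows.
\begin{itemize}
\item $\Delta_{IV}=\prod|\alpha(\gamma)-1|^{1/2}$ and $\Delta_{II}=\prod\chi_\alpha((\alpha(\gamma)-1)/a_\alpha)$ are locally constant by \Cref{Lemma: elements of O}. With the choice $a_\alpha=\alpha(\gamma)-1$, $\Delta_{II}$ is in fact identically $1$.
\item $\Delta_I=\langle\lambda(T),s\rangle$ depends on a class $\lambda\in H^1(F,T)$ built from the $a$-data via the splitting invariant. The splitting invariant is a cocycle with values in $T(E_{\geq B})$ obtained from the $a_\alpha$ and a cocycle in the normalizer. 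By \Cref{Corollary:Tate-Nakayama compatibility}, its Tate--Nakayama image is constant for $i\ge B'$.
\item $\Delta_{III_1}=\langle \mathrm{inv}(\gamma,\gamma_G),\kappa\rangle$ is again a pairing of an $H^1(F,T)$ class, coming from the cocycle $h^{-1}g(h)$ with $h\in G(E_{\geq B})$ as in \Cref{Lemma: matching pair in families of close local fields}, against $\kappa$. It is constant by \Cref{Corollary:Tate-Nakayama compatibility}.
\item $\Delta_{III_2}=\langle a,\gamma\rangle$ is the pairing, via local Langlands for tori, of the cocycle $a\in H^1(W_F,\hat T)$ determined by the $\chi$-data and $\xi$ with $\gamma\in T(F)$. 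Since $a$ is tamely ramified and $\gamma$ lies in $T(F_{\geq B})$, we reduce via $E$ and a norm argument (as in \Cref{Corollary: local constancy of cocharacter groups}) to evaluating the characters $\chi_\alpha$ on the elements $\mu(\tilde\gamma)\in E_{\geq B}^\times$. These values are locally constant by \Cref{Lemma: elements of O} together with the chosen compatibility of the $\chi_\alpha$ modulo high units.
\end{itemize}

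The main obstacle is $\Delta_{III_2}$, and to a lesser extent the choice of $\chi$-data. It requires the compatibility of the Langlands correspondence for tori with Deligne's isomorphisms of truncated Galois groups, at least for depth-bounded characters. Our first attempt will be to avoid the general correspondence: we reduce to induced tori $\Res_{F_\alpha/F}\mathbb{G}_m$ through the explicit formula of Langlands--Shelstad, where the correspondence is local class field theory. There we use the fact that Deligne's isomorphism is compatible with the reciprocity map on $F^\times/(1+\pi^{n}\mathcal{O})$ and the valuation, so that the relevant characters have bounded conductor and take equal values on elements congruent modulo $\pi^{e_i}$.
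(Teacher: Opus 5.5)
Your overall plan is the paper's. You fix $\iota$, $a$-data and $\chi$-data uniformly over $F_{\geq B}$ and treat each factor separately: \Cref{Corollary:Tate-Nakayama compatibility} for $\Delta_I$, Deligne's compatibility of reciprocity maps \cite[Proposition 3.6.1]{Deligne84} for $\Delta_{II}$ and $\Delta_{III_2}$, and \Cref{Lemma: elements of O} for $\Delta_{IV}$. But several concrete steps fail.

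\textbf{The $a$-data and $\Delta_{II}$.} Your choice $a_\alpha=\alpha(\gamma)-1$ is not $a$-data: $a_{-\alpha}=\alpha(\gamma)^{-1}-1=-\alpha(\gamma)^{-1}a_\alpha$, which equals $-a_\alpha$ only if $\alpha(\gamma)=1$. So the claim $\Delta_{II}\equiv 1$ is unfounded. Moreover, the Langlands--Shelstad splitting invariant in $\Delta_I$ is constructed only for genuine $a$-data. With honest (Galois-equivariant, antisymmetric) $a$-data in $E_{\geq B}^\times$, as in the paper, $\Delta_{II}$ needs a real argument. The construction of $\mcO$ (cf.\ \Cref{Lemma: elements of O}), applied to the family $F_\alpha$, matches $(\alpha(\gamma_i)-1)/a_{\alpha,i}$ with its counterpart at $\infty$ in $F_{\alpha,i}^\times/(1+\mathfrak{p}^n)\simeq F_{\alpha,\infty}^\times/(1+\mathfrak{p}^n)$. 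Deligne's compatibility then shows that the transported $\chi_{\alpha,i}$ agrees with $\chi_{\alpha,\infty}$ on these quotients.

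\textbf{$\Delta_{III_1}$.} This factor is $1$ identically, because it is evaluated at $\gamma_i=\iota(\gamma_{H_i})$ itself. The cocycle $h^{-1}g(h)$ of \Cref{Lemma: matching pair in families of close local fields} lies in the normalizer of the torus, not in $T_G$. It enters $\Delta_I$ through the Weyl cocycle $w_g$, not $\Delta_{III_1}$.

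\textbf{The genuine gap: $\Delta_{III_2}$.} First, tame $\chi$-data need not exist when $p=2$. If $\alpha$ is symmetric and $F_\alpha/F_{\pm\alpha}$ is ramified quadratic (e.g.\ an anisotropic maximal torus of $\mathrm{SL}_2$ split by a ramified quadratic extension), then $\chi_\alpha|_{F_{\pm\alpha}^\times}$ is the wildly ramified quadratic character. Transport is unaffected, since any $\chi$-data at $\infty$ factor through some $W_{F_{\alpha,\infty}}/I^n_\infty$. But your step ``$a$ is tamely ramified'' fails.

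Second, and more seriously, the norm trick of \Cref{Corollary: local constancy of cocharacter groups} works over $F^{\ur}$ only because of Steinberg's theorem. Over $F$, the norm $T_G(E)\to T_G(F)$ has cokernel $\hat H^0(\Gal(E/F),T_G(E))$, which is typically nonzero. So $\gamma$ need not be a norm, and values on norms do not determine $\langle a,\gamma\rangle$.

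Third, the explicit formulas of Langlands--Shelstad describe the cocycle $a$, via the $r_p$-terms built from the $\chi_\alpha$, the Tits section and the cocycle of $\xi$. They do not give the value $\langle a,\gamma\rangle$, so you still need the Langlands correspondence for $T_G$ itself. Precisely, you need its compatibility with Deligne's isomorphisms for parameters trivial on $I^n$. The paper obtains this by arguing as in \cite[Proposition 3.3.3]{AubertVarmaLLCToriCloseFields}.

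One concrete way to supply it:
\begin{enumerate}
\item Embed $T_G\hookrightarrow R=\Res_{E/F}(T_G)_E$.
\item Lift $a_\infty$ along $H^1(W_{F_\infty},\widehat{R})\to H^1(W_{F_\infty},\widehat{T_G})$. This map is surjective, since characters of $T_G(F_\infty)$ extend to $R(F_\infty)$. Do the lift at a finite level and transport it to the fibers $i\geq B$.
\item Evaluate on $\gamma\in T_G(F_{\geq B})\subset T_G(E_{\geq B})\simeq (E_{\geq B}^\times)^N$, using class field theory for the family $E$ together with \Cref{Lemma: elements of O}.
\end{enumerate}
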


The rest of this subsection is devoted to the proof of \Cref{proposition: local constancy of transfer factors}.
We will enlarge $B$ freely (often without explicitly mentioning it). 
By construction, $B'_G:= h_0(B_{E_{\geq B}})h^{-1}_0 \subset G_{E_{\geq B}}$ is a Borel subgroup containing $(G_\gamma)_{E_{\geq B}}=h_0(T_{E_{\geq B}})h^{-1}_0$.
We will prove the proposition by reviewing some constructions of Langlands--Shelstad.
We will perform these constructions with respect to 
the pinnings of $\mcG$ and $\mcH$
and
the Borel subgroups $B'_H$ and $B'_G$.
Using $\iota$ and a fixed choice of $a$-data and $\chi$-data, the transfer factor for $i \in \mbN \cup \{ \infty \}$
can be written as
\[
\Delta_{0}(\gamma_{H_{i}},\gamma_{i})=\Delta_I(\gamma_{H_i}, \gamma_i)\Delta_{II}(\gamma_{H_i}, \gamma_i)\Delta_{III_2}(\gamma_{H_i}, \gamma_i)\Delta_{IV}(\gamma_{H_i}, \gamma_i)
\]
since $\Delta_{III_1}(\gamma_{H_i}, \gamma_i)=1$.
We make the following choices for the $a$-data and $\chi$-data; with these choices, our goal is to prove that each term appearing in the product is locally constant around $\infty$.
To simplify the notation, we write
\[
T_H:=H_{\gamma_H}
\quad \text{and} \quad
T_G:=G_{\gamma}.
\]
We may assume that $T_G$ is also split over $E_{\geq B}$, and by taking the dual of \eqref{equation: psi for cocharacter groups},
we have identifications
$X^*(T_{G, i}) \simeq X^*(T_{G, \infty})$
and
$X^*(T_{H, i}) \simeq X^*(T_{H, \infty})$
which are equivariant with respect to the actions of $\Gamma_i/I^n_i \simeq \Gamma_\infty/I^n_\infty$ for some $n \leq B$.
We may further assume that these identifications induce
$R(G_i, T_{G, i}) \simeq R(G_\infty, T_{G, \infty})$
and
$R(H_i, T_{H, i}) \simeq R(H_\infty, T_{H, \infty})$
for the sets of absolute roots.
For ease of notation, we will simply denote these sets by $R_G$ and $R_H$, respectively.
We identify 
$R_H$ as a subset of $R_G$ using $\iota$.

\begin{Construction}[$a$-data]\label{Construction:a-data}
    We can choose
    a subset
    $\{ a_{\alpha} \}_{\alpha \in R_G} \subset E^{\times}_{\geq B}$
    such that $a_{g \alpha}=g(a_{\alpha})$ for all $g\in \Gal(E_\infty/F_\infty)$ and
    $a_{-\alpha}=-a_{\alpha}.$
    On each fiber $E_i$ for $B \leq i \leq \infty$,
    the set $\{ a_{\alpha, i} \}_{\alpha \in R_G} \subset E^\times_i$
    gives a set of $a$-data for $T_{G, i}$ in the sense of \cite[(2.2)]{LanglandsShelstadTransfer}, which we will use to compute the transfer factor.
\end{Construction}

For $\alpha \in R_G$ and each $B \leq i \leq \infty$, let
$F_i \subset F_{\alpha, i} \subset E_i$
be the subfield corresponding to the stabilizer of $\alpha$ for the action of $\Gal(E_i/F_i) \simeq \Gal(E_\infty/F_\infty)$ on $R_G$.

\begin{Construction}[$\chi$-data]\label{Construction:chi-data}
We choose a set of $\chi$-data
$\{ \chi_{\alpha, \infty} \colon F^\times_{\alpha, \infty} \to \mbC^\times \}_{\alpha \in R_G}$
for $T_{G, \infty}$
in the sense of \cite[(2.5)]{LanglandsShelstadTransfer}
(see also \cite[Definition A.2.13]{GriffinWangBook}).
By local class field theory, we may regard
$\chi_{\alpha, \infty}$
as a continuous character $\chi_{\alpha, \infty} \colon W_{F_{\alpha, \infty}} \to \mbC^\times$.
We may assume that
the characters $\chi_\alpha$ factor through
$W_{F_{\alpha, \infty}}/I^n_\infty$.
After enlarging $B$, 
we can use identifications
$W_{F_{\alpha, i}}/I^n_i \simeq W_{F_{\alpha, \infty}}/I^n_\infty$
to transport $\{ \chi_{\alpha, \infty} \}_{\alpha \in R_G}$ to a set of $\chi$-data
$\{ \chi_{\alpha, i} \colon F^\times_{\alpha, i} \to \mbC^\times\}_{\alpha \in R_G}$
for $T_{G, i}$ for all $i \geq B$.
\end{Construction}

\begin{Lemma}\label{Lemma:local constancy of DeltaI}
$
    \Delta_{I}(\gamma_{H_{i}},\gamma_{i})=\Delta_{I}(\gamma_{H_{\infty}},\gamma_{\infty})
    $
    for all $i\geq B$
after possibly enlarging $B$.
\end{Lemma}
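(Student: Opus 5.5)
Recall that $\Delta_I(\gamma_H,\gamma)=\langle \lambda(T_{G}),\kappa\rangle$ (Langlands--Shelstad (3.2)). Here $\lambda(T_G)\in H^1(F,T_G)$ is the class obtained from the $a$-data and the $F$-splitting $\mathrm{spl}=(\mcB,\mcT,\{x_\alpha\})$ of $G$. Concretely, one chooses $h\in G_{\mathrm{sc}}(F^{\sep})$ conjugating $(\mcB,\mcT)$ to $(B'_G,T_G)$ and transports the splitting. Then one twists by $\prod_{\alpha>0}\alpha^\vee(a_\alpha)$, so that the transported splitting becomes $\Gamma$-stable modulo $T_G$. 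Finally $\lambda(T_G)$ is represented by the cocycle $g\mapsto h\,x(g)\,g(h)^{-1}$, with $x(g)$ built from the $a_\alpha$ and the Weyl elements $n(\omega_T(g))$ as in (2.3) of loc.\ cit. The plan is to perform this construction uniformly over $F_{\geq B}$, i.e.\ family-wise, and then invoke the compatibility results of Section~\ref{Tori in families of close fields}.

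First, I will realize all the data over $E_{\geq B}$ after enlarging $E$ and $B$. The element $h_0\in G(E_{\geq B})$ already conjugates $(T,B)$ to $(T_G,B'_G)$. Lifting $h_0$ to $G_{\mathrm{sc}}$ may require a further finite extension, which is handled by enlarging $E$ via Construction~\ref{Construction: etale covering of family of close fields}. The Galois action on $R_G$ is identified fiberwise through $\theta_i$, so the Weyl elements $\omega_T(g)$ and their Tits lifts $n(\omega_T(g))\in N_{\mcG}(\mcT)(W(k))$ are the same for all $i$. The $a$-data of Construction~\ref{Construction:a-data} live in $E^\times_{\geq B}$. Hence the formula for the cocycle gives a single $1$-cocycle $g\mapsto t_g$ of $\Gal(E_\infty/F_\infty)$ with values in $T_G(E_{\geq B})$. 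Its fiber at each $i\geq B$, via the torsor structure of Construction~\ref{Construction: etale covering of family of close fields} and the diagram \eqref{equation:diagram of Galois group of close fields}, is exactly the Langlands--Shelstad cocycle for $\lambda(T_{G,i})$. The main care point is checking that $t_g$ really lands in $T_G$ over all of $E_{\geq B}$, not only at $\infty$. Since this is an identity of sections of the finitely presented scheme $G$, it can be verified at $\infty$ and spread out by Corollary~\ref{Corollary: open nbd of infinity}.

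Next, I apply Corollary~\ref{Corollary:Tate-Nakayama compatibility} to this class $c$. It shows that after enlarging $B$, $\psi_i(\lambda(T_{G,\infty}))=\lambda(T_{G,i})$ in $X_*(T_{G,i})_{\Gamma_i,\mathrm{tor}}$. It remains to see that $\kappa_i$ corresponds to $\kappa_\infty$ under $\psi_i$. Both are induced by the same $s\in\hat T^{\Gamma}$ through the canonical identifications $\widehat{T_{G,i}}\simeq\hat T$ coming from $B'_G$. The identification \eqref{equation: psi for cocharacter groups} is compatible with these because $h_0$ is defined over $E_{\geq B}$ and the isomorphism $T\simeq T_G$ is given by $\ad(h_0)$ uniformly. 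Therefore the pairings agree, giving $\Delta_I(\gamma_{H_i},\gamma_i)=\Delta_I(\gamma_{H_\infty},\gamma_\infty)$.

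I expect the main obstacle to be the first step. There one must ensure that the choice of $h$ lifting to $G_{\mathrm{sc}}$ and the resulting cocycle are simultaneously valid fiberwise, with Galois actions matched through $\theta_i$. I would handle this by working throughout with the Galois group $\Gal(E_\infty/F_\infty)$ acting on $E_{\geq B}$ as a torsor, rather than with the full $\Gamma_i$.
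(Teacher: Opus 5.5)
Your strategy is the paper's. You build one cocycle $g\mapsto h_0x_g n(w_g)g(h_0)^{-1}$ of $\Gal(E_\infty/F_\infty)$ in $T_G(E_{\geq B})$ from $h_0$, the Weyl elements and their Tits lifts (constant near $\infty$), and the family $a$-data. You observe that its fibers are the Langlands--Shelstad cocycles, and conclude by \Cref{Corollary:Tate-Nakayama compatibility}. Your explicit check that $\kappa_i$ corresponds to $\kappa_\infty$ under $\psi_i$, because $\ad(h_0)$ is defined over $E_{\geq B}$, is correct; the paper leaves it implicit.

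\textbf{The main care point is justified incorrectly.} That $t_g$ lies in the closed subscheme $T_G$, and that $g\mapsto t_g$ satisfies the cocycle identity, are \emph{closed} conditions. \Cref{Corollary: open nbd of infinity} only spreads \emph{open} conditions from the point $\infty$ to some $\Spec F_{\geq B}$. A closed condition holding at $\infty$ can fail at every $i\in\mbN$: the element $p\in\mcO$ vanishes at $\infty$ and at no $i$. So ``verify at $\infty$ and spread out'' does not work.

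The step has to be done fiberwise. For each $B\le i\le\infty$, the fiber $t_{g,i}$ is literally the Langlands--Shelstad expression for $T_{G,i}$: the fibers of $h_0$, $w_g$, $x_g$ are the Langlands--Shelstad data, with $g$ acting through $\theta_i$. Hence $t_{g,i}\in T_{G,i}(E_i)$ and the cocycle identity holds fiberwise, by Langlands--Shelstad. Since $G$ is affine and $E_{\geq B}\hookrightarrow\prod_{i\geq B}E_i\times E_\infty$ is injective, both conditions then hold over $E_{\geq B}$.

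\textbf{A smaller imprecision concerns the lift to $G_{\mathrm{sc}}$.} The element $h_0$ itself need not lift to $G_{\mathrm{sc}}$ over any extension; for example, $\det h_0\neq 1$ for $\GL_n$. First replace $h_0$ by $h_0t$ with $t\in T(E_{\geq B})$; this does not change the conjugate pair $(T_G,B'_G)$ of $(T,B)$. Then obtain the lift using two facts:
\begin{itemize}
\item the map $G_{\mathrm{sc}}\times T\to G$, $(x,t)\mapsto \bar{x}t$, is smooth, since it is a torsor under the preimage $T_{\mathrm{sc}}$ of $T$ in $G_{\mathrm{sc}}$;
\item $\mcO_{\Spa E,\infty}$ is henselian.
\end{itemize}
The paper instead reduces to $\mcG=\mcG_{\mathrm{sc}}$ at the outset.
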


\begin{proof}
Since, by definition (see \cite[(3.2)]{LanglandsShelstadTransfer}), both sides coincide with those obtained by replacing $\mcG$ with $\mcG_{\mathrm{sc}}$, we may assume that $\mcG=\mcG_{\mathrm{sc}}$.
Recall that $k'$ is the residue field of $E_\infty$.
We may assume that the maximal torus $\mcT \subset \mcG$ over $W(k)$ is split over $W(k')$
and the pinning $\{ x_\alpha \}_{\alpha \in \Delta}$ of $\mcG$ is defined over $W(k')$.
Let $W(\mcT)$ be the Weyl group scheme over $W(k)$ associated with $\mcT$.
Let
\[
n \colon W(\mcT)(W(k')) \to N_{\mcG}(\mcT)(W(k'))
\]
be the Tits section associated with the pinning of $\mcG$
as in \cite[(2.1)]{LanglandsShelstadTransfer}.
Since our pinning is defined over $W(k')$, the map $n$ is actually defined over $W(k')$; see also \cite[Section A.3.1]{GriffinWangBook}.

For an element $g \in \Gal(E_\infty/F_\infty)$,
let $w_g \in W(\mcT)(E_{\geq B})$ be the image of $h^{-1}_0g(h_0)$.
Then $g \mapsto w_g$ is a $1$-cocycle of $\Gal(E_\infty/F_\infty)$ in $W(\mcT)(E_{\geq B})$.
Since $W(\mcT)$ is constant over $W(k')$,
by \Cref{Corollary: open nbd of infinity},
we may assume after enlarging $B$ that this $1$-cocycle
comes from a $1$-cocycle of $\Gal(E_\infty/F_\infty)$ in $W(\mcT)(W(k'))$, which we also denote by $g \mapsto w_g$.
Furthermore, our choice of $a$-data in \Cref{Construction:a-data} allows us to attach to each $g \in \Gal(E_\infty/F_\infty)$ an element $x_g \in T(E_{\geq B})$ whose restriction to each fiber $E_i$ ($B \leq i \leq \infty$) coincides with the element $x(w_g \rtimes g) \in T(E_i)$ constructed in \cite[(2.3)]{LanglandsShelstadTransfer} using
the set of $a$-data
$\{ a_{\alpha, i} \}_{\alpha \in R_G}$.
Then
one can show that
$g \mapsto h_0 x_gn(w_g)g(h_0)^{-1}$
is a $1$-cocycle of $\Gal(E_\infty/F_\infty)$ in $T_G(E_{\geq B})$.
Let $\lambda(T_G)$ denote its class in
$H^1(\Gal(E_\infty/F_\infty), T_G(E_{\geq B}))$
and let $\lambda(T_G)_i \in H^1(F_i, T_{G, i})$
be the image of $\lambda(T_G)$.
By definition,
we have
$\Delta_{I}(\gamma_{H_{i}},\gamma_{i})=\kappa_i(\lambda(T_G)_i)$ for the character $\kappa_i \colon H^1(F_i, T_{G, i}) \to \mbC^\times$ (corresponding to $s$).
The desired claim now follows from \Cref{Corollary:Tate-Nakayama compatibility}.
\end{proof}

\begin{Lemma}\label{Lemma:local constancy of DeltaII}
$
    \Delta_{II}(\gamma_{H_{i}},\gamma_{i})=\Delta_{II}(\gamma_{H_{\infty}},\gamma_{\infty})
    $
    for all $i\geq B$
after possibly enlarging $B$.
\end{Lemma}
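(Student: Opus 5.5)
Recall the Langlands--Shelstad definition: with $a$-data and $\chi$-data as in \Cref{Construction:a-data} and \Cref{Construction:chi-data},
\[
\Delta_{II}(\gamma_{H_i},\gamma_i)=\prod_{\alpha}\chi_{\alpha,i}\Bigl(\frac{\alpha(\gamma_i)-1}{a_{\alpha,i}}\Bigr),
\]
where $\alpha$ runs over a set of representatives of the $\Gal(E_i/F_i)$-orbits in $R_G\setminus R_H$. Here $\alpha(\gamma_i)$ is computed via the embedding $T_{G,i}\hookrightarrow G_i$, and $\frac{\alpha(\gamma_i)-1}{a_{\alpha,i}}\in F_{\alpha,i}^\times$. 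Through the identification $\Gal(E_i/F_i)\simeq\Gal(E_\infty/F_\infty)$, the sets $R_G$, $R_H$ and the orbit decomposition are independent of $i$. So I would fix one set of orbit representatives once and for all, and then show that each individual factor is locally constant in $i$.

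Fix a root $\alpha\in R_G\setminus R_H$. Since $T_G$ is split over $E_{\geq B}$, the root $\alpha$ gives a character of $(T_G)_{E_{\geq B}}$. Hence
\[
y_\alpha:=\frac{\alpha(\gamma)-1}{a_\alpha}\in E_{\geq B}
\]
is a single element whose fibers are the quantities above. Its fibers are nonzero because $\gamma$ is fiberwise strongly regular semisimple. After enlarging $B$ it is a unit in $E_{\geq B}$, by \Cref{Lemma: elements of O} applied over $E$. The element $y_\alpha$ is fixed by the stabilizer of $\alpha$ in $\Gal(E_\infty/F_\infty)$, because $a_{g\alpha}=g(a_\alpha)$ and $\gamma$ is defined over $F_{\geq B}$. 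It therefore lies in the subring $F_{\alpha,\geq B}:=E_{\geq B}^{\mathrm{Stab}(\alpha)}$, whose fibers are $F_{\alpha,i}$. I would note that $F_\alpha=\{(F_{\alpha,i},\pi_{F_{\alpha,i}})\}$ can itself be arranged, after enlarging $B$, to be a family of close fields as in \Cref{Construction: etale covering of family of close fields}, with the torsor structure compatible.

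It remains to show that $\chi_{\alpha,i}(y_{\alpha,i})=\chi_{\alpha,\infty}(y_{\alpha,\infty})$ for $i\gg0$. Via class field theory, $\chi_{\alpha,\infty}$ is trivial on $1+\pi_{F_{\alpha,\infty}}^{m}\mcO$ for some $m$. By Deligne's theory, the transported character $\chi_{\alpha,i}$ corresponds, as a character of $F_{\alpha,i}^\times$, to the character of $F_{\alpha,\infty}^\times$ under the isomorphism
\[
F_{\alpha,\infty}^\times/(1+\pi^m)\simeq F_{\alpha,i}^\times/(1+\pi^m)
\]
induced by $\psi_i$ and $\pi_\infty\mapsto\pi_i$. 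This is the compatibility of the reciprocity map with close fields, \cite{Deligne84}. Writing $y_\alpha=\pi_{F_\alpha}^{v}u$ with $u$ a unit, \Cref{Lemma: elements of O} shows that for $i\gg0$ the valuation $v$ is constant and that $\psi_i$ carries $u_\infty$ mod $\pi^m$ to $u_i$ mod $\pi^m$. Hence the two character values agree.

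The main obstacle is the class field theory compatibility: I need that transporting $\chi$ through $W_{F_{\alpha}}/I^n$ agrees with transporting through the multiplicative groups truncated mod $1+\pi^m$. This is Deligne's result (\cite[Section 3]{Deligne84}), provided $n$ and $e_i$ are large relative to $m$. I would simply cite it, and enlarge $B$ so that $e_i\geq$ the relevant bound.
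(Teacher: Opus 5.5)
Your proposal is correct and follows the same route as the paper. Both write $\Delta_{II}$ as the Langlands--Shelstad product over $\Gal(E_\infty/F_\infty)$-orbit representatives in $R_G \setminus R_H$, and both get local constancy of each factor $\chi_{\alpha,i}\bigl((\alpha(\gamma_i)-1)/a_{\alpha,i}\bigr)$ from the constructions of the $a$-data and $\chi$-data together with Deligne's compatibility of reciprocity maps with close fields \cite[Proposition 3.6.1]{Deligne84}. The paper compresses this into one line; your extra details (treating $(\alpha(\gamma)-1)/a_\alpha$ as a single element of $E_{\geq B}^{\mathrm{Stab}(\alpha)}$, with \Cref{Lemma: elements of O} controlling its valuation and unit part) are what that line leaves implicit.
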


\begin{proof}
    Recall that $\Delta_{II}(\gamma_{H_{i}},\gamma_{i})=\prod_{\alpha} \chi_{\alpha,i}\left(\frac{\alpha(\gamma_{i})-1}{a_{\alpha,i}}\right),
    $
    where $\alpha$ runs over a complete set of representatives of the $\Gal(E_\infty/F_\infty)$-orbits in $R_G \backslash R_H$.
    Thus, the assertion immediately follows from the constructions of $\chi_{\alpha, i}$ and $a_{\alpha,i}$, together with the compatibility between reciprocity maps in local class field theory and families of close local fields, as proved in \cite[Proposition 3.6.1]{Deligne84}.
\end{proof}

\begin{Lemma}\label{Lemma:local constancy of DeltaIII2}
$
    \Delta_{III_2}(\gamma_{H_{i}},\gamma_{i})=\Delta_{III_2}(\gamma_{H_{\infty}},\gamma_{\infty})
    $
    for all $i\geq B$
after possibly enlarging $B$.
\end{Lemma}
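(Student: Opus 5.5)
Recall from \cite[(3.4)]{LanglandsShelstadTransfer} that $\Delta_{III_2}$ is defined via the Langlands pairing. The $\chi$-data $\{\chi_{\alpha}\}$ produce an $L$-embedding $\xi_{T_G}\colon {}^LT_G \to {}^LG$, and similarly one for $T_H$, giving an embedding into ${}^LH$. Composing the latter with $\xi$ and comparing with $\xi_{T_G}$ yields a $1$-cocycle $a_i \colon W_{F_i} \to \hat{T}_G$. This cocycle defines a character $\chi_{a,i}$ of $T_G(F_i)$ via the Langlands correspondence for tori, and $\Delta_{III_2}(\gamma_{H_i},\gamma_i) = \langle a_i, \gamma_i\rangle = \chi_{a,i}(\gamma_i)$.

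The plan has two steps. First, show that the cocycles $a_i$ for $i \geq B$ are obtained from $a_\infty$ by transport along the identifications $W_{F_i}/I^n_i \simeq W_{F_\infty}/I^n_\infty$. Second, show that the Langlands correspondence for tori is compatible with close fields, in the sense that $\chi_{a,i}(\gamma_i) = \chi_{a,\infty}(\gamma_\infty)$ for $i$ large.

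\textbf{Step 1 (cocycles).} The embeddings $\xi_{T_G}$ are built by the explicit recipe of \cite[(2.6)]{LanglandsShelstadTransfer} from the following inputs:
\begin{itemize}
\item the $\chi_\alpha$, through induced representations from $W_{F_\alpha}$ (and $W_{F_{\pm\alpha}}$) to $W_F$;
\item the Tits section $n$ and the Weyl cocycle $w_g$, both already constant over $W(k')$, as in the proof of \Cref{Lemma:local constancy of DeltaI};
\item the $\Gamma$-action on $R_G$.
\end{itemize}
All of these factor through $\Gamma_i/I^n_i$ once $n$ is large, and they correspond under $\psi_i$ by \Cref{Construction:chi-data} and the diagram \eqref{equation:diagram of Galois group of close fields}. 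Since $\xi$ is unramified, the same holds for $\xi_i$. So after enlarging $n$ and $B$, the cocycle $a_i$ is inflated from $W_{F_i}/I^n_i$ and equals $a_\infty\circ\psi_i^{-1}$. I will pick the transfer section of the induced representations compatibly with $\psi_i$ so that this holds on the nose.

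\textbf{Step 2 (Langlands correspondence for tori).} This is the main obstacle. The approach is to reduce to class field theory for induced tori. Use the $\Gal(E_\infty/F_\infty)$-equivariant splitting $T_{G,E_{\geq B}} \simeq \mbG_m^N$ from \Cref{Proposition: lattices in cocharacter groups}, and choose a surjection $R := \prod_\beta \Res^{E_\beta}_{F}\mbG_m \to T_G$ from an induced torus, defined over $F_{\geq B}$. By functoriality of the Langlands correspondence, the character $\chi_{a,i}$ pulled back to $R(F_i)$ is a product of characters $\chi'_{\beta,i}\circ N_{E_{\beta,i}/F_i}$-type factors. Each factor is given by local class field theory on $E_{\beta,i}^\times$ applied to the restrictions of the $a_i$, and these correspond under close-field identifications. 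It remains to lift $\gamma_i$. Since $R\to T_G$ is smooth when the kernel is a torus, I will arrange this by taking the kernel connected; then, as in the proof of \Cref{Corollary: local constancy of cocharacter groups}, the point $\gamma_\infty$ lifts to $\tilde\gamma\in R(F_{\geq B})$ by henselianity of $\mcO_{\Spa F,\infty}$. If a lift to $R(F)$ fails in general, I will work with the dual map $T_G \to R$ instead.

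Then $\chi_{a,i}(\gamma_i)$ is a product of values of characters of $E_{\beta,i}^\times$ of conductor $\le n$ at components of $\tilde\gamma_i$. These values match for $i\ge B$ by \cite[Proposition 3.6.1]{Deligne84} together with \Cref{Lemma: elements of O}, which shows that the components agree modulo $1+\pi^n$ and have the same valuation.
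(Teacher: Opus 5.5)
Your Step 1 is what the paper does. The paper asserts that, by the construction of the $\chi$-data, the cocycle classes agree after transport along $W_{F_i}/I^n_i\simeq W_{F_\infty}/I^n_\infty$. For Step 2, however, the paper does not reduce to induced tori. It invokes the argument of \cite[Proposition 3.3.3]{AubertVarmaLLCToriCloseFields}, i.e.\ the compatibility of the Langlands correspondence for tori with close fields, which rests on \cite[Proposition 3.6.1]{Deligne84}. Your Step 2 tries to prove that compatibility directly, and as written it has a gap.

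\textbf{The gap is the lift $\tilde\gamma$.} Henselianity of $\mcO_{\Spa F,\infty}$ and smoothness of $R\to T_G$ only let you spread out a point of $R(F_\infty)$ lying over $\gamma_\infty$; they do not produce one. Since $H^1(F_\infty,R)=0$, the cokernel of $R(F_\infty)\to T_G(F_\infty)$ is $H^1(F_\infty,K)$ for the kernel torus $K$, and this is nonzero in general. In \Cref{Corollary: local constancy of cocharacter groups} the lifting takes place over $F^{\ur}_\infty$, where Steinberg's theorem kills the obstruction; nothing like that is available over $F_\infty$. Nor can a better choice of $R$ always fix it. Take $T_G$ the norm-one torus of a biquadratic extension with group $(\mbZ/2)^2$ (a maximal torus of $\mathrm{SL}_4$). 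A Tate--Nakayama computation shows that the cokernel is $H_2((\mbZ/2)^2,\mbZ)\simeq\mbZ/2$ for every surjection from an induced torus split over $E$ with connected kernel. So elements in the nontrivial coset have no lift, and your evaluation of $\chi_{a,i}(\gamma_i)$ through $R$ is unavailable.

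\textbf{Your fallback does work, but the work moves.} Embedding $T_G\hookrightarrow R'=\prod_\beta\Res^{E_\beta}_F\mbG_m$ makes the image of $\gamma$ in $R'(F_{\geq B})=\prod_\beta E^\times_{\beta,\geq B}$ automatic. What you must now supply is an extension of the character, i.e.\ a lift $\tilde a_\infty$ of $a_\infty$ along $\hat{R}'\twoheadrightarrow\hat{T}_G$:
\begin{itemize}
\item The lift exists because quasi-characters of the closed subgroup $T_G(F_\infty)\subset R'(F_\infty)$ extend, together with functoriality of the correspondence.
\item By continuity, $\tilde a_\infty$ is inflated from some $W_{F_\infty}/I^{n'}_\infty$, so after enlarging $B$ you can transport it to $\tilde a_i$.
\item Because the identifications are compatible with $\hat{R}'\to\hat{T}_G$, $\tilde a_i$ maps to $a_i$. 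Hence $\chi_{a,i}$ is the restriction of the character of $\prod_\beta E^\times_{\beta,i}$ attached to $\tilde a_i$.
\end{itemize}
Your final comparison then goes through, using Deligne's compatibility and \Cref{Lemma: elements of O} for the families $E_\beta$ (with conductors bounded via Herbrand). This gives a self-contained proof of the compatibility the paper imports from \cite{AubertVarmaLLCToriCloseFields}.
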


\begin{proof}
    Following \cite[(2.6)]{LanglandsShelstadTransfer}, the set of $\chi$-data $\{ \chi_{\alpha, i} \}_{\alpha \in R_G}$ induces an admissible embedding ${}^LT_{G, i} \to {}^LG_i$.
    Furthermore, its restriction provides a set of $\chi$-data for $R_H$, yielding an analogous admissible embedding ${}^LT_{H, i} \to {}^LH_i$.
    In fact, after enlarging $n$,
    these admissible embeddings descend to $W_{F_i}/I^n_i$ for any $B \leq i \leq \infty$.
     Then the failure of commutativity of the diagram
    $$
    \xymatrix{
    {}^LT_{H, i} \ar[r] \ar[d] & {}^LH_{i} \ar[d]^{\xi_{i}} \\
    {}^LT_{G, i} \ar[r] & {}^LG_{i},
    }
    $$
    where the left vertical map is induced by $\iota$, 
    defines a continuous $1$-cocycle of $W_{F_i}/I_{i}^{n}$
    in $\widehat{T_{G, i}}$
    as in \cite[(3.5)]{LanglandsShelstadTransfer}.
    Let $\mu_i \in H^1(W_{F_i}/I_{i}^{n}, \widehat{T_{G, i}})$ denote its class.
    Let $\chi_{\mu_{i}} \colon T_{G, i}(F_i) \to \mbC^\times$ be the continuous character corresponding to $\mu_i$ via Langlands duality for tori.
    Then we have 
    $
    \Delta_{III_2}(\gamma_{H_{i}},\gamma_{i})=\chi_{\mu_{i}}(\gamma_i)$.
    
    By the construction of $\chi$-data,
    it follows after enlarging $B$ that
    $\mu_i = \mu_\infty$
    under the identifications
    \[
    H^1(W_{F_i}/I_{i}^{n}, \widehat{T_{G, i}}) \simeq H^1(W_{F_\infty}/I_{\infty}^{n}, \widehat{T_{G, \infty}})
    \]
    for all $i \geq B$.
    Then
    one can conclude that
    $\chi_{\mu_{i}}(\gamma_i) = \chi_{\mu_{\infty}}(\gamma_\infty)$
    for large enough $i$; indeed, by arguing as in the proof of \cite[Proposition 3.3.3]{AubertVarmaLLCToriCloseFields}, this statement follows from the compatibility (\cite[Proposition 3.6.1]{Deligne84}) between reciprocity maps in local class field theory and families of close local fields.
\end{proof}

\begin{Lemma}\label{Lemma:local constancy of DeltaIV}
$
    \Delta_{IV}(\gamma_{H_{i}},\gamma_{i})=\Delta_{IV}(\gamma_{H_{\infty}},\gamma_{\infty})
    $
    for all $i\geq B$
after possibly enlarging $B$.
\end{Lemma}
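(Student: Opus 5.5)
The plan is to unwind the definition of $\Delta_{IV}$ in \cite[(3.6)]{LanglandsShelstadTransfer}. Following it, we have
\[
\Delta_{IV}(\gamma_{H_i},\gamma_i)=\frac{|D_G(\gamma_i)|_i^{1/2}}{|D_H(\gamma_{H_i})|_i^{1/2}},
\qquad
D_G(\gamma_i)=\prod_{\alpha\in R_G}(\alpha(\gamma_i)-1),
\]
and similarly $D_H(\gamma_{H_i})=\prod_{\alpha\in R_H}(\alpha(\gamma_{H_i})-1)$. Here $|\cdot|_i$ is the normalized absolute value of $F_i$. Since $\iota$ identifies $\gamma_H$ with $\gamma$ and $R_H\subset R_G$ via $\iota$, the quotient reduces to $|\prod_{\alpha\in R_G\setminus R_H}(\alpha(\gamma_i)-1)|_i^{1/2}$. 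It therefore suffices to show that $|D_G(\gamma_i)|_i$ and $|D_H(\gamma_{H_i})|_i$ are each independent of $i$ for $i\ge B$ after enlarging $B$.

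\textbf{Step 1: globalize the discriminants.} Each discriminant is the value at $\gamma$ (resp.\ $\gamma_H$) of a regular function on $G$ (resp.\ $H$) defined over $W(k)$: the coefficient of the appropriate power of $X$ in the characteristic polynomial of $\mathrm{Ad}$. So we obtain a single element
\[
D_G(\gamma):=\prod_{\alpha\in R_G}(\alpha(\gamma)-1)\in F_{\geq B}.
\]
It is Galois invariant, because the product is over all roots, and it can also be computed in $E_{\geq B}$ using the splitting of $T_G$ over $E_{\geq B}$. Its fiber at each $i\ge B$ is $D_G(\gamma_i)$. Because $\gamma$ is fiberwise strongly regular semisimple, $D_G(\gamma)_\infty\neq 0$. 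The same holds for $D_H(\gamma_H)\in F_{\geq B}$, since $\gamma_H$ is fiberwise strongly regular in $H$.

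\textbf{Step 2: apply \Cref{Lemma: elements of O}.} Write $D_G(\gamma)_\infty\in \pi_\infty^n\mcO_\infty\setminus\pi_\infty^{n+1}\mcO_\infty$. By \Cref{Lemma: elements of O}, after enlarging $B$ we get $D_G(\gamma)_i\in\pi_i^n\mcO_i\setminus\pi_i^{n+1}\mcO_i$ for all $i\ge B$. Since all the $F_i$ share the residue field $k$, this gives $|D_G(\gamma_i)|_i=|k|^{-n}=|D_G(\gamma_\infty)|_\infty$. Doing the same for $D_H$ and taking the quotient proves the lemma.

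The only point needing care is Step 1. We must check that the elementwise description of $\Delta_{IV}$ is compatible with the chosen identifications of $R_G$, $R_H$ across fibers, so that the products taken on each fiber really are the fibers of one element of $F_{\geq B}$. This follows because the product over all roots is the intrinsic Weyl discriminant, so no identification of roots is actually needed. The remaining steps are immediate.
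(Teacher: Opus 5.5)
Your proposal is correct and follows the paper's proof: you write $\Delta_{IV}$ as the ratio of the square roots of the absolute values of the Weyl discriminants $\prod_{\alpha\in R_G}(\alpha(\gamma_i)-1)$ and $\prod_{\alpha\in R_H}(\alpha(\gamma_{H_i})-1)$, then apply \Cref{Lemma: elements of O} to see that these absolute values are constant for large $i$. Your added observation, that each discriminant is the value at $\gamma$ (resp.\ $\gamma_H$) of a single regular function (a coefficient of the characteristic polynomial of $\mathrm{Ad}$) and hence a single element of $F_{\geq B}$, makes explicit a step the paper leaves implicit.
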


\begin{proof}
    Recall that $\Delta_{IV}(\gamma_{H_{i}},\gamma_{i})=D_{G_{i}}(\gamma_{i})D_{H_{i}}(\gamma_{H_{i}})^{-1}$ where $D_{G_{i}}(\gamma_{i})= \left\vert \prod_{\alpha\in R_G} (\alpha(\gamma_{i})-1)\right\vert^{1/2}$
    and
    $D_{H_{i}}(\gamma_{H_{i}})=\left\vert \prod_{\alpha\in R_H}(\alpha(\gamma_{H_{i}})-1)\right\vert^{1/2}$.
    The assertion then follows from  \Cref{Lemma: elements of O}.
\end{proof}

Now \Cref{proposition: local constancy of transfer factors} follows from the previous four lemmas, and the proof of \Cref{Theorem: Standard Endo FL} is complete.

\subsection*{Acknowledgements}
We would like to thank Jingren Chi, Ulrich G\"ortz, Thomas Haines, Jochen Heinloth, Tetsushi Ito, Weimin Jiang, Wansu Kim, Teruhisa Koshikawa, Bertrand Lemaire, Daniel Li-Huerta, Sophie Morel and Griffin Wang for helpful discussions, interest in and feedback on this work.
S.\ Bartling would like to thank Vytautas Pa\v{s}k\={u}nas for a short but eventful conversation and especially Andreas Mihatsch for suggesting the collaboration on \cite{AndreasMeAFL} which naturally led to this project.
While working on this project, S.\ Bartling was part of the DFG RTG 2553
and K.\ Ito was supported by JSPS KAKENHI Grant Numbers 24K16887 and 24H00015.

\bibliographystyle{abbrvsort}
\bibliography{bibliography.bib}
\end{document}